\newcommand{\R}{\mathbb{R}}
\newcommand{\C}{\mathbb{C}}
\newcommand{\Z}{\mathbb{Z}}
\newcommand{\N}{\mathbb{N}}
\newcommand{\smooth}{{\mathscr{C}^\infty}}
\newcommand{\e}{\varepsilon}
\newcommand{\im}{\mathrm{Im}}
\newcommand{\End}{\mathrm{End}}
\newcommand{\n}{\nabla}
\newcommand{\Sp}{\mathrm{Sp}}
\newcommand{\bd}{{\bf{bd}}}
\newcommand{\LL}{\mathscr{L}}
\newcommand{\hh}{\mathscr{H}^\bullet}
\newcommand{\hha}{\mathscr{H}^\bullet_\mathrm{abs}}
\newcommand{\hhr}{\mathscr{H}^\bullet_\mathrm{rel}}
\newcommand{\DV}{D^V}
\newcommand{\DsV}{D^{V,2}}
\newcommand{\DVbd}{D^V_{\bf{bd}}}
\newcommand{\DsVbd}{D^{V,2}_{\bf{bd}}}
\newcommand{\DVT}{D^V_T}
\newcommand{\DsVT}{D^{V,2}_T}
\newcommand{\DVTbd}{D^V_{T,{\bf{bd}}}}
\newcommand{\wDVT}{\widetilde{D}^V_T}
\newcommand{\wDVTbd}{\widetilde{D}^V_{T,{\bf{bd}}}}
\newcommand{\DY}{D^Y}
\newcommand{\DsY}{D^{Y,2}}
\newcommand{\DIYR}{D^{IY_R}}
\newcommand{\DIYRT}{D^{IY_R}_T}
\newcommand{\DsIYRT}{D^{IY_R,2}_T}
\newcommand{\wDIYRT}{\widetilde{D}^{IY_R}_T}
\newcommand{\DRT}{D^{Z_R}_T}
\newcommand{\DsRT}{D^{Z_R,2}_T}
\newcommand{\DjRT}{D^{Z_{j,R}}_T}
\newcommand{\DsjRT}{D^{Z_{j,R},2}_T}
\newcommand{\DthreeRT}{D^{IY_R}_T}
\newcommand{\wDjRT}{\widetilde{D}^{Z_{j,R}}_T}
\newcommand{\Djinf}{D^{Z_{j,\infty}}}
\newcommand{\Doneinf}{D^{Z_{1,\infty}}}
\newcommand{\Dtwoinf}{D^{Z_{2,\infty}}}
\newcommand{\DRzero}{D^{Z_R}}
\newcommand{\DsRzero}{D^{Z_R,2}}
\newcommand{\diffRT}{d^{Z_R}_T}
\newcommand{\stdiffRT}{d^{Z_R,*}_T}
\newcommand{\diffjRT}{d^{Z_{j,R}}_T}
\newcommand{\stdiffjRT}{d^{Z_{j,R},*}_T}
\newcommand{\Res}{\mathscr{R}_{d^F}}
\newcommand{\stRes}{\mathscr{R}_{d^{F,*}}}
\newcommand*{\rom}[1]{\expandafter\@slowromancap\romannumeral #1@}
\DeclareMathOperator{\Id}{Id}
\DeclareMathOperator{\tr}{Tr}
\DeclareMathOperator{\Ker}{Ker}
\newtheorem{prop}{Proposition}[section]
\newtheorem{thm}[prop]{Theorem}
\newtheorem{lemme}[prop]{Lemma}
\newtheorem{cor}[prop]{Corollary}
\theoremstyle{definition}
\newtheorem{defn}[prop]{Definition}
\theoremstyle{remark}
\newtheorem{rem}[prop]{Remark}
\numberwithin{equation}{section}
\title[analytic torsion forms]{Adiabatic limit, Witten deformation \\
and analytic torsion forms}
\date{\today}
\author{Martin PUCHOL}
\address{D{\'e}partement de Math{\'e}matiques,
B{\^a}timent IMO, Bureau 2G3,
Facult{\'e} des Sciences d'Orsay,
Universit{\'e} Paris-Saclay,
F-91405 Orsay Cedex, France}
\email{martin.puchol@math.cnrs.fr}
\author{Yeping ZHANG}
\address{School of Mathematics,
Korea Institute for Advanced Study,
Hoegiro 85, Dongdaemungu,
Seoul 02455, Korea}
\email{ypzhang@kias.re.kr}
\author{Jialin ZHU}
\address{Mathematical Science Research Center,
Chongqing University of Technology,
No. 69 Hongguang Road,
Chongqing 400054, China}
\email{jialinzhu@cqut.edu.cn}
\begin{document}

\begin{abstract}
We consider a smooth fibration equipped with a flat complex vector bundle
and a hypersurface cutting the fibration into two pieces.
Our main result is a gluing formula
relating the Bismut-Lott analytic torsion form of the whole fibration
to that of each piece.
This result confirms a conjecture
proposed in a conference in G{\"o}ttingen in 2003.
Our approach combines
an adiabatic limit along the normal direction of the hypersurface
and a Witten type deformation on the flat vector bundle.
\end{abstract}

\maketitle

\tableofcontents

\section{Introduction}
\label{sect-intro}

We consider a unitarily flat complex vector bundle $(F,\n^F)$ over a compact smooth manifold $X$ without boundary
whose cohomology with coefficients in $F$ vanishes,
i.e., $H^\bullet(X,F)=0$.
Franz \cite{fr}, Reidemeister \cite{rei} and de Rham \cite{dr}
constructed a topological invariant associated with $(F,\n^F)$,
known as Reidemeister-Franz topological torsion (RF-torsion).
RF-torsion is the first algebraic-topological invariant
which can distinguish the homeomorphism types of certain homotopy-equivalent topological spaces \cite{fr,rei}.
RF-torsion could be extended to the case $H^\bullet(X,F)\neq 0$ \cite{dr,mil,wh}.
Both the original construction of RF-torsion
and its extensions are
based on a complex of simplicial chains in $X$ with values in $F$.

By replacing the complex of simplicial chains by the de Rham complex,
Ray and Singer \cite{rs}
obtained an analytic version of RF-torsion,
known as Ray-Singer analytic torsion (RS-torsion).
In the same paper,
Ray and Singer conjectured that
RF-torsion and RS-torsion are equivalent.

Ray-Singer conjecture was proved independently by
Cheeger \cite{c-cm} and M{\"u}ller \cite{m-cm}.
Their result is now known as Cheeger-M{\"u}ller theorem.
Bismut, Zhang and M{\"u}ller simultaneously considered its extension.
M{\"u}ller \cite{m-cm-2} extended Cheeger-M{\"u}ller theorem to the unimodular case,
i.e., the induced metric on the determinant line bundle $\det F$ is flat.
Bismut and Zhang \cite{bz} extended Cheeger-M{\"u}ller theorem to arbitrary flat vector bundle.
There are also various extensions to equivariant cases
\cite{bz2,lr,lu}.

Wagoner \cite{wa} conjectured that
RF-torsion and RS-torsion can be extended to invariants of a fiber bundle,
i.e., a fibration $\pi: M\rightarrow S$
together with a flat complex vector bundle $(F,\n^F)$ over $M$.
Bismut and Lott \cite{bl} confirmed the analytic part of Wagoner's conjecture
by constructing analytic torsion forms (BL-torsion),
which are even differential forms on $S$.
Inspired by the work of Bismut and Lott,
Igusa \cite{ig} constructed higher topological torsions,
known as Igusa-Klein torsion (IK-torsion).
Goette, Igusa and Williams \cite{g-i-w,g-i} used IK-torsion to detect the exotic smooth structure of fiber bundles.
Dwyer, Weiss and Williams \cite{dww} constructed another version of higher topological torsion (DWW-torsion).
Then a natural and important problem is to understand the relation among these higher torsion invariants.

Bismut and Goette \cite{bg} established a higher version of Cheeger-M{\"u}ller/Bismut-Zhang theorem
under the assumption that
there exist a fiberwise Morse function $f: M \rightarrow \R$ and a fiberwise Riemannian metric
such that the fiberwise gradient of $f$ is Morse-Smale \cite{sm}.
Goette \cite{g-a1,g-a2} extended the results in \cite{bg} to fiberwise Morse functions
whose gradient vector fields are not necessarily Morse-Smale.
Bismut and Goette \cite{bg} also extended BL-torsion to the equivariant case.
And there are related works \cite{bu,bg2}.
We refer to the survey by Goette \cite{g} for an overview on higher torsions.

Igusa \cite{ig2} axiomatized higher torsion invariants.
His axiomatization consists of two axioms:
additivity axiom and transfer axiom.
Igusa proved that IK-torsion satisfies his axioms.
Moreover,
any higher torsion invariant satisfying Igusa's axioms
is a linear combination of
IK-torsion and the higher Miller-Morita-Mumford class \cite{mum,mor,miller}.
Badzioch, Dorabiala, Klein and Williams \cite{bdkw} showed that
DWW-torsion satisfies Igusa's axioms.
Ma \cite{ma} studied the behavior of BL-torsion under the composition of submersions.
The result of Ma implies that
BL-torsion satisfies the transfer axiom.
The additivity of BL-torsion was proposed as an open problem
in a conference on higher torsion invariants in 2003
\footnote{Smooth Fibre Bundles and Higher Torsion Invariants,
http://www.uni-math.gwdg.de/wm03/,
G{\"o}ttingen, 2003.}.

Igusa's theory begins with higher torsion invariants for fibrations with closed fibers.
In the additivity axiom \cite[(3.1)]{ig2},
Igusa used fiberwise double to avoid considering fibrations with boundaries.
Assuming that the torsion invariant in question is also defined for fibrations with boundaries,
Igusa \cite[\textsection 5]{ig2} stated a gluing axiom
equivalent to the additivity axiom.
More precisely,
given a hypersurface cutting the fibration into two pieces,
the gluing axiom basically says that
the torsion invariant of the total fibration should be the sum of that of each piece.

The gluing formula for BL-torsion was first precisely formulated by Zhu \cite{imrn-zhu}.
Zhu constructed analytic torsion forms for fibrations with boundaries
and formulated a precise gluing formula for BL-torsion.
This gluing formula, once proved,
will lead to the conclusion that BL-torsion satisfies the gluing axiom.

Now we briefly recall previous works on the gluing formula for RS-torsion and BL-torsion.
The gluing formula for RS-torsion associated with unitarily flat vector bundles
was proved by L{\"u}ck \cite{lu}.
The proof is based on Cheege-M{\"u}ller theorem and the work of Lott and Rothenberg \cite{lr}.
Vishik \cite{vi} gave an alternative proof
without using Cheege-M{\"u}ller theorem or the work of Lott and Rothenberg.
The gluing formula for RS-torsion
was proved by Br{\"u}ning and Ma \cite{bm} in full generality.
The proof is based on the work of Bismut and Zhang \cite{bz2},
which is the equivariant version of \cite{bz},
and the work of Br{\"u}ning and Ma \cite{bm2}.
In our earlier paper \cite{pzz} (announced  in \cite{pzz-cras}),
we gave another proof
by means of adiabatic limit along the normal direction of the hypersurface,
which is also one of the key tools in the present paper.
There are also related works \cite{ha,les,mjmw}.
Zhu \cite{imrn-zhu} proved the gluing formula for BL-torsion
under the same assumption as in \cite{bg}.
Zhu \cite{israel-zhu} also proved the gluing formula for BL-torsion under the assumption that
the fiberwise cohomology of the hypersurface vanishes.
This vanishing condition yields a uniform spectral gap of the fiberwise Hodge de Rham operator
as the metric on the normal direction tends to infinity,
which considerably simplifies the analysis involved.

The method used in \cite{pzz} cannot be directly generalized to the family case.
In other words,
it does not lead to a proof of the gluing formula for BL-torsion in full generality.
The main reason is the lack of a good interpretation of the limit of the analytic torsion forms
when the metric on the normal direction of the hypersurface tends to infinity.

The purpose of this paper is to prove a gluing formula for BL-torsion in full generality,
i.e., to solve the problem proposed in the conference on higher torsion invariants mentioned above.
The technical core of this paper consists of two analytic tools:
the adiabatic limit \cite{dw,m,clm,pw} along the normal direction of hypersurface,
which is exactly the same as in our earlier paper \cite{pzz},
and a Witten type deformation \cite{wi,hs4,bz,bz2,book-zh} on the flat vector bundle.
By introducing the Witten type deformation,
we overcome the difficulties mentioned in the previous paragraph.
We will give a more detailed explanation by the end of this introduction.

Now we briefly recall previous works on the two analytic tools used in this paper.
The adiabatic limit of $\eta$-invariant first appeared
in the work of Bismut and Freed \cite{bf} and in the work of Bismut and Cheeger \cite{bc}.
The adiabatic limit used in our paper first appeared in the work of Douglas and Wojciechowski \cite{dw}
and was further developed in \cite{m,clm,pw}.
We refer to the introduction of \cite{pzz}
for more details on previous works on the adiabatic limit.
The Witten deformation was introduced by Witten \cite{wi} in the language of physics.
In a series of works \cite{hs1,hs2,hs3,hs4},
Helffer and Sj{\"o}strand showed that
the Witten instanton complex,
which arises from Witten deformation,
is isomorphic to the Thom-Smale complex.
Bismut and Zhang \cite[\textsection 8]{bz} extended the result of Helffer and Sj{\"o}strand to arbitrary flat vector bundles.
Later they gave a simple proof in \cite[\textsection 6]{bz2} (cf. \cite[\textsection 6]{book-zh}),
where they did not use the work of Helffer and Sj{\"o}strand.

Let us now give more details about the matter of this paper.
\\

\noindent\textbf{Bismut-Lott's Riemann-Roch-Grothendieck type formula and analytic torsion forms.}
Let $M$ be a smooth manifold.
Let $(F,\n^F)$ be a flat complex vector bundle over $M$ with flat connection $\n^F$,
i.e., $\big(\n^F\big)^2 = 0$.
Let $h^F$ be a Hermitian metric on $F$.
Let $\overline{F}^*$ be the bundle of antilinear functionals on $F$.
We will view $h^F$ as a map from $F$ to $\overline{F}^*$.
Following \cite[(4.1)]{bz} and \cite[(1.31)]{bl}, 
set 
\begin{equation}
\label{intro-def-omegaF}
\omega(F,h^F) = \big(h^F\big)^{-1} \n^F h^F
\in \Omega^1(M,\mathrm{End}(F)) \;.
\end{equation}
Let $f$ be an odd polynomial,
i.e., $f(-x)=-f(x)$.
We fix a square root of $i$,
which we denote by $i^{1/2}$.
In what follows,
the choice of square root will be irrelevant.
Following \cite[(1.34)]{bl}, 
set
\begin{equation}
f(\n^F,h^F) = (2\pi i)^{1/2} \tr\Big[f\Big(\frac{1}{2}(2\pi i)^{-1/2}\omega(F,h^F)\Big)\Big]
\in \Omega^\mathrm{odd}(M) \;.
\end{equation}
Bismut and Lott \cite[\textsection 1]{bl} showed that $f(\n^F,h^F)$ is closed
and its de Rham cohomolgy class
\begin{equation}
f(\n^F) := \big[f(\n^F,h^F)\big]\in H^\mathrm{odd}(M)
\end{equation}
is independent of $h^F$.
For a $\Z$-graded flat complex vector bundle
$\big(F^\bullet = \bigoplus_k F^k,\n^{F^\bullet} = \bigoplus_k \n^{F^k}\big)$
and a Hermitian metric $h^{F^\bullet} = \bigoplus_k h^{F^k}$ on $F^\bullet$,
we denote
\begin{align}
\begin{split}
\label{intro-fsum}
f(\n^{F^\bullet},h^{F^\bullet}) & = \sum_k (-1)^k f(\n^{F^k},h^{F^k})
\in \Omega^\mathrm{odd}(M) \;,\\
f(\n^{F^\bullet}) & = \sum_k (-1)^k f(\n^{F^k})
\in H^\mathrm{odd}(M) \;.
\end{split}
\end{align}
If $f$ is an odd formal power series,
the constructions still make sense.
In the sequel,
we take
\begin{equation}
\label{intro-f}
f(x) = xe^{x^2} \;.
\end{equation}

Now let $\pi: M \rightarrow S$ be a fibration with compact fiber $Z$.
Let $o(TZ)$ be the orientation line of the fiberwise tangent bundle $TZ$.
Let $e(TZ)\in H^{\dim Z}(M,o(TZ))$ be the Euler class of $TZ$
(cf. \cite[(3.17)]{bz}).
Let $H^\bullet(Z,F)$ be the fiberwise de Rham cohomology of $Z$ with coefficients in $F$.
Then $H^\bullet(Z,F)$ is a $\Z$-graded complex vector bundle over $S$
equipped with a canonical flat connection $\n^{H^\bullet(Z,F)}$ (see \cite[Def. 2.4]{bl}).
Bismut and Lott \cite[Thm. 3.17]{bl} established the following Riemann-Roch-Grothendieck type formula
\begin{equation}
\label{intro-RRG}
f\big(\n^{H^\bullet(Z,F)}\big) = \int_Z e(TZ) f(\n^F) \in H^\mathrm{odd}(S) \;.
\end{equation}

Bismut and Lott \cite{bl} refined equation \eqref{intro-RRG}.
We consider a connection of the fibration,
i.e., a splitting
\begin{equation}
\label{intro-THM}
TM = T^HM \oplus TZ\;,
\end{equation}
a metric $g^{TZ}$ on $TZ$
and a Hermitian metric $h^F$ on $F$.
Let $\n^{TZ}$ be the Bismut connection associated with $T^HM$ and $g^{TZ}$ \cite[Def. 1.6]{b}.
Let $e(TZ,\n^{TZ})\in\Omega^{\dim Z}\big(M,o(TZ)\big)$ be the Euler form (cf. \cite[(3.17)]{bz}).
Let $h^{H^\bullet(Z,F)}$ be the $L^2$-metric on $H^\bullet(Z,F)$ induced by the Hodge theory.
Let $Q^S$ be the vector space of real even differential forms on $S$.
Bismut and Lott \cite[Def. 3.22]{bl} constructed a differential form
$\mathscr{T}\in Q^S$
depending on $\big(T^HM,g^{TZ},h^F\big)$
and showed that
\begin{equation}
\label{intro-dT}
d\mathscr{T} =
\int_Z e(TZ,\n^{TZ}) f(\n^F,h^F) -
f\big(\n^{H^\bullet(Z,F)},h^{H^\bullet(Z,F)}\big) \;.
\end{equation}
The differential form $\mathscr{T}$ is called the analytic torsion form of Bismut-Lott.
Now we explain the setup of our gluing formula for analytic torsion forms of Bismut-Lott.
\\

\noindent\textbf{Gluing formula.}
Let $N\subseteq M$ be a hypersurface transversal to $Z$.
We suppose that $\pi\big|_N: N \rightarrow S$ is surjective.
Then $\pi\big|_N$ is a fibration over $S$ with fiber $Y := N \cap Z$.
We suppose that $N$ cuts $M$ into two pieces,
which we denote by $M'_1$ and $M'_2$.
We identify a tubular neighborhood of $N$ with
\begin{equation}
IN:=[-1,1]\times N
\end{equation}
such that
\begin{equation}
IN \cap M'_1 = [-1,0]\times N \;, \hspace{5mm}
IN \cap M'_2 = [0,1]\times N \;.
\end{equation}
Set $\pi_3 = \pi\big|_{IN}: IN \rightarrow S$.
Then $\pi_3$ is a fibration over $S$ with fiber
\begin{equation}
IY:=[-1,1]\times Y \;.
\end{equation}
For $j=1,2$, set $M_j = M_j'\cup IN$.
Let $\pi_j: M_j \rightarrow S$ be the restriction of $\pi$.
Then $\pi_j$ is a fibration over $S$ with fiber
$Z_j:= M_j\cap Z$.
For convenience,
we denote
\begin{equation}
\pi_0 = \pi \;,\hspace{5mm} M_0 = M, \;\hspace{5mm} Z_0 = Z \;,\hspace{5mm}
M_3 = IN \;,\hspace{5mm} Z_3=IY \;.
\end{equation}
Then, for $j=0,1,2,3$,
we have a fibration $\pi_j: M_j\rightarrow S$
with fiber $Z_j$.

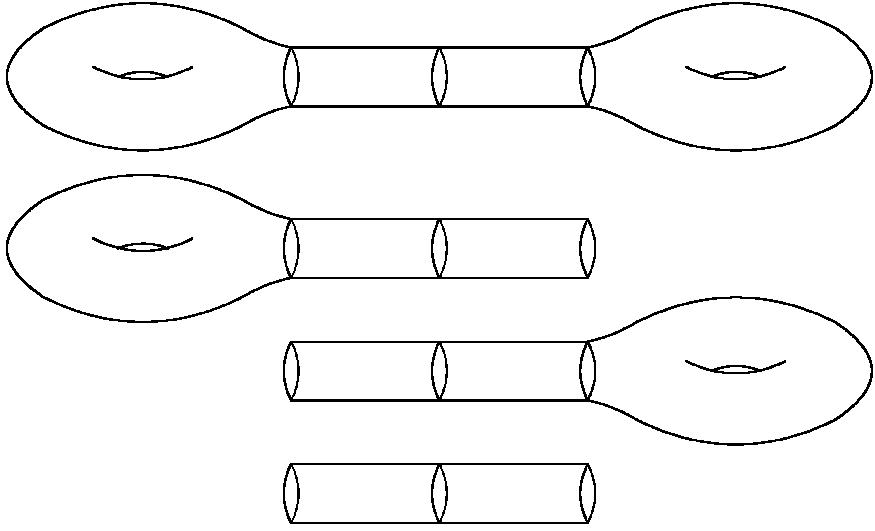
\begin{figure}[ht]
\setlength{\unitlength}{0.65cm}
\centering
\begin{picture}(18,11.5)
\newsavebox{\zleft}
\savebox{\zleft}(3,3)[bl]{
\qbezier(1,1)(3,0)(5,1) % down
\qbezier(1,3)(3,4)(5,3) % up
\qbezier(1,1)(-0.5,2)(1,3) % left
\qbezier(2,2.2)(3,1.7)(4,2.2) % int down
\qbezier(2.5,2)(3,2.2)(3.5,2) % int up
\qbezier(5,1)(5.5,1.3)(6,1.4) % s down
\qbezier(5,3)(5.5,2.7)(6,2.6) % s up
}
\newsavebox{\zright}
\savebox{\zright}(3,3)[bl]{
\qbezier(13,1)(15,0)(17,1) % down
\qbezier(13,3)(15,4)(17,3) % up
\qbezier(17,1)(18.5,2)(17,3) % right
\qbezier(14,2.2)(15,1.7)(16,2.2) % int down
\qbezier(14.5,2)(15,2.2)(15.5,2) % int up
\qbezier(13,1)(12.5,1.3)(12,1.4) % s down
\qbezier(13,3)(12.5,2.7)(12,2.6) % s up
}
\newsavebox{\cylinder}
\savebox{\cylinder}(3,3)[bl]{
\qbezier(6,1.4)(9,1.4)(12,1.4) % down
\qbezier(6,2.6)(9,2.6)(12,2.6) % up
\qbezier(6,1.4)(5.7,2)(6,2.6) % Y left
\qbezier[30](6,1.4)(6.3,2)(6,2.6) % Y left back
\qbezier(9,1.4)(8.7,2)(9,2.6) % Y middle
\qbezier[30](9,1.4)(9.3,2)(9,2.6) % Y middle back
\qbezier(12,1.4)(11.7,2)(12,2.6) % Y right
\qbezier[30](12,1.4)(12.3,2)(12,2.6) % Y right back
}
\put(0,7.5){\usebox{\zleft}}
\put(0,7.5){\usebox{\zright}}
\put(0,7.5){\usebox{\cylinder}}
\put(0,4){\usebox{\zleft}}
\put(0,4){\usebox{\cylinder}}
\put(0,1.5){\usebox{\zright}}
\put(0,1.5){\usebox{\cylinder}}
\put(0,-1){\usebox{\cylinder}}
\end{picture}
\caption{from the top to bottom: $Z_0=Z$, $Z_1$, $Z_2$ and $Z_3=IY$}
\label{fig-intro}
\end{figure}

Let $(u,y)\in [-1,1]\times Y$ be coordinates on $IY$.
We suppose that the splitting \eqref{intro-THM} on $IN$
is the pullback of a splitting
\begin{equation}
\label{intro-TN}
TN = T^HN \oplus TY \;.
\end{equation}
In particular,
we have
\begin{equation}
\label{intro-THM-THN}
T^HM\big|_{IN} = p^*\big(T^HN\big|_N\big) \;,
\end{equation}
where $p: IN \rightarrow N$ is the canonical projection.
Let $g^{TY}$ be the metric on $TY$
induced by the canonical embedding $Y \hookrightarrow Z$.
We suppose that the metric $g^{TZ}$ is product on $IN$, i.e.,
\begin{equation}
g^{TZ}\big|_{\{u\}\times Y} = du^2 + g^{TY} \;.
\end{equation}
We trivialize $F\big|_{IN}$ using the parallel transport
along the curve $[-1,1]\ni u \mapsto (u,y)$ with respect to $\n^F$.
Since $\n^F$ is flat,
we have
\begin{equation}
\label{eq-intro-nF}
(F,\n^F)\big|_{IN} = p^*\big(F\big|_N,\n^F\big|_N\big) \;,
\end{equation}
where $p: IN \rightarrow N$ is the canonical projection.
We assume that
\begin{equation}
\label{eq-intro-hF}
h^F\big|_{IN} = p^* \big( h^F\big|_N \big) \;.
\end{equation}

For $j=0,1,2,3$,
let $d^{Z_j}$ be the fiberwise de Rham operator on $Z_j$ with values in $F$.
Let $d^{Z_j,*}$ be the formal adjoint of $d^{Z_j}$ with respect to the $L^2$-product (see \eqref{eq-def-L2-product}).
The Hodge de Rham operator is defined as
\begin{equation}
D^{Z_j} = d^{Z_j} + d^{Z_j,*} \;.
\end{equation}

We identify the normal bundle $\mathfrak{n}$ of $\partial Z_j$
with the orthogonal complement of $T(\partial Z_j) \subseteq TZ_j\big|_{\partial Z_j}$.
We denote by $e_{\mathfrak{n}}$ the inward pointing unit normal vector field on $\partial Z_j$.
Let $e^{\mathfrak{n}}$ be the dual vector field.
We denote by $i_\cdot$ (resp. $\wedge \cdot$) the interior (resp. exterior) multiplication.
Following \cite[(1.11),(1.12)]{bm} and \cite[(1.4),(1.5))]{pzz}, 
we denote
\begin{align}
\label{eq-def-abs-bd}
\begin{split}
\Omega^\bullet_\mathrm{abs}(Z_j,F) & = \Big\{ \omega\in \Omega^\bullet(Z_j,F) \;:\;
i_{e_\mathfrak{n}}\omega\big|_{\partial Z_j}=0 \Big\} \;,\\
\Omega^\bullet_{\mathrm{abs},D^2}(Z_j,F) & = \Big\{ \omega\in \Omega^\bullet(Z_j,F) \;:\;
i_{e_\mathfrak{n}}\omega\big|_{\partial Z_j} = i_{e_\mathfrak{n}}(d^{Z_j}\omega)\big|_{\partial Z_j} = 0 \Big\} \;.
\end{split}
\end{align}

The self-adjoint extensions of $D^{Z_j}$ and $D^{Z_j,2}$ with domains
\begin{equation}
\label{eq2-def-abs-bd}
\mathrm{Dom}\big(D^{Z_j}\big) = \Omega^\bullet_\mathrm{abs}(Z_j,F) \;,\hspace{5mm}
\mathrm{Dom}\big(D^{Z_j,2}\big) = \Omega^\bullet_{\mathrm{abs},D^2}(Z_j,F) \;,
\end{equation}
will also be denoted by $D^{Z_j}$ and $D^{Z_j,2}$.
In the sequel,
the boundary condition as above will be called absolute boundary condition.
By the Hodge theorem (cf. \cite[Thm. 1.1]{bm}),
we have an isomorphism
\begin{equation}
\label{eq-Hodge-DF}
H^\bullet(Z_j,F) \simeq \Ker\big(D^{Z_j,2}\big) \subseteq \Omega^\bullet(Z_j,F) \;.
\end{equation}
Let $h^{H^\bullet(Z_j,F)}$
be the Hermitian metric on $H^\bullet(Z_j,F)$
induced by the $L^2$-metric on $\Omega^\bullet(Z_j,F)$
via the identification \eqref{eq-Hodge-DF}.

We have a Mayer-Vietoris exact sequence of flat complex vector bundles over $S$,
\begin{equation}
\label{eq-mv-top}
\cdots\rightarrow
H^k(Z,F) \rightarrow
H^k(Z_1,F) \oplus H^k(Z_2,F) \rightarrow
H^k(IY,F) \rightarrow\cdots \;.
\end{equation}
Let $\mathscr{T}_\mathscr{H} \in Q^S$ be the torsion form
(\cite[Def. 2.20]{bl}, cf. \textsection \ref{subsec-tf})
associated with the exact sequence \eqref{eq-mv-top}
equipped with Hermitian metrics $\big(h^{H^\bullet(Z_j,F)}\big)_{j=0,1,2,3}$.
By \cite[Def. 2.22]{bl},
we have
\begin{equation}
d \mathscr{T}_\mathscr{H} = \sum_{j=0}^3
(-1)^{j(j-3)/2} f\big(\n^{H^\bullet(Z_j,F)},h^{H^\bullet(Z_j,F)}\big) \;.
\end{equation}

We put the absolute boundary condition on the boundary of $Z_j$
(see \eqref{eq-def-abs-bd} and \eqref{eq2-def-abs-bd}).
The analytic torsion form for fibration with boundary
equipped with absolute boundary condition
was constructed by Zhu \cite[Def. 2.18]{imrn-zhu}.
For $j=0,1,2,3$,
let $\mathscr{T}_j \in Q^S$
be the analytic torsion form associated with
\begin{equation}
\big(\pi_j,T^HM\big|_{M_j},g^{TZ}\big|_{M_j},F\big|_{M_j},\n^F\big|_{M_j},h^F\big|_{M_j}\big) \;.
\end{equation}
We denote
\begin{equation}
[\partial Z_j:Y] = \left\{
\begin{array}{rl}
0 & \text{if } j = 0;\\
1 & \text{if } j = 1,2;\\
2 & \text{if } j = 3.
\end{array} \right.
\end{equation}
In other words,
$\partial Z_j$ consists of $[\partial Z_j:Y]$ copies of $Y$.
Let $\n^{TY}$ be the Bismut connection on $TY$ associated with $T^HN$ and $g^{TY}$
\cite[Def. 1.6]{b}.
By \cite[Thm. 2.19]{imrn-zhu},
we have
\begin{align}
\label{eq-d-tf}
\begin{split}
d \mathscr{T}_j
& = \int_{Z_j} e(TZ,\n^{TZ}) f(\n^F,h^F)
+ \frac{[\partial Z_j:Y]}{2} \int_Y e(TY,\n^{TY}) f(\n^F,h^F) \\
& \hspace{5mm} - f\big(\n^{H^\bullet(Z_j,F)},h^{H^\bullet(Z_j,F)}\big) \;.
\end{split}
\end{align}

Let $Q^{S,0}\subseteq Q^S$ be the vector subspace of exact real even differential forms on $S$.

The main result in this paper is the following theorem.

\begin{thm}
\label{thm-gluing}
The following equation holds,
\begin{equation}
\label{eq-thm-gluing}
\mathscr{T}
-\mathscr{T}_1
-\mathscr{T}_2
+\mathscr{T}_3
+\mathscr{T}_\mathscr{H}
\in Q^{S,0} \;.
\end{equation}
\end{thm}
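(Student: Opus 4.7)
The plan is to stretch the cylindrical neighborhood $IN$ of the hypersurface $N$ to $IN_R := [-R,R]\times N$, take $R\to\infty$, and combine this adiabatic limit with a Witten-type deformation along the cylinder to handle the small eigenvalues that accumulate in this process. For each $R\geq 1$ let $Z_R$ and $Z_{j,R}$ denote the corresponding stretched fibers, and let $\mathscr{T}^R$ and $\mathscr{T}^R_j$ be their analytic torsion forms. A direct application of the anomaly formula for Bismut--Lott torsion, combined with \eqref{eq-d-tf} and with the differential equation for $\mathscr{T}_\mathscr{H}$, shows that the combination
\[
\Theta(R) := \mathscr{T}^R - \mathscr{T}^R_1 - \mathscr{T}^R_2 + \mathscr{T}^R_3 + \mathscr{T}_\mathscr{H}
\]
is $d$-closed, and that its class in $Q^S/Q^{S,0}$ is independent of $R$. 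Indeed, since $g^{TZ}$ is product near $N$ and $h^F$ is pulled back from $N$, the interior Euler-form contributions in \eqref{eq-d-tf} cancel via $Z = Z_1 \cup_Y Z_2$; the boundary corrections weighted by $[\partial Z_j:Y]$ are built so that the two copies of $Y$ appearing on the two sides of each gluing cancel in pairs; and the Mayer--Vietoris differential of $\mathscr{T}_\mathscr{H}$ absorbs the remaining $L^2$-metric contributions. Hence it suffices to evaluate $\lim_{R\to\infty}\Theta(R)$ in $Q^S/Q^{S,0}$.

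To control this limit I would introduce on each $Z_{j,R}$ a Witten-type deformation $d^{Z_{j,R}}_T = e^{-Tv}\, d^{Z_{j,R}}\, e^{Tv}$, where $v$ is a smooth function on $M$ that coincides with the signed normal coordinate on the stretched cylinder and is constant outside a neighborhood of it. The associated Hodge--de Rham operator $D^{Z_{j,R}}_T$ feeds into a deformed Bismut superconnection, from which a $T$-dependent torsion form $\mathscr{T}^R_{j,T}$ can be extracted; a $T$-variation formula identifies $\mathscr{T}^R_{j,T}$ with $\mathscr{T}^R_j$ modulo $Q^{S,0}$ and explicit transgression terms one can track. The crucial feature is that for $T$ large and fixed the deformation creates a spectral gap of the fiberwise Laplacian that is uniform in $R$, separating the spectrum of $D^{Z_R,2}_T$ into a finite-dimensional ``small'' part, whose total rank matches the alternating sum of Betti numbers in the Mayer--Vietoris sequence \eqref{eq-mv-top}, and a ``large'' part bounded away from zero.

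The next step is to identify, in the joint limit $R,T\to\infty$, the small-eigenvalue subcomplex together with its induced $L^2$-metric: following the strategy of \cite[\textsection 6]{bz2} and \cite[\textsection 6]{book-zh}, I expect the small subcomplex of $\bigl(\Omega^\bullet(Z_R,F), d^{Z_R}_T\bigr)$ to be canonically quasi-isomorphic to the Mayer--Vietoris complex \eqref{eq-mv-top}, with a metric that converges, after rescaling, to $\bigoplus_j h^{H^\bullet(Z_j,F)}$. The behavior of the Bismut--Lott torsion form under such a short exact sequence of finite-dimensional flat complexes is exactly what is captured by $\mathscr{T}_\mathscr{H}$, which will therefore arise as the small-eigenvalue contribution. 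Meanwhile, on the orthogonal complement of the small eigenvalues, a finite-propagation/localization argument near the three regions $Z_{1,R}$, $Z_{2,R}$ and $Z_{3,R}$, made possible by the uniform spectral gap produced by the Witten deformation, decomposes the large-eigenvalue contribution asymptotically as $\mathscr{T}^R_1 + \mathscr{T}^R_2 - \mathscr{T}^R_3$ modulo exact forms, so that $\lim_{R\to\infty}\Theta(R) \in Q^{S,0}$.

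The main obstacle is to carry out all of the above analysis uniformly in the two parameters $R$ and $T$: one must establish heat-kernel estimates for the deformed Bismut superconnection that are uniform both as $R\to\infty$ and as $T\to\infty$, on the small-time regime $s\to 0$ (where the local index contributions to the torsion form concentrate) and on the large-time regime $s\to\infty$ (where the small eigenvalues dominate). This requires blending the adiabatic-limit techniques of \cite{pzz} with the Witten-deformation analysis of \cite{bz,bz2,book-zh}, while the two parameters scale the underlying geometry in competing ways; matching the finite parts of the resulting asymptotic expansions of the analytic torsion forms, and verifying that the surviving contribution is precisely $\mathscr{T}_\mathscr{H}$, is the technical heart of the proof.
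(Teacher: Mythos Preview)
Your high-level strategy---stretch the cylinder, combine with a Witten deformation, use anomaly formulas to reduce to a limit, separate small and large eigenvalues, and match the small part to Mayer--Vietoris data---is exactly the architecture of the paper. But two concrete choices in your proposal diverge from the paper and would cause trouble as stated.

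\medskip

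\textbf{The Witten function.} You take $v$ to be the signed normal coordinate $u$ on the cylinder, constant outside; this has \emph{no interior critical point}, so Witten localization concentrates eigenforms only near $Z_{1,0}$ and $Z_{2,0}$ and produces nothing at the hypersurface $Y$ in the middle. The paper instead deforms by a self-indexed Morse function $f_T\colon[-1,1]\to\R$ (see \eqref{intro-finf}, \eqref{eq-def-finf}) with critical points at $-1,0,+1$. The local maximum at $0$ is precisely what generates, in the finite-dimensional model $(C^{\bullet,\bullet}_0,\partial)$ of \eqref{eq-intro-C}, the summand $C^{1,\bullet}_0 = H^\bullet(Y,F)$ carrying the middle piece of the Mayer--Vietoris sequence; the minima at $\pm 1$ give $C^{0,\bullet}_0=H^\bullet(Z_1,F)\oplus H^\bullet(Z_2,F)$. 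With your $v$ the small-eigenvalue complex has no a priori copy of $H^\bullet(Y,F)$, and it is unclear how $\mathscr{T}_3$ and $\mathscr{T}_\mathscr{H}$ would emerge in the limit.

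\medskip

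\textbf{The parameter coupling.} You assert that for $T$ large and \emph{fixed} the spectral gap is uniform in $R$. The paper does not do this and explains why (see the paragraph following \eqref{eq-intro-C2filtE}): with $T$ held fixed, infinitely many small eigenvalues accumulate as $R\to\infty$, so no finite-dimensional model can absorb them. The paper's remedy is to couple $T = R^\kappa$ for a fixed $\kappa\in\,]0,1/3[$; this coupling is used throughout the spectral-gap theorem (Theorem~\ref{thm-central-spgap}), the model-matching theorem (Theorem~\ref{thm-central-SRT}), and every estimate in \textsection\ref{sect-al-wd}--\ref{sect-mv}. The window for $\kappa$ is not decorative: the lower bound makes the Witten gap $\alpha\sqrt{T}$ diverge (so only finitely many eigenvalues stay small), while the upper bound $\kappa<1/3$ guarantees that the adiabatic error terms (of the form $R^{-1+\kappa/2}$, $R^{-1/2+\kappa/4}$, etc.) still decay. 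Your ``$T$ large fixed, then $R\to\infty$'' would need a separate argument, and the paper's discussion indicates it fails as stated.

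\medskip

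Beyond these two points, your outline is on the right track; the remaining ingredients you allude to (finite propagation for the large-eigenvalue part, a finite-dimensional gluing identity for the model torsions, and a separate comparison for $\mathscr{T}_\mathscr{H}$) are indeed what the paper supplies in Theorems~\ref{thm-model-tf-gluing}, \ref{thm-central-tf} and \ref{thm-central-mv}.
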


For any closed oriented submanifold $\mathfrak{O}\subseteq S$,
the following map
\begin{equation}
\label{eq-int-QS}
\int_\mathfrak{O}: \; Q^S \rightarrow \R
\end{equation}
may be viewed as a linear function on $Q^S/Q^{S,0}$.
By the Stokes' formula and the de Rham theorem (cf. \cite[Thm. 1.1 (d)]{bm}),
these linear functions separate the elements of $Q^S/Q^{S,0}$.
As a consequence,
to prove Theorem \ref{thm-gluing},
it is sufficient to show that
the integration of the left hand side of \eqref{eq-thm-gluing}
on each $\mathfrak{O}$ vanishes.
Hence,
without loss of generality,
we may and we will assume that
{\bf $S$ is a compact manifold without boundary} in the whole paper.

We note that in Theorem \ref{thm-gluing},
we only use the absolute boundary condition,
whereas the relative boundary condition appears
in the gluing formula for the RS-tosrion in \cite{bm, pzz}
and in the Zhu's formulation of the gluing formula for the BL-torsion \cite{imrn-zhu}.
In fact,
Theorem \ref{thm-gluing} implies Zhu's formula.
In order to keep this paper to a reasonable length,
this will be proved in a subsequent paper,
in which we will also discuss more precisely the link
between BL-torsion and IK-torsion resulting from the work of Igusa \cite{ig2} combined with \cite{ma} and Theorem \ref{thm-gluing}.

Now we briefly describe the strategy of our proof.
\\

\noindent\textbf{A two-parameter deformation and anomaly formulas.}
For $j=1,2$,
set $M_j'' = M_j \backslash IN$.
For $R\geqslant 1$,
set
\begin{equation}
\label{intro-INR-MR}
IN_R = [-R,R]\times N \;,\hspace{5mm}
M_R = M_1'' \cup_N IN_R \cup_N M_2'' \;,
\end{equation}
where we identify $\partial M_j'' = N$ with $\{(-1)^jR\}\times N\subseteq IN_R$ for $j=1,2$.
Then $M_R$ is a closed manifold.
In particular, $M_R\big|_{R=1} = M$.
We construct a smooth fibration
\begin{equation}
\label{intro-piR}
\pi_R : M_R \rightarrow S
\end{equation}
as follows:
$\pi_R\big|_{M_j''} = \pi\big|_{M_j''}$ for $j=1,2$
and $\pi_R\big|_{IN_R}$ being the composition
of the canonical projection $IN_R\rightarrow N$ and $\pi\big|_N: N \rightarrow S$.

For $R\geqslant 1$,
let $\phi_R: [-1,1] \rightarrow [-R,R]$ be a smooth bijective map such that
\begin{align}
\begin{split}
& \phi_R'(u)\geqslant 1 \;,\hspace{5mm}
\phi(-u) = -\phi(u) \hspace{2mm}\text{for } u\in[-1,1] \;,\\
& \phi_R(u) = u - R + 1 \hspace{2mm}\text{for } u\in[-1,-1/2] \;.
\end{split}
\end{align}
We construct a diffeomorphism $\varphi_R: M \rightarrow M_R$ as follows:
\begin{equation}
\label{intro-varphiR}
\varphi_R\big|_{M_1''\cup M_2''} = \mathrm{Id} \;,\hspace{5mm}
\varphi_R\big|_{IN}: (u,y) \mapsto (\phi_R(u),y) \;.
\end{equation}
Then the following diagram commutes
\begin{equation}
\xymatrix{
M \ar[r]^{\varphi_R} \ar[d]_{\pi} & M_R \ar[dl]^{\pi_R} \\
S \;. & }
\end{equation}

Let $Z_R$ be the fiber of $\pi_R$.
We construct a metric $g^{TZ_R}$ on $TZ_R$ as follows:
\begin{equation}
\label{intro-gTZR}
g^{TZ_R}\big|_{M_1'' \cup M_2''} =
g^{TZ}\big|_{M_1'' \cup M_2''} \;,\hspace{5mm}
g^{TZ_R}\big|_{IN_R} = du^2 + g^{TY} \;.
\end{equation}
Set $g^{TZ}_R = \varphi_R^*\big(g^{TZ_R}\big)$.
It is obvious that
$\big(\pi:M\rightarrow S,g^{TZ}_R\big)$ and $\big(\pi_R:M_R\rightarrow S,g^{TZ_R}\big)$ are isometric.
We will work on one or another depending on the context.

Let $f_\infty:[-1,1]\rightarrow\R$ be a self-indexed Morse function such that
\begin{align}
\label{intro-finf}
\begin{split}
& \big\{ u\in[-1,1] \;:\;f'_\infty(u)=0 \big\}
= \big\{-1,0,1\big\} \;,\\
& f_\infty(-1)=f_\infty(1)=0\;,\hspace{5mm}
f_\infty(0)=1 \;.
\end{split}
\end{align}
We can construct a family smooth function $\big(f_T: [-1,1]\rightarrow\R\big)_{T\geqslant 0}$ such that
\begin{equation}
f_T(u) = 0 \;,\hspace{5mm}
\text{for } |u\pm 1| \leqslant e^{-T^2} \;;\hspace{5mm}
f'_T(u) - f'_\infty(u) = \mathscr{O}\big(e^{-T^2}\big) \;.
\end{equation}
We will view $f_T$ as a smooth function on $M_R$ as follows:
\begin{equation}
\label{intro-fT-MR}
f_T\big|_{M\backslash IN} = 0 \;,\hspace{5mm}
f_T(u,y)=f_T(u/R) \hspace{2mm}\text{for } (u,y)\in IN_R \;.
\end{equation}
Then $\varphi_R^*(f_T)$ is a smooth function on $M$.
Set
\begin{equation}
h^F_{R,T} = \exp\big(-2T\varphi_R^*(f_T)\big)h^F \;.
\end{equation}

Replacing $\big(g^{TZ},h^F\big)$ by $\big(g^{TZ}_R,h^F_{R,T}\big)$
and proceeding in the same way as before,
we get analytic torsion forms
$\mathscr{T}_{j,R,T}\in Q^S$ ($j=0,1,2,3$)
and torsion form $\mathscr{T}_{\mathscr{H},R,T} \in Q^S$.
By anomaly formulas
\cite[Thm. 3.24]{bl},
\cite[Thm. 1.5]{israel-zhu},
the class
\begin{equation}
\label{eq-intro-lhs-gluing}
\big[\mathscr{T}_{R,T}
-\mathscr{T}_{1,R,T}
-\mathscr{T}_{2,R,T}
+\mathscr{T}_{3,R,T}
+\mathscr{T}_{\mathscr{H},R,T}\big] \in Q^S/Q^{S,0}
\end{equation}
is independent of $R,T$.
As a consequence,
to prove Theorem \ref{thm-gluing},
it is sufficient to show that
\eqref{eq-intro-lhs-gluing} tends to zero as $R,T\rightarrow+\infty$.
\\

\noindent\textbf{Spectral gap and Witten type theorem.}
For simplicity,
the pushforward $\varphi_{R,*}(F,\n^F,h^F)$
will also be denoted by $(F,\n^F,h^F)$.
We construct a family of Hermitian metrics on $F$ over $Z_R$ as follows:
\begin{equation}
\label{intro-hFT}
h^F_T = e^{-2Tf_T} h^F \;.
\end{equation}
Then we have $h^F_T = \varphi_{R,*}\big(h^F_{R,T}\big)$.
Replacing $\big(g^{TZ_j},h^F\big)$ by $\big(g^{TZ_j}_R,h^F_{R,T}\big)$
in the construction of the Hodge de Rham operator $D^{Z_j}$
and identifying $(Z_j,g^{TZ_j}_R)$ with $(Z_{j,R},g^{TZ_{j,R}})$ via the isometry $\varphi_R\big|_Z$,
we obtain $\wDjRT$ acting on $\Omega^\bullet_\mathrm{abs}(Z_{j,R},F)$.
The operator $\wDjRT$ is self-adjoint
with respect to the $L^2$-metric induced by $g^{TZ_{j,R}}$ and $h^F_T$.
For convenience,
we consider the conjugated operator
$\DjRT = e^{-Tf_T} \wDjRT e^{Tf_T}$,
which is self-adjoint with respect to
the $L^2$-metric induced by $g^{TZ_{j,R}}$ and $h^F$.

We fix a constant $\kappa\in]0,1/3[$.
The following result is crucial (see Theorem \ref{thm-central-spgap}):
there exists $\alpha>0$ such that
for $T = R^\kappa \gg 1$,
we have
\begin{equation}
\label{eq-intro-s-gap}
\Sp\big(R\DjRT\big) \subseteq
]-\infty,-\alpha \sqrt{T}] \cup
[-1,1] \cup
[\alpha \sqrt{T},+\infty[ \;,
\end{equation}
where $\Sp(\cdot)$ is the spectrum.
We call the eigenvalues of $R\DjRT$ lying in $[-1,1]$ (resp. out of $[-1,1]$)
small eigenvalues (resp. large eigenvalues).
Let $\mathscr{E}_{j,R,T}^{[-1,1]}\subseteq\Omega^\bullet(Z_{j,R},F)$ be
the eigenspace of $R\DjRT$ associated with small eigenvalues.
Set
\begin{equation}
\diffjRT = e^{-Tf_T} d^{Z_{j,R}} e^{Tf_T} \;.
\end{equation}
Since $\diffjRT$ commutes with $\DsjRT$,
we get a finite dimensional complex
\begin{equation}
\label{eq-intro-E}
\big(\mathscr{E}_{j,R,T}^{[-1,1]},\diffjRT\big) \;.
\end{equation}
We will show that $\dim \mathscr{E}_{j,R,T}^{[-1,1]}$ is independent of $R$ for $R \gg 1$
and explicitly construct a complex $(C^{\bullet,\bullet}_j,\partial)$ and show that
the complex \eqref{eq-intro-E} is `asymptotic' to $(C^{\bullet,\bullet}_j,\partial)$
as  $T = R^\kappa \rightarrow + \infty$ (see Theorem \ref{thm-central-SRT}).
For instance, taking $j=0$, we have
\begin{align}
\label{eq-intro-C}
\begin{split}
& C^{k,\bullet}_0 = 0 \hspace{5mm}\text{for } k\neq 0,1 \;,\\
& C^{0,\bullet}_0 = H^\bullet(Z_1,F) \oplus H^\bullet(Z_2,F) \;,\hspace{5mm}
C^{1,\bullet}_0 = H^\bullet(Y,F) = H^\bullet(IY,F)
\end{split}
\end{align}
with
$\partial: H^\bullet(Z_1,F) \oplus H^\bullet(Z_2,F) \rightarrow H^\bullet(IY,F)$
being the same map as in \eqref{eq-mv-top}.
This result may be viewed as a variation of the Witten deformation.
\\

\noindent\textbf{Finite propagation speed.}
By the finite propagation speed
for solutions of hyperbolic equations
(cf. \cite[\textsection 2.6, Thm. 6.1]{tay},
\cite[Appendix D.2]{mm}),
the contribution of large eigenvalues to \eqref{eq-intro-lhs-gluing}
tends to $0$ as $T = R^\kappa \rightarrow + \infty$.
On the other hand,
we can explicitly estimate the contribution of small eigenvalues
by applying our Witten type theorem (Theorem \ref{thm-central-SRT}).
These estimates will lead to the conclusion that
\eqref{eq-intro-lhs-gluing} tends to zero
as $T = R^\kappa\rightarrow + \infty$.
\\

If we take $T=0$ and $R\rightarrow + \infty$,
the situation on each fiber is exactly
what was studied in our earlier paper \cite{pzz}.
We owe readers an explanation for introducing the second parameter $T$.
Now we try to answer the following questions.
\begin{itemize}
\item[-] Why we cannot prove the gluing formula for analytic torsion forms
by simply taking $T=0$ and $R\rightarrow + \infty$ ?
\item[-] How does the second parameter $T$ improve the situation ?
\end{itemize}
Both in \cite{pzz} and in this paper,
the contribution of large eigenvalues can be controlled by means of the finite propagation speed method.
The difficulties come from the small eigenvalues.
In \cite{pzz},
the small eigenvalues are handled in a rather brutal way:
we estimate the contribution of each eigenvalue and take the sum of them.
Such a proof highly relies on the expression of the analytic torsion in terms of the zeta-function associated with the eigenvalues,
which does not hold for analytic torsion forms.
An alternative way is to build a model encoding the asymptotic limit of the small eigenvalues.
However,
with $T=0$ and $R\rightarrow + \infty$,
we find infinitely many small eigenvalues.
It seems hopeless to find a reasonable model.
This problem is solved by taking $T = R^\kappa\rightarrow + \infty$.
With the new parameter $T$ introduced,
there remain finitely many small eigenvalues (see \eqref{eq-intro-E}).
Moreover,
for $T = R^\kappa$ large enough,
the dimension of the eigenspace associated with small eigenvalues is a constant.
And a model $(C^{\bullet,\bullet}_j,\partial)$ is built accordingly (see \eqref{eq-intro-C}).

Now we explain the model in more detail.
Recall that the eigenspace associated with small eigenvalues is denoted by $\mathscr{E}_{j,R,T}^{[-1,1]}$
(see \eqref{eq-intro-E}).
Since we work with a fibration over $S$,
both $C^{\bullet,\bullet}_j$ and $\mathscr{E}_{j,R,T}^{[-1,1]}$ are vector bundles over $S$.
The vague word `model' should be interpreted as follows:
we construct a bijection (parameterized by $R,T$) between vector bundles $C^{\bullet,\bullet}_j \rightarrow \mathscr{E}_{j,R,T}^{[-1,1]}$,
which we denote by $\mathscr{S}_{j,R,T}$ in this paper (see Theorem \ref{thm-central-SRT}).
We denote
\begin{equation}F^1\mathscr{E}_{j,R,T}^{[-1,1]}
= \mathscr{S}_{j,R,T}(C^{1,\bullet}_j)
\subseteq \mathscr{E}_{j,R,T}^{[-1,1]} \;.
\end{equation}
Then we have induced bijections
\begin{equation}
\label{eq-intro-C2filtE}
C^{0,\bullet}_j \rightarrow \mathscr{E}_{j,R,T}^{[-1,1]}/F^1\mathscr{E}_{j,R,T}^{[-1,1]} \;,\hspace{5mm}
C^{1,\bullet}_j \rightarrow F^1\mathscr{E}_{j,R,T}^{[-1,1]} \;.
\end{equation}
There is a canonical way to equip $C^{\bullet,\bullet}_j$ and $\mathscr{E}_{j,R,T}^{[-1,1]}$ with superconnections (parameterized by $R,T$).
As $T = R^\kappa \rightarrow \infty$,
the maps in \eqref{eq-intro-C2filtE} tend to be compatible with the superconnections in certain sense.
Similar phenomena appeared in various works on the analytic torsion forms
(cf. \cite[\textsection 10, \textsection 11]{bg}, \cite[Thm. 2.9]{ma-ajm} and  \cite[Thm. 4.4]{ma}).

The situation becomes more clear once we pass to the cohomology.
We will construct a bijection (see \eqref{eq2-def-SHRT}, \eqref{eq-cor-SRTH-bij}, \eqref{eq-def-SRTj} and \eqref{eq-def-SRT3})
\begin{equation}
\label{eq-intro-modelH}
\big[\mathscr{S}_{j,R,T}^H\big]_T: H^\bullet\big(C^{\bullet,\bullet}_j,\partial\big) \rightarrow H^\bullet\big(\mathscr{E}_{j,R,T}^{[-1,1]},\diffjRT \big) \simeq H^\bullet(Z_j,F) \;,
\end{equation}
where the last isomorphism is induced by the Hodge theory.
Here $\big[\mathscr{S}_{j,R,T}^H\big]_T$ is not directly induced by $\mathscr{S}_{j,R,T}$.
Necessary modification is required since $\mathscr{S}_{j,R,T}$ is not a map between complexes (see \eqref{eq4-thm-central-SRT}).
Both $H^\bullet\big(C^{\bullet,\bullet}_j,\partial\big)$ and $H^\bullet(Z_j,F)$ are flat vector bundles over $S$.
But $\big[\mathscr{S}_{j,R,T}^H\big]_T$ is not necessarily a map between flat vector bundles.
To properly interpret the flatness of $\big[\mathscr{S}_{j,R,T}^H\big]_T$,
we consider the short exact sequence induced by \eqref{eq-intro-modelH},
\begin{equation}
0 \rightarrow H^1\big(C^{\bullet,\bullet}_j,\partial\big)
\rightarrow H^\bullet(Z_j,F)
\rightarrow H^0\big(C^{\bullet,\bullet}_j,\partial\big) \rightarrow 0 \;.
\end{equation}
This is indeed an exact sequence of flat vector bundles.
\\
%\begin{figure}[ht]
%\setlength{\unitlength}{0.65cm}
%%\centering
%\begin{picture}(7,7)
%\newsavebox{\gk}
%\savebox{\gk}(6,6)[bl]{
%\multiput(0,0)(0.5,0){12}{\line(1,0){0.25}}
%\multiput(6,0)(0,0.5){12}{\line(0,1){0.25}}
%\qbezier(0,0)(4,0)(5,1)
%\qbezier(6,6)(6,2)(5,1)
%}
%\put(0.75,0.75){\usebox{\gk}}
%\put(0,0){$(1,0)$}
%\put(6,0){$(+\infty,0)$}
%\put(6,7){$(+\infty,+\infty)$}
%\put(5,2){$\Gamma_\kappa$}
%\put(7,2){$\Gamma_0$}
%\end{picture}
%\caption{$\Gamma_0$ and $\Gamma_\kappa$}
%\label{fig2-intro}
%\end{figure}

This paper is organized as follows.

In \textsection \ref{sect-appendix},
we establish several technical results
concerning the finite dimensional Hodge theory and torsion forms.
We also recall the construction of analytic torsion forms.

In \textsection \ref{sect-model},
we build up a finite dimensional model of the problem addressed in this paper.

In \textsection \ref{sect-construction},
we state several intermediate results
and show that these results lead to Theorem \ref{thm-gluing}.
The proof of these results are delayed to
\textsection \ref{sect-al-wd}, \ref{sect-tf}, \ref{sect-mv}.

In \textsection \ref{sect-dim-one-wd},
we study a one-dimensional Witten type deformation.

In \textsection \ref{sect-al-wd},
we establish the crucial spectral gap \eqref{eq-intro-s-gap}
and study the asymptotics of the complex
$\big(\mathscr{E}_{j,R,T}^{[-1,1]},\diffjRT\big)$.

In  \textsection \ref{sect-tf},
we study the asymptotics of the analytic torsion forms $\mathscr{T}_{j,R,T}$
as $T = R^\kappa \rightarrow +\infty$.

In \textsection \ref{sect-mv},
we study the asymptotics of the torsion form $\mathscr{T}_{\mathscr{H},R,T}$
as $T = R^\kappa\rightarrow +\infty$.
\\

\noindent\textbf{Notations.}
Hereby we summarize some frequently used notations and conventions.

For a manifold $X$ and a flat complex vector bundle $(F,\n^F)$ over $X$,
we denote
\begin{equation}
\Omega^\bullet(X,F) = \smooth(X,\Lambda^\bullet(T^*X) \otimes F) \;,
\end{equation}
the vector space of smooth differential forms on $X$ with values in $F$.
The de Rham operator on $\Omega^\bullet(X,F)$ is defined as follows:
\begin{equation}
\label{eq-derham-op}
d^X : \omega \otimes s \mapsto d\omega \otimes s + (-1)^{\deg \omega}\omega\wedge\n^F s \;,\hspace{5mm}
\text{for } \omega\in\Omega^\bullet(X) \;,\; s\in\smooth(X,F) \;.
\end{equation}
Then $\big(\Omega^\bullet(X,F),d^X\big)$ is
the de Rham complex of smooth differential forms on $X$ with values in $F$.
Its cohomology is denoted by $H^\bullet(X,F)$.

For a submanifold $U\subseteq X$
and $\omega\in\Omega^\bullet(X,F)$,
we denote by
$\omega\big|_U \in  \smooth(U,\Lambda^\bullet(T^*X) \otimes F)$
its restriction on $U$.
Let $j: U\rightarrow X$ be the canonical embedding.
For $\omega\in\Omega^\bullet(X,F)$ closed,
we denote $[\omega]\big|_U = j^*[\omega] \in H^\bullet(U,F)$.
We remark that in general $\omega\big|_U \notin [\omega]\big|_U$,
unless $\dim U = \dim X$.

If $TX$ is equipped with a Riemannian metric $g^{TX}$,
and $F$ is equipped with a Hermitian metric $h^F$,
we denote by
$\big\lVert\cdot\big\rVert_X$ (resp. $\big\langle\cdot,\cdot\big\rangle_X$)
the $L^2$-norm (resp. $L^2$-product)
on $\Omega^\bullet(X,F)$.
More precisely,
for $\omega,\mu\in\Omega^\bullet(X,F)$,
we have
\begin{equation}
\label{eq-def-L2-product}
\big\langle\omega,\mu\big\rangle_X =
\int_X \big\langle\omega_x,\mu_x\big\rangle_{\Lambda^\bullet(T_x^*X)\otimes F_x}dv(x) \;,
\end{equation}
where
$\big\langle\cdot,\cdot\big\rangle_{\Lambda^\bullet(T_x^*X)\otimes F_x}$
is the scalar product on $\Lambda^\bullet(T_x^*X)\otimes F_x$ induced by $g^{TX}_x$ and $h^F_x$,
and $dv$ is the Riemannian volume form on $(X,g^{TX})$.
For a submanifold $U\subseteq X$,
we denote by $\big\lVert\cdot\big\rVert_U$ (resp. $\big\langle\cdot,\cdot\big\rangle_U$)
the $L^2$-norm (resp. $L^2$-product) on $\smooth(U,\Lambda^\bullet(T^*X) \otimes F)$
with respect to the induced Riemannian metric on $TU$.
For simplicity,
for $\omega,\mu\in\Omega^\bullet(X,F)$,
we will abuse the notations as follows,
\begin{equation}
\big\lVert\omega\big\rVert_U = \big\lVert\omega\big|_U\big\rVert_U\;,\hspace{5mm}
\big\langle\omega,\mu\big\rangle_U = \big\langle\omega\big|_U,\mu\big|_U\big\rangle_U \;.
\end{equation}

For any set $X$,
we denote by $\Id_X: X \rightarrow X$ the identity map.

For a self-adjoint operator $A$,
we denote by $\Sp(A)$ its spectrum.
\\

\noindent\textbf{Acknowledgments.}
We are grateful to Professor Xiaonan Ma for having raised the question which is solved in this paper.
Y. Z. thanks his advisor Professor Jean-Michel Bismut
for helpful discussions.
Y. Z. thanks Professor Weiping Zhang and Professor Huitao Feng
for their warm reception in Chern Institute of Mathematics.
J. Z. thanks Shanghai Center for Mathematical Sciences
for its working environment and support during the completion of this work.

Y. Z.  was supported by Japan Society for the Promotion of Science (JSPS)
KAKENHI grant No. JP17F17804
and Korea Institute for Advanced Study individual grant No. MG077401.
J. Z. was supported by National Natural Science Foundation of China (NNSFC)
grants No. 11601089 and No. 11571183.

\section{Preliminaries}
\label{sect-appendix}

This section is organized as follows.
In \textsection \ref{subsect-hodge},
we state the finite dimensional Hodge theory and
establish several estimates concerning the spectral decomposition of the Hodge Laplacian.
In \textsection \ref{subsec-tf},
we recall the definition of torsion forms
and establish several estimates concerning the comparison of torsion forms.
In \textsection \ref{subsec-atf},
we recall the definition of analytic torsion forms \cite{bl,imrn-zhu}.

\subsection{Finite dimensional Hodge theory and some estimates}
\label{subsect-hodge}

Let
\begin{equation}
(W^\bullet, \partial) : 0 \rightarrow W^0 \rightarrow \cdots \rightarrow W^n \rightarrow 0
\end{equation}
be a chain complex of finite dimensional complex vector spaces.
Let $H^\bullet(W^\bullet, \partial)$ be the cohomology of $(W^\bullet, \partial)$.
Let $h^{W^\bullet} = \bigoplus_{k=0}^n h^{W^k}$ be a Hermitian metric on $W^\bullet$.
Let $\partial^*$ be the adjoint of $\partial$.
Set
\begin{equation}
D = \partial + \partial^* \;,
\end{equation}
which is self-adjoint with respect to $h^{W^\bullet}$.

Now we state the finite dimensional Hodge theorem without proof.

\begin{thm}
The following orthogonal decomposition holds,
\begin{equation}
\label{eq-decomposition-hodge-dim-fini}
W^\bullet = \Ker D \oplus \im \partial \oplus \im \partial^* \;.
\end{equation}
We have
\begin{equation}
\label{eq-inj-hodge-dim-fini}
\Ker D = \Ker D^2 = \Ker \partial \cap \Ker \partial^* \subseteq W^\bullet \;.
\end{equation}
Moreover, the induced map
\begin{align}
\label{eq-hodge-dim-fini}
\begin{split}
\Ker D^2 & \rightarrow H^\bullet( W^\bullet, \partial ) \\
w & \mapsto [w]
\end{split}
\end{align}
is an isomorphism.
\end{thm}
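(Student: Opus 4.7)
The plan is to verify the three assertions of the theorem in sequence, each by elementary arguments adapted from the infinite-dimensional Hodge-theoretic picture. Throughout, I write $\langle\cdot,\cdot\rangle$ for the Hermitian product induced by $h^{W^\bullet}$ and use only the identities $\partial^2 = 0$, $(\partial^*)^2 = 0$, together with finite-dimensionality of $W^\bullet$.

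First I would establish \eqref{eq-inj-hodge-dim-fini}. The inclusion $\Ker\partial \cap \Ker\partial^* \subseteq \Ker D$ is immediate from $D = \partial + \partial^*$, and $\Ker D \subseteq \Ker D^2$ is tautological. For the remaining inclusion $\Ker D^2 \subseteq \Ker\partial \cap \Ker\partial^*$, I would use the identity $D^2 = \partial\partial^* + \partial^*\partial$ (the cross terms vanish since $\partial^2 = (\partial^*)^2 = 0$), so that for any $w$,
\begin{equation}
\langle D^2 w, w\rangle = \langle \partial w, \partial w\rangle + \langle \partial^* w, \partial^* w\rangle \;,
\end{equation}
whence $D^2 w = 0$ forces $\partial w = \partial^* w = 0$.

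Second, I would prove the orthogonal decomposition \eqref{eq-decomposition-hodge-dim-fini}. Pairwise orthogonality of the three summands is direct: $\langle \partial u, \partial^* v\rangle = \langle \partial^2 u, v\rangle = 0$, and for $w \in \Ker D$ one has $\partial w = \partial^* w = 0$, so $\langle w, \partial u\rangle = \langle \partial^* w, u\rangle = 0$ and similarly $w \perp \im\partial^*$. To see the sum exhausts $W^\bullet$, I would identify the orthogonal complement: since $(\im\partial)^\perp = \Ker\partial^*$ and $(\im\partial^*)^\perp = \Ker\partial$, any vector orthogonal to $\Ker D \oplus \im\partial \oplus \im\partial^*$ lies in $\Ker\partial \cap \Ker\partial^* = \Ker D$, and is also orthogonal to $\Ker D$, so vanishes.

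Third, I would verify that \eqref{eq-hodge-dim-fini} is an isomorphism. Well-definedness follows from $\Ker D \subseteq \Ker\partial$. For injectivity, if $w \in \Ker D$ satisfies $w = \partial v$, then $w \in \Ker D \cap \im\partial$, which is $0$ by the orthogonal decomposition. For surjectivity, given $w' \in \Ker\partial$, decompose $w' = w_0 + \partial u + \partial^* v$ with $w_0 \in \Ker D$; applying $\partial$ gives $\partial\partial^* v = 0$, so $\|\partial^* v\|^2 = \langle \partial\partial^* v, v\rangle = 0$, hence $\partial^* v = 0$ and $w' - w_0 = \partial u$, giving $[w'] = [w_0]$.

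No real obstacle is anticipated: each step is a routine consequence of the vanishing $\partial^2 = 0$, the formula $D^2 = \partial\partial^* + \partial^*\partial$, and the standard finite-dimensional fact $(\im T)^\perp = \Ker T^*$.
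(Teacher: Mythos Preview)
Your proof is correct. The paper explicitly states this finite-dimensional Hodge theorem \emph{without proof}, treating it as standard background, so there is no argument in the paper to compare against; your write-up supplies exactly the routine verification the authors omitted.
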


Let
\begin{equation}
\label{eq1-W-decomposition}
W^\bullet = \bigoplus_{\lambda \geqslant 0} W^\bullet_\lambda
\end{equation}
be the spectral decomposition with respect to $D^2$,
i.e., $D^2\big|_{W^\bullet_\lambda} = \lambda\Id$.
We denote
\begin{equation}
\label{eq2-W-decomposition}
{W^\bullet_\lambda}' = W^\bullet_\lambda \cap \Ker \partial \;, \hspace{5mm}
{W^\bullet_\lambda}'' = W^\bullet_\lambda \cap \Ker \partial^* \;.
\end{equation}
The following orthogonal decomposition holds for $\lambda>0$,
\begin{equation}
\label{eq3-W-decomposition}
W^\bullet_\lambda = {W^\bullet_\lambda}' \oplus {W^\bullet_\lambda}'' \;.
\end{equation}
Let $\big\lVert\cdot\big\rVert$ be the norm on $W^\bullet$ induced by $h^{W^\bullet}$.
For $w'\in {W^\bullet_\lambda}'$ and $w''\in {W^\bullet_\lambda}''$,
we have
\begin{equation}
\label{eq4-W-decomposition}
\big\lVert \partial^* w' \big\rVert^2 = \lambda \big\lVert w'\big\rVert^2 \;,\hspace{5mm}
\big\lVert \partial w'' \big\rVert^2 = \lambda \big\lVert w''\big\rVert^2 \;.
\end{equation}

For $\Lambda\subseteq\R$,
let
\begin{equation}
P^\Lambda : W^\bullet \rightarrow \bigoplus_{\lambda \in \Lambda} W^\bullet_\lambda
\end{equation}
be the orthogonal projection.

We state a naive estimate without proof.

\begin{prop}
\label{prop-proj-estimate}
Let $\alpha,\beta\geqslant 0$ and $w\in W^\bullet$.
If $\big\lVert D w \big\rVert^2 \leqslant \alpha\beta$,
then $\big\lVert w - P^{[0,\beta]}w\big\rVert^2 \leqslant \alpha$.
\end{prop}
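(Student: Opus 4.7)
The plan is to exploit the spectral decomposition \eqref{eq1-W-decomposition} of $D^2$ and compare the weighted sum $\bigl\lVert Dw \bigr\rVert^2$ with the tail sum $\bigl\lVert w - P^{[0,\beta]}w \bigr\rVert^2$. The statement is labelled \emph{naive} by the authors precisely because both quantities admit transparent expressions in the $D^2$-eigenbasis, so the desired inequality ends up being a Chebyshev-type bound on the high-frequency part of $w$.

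Concretely, I would first write $w = \sum_{\lambda \geqslant 0} w_\lambda$ with $w_\lambda \in W^\bullet_\lambda$, the decomposition being orthogonal. Since $D$ commutes with $D^2$, each eigenspace $W^\bullet_\lambda$ is preserved by $D$, so the vectors $\{Dw_\lambda\}_\lambda$ remain pairwise orthogonal. Together with the identity $\bigl\lVert Dw_\lambda \bigr\rVert^2 = \langle D^2 w_\lambda, w_\lambda \rangle = \lambda \bigl\lVert w_\lambda \bigr\rVert^2$, this yields $\bigl\lVert Dw \bigr\rVert^2 = \sum_{\lambda \geqslant 0} \lambda \bigl\lVert w_\lambda \bigr\rVert^2$, while by definition of $P^{[0,\beta]}$ one has $\bigl\lVert w - P^{[0,\beta]} w \bigr\rVert^2 = \sum_{\lambda > \beta} \bigl\lVert w_\lambda \bigr\rVert^2$.

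Second, assuming $\beta > 0$, I would conclude by the chain
\[
\beta \sum_{\lambda > \beta} \bigl\lVert w_\lambda \bigr\rVert^2
\leqslant \sum_{\lambda > \beta} \lambda \bigl\lVert w_\lambda \bigr\rVert^2
\leqslant \bigl\lVert Dw \bigr\rVert^2
\leqslant \alpha \beta,
\]
and dividing through by $\beta$. The edge case $\beta = 0$ is immediate: the hypothesis reduces to $Dw = 0$, so by \eqref{eq-inj-hodge-dim-fini} we have $w \in \Ker D = W^\bullet_0$, and therefore $P^{[0,0]}w = w$, making $\bigl\lVert w - P^{[0,0]} w \bigr\rVert^2 = 0 \leqslant \alpha$.

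There is no genuine obstacle here. The one point that deserves a moment's care is the equality $\bigl\lVert Dw_\lambda \bigr\rVert^2 = \lambda \bigl\lVert w_\lambda \bigr\rVert^2$, which relies on the invariance of each $D^2$-eigenspace under $D$ to ensure that the cross terms in the expansion of $\bigl\lVert Dw \bigr\rVert^2$ vanish; everything else is bookkeeping.
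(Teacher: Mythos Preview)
Your argument is correct. The paper actually omits the proof of this proposition entirely, calling it ``a naive estimate'' and moving on; your spectral Chebyshev-type bound is precisely the standard argument one expects and is consistent with the surrounding machinery (the decomposition \eqref{eq1-W-decomposition} and the identity $\lVert Dw_\lambda\rVert^2=\lambda\lVert w_\lambda\rVert^2$).
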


Now we establish a more sophisticated estimate.

\begin{prop}
\label{prop-proj-estimate-better}
Let $\alpha,\beta,\gamma\geqslant 0$ and $w,v\in W^\bullet$.
If
\begin{equation}
\label{eq1-prop-proj-estimate-better}
\big\lVert \partial w \big\rVert^2 \leqslant \alpha\gamma \;,\hspace{5mm}
\big\lVert \partial^* v \big\rVert^2 \leqslant \alpha\gamma \;,\hspace{5mm}
\big\lVert w-v \big\rVert^2 \leqslant \beta  \;,
\end{equation}
then
\begin{equation}
\label{eq2-prop-proj-estimate-better}
\big\lVert w - P^{[0,\gamma]}w\big\rVert^2 \leqslant 3\alpha+2\beta \;,\hspace{5mm}
\big\lVert v - P^{[0,\gamma]}v\big\rVert^2 \leqslant 3\alpha+2\beta \;.
\end{equation}
\end{prop}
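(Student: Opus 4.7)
The plan is to analyze everything through the spectral decomposition \eqref{eq1-W-decomposition}-\eqref{eq3-W-decomposition}. Write $w = \sum_\lambda w_\lambda$ with $w_\lambda \in W^\bullet_\lambda$, and for $\lambda > 0$ further decompose orthogonally $w_\lambda = w'_\lambda + w''_\lambda$ with $w'_\lambda \in {W^\bullet_\lambda}'$, $w''_\lambda \in {W^\bullet_\lambda}''$. Do the same for $v$. The quantity to estimate is
\begin{equation*}
\big\lVert w - P^{[0,\gamma]}w\big\rVert^2 = \sum_{\lambda > \gamma}\big\lVert w_\lambda\big\rVert^2
= \sum_{\lambda > \gamma}\big(\lVert w'_\lambda\rVert^2 + \lVert w''_\lambda\rVert^2\big),
\end{equation*}
so the task is to bound each of the two sums separately.

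The first sum is easy: since $w'_\lambda \in \Ker\partial$, we have $\partial w_\lambda = \partial w''_\lambda$, and \eqref{eq4-W-decomposition} gives $\lVert\partial w''_\lambda\rVert^2 = \lambda\lVert w''_\lambda\rVert^2$. Hence
\begin{equation*}
\alpha\gamma \geqslant \lVert\partial w\rVert^2 = \sum_{\lambda>0}\lambda\lVert w''_\lambda\rVert^2
\geqslant \gamma\sum_{\lambda > \gamma}\lVert w''_\lambda\rVert^2,
\end{equation*}
which gives $\sum_{\lambda > \gamma}\lVert w''_\lambda\rVert^2 \leqslant \alpha$. The analogous estimate applied to $v$, using the $\partial^* v$ hypothesis, yields $\sum_{\lambda > \gamma}\lVert v'_\lambda\rVert^2 \leqslant \alpha$.

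For the remaining sum $\sum_{\lambda > \gamma}\lVert w'_\lambda\rVert^2$, which the $\partial w$ hypothesis alone cannot control, the key idea is to transfer information from $v$ via the third hypothesis. Since the projections onto ${W^\bullet_\lambda}'$ are orthogonal, $\lVert w'_\lambda - v'_\lambda\rVert \leqslant \lVert w_\lambda - v_\lambda\rVert$, so
\begin{equation*}
\lVert w'_\lambda\rVert^2 \leqslant 2\lVert v'_\lambda\rVert^2 + 2\lVert w'_\lambda - v'_\lambda\rVert^2
\leqslant 2\lVert v'_\lambda\rVert^2 + 2\lVert w_\lambda - v_\lambda\rVert^2.
\end{equation*}
Summing over $\lambda > \gamma$ and using $\sum_\lambda \lVert w_\lambda - v_\lambda\rVert^2 = \lVert w-v\rVert^2 \leqslant \beta$, one obtains $\sum_{\lambda>\gamma}\lVert w'_\lambda\rVert^2 \leqslant 2\alpha + 2\beta$. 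Combined with the first sum this gives $\lVert w - P^{[0,\gamma]}w\rVert^2 \leqslant 3\alpha + 2\beta$. The bound for $v$ is symmetric: swap the roles, using the already proved $\sum_{\lambda>\gamma}\lVert w''_\lambda\rVert^2 \leqslant \alpha$ together with $\lVert v''_\lambda\rVert^2 \leqslant 2\lVert w''_\lambda\rVert^2 + 2\lVert v_\lambda - w_\lambda\rVert^2$.

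There is no real obstacle here; the only subtle point is noticing that the $\partial$-hypothesis only controls the $''$-component while the $\partial^*$-hypothesis only controls the $'$-component, so the two hypotheses must be combined via the comparison bound $\lVert w - v\rVert^2 \leqslant \beta$ in order to obtain a two-sided estimate. The factor $3$ in $3\alpha + 2\beta$ reflects precisely this: one $\alpha$ from the direct $\partial w$ (resp.\ $\partial^* v$) control, and two more $\alpha$'s coming from the cross term via the $2\lVert v'_\lambda\rVert^2$ (resp.\ $2\lVert w''_\lambda\rVert^2$) estimate.
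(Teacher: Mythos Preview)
Your proof is correct and follows essentially the same approach as the paper: spectral decomposition, splitting each $w_\lambda$ into its $'$ and $''$ parts, controlling $\sum_{\lambda>\gamma}\lVert w''_\lambda\rVert^2$ via $\lVert\partial w\rVert^2$ and $\sum_{\lambda>\gamma}\lVert v'_\lambda\rVert^2$ via $\lVert\partial^* v\rVert^2$, and then transferring the $'$-control from $v$ to $w$ through the bound $\lVert w-v\rVert^2\leqslant\beta$. The only cosmetic difference is that the paper applies the inequality $\lVert a+b\rVert^2\leqslant 2\lVert a\rVert^2+2\lVert b\rVert^2$ to the full sums $\sum_{\lambda>\gamma} w'_\lambda$ and $\sum_{\lambda>\gamma} v'_\lambda$ at once, while you work term-by-term; the two are equivalent by orthogonality.
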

\begin{proof}
Let
\begin{equation}
\label{eq1-pf-prop-proj-estimate-better}
w = \sum_\lambda w_\lambda \;,\hspace{5mm}
v = \sum_\lambda v_\lambda
\end{equation}
be the decompositions with respect to \eqref{eq1-W-decomposition},
i.e., $w_\lambda,v_\lambda\in W^\bullet_\lambda$.
For $\lambda>0$, let
\begin{equation}
\label{eq2-pf-prop-proj-estimate-better}
w_\lambda = w'_\lambda + w''_\lambda \;,\hspace{5mm}
v_\lambda = v'_\lambda + v''_\lambda
\end{equation}
be the decompositions with respect to \eqref{eq3-W-decomposition},
i.e., $w'_\lambda,v'_\lambda\in{W^\bullet_\lambda}'$
and $w''_\lambda,v''_\lambda\in{W^\bullet_\lambda}''$.

By \eqref{eq4-W-decomposition},
\eqref{eq1-pf-prop-proj-estimate-better} and
\eqref{eq2-pf-prop-proj-estimate-better},
we have
\begin{equation}
\label{eq3-pf-prop-proj-estimate-better}
\big\lVert \partial w \big\rVert^2 = \sum_{\lambda>0} \lambda \big\lVert w''_\lambda \big\rVert^2 \;,\hspace{5mm}
\big\lVert \partial^* v \big\rVert^2 = \sum_{\lambda>0} \lambda \big\lVert v'_\lambda \big\rVert^2 \;.
\end{equation}
By \eqref{eq1-prop-proj-estimate-better}
and \eqref{eq3-pf-prop-proj-estimate-better},
we have
\begin{equation}
\label{eq4-pf-prop-proj-estimate-better}
\Big\lVert \sum_{\lambda>\gamma}w''_\lambda \Big\rVert^2 =
\sum_{\lambda>\gamma} \big\lVert w''_\lambda \big\rVert^2
\leqslant \alpha  \;,\hspace{5mm}
\Big\lVert \sum_{\lambda>\gamma}v'_\lambda \Big\rVert^2 =
\sum_{\lambda>\gamma} \big\lVert v'_\lambda \big\rVert^2
\leqslant \alpha \;.
\end{equation}
On the other hand,
by the third inequality in \eqref{eq1-prop-proj-estimate-better},
\eqref{eq1-pf-prop-proj-estimate-better}
and \eqref{eq2-pf-prop-proj-estimate-better},
we have
\begin{equation}
\label{eq5-pf-prop-proj-estimate-better}
\Big\lVert \sum_{\lambda>\gamma}w'_\lambda - \sum_{\lambda>\gamma}v'_\lambda \Big\rVert^2
\leqslant \beta \;.
\end{equation}
By the first identity in \eqref{eq2-pf-prop-proj-estimate-better},
\eqref{eq4-pf-prop-proj-estimate-better}
and \eqref{eq5-pf-prop-proj-estimate-better},
we have
\begin{align}
\begin{split}
\Big\lVert \sum_{\lambda>\gamma}w_\lambda \Big\rVert^2
& = \Big\lVert \sum_{\lambda>\gamma}w''_\lambda \Big\rVert^2 +
\Big\lVert \sum_{\lambda>\gamma}w'_\lambda \Big\rVert^2 \\
& \leqslant \Big\lVert \sum_{\lambda>\gamma}w''_\lambda \Big\rVert^2 +
2 \bigg( \Big\lVert \sum_{\lambda>\gamma}v'_\lambda \Big\rVert^2 +
\Big\lVert \sum_{\lambda>\gamma}w'_\lambda - \sum_{\lambda>\gamma}v'_\lambda \Big\rVert^2 \bigg)
\leqslant 3\alpha+2\beta \;,
\end{split}
\end{align}
which implies the first inequality in \eqref{eq2-prop-proj-estimate-better}.
The second inequality in \eqref{eq2-prop-proj-estimate-better}
can be proved in the same way.
This completes the proof of Proposition \ref{prop-proj-estimate-better}.
\end{proof}

For $w\in W^\bullet$,
we define $\big\lVert w \big\rVert_1^2 = \big\lVert w \big\rVert^2 + \big\lVert Dw \big\rVert^2$.

\begin{cor}
\label{cor-proj-estimate-better}
Propositions \ref{prop-proj-estimate}, \ref{prop-proj-estimate-better}
hold with $\big\lVert\cdot\big\rVert$ replaced by $\big\lVert\cdot\big\rVert_1$.
\end{cor}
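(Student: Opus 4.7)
The plan is to reduce the corollary to the propositions themselves by observing that the bilinear form $\langle w,v\rangle_1 := \langle w,v\rangle + \langle Dw,Dv\rangle$ is a Hermitian inner product on $W^\bullet$ with respect to which all the structural ingredients used in the proofs of Propositions \ref{prop-proj-estimate} and \ref{prop-proj-estimate-better} behave exactly as before, up to a harmless rescaling on each eigenspace of $D^2$.

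The first step is to verify three compatibility facts. First, since $D^2$ commutes with itself, the eigenspaces $W_\lambda^\bullet$ of $D^2$ are $D$-invariant, so they remain mutually orthogonal with respect to $\langle\cdot,\cdot\rangle_1$; in fact, for $w_\lambda\in W_\lambda^\bullet$ we have the exact scaling
\begin{equation}
\big\lVert w_\lambda \big\rVert_1^2 = (1+\lambda)\big\lVert w_\lambda \big\rVert^2.
\end{equation}
Second, since $\partial$ and $\partial^*$ commute with $D^2$, they preserve each $W_\lambda^\bullet$, and hence the Hodge decomposition $W_\lambda^\bullet={W_\lambda^\bullet}'\oplus{W_\lambda^\bullet}''$ of \eqref{eq3-W-decomposition} is still orthogonal in $\langle\cdot,\cdot\rangle_1$ (the two subspaces already being orthogonal in $\langle\cdot,\cdot\rangle$ and sharing the same scaling factor $1+\lambda$). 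Third, for $w'\in{W_\lambda^\bullet}'$ and $w''\in{W_\lambda^\bullet}''$, the identities in \eqref{eq4-W-decomposition} remain valid for $\lVert\cdot\rVert_1$:
\begin{equation}
\big\lVert \partial^* w' \big\rVert_1^2 = (1+\lambda)\big\lVert \partial^* w' \big\rVert^2
= \lambda(1+\lambda)\big\lVert w' \big\rVert^2 = \lambda \big\lVert w' \big\rVert_1^2,
\end{equation}
and similarly $\lVert \partial w''\rVert_1^2=\lambda \lVert w''\rVert_1^2$, because $\partial^* w'$ and $\partial w''$ belong to $W_\lambda^\bullet$.

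Granted these three facts, the proofs of Propositions \ref{prop-proj-estimate} and \ref{prop-proj-estimate-better} now go through verbatim with $\lVert\cdot\rVert$ replaced throughout by $\lVert\cdot\rVert_1$: the spectral decomposition \eqref{eq1-W-decomposition} and the refined decomposition \eqref{eq2-W-decomposition}--\eqref{eq3-W-decomposition} stay orthogonal, the projection $P^\Lambda$ is unchanged, and the key quantitative step in the proof of Proposition \ref{prop-proj-estimate-better} (the passage from \eqref{eq3-pf-prop-proj-estimate-better} to \eqref{eq4-pf-prop-proj-estimate-better}, which hinges only on $\lVert\partial w\rVert^2=\sum_\lambda \lambda \lVert w''_\lambda\rVert^2$) survives intact once each $\lVert w_\lambda\rVert^2$ is rescaled by $1+\lambda$.

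There is no real obstacle here; the only point that deserves a line of care is confirming that $\partial^*$, which was defined as the adjoint of $\partial$ with respect to $\langle\cdot,\cdot\rangle$, remains its adjoint with respect to $\langle\cdot,\cdot\rangle_1$ — equivalently, that $D$ stays self-adjoint for $\langle\cdot,\cdot\rangle_1$. Both are immediate from $D^2 \partial = \partial D^2$ and $D^2\partial^* = \partial^* D^2$, which give $\langle Dw,Dv\rangle_1$-compatibility of $\partial,\partial^*$. With this observation in hand, the corollary reduces to an application of the two propositions to the new inner product.
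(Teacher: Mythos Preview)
Your proof is correct and follows the same approach as the paper. The paper's proof is a two-sentence sketch (``All the properties concerning $\lVert\cdot\rVert$ hold for $\lVert\cdot\rVert_1$. In particular, the adjoint of $\partial$ with respect to $\lVert\cdot\rVert_1$ is still $\partial^*$.''), and your argument simply fills in the details behind that sketch---the eigenspace scaling, the preservation of the orthogonal Hodge splitting, and the verification that $\partial^*$ remains the adjoint under $\langle\cdot,\cdot\rangle_1$.
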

\begin{proof}
All the properties concerning $\big\lVert\cdot\big\rVert$
hold for $\big\lVert\cdot\big\rVert_1$.
In particular,
the adjoint of $\partial$
with respect to $\big\lVert\cdot\big\rVert_1$
is still $\partial^*$.
\end{proof}

\subsection{Torsion forms and some estimates}
\label{subsec-tf}

Let $S$ be a compact manifold without boundary.

Let
\begin{equation}
(W^\bullet,\partial) : 0 \rightarrow W^0 \rightarrow \cdots \rightarrow W^n \rightarrow 0
\end{equation}
be a chain complex of complex vector bundles over $S$,
i.e., $\partial : W^\bullet \rightarrow W^{\bullet+1}$ is a linear map between complex vector bundles
satisfying
\begin{equation}
\label{eq-partial-sq}
\partial^2=0 \;.
\end{equation}
We extend the action of $\partial$ to $\Omega^\bullet(S,W^\bullet)$ as follows:
for $\tau\in\Omega^k(S)$ and $w\in\smooth(S,W^\bullet)$,
\begin{equation}
\partial \big(\tau \otimes w\big) =
(-1)^k \tau \otimes \partial w \;.
\end{equation}

Let $\n^{W^\bullet} = \bigoplus_{k=0}^n \n^{W^k}$ be a connection on $W^\bullet$.
We extend the action of $\n^{W^\bullet}$ to $\Omega^\bullet(S,W^\bullet)$
in the same way as in \eqref{eq-derham-op}.
We assume that $\n^{W^\bullet}$ is a flat connection.
Equivalently,
we assume that
\begin{equation}
\label{eq-nW-sq}
\big(\n^{W^\bullet}\big)^2 = 0 \;.
\end{equation}

Now we assume that $(W^\bullet,\n^{W^\bullet},\partial)$
is a chain complex of flat complex vector bundles.
Equivalently,
we assume that
\begin{equation}
\label{eq-partial-nW}
\partial \n^{W^\bullet} + \n^{W^\bullet} \partial = 0 \;.
\end{equation}

By \eqref{eq-partial-nW},
$\partial$ is covariantly constant with respect to the connection $\n^{W^\bullet}$.
Thus there is a $\Z$-graded complex vector bundle $H^\bullet$ over $S$
whose fiber over $s\in S$ is the cohomology of $\big(W^\bullet_s,\partial\big|_{W^\bullet_s}\big)$
(see \cite[p. 307]{bl}).
Let $\n^{H^\bullet}$ be the connection on $H^\bullet$ induced by $\n^{W^\bullet}$ in the sense of \cite[Def. 2.4]{bl}.
By \cite[Prop. 2.5]{bl},
$(H^\bullet,\n^{H^\bullet})$ is a $\Z$-graded flat complex vector bundle.

Recall that $f(z)=ze^{z^2}$.
Let $f\big(\n^{W^\bullet}\big), f\big(\n^{H^\bullet}\big) \in H^\mathrm{odd}(S)$
be as in \eqref{intro-fsum}.
By \cite[Thm. 2.19]{bl},
we have
\begin{equation}
f\big(\n^{W^\bullet}\big) = f\big(\n^{H^\bullet}\big) \;.
\end{equation}

Set
\begin{equation}
\label{eq-Aprimprim}
A'' = \partial + \n^{W^\bullet} \;.
\end{equation}
By \eqref{eq-partial-sq},
\eqref{eq-nW-sq},
\eqref{eq-partial-nW}
and \eqref{eq-Aprimprim},
we have
\begin{equation}
\big( A'' \big)^2 = 0 \;,
\end{equation}
i.e., $A''$ is a flat superconnection in the sense of \cite[\textsection 1]{bl}.

Let $h^{W^\bullet} = \bigoplus_{k=0}^n h^{W^k}$ be a Hermitian metric on $W^\bullet$.
Let $\omega^{W^\bullet} \in \Omega^1\big(S,\End(W^\bullet)\big)$
be as in \eqref{intro-def-omegaF} with $(\n^F,h^F)$ replaced by $(\n^{W^\bullet},h^{W^\bullet})$,
i.e.,
\begin{equation}
\label{eq-def-omegaW}
\omega^{W^\bullet} =
\big(h^{W^\bullet}\big)^{-1} \n^{W^\bullet} h^{W^\bullet} \;.
\end{equation}
Let $\partial^*$ be the adjoint of $\partial$.
Let $A'$ be the adjoint superconnection of $A''$ in the sense of \cite[\textsection 1]{bl}.
By \cite[\textsection 2(b)]{bl},
we have
\begin{equation}
A' = \partial^* + \n^{W^\bullet} + \omega^{W^\bullet} \;.
\end{equation}
Set
\begin{equation}
X
= \frac{1}{2} (A' - A'')
= \frac{1}{2}(\partial^*-\partial) + \frac{1}{2} \omega^{W^\bullet}
\in \Omega^\bullet\big(S,\End(W^\bullet)\big) \;.
\end{equation}

Let $N^{W^\bullet}$ be the number operator on $W^\bullet$,
i.e., $N^{W^\bullet}\big|_{W^k} = k\Id$.
For $t>0$,
set $h^{W^\bullet}_t = t^{N^{W^\bullet}}h^{W^\bullet}$.
Let $X_t$ be the operator $X$ associated with $h^{W^\bullet}_t$.
We have
\begin{equation}
\label{eq-def-Xt}
X_t = \frac{1}{2}(t \partial^*-\partial) + \frac{1}{2} \omega^{W^\bullet} \;.
\end{equation}
We define $\varphi: \Omega^\mathrm{even}(S) \rightarrow \Omega^\mathrm{even}(S)$ as follows,
\begin{equation}
\label{eq-def-varphi}
\varphi \omega = (2\pi i)^{-k }\omega \;,\hspace{5mm} \text{for }\omega\in\Omega^{2k}(S) \;.
\end{equation}
We remark that $f'(z) = (1+2z^2) e^{z^2}$.
Set
\begin{equation}
\label{eq-def-fhat}
f^\wedge(A'',h^{W^\bullet}_t) =
\varphi \tr \left[(-1)^{N^{W^\bullet}}\frac{N^{W^\bullet}}{2} f'(X_t) \right]
\in \Omega^\mathrm{even}(S) \;.
\end{equation}
Set
\begin{equation}
\mathfrak{X}_t = t^{N^{W^\bullet}/2} X_t t^{-N^{W^\bullet}/2} =
\frac{\sqrt{t}}{2}(\partial^*-\partial) + \frac{1}{2} \omega^{W^\bullet} \;.
\end{equation}
We have an alternative definition,
\begin{equation}
\label{eq2-def-fhat}
f^\wedge(A'',h^{W^\bullet}_t) =
\varphi \tr \left[(-1)^{N^{W^\bullet}}\frac{N^{W^\bullet}}{2} f'(\mathfrak{X}_t) \right] \;.
\end{equation}

We denote
\begin{equation}
\chi'(W^\bullet) = \sum_k (-1)^kk \,\mathrm{rk}\big(W^k\big) \;,\hspace{5mm}
\chi'(H^\bullet) = \sum_k (-1)^kk \,\mathrm{rk}\big(H^k\big) \;.
\end{equation}

The following definition is due to Bismut and Lott \cite[Def. 2.20]{bl}.

\begin{defn}
\label{def-tf}
The torsion form associated with
$(\n^{W^\bullet},\partial,h^{W^\bullet})$ is defined by
\begin{align}
\label{eq-def-torsion-form}
\begin{split}
\mathscr{T}\big(\n^{W^\bullet},\partial,h^{W^\bullet}\big)
& =  - \int_0^{+\infty} \bigg[ f^\wedge(A'',h^{W^\bullet}_t) - \frac{1}{2}\chi'(H^\bullet) \\
& \hspace{25mm} - \frac{1}{2}\big(\chi'(W^\bullet) - \chi'(H^\bullet)\big) f'\Big(\frac{i\sqrt{t}}{2}\Big) \bigg] \frac{dt}{t} \;.
\end{split}
\end{align}
By \cite[Thm. 2.13, Prop. 2.18]{bl},
the integrand in \eqref{eq-def-torsion-form} is integrable.
\end{defn}

Let $h^{H^\bullet}$ be the Hermitian metric on $H^\bullet$ induced by $h^{W^\bullet}$
via the identification $H^\bullet \simeq \Ker\big((\partial+\partial^*)^2\big) \hookrightarrow W^\bullet$
defined by \eqref{eq-hodge-dim-fini}.
Let $f\big(\n^{W^\bullet},h^{W^\bullet}\big), f\big(\n^{H^\bullet},h^{H^\bullet}\big) \in \Omega^\mathrm{odd}(S)$
be as in \eqref{intro-fsum}.
By \cite[Thm. 2.22]{bl},
we have
\begin{equation}
d \mathscr{T}\big(\n^{W^\bullet},\partial,h^{W^\bullet}\big)
= f\big(\n^{W^\bullet},h^{W^\bullet}\big) - f\big(\n^{H^\bullet},h^{H^\bullet}\big) \;.
\end{equation}

Let
$(\widetilde{W}^\bullet = \bigoplus_{k=0}^n \widetilde{W}^k,\n^{\widetilde{W}^\bullet},\widetilde{\partial})$
be another chain complex of flat complex vector bundles over $S$.
Let $\widetilde{H}^\bullet$ be its cohomology.
We assume that for $k=0,\cdots,n$,
\begin{equation}
\label{eq-assume-rk}
\mathrm{rk}\big(W^k\big) = \mathrm{rk}\big(\widetilde{W}^k\big) \;,\hspace{5mm}
\mathrm{rk}\big(H^k\big) = \mathrm{rk}\big(\widetilde{H}^k\big) \;.
\end{equation}
Let $h^{\widetilde{W}^\bullet} = \bigoplus_{k=0}^n h^{\widetilde{W}^k}$
be a Hermitian metric on $\widetilde{W}^\bullet$.

Let $g^{TS}$ be a Riemannian metric on $TS$.
Let $\big|\cdot\big|$ be the norm on $TS$ induced by $g^{TS}$.
For $\omega\in\Omega^\bullet(S)$,
we denote
\begin{equation}
\big| \omega \big| =
\sup_{k\in\N,\; x\in S,\; v_1,\cdots,v_k\in T_xS,\; |v_1|,\cdots,|v_k|\leqslant 1}
\big| \omega(v_1,\cdots,v_k) \big| \;.
\end{equation}
For an operator $A$ on $W^\bullet$,
we denote by $\big\lVert A \big\rVert$
its operator norm with respect to $h^{W^\bullet}$.
For $A\in\Omega^\bullet(S,\mathrm{End}(W^\bullet))$,
we denote
\begin{equation}
\big\lVert A \big\rVert =
\sup_{k\in\N,\; x\in S,\; v_1,\cdots,v_k\in T_xS,\; |v_1|,\cdots,|v_k|\leqslant 1}
\big\lVert A(v_1,\cdots,v_k) \big\rVert \;.
\end{equation}

Let $0<\lambda_\mathrm{min}\leqslant\lambda_\mathrm{max}$ such that
\begin{equation}
\label{eq-inclu-sp}
\Sp\big((\partial^*+\partial)^2\big) \subseteq
\{0\} \cup [\lambda^2_\mathrm{min},\lambda^2_\mathrm{max}] \;.
\end{equation}
Let $l>0$ such that
\begin{equation}
\label{eq-estim-omega}
\big\lVert \omega^{W^\bullet} \big\rVert \leqslant l \;.
\end{equation}

\begin{prop}
\label{prop-comparison-tf}
There exists a function
$C: \N \times \N \times \R_+ \times \R_+ \rightarrow \R_+$
such that for any $(W^\bullet,\n^{W^\bullet},\partial,h^{W^\bullet})$,
$(\widetilde{W}^\bullet,\n^{\widetilde{W}^\bullet},\widetilde{\partial},h^{\widetilde{W}^\bullet})$,
$\lambda_\mathrm{min}$, $\lambda_\mathrm{max}$ and $l$ as above,
if there exist an isomorphism of graded complex vector bundles
$\alpha: W^\bullet\rightarrow \widetilde{W}^\bullet$
and $0< \delta < 13^{-1}\lambda_\mathrm{min}\lambda_\mathrm{max}^{-1}$
satisfying
\begin{equation}
\label{eq1-prop-comparison-tf}
\big\lVert \alpha^*\widetilde{\partial} - \partial \big\rVert
\leqslant \lambda_\mathrm{min}\delta \;,\hspace{5mm}
- \delta h^{W^\bullet} \leqslant
\alpha^*h^{\widetilde{W}^\bullet} - h^{W^\bullet}
\leqslant \delta h^{W^\bullet} \;,\hspace{5mm}
\big\lVert \alpha^*\omega^{\widetilde{W}^\bullet} - \omega^{W^\bullet} \big\rVert
\leqslant \delta \;,
\end{equation}
then
\begin{equation}
\label{eq-prop-comparison-tf}
\Big| \mathscr{T}\big(\n^{W^\bullet},\partial,h^{W^\bullet}\big)
- \mathscr{T}\big(\n^{\widetilde{W}^\bullet},\widetilde{\partial},h^{\widetilde{W}^\bullet}\big) \Big|
\leqslant C\big(\dim S, \mathrm{rk}(W^\bullet),l,\lambda_\mathrm{max}/\lambda_\mathrm{min}\big) \delta^{1/2} \;.
\end{equation}
\end{prop}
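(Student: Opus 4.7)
The plan is to transport everything onto $W^\bullet$ via $\alpha$, so that we must compare $\mathscr{T}(\n^{W^\bullet}, \partial, h^{W^\bullet})$ with the torsion form built from the perturbed data $(\partial', h', \omega')$ on the same graded bundle $W^\bullet$, where $\|\partial' - \partial\| \leq \lambda_\mathrm{min}\delta$, $(1-\delta)h^{W^\bullet} \leq h' \leq (1+\delta)h^{W^\bullet}$, and $\|\omega' - \omega^{W^\bullet}\| \leq \delta$. The assumption $\delta < 13^{-1}\lambda_\mathrm{min}\lambda_\mathrm{max}^{-1}$ ensures, by Weyl-type spectral perturbation applied to the Hodge Laplacian (using Proposition~\ref{prop-proj-estimate-better} or Corollary~\ref{cor-proj-estimate-better} as a quantitative input), that $\Sp((D')^2) \subseteq \{0\} \cup [\lambda_\mathrm{min}^2/2, 2\lambda_\mathrm{max}^2]$ and that $\mathrm{rk}\,\Ker(D'^2) = \mathrm{rk}\,\Ker(D^2) = \mathrm{rk}(H^\bullet)$; together with the hypothesis \eqref{eq-assume-rk}, the constants subtracted inside the integrand of \eqref{eq-def-torsion-form} then agree for the two complexes, so the difference of torsion forms reduces to a single integral over $t \in (0, +\infty)$ of the difference of $f^\wedge$-integrands.

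I would split $(0,+\infty)$ into three regions $(0, a]$, $[a, T_0]$, $[T_0, +\infty)$, with $a = \delta^{1/2}$ and $T_0 = C\log(1/\delta)/\lambda_\mathrm{min}^2$ for a large enough constant $C$, and estimate each separately. On $(0,a]$, apply the small-$t$ estimate from \cite[Thm.~2.13]{bl}: after subtracting $\frac{1}{2}\chi'(H^\bullet) + \frac{1}{2}(\chi'(W^\bullet) - \chi'(H^\bullet))f'(i\sqrt{t}/2)$, each integrand is $O(t)$ uniformly, with implicit constants depending polynomially on $l$, $\lambda_\mathrm{max}$, and $\mathrm{rk}(W^\bullet)$; so each of the two integrals over $(0,a]$ is $O(a) = O(\delta^{1/2})$, and the same bound holds for the difference. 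On $[T_0, +\infty)$, the spectral gap guarantees that the degree-zero part of $\mathfrak{X}_t^2$, which equals $-tD^2/4$, is bounded above by $-ct\lambda_\mathrm{min}^2$ on $(\Ker D)^\perp$; a Duhamel expansion of $e^{\mathfrak{X}_t^2}$ then gives integrands bounded by $Ce^{-ct\lambda_\mathrm{min}^2}$, and the two tails are $O(e^{-cT_0\lambda_\mathrm{min}^2})$, which for the chosen $T_0$ with $C$ large is $o(\delta^{1/2})$.

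The core of the argument, and the main obstacle, is the middle region $[a, T_0]$. Here I would use a Duhamel-type expansion,
\begin{equation*}
f'(\mathfrak{X}_t) - f'(\mathfrak{X}'_t) = \int_0^1 \frac{d}{du}f'\big(\mathfrak{X}_t + u(\mathfrak{X}'_t - \mathfrak{X}_t)\big)\, du,
\end{equation*}
expanded via the Volterra formula to handle the non-commutativity inherent to $f'(z) = (1+2z^2)e^{z^2}$, combined with the pointwise bound $\|\mathfrak{X}_t - \mathfrak{X}'_t\| \leq C(1 + \sqrt{t}\,\lambda_\mathrm{max})\delta$ obtained by adding the three perturbation contributions (including the effect of the metric change on the formal adjoint $\partial^*$). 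Crucially, since the degree-zero part of $\mathfrak{X}_t$ is the skew-adjoint operator $\frac{\sqrt{t}}{2}(\partial^* - \partial)$ with non-positive square $-tD^2/4$, the exponential $e^{\mathfrak{X}_t^2}$ does not blow up as $t$ grows, and the resulting estimate on $f'(\mathfrak{X}_t) - f'(\mathfrak{X}'_t)$ takes the form $\delta \cdot P(t)$ for an explicit polynomial $P$ whose coefficients depend on $\dim S$, $\mathrm{rk}(W^\bullet)$, $l$, and $\lambda_\mathrm{max}$. Integrating against $dt/t$ over $[a, T_0]$ yields $O(\delta \cdot P(T_0) \cdot \log(T_0/a)) = O(\delta \cdot \mathrm{polylog}(1/\delta)) = o(\delta^{1/2})$. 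Summing the three regional contributions gives the required bound $|\mathscr{T} - \mathscr{T}'| \leq C\delta^{1/2}$, with the dependence of $C$ on $\dim S$, $\mathrm{rk}(W^\bullet)$, $l$, and the ratio $\lambda_\mathrm{max}/\lambda_\mathrm{min}$ tracked explicitly through every intermediate estimate.
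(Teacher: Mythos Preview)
Your overall strategy---transport via $\alpha$, control the spectral gap of the perturbed Laplacian, and split the $t$-integral into three regions---matches the paper's. But there is a genuine gap in your large-$t$ region.

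You claim that a Duhamel expansion of $e^{\mathfrak{X}_t^2}$ gives integrands bounded by $Ce^{-ct\lambda_\mathrm{min}^2}$. This is false in general: the decay of $f^\wedge(A'',h^{W^\bullet}_t)-\tfrac12\chi'(H^\bullet)$ is only $O(t^{-1/2})$, not exponential (this is exactly \cite[Prop.~2.18]{bl}, used in \eqref{eq2-pf-prop-comparison-tf}). The reason is the off-diagonal coupling between $\Ker D$ and $(\Ker D)^\perp$ induced by $\omega^{W^\bullet}$: in the Duhamel expansion of $e^{\mathfrak{X}_t^2}$ around $e^{-tD^2/4}$, the first-order term contains $\sqrt{t}\,(AB+BA)$ with $A=\tfrac12(\partial^*-\partial)$ and $B=\tfrac12\omega^{W^\bullet}$, and the $P$--$P^\perp$ cross contributions, once integrated against $e^{-(1-s)tD^2/4}$ and traced, produce $O(t^{-1/2})$ corrections, not exponentially small ones. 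So your choice $T_0=C\log(1/\delta)/\lambda_\mathrm{min}^2$ does not make the tail small.

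The paper fixes this by taking the three regions to be $(0,\delta]$, $[\delta,\delta^{-1}]$, $[\delta^{-1},\infty)$ (after normalising $\lambda_\mathrm{min}=1$): the tail then contributes $\int_{\delta^{-1}}^\infty C_4 t^{-3/2}\,dt=O(\delta^{1/2})$. This forces a sharper bound on the middle region than your $\delta\cdot P(t)$: one needs $\big|f^\wedge(A'',h^{W^\bullet}_t)-f^\wedge(\widetilde{A}'',h^{\widetilde{W}^\bullet}_t)\big|\leqslant C(1+\sqrt{t})\delta$, so that $\int_\delta^{\delta^{-1}}(1+\sqrt{t})\delta\,\tfrac{dt}{t}=O(\delta^{1/2})$. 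The paper obtains this via a resolvent contour integral over $\partial U$ with $U=\{|\mathrm{Re}\lambda|<1\}$: since the degree-zero part of $\mathfrak{X}_t$ is skew-adjoint and $\omega^{W^\bullet}$ is a nilpotent $1$-form perturbation, $\|(\lambda-\mathfrak{X}_t)^{-1}\|$ is uniformly bounded on $\partial U$, and the resolvent identity together with $\|\mathfrak{X}_t-\widetilde{\mathfrak{X}}_t\|\leqslant C(1+\sqrt{t})\delta$ gives the result directly. Your Duhamel/Volterra route can in principle recover the same $(1+\sqrt{t})\delta$ bound, but you would need to exploit the uniform boundedness of $e^{s\mathfrak{X}_t^2}$ more carefully than stated; the loose ``$\delta\cdot P(t)$ for some polynomial $P$'' is not enough once $T_0=\delta^{-1}$.
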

\begin{proof}
Replacing $\partial$ by $\lambda_\mathrm{min}^{-1}\partial$ and
replacing $\widetilde{\partial}$ by $\lambda_\mathrm{min}^{-1}\widetilde{\partial}$,
we may assume that $\lambda_\mathrm{min} = 1$.
Then \eqref{eq-inclu-sp} and the first inequality in \eqref{eq1-prop-comparison-tf} become
\begin{equation}
\label{eqx-pf-prop-comparison-tf}
\Sp\big((\partial^*+\partial)^2\big) \subseteq
\{0\} \cup [1,\lambda^2_\mathrm{max}] \;, \hspace{5mm}
\big\lVert \alpha^*\widetilde{\partial} - \partial \big\rVert
\leqslant \delta \;.
\end{equation}

By \eqref{eqx-pf-prop-comparison-tf},
we have
\begin{equation}
\label{eqb-pf-prop-comparison-tf}
\big\lVert \alpha^*\widetilde{\partial} \big\rVert \leqslant
\delta + \big\lVert \partial \big\rVert \leqslant
\delta + \lambda_\mathrm{max} \;.
\end{equation}
Since $\big\lVert A \big\rVert = \big\lVert A^* \big\rVert$ for any operator $A$ on $W^\bullet$,
we have
\begin{equation}
\big\lVert \big(\alpha^*\widetilde{\partial}\big)^* - \partial^* \big\rVert
= \big\lVert \alpha^*\widetilde{\partial} - \partial \big\rVert \;.
\end{equation}
Let $\widetilde{\partial}^*$ be the adjoint of $\widetilde{\partial}$ with respect to $h^{\widetilde{W}^\bullet}$.
Note that $\alpha^*\widetilde{\partial}^*$ is the adjoint of $\alpha^*\widetilde{\partial}$ with respect to $\alpha^*h^{\widetilde{W}^\bullet}$
and $\big(\alpha^*\widetilde{\partial}\big)^*$ is the adjoint of $\alpha^*\widetilde{\partial}$ with respect to $h^{W^\bullet}$,
by the second inequality in \eqref{eq1-prop-comparison-tf},
we have
\begin{equation}
\label{eqc-pf-prop-comparison-tf}
\big\lVert \alpha^*\widetilde{\partial}^* - \big(\alpha^*\widetilde{\partial}\big)^* \big\rVert
\leqslant 3\delta \big\lVert \alpha^*\widetilde{\partial} \big\rVert \;.
\end{equation}
By \eqref{eqx-pf-prop-comparison-tf}-\eqref{eqc-pf-prop-comparison-tf}
and the assumption $0 < \delta < 13^{-1}\lambda^{-1}_\mathrm{max}$,
we have
\begin{align}
\label{eqd-pf-prop-comparison-tf}
\begin{split}
\big\lVert \alpha^*\widetilde{\partial}^* - \partial^* \big\rVert
& \leqslant \big\lVert \alpha^*\widetilde{\partial}^* - \big(\alpha^*\widetilde{\partial}\big)^* \big\rVert
+ \big\lVert \big(\alpha^*\widetilde{\partial}\big)^* - \partial^* \big\rVert \\
& \leqslant 3\delta(\delta + \lambda_\mathrm{max}) + \delta
\leqslant 5 \delta \lambda_\mathrm{max} \;.
\end{split}
\end{align}
By \eqref{eqx-pf-prop-comparison-tf}, \eqref{eqd-pf-prop-comparison-tf}
and the assumption $0 < \delta < 13^{-1}\lambda^{-1}_\mathrm{max}$,
we have
\begin{align}
\label{eqe-pf-prop-comparison-tf}
\begin{split}
\big\lVert \partial^*+\partial-\alpha^*(\widetilde{\partial}^*+\widetilde{\partial}) \big\rVert
& \leqslant 6 \delta \lambda_\mathrm{max} \leqslant \frac{6}{13} \;, \\
\big\lVert \partial^*-\partial-\alpha^*(\widetilde{\partial}^*-\widetilde{\partial}) \big\rVert
& \leqslant 6 \delta \lambda_\mathrm{max} \leqslant \frac{6}{13} \;.
\end{split}
\end{align}
By \eqref{eqx-pf-prop-comparison-tf} and \eqref{eqe-pf-prop-comparison-tf},
we have
\begin{equation}
\label{eqf-pf-prop-comparison-tf}
\Sp\big((\widetilde{\partial}^*+\widetilde{\partial})^2\big) \subseteq
\Big[0,\frac{6^2}{13^2}\Big] \cup
\Big[\frac{7^2}{13^2},3\lambda^2_\mathrm{max}\Big] \;.
\end{equation}
Moreover,
the dimension of the eigenspace of $(\widetilde{\partial}^*+\widetilde{\partial})^2$
associated with eigenvalues in $\big[0,\frac{6^2}{13^2}\big]$
equals the dimension of $\Ker\big((\partial^*+\partial)^2\big)$.
On the other hand,
by \eqref{eq-hodge-dim-fini} and the second identity in \eqref{eq-assume-rk},
we have
\begin{equation}
\label{eqg-pf-prop-comparison-tf}
\dim \Ker\big((\partial^*+\partial)^2\big) =
\mathrm{rk} H^\bullet = \mathrm{rk} \widetilde{H}^\bullet =
\dim \Ker\big((\widetilde{\partial}^*+\widetilde{\partial})^2\big) \;.
\end{equation}
As a consequence,
the only possible eigenvalue of $(\widetilde{\partial}^*+\widetilde{\partial})^2$
in $\big[0,\frac{6^2}{13^2}\big]$ is zero, i.e.,
\begin{equation}
\label{eqh-pf-prop-comparison-tf}
\Sp\big((\widetilde{\partial}^*+\widetilde{\partial})^2\big) \subseteq
\big\{0\big\} \cup
\Big[\frac{7^2}{13^2},3\lambda^2_\mathrm{max}\Big] \;.
\end{equation}

In the sequel,
we will use $C_1,C_2,\cdots$ to denote constants depending on
$\dim S$, $\mathrm{rk}W^\bullet$, $l$ and $\lambda_\mathrm{max}/\lambda_\mathrm{min}$.

Let $\omega^{\widetilde{W}^\bullet}$ be as in \eqref{eq-def-omegaW}
with $(W^\bullet,\n^{W^\bullet},h^{W^\bullet})$ replaced by $(\widetilde{W}^\bullet,\n^{\widetilde{W}^\bullet},h^{\widetilde{W}^\bullet})$.
For $t>0$, we denote
\begin{equation}
\mathfrak{X}_t =
\frac{\sqrt{t}}{2}(\partial^*-\partial)
+ \frac{1}{2} \omega^{W^\bullet} \;,\hspace{5mm}
\widetilde{\mathfrak{X}}_t = \alpha^*\Big(
\frac{\sqrt{t}}{2}(\widetilde{\partial^*}-\widetilde{\partial})
+ \frac{1}{2} \omega^{\widetilde{W}^\bullet} \Big) \;.
\end{equation}
Set $U = \big\{\lambda\in\C\;:\; -1<\mathrm{Re}(z)<1\big\}$.
By \eqref{eq-estim-omega},
the third inequality in \eqref{eq1-prop-comparison-tf},
\eqref{eqx-pf-prop-comparison-tf},
\eqref{eqe-pf-prop-comparison-tf}
and \eqref{eqh-pf-prop-comparison-tf},
for $\lambda\in\partial U$ and $t>0$,
we have
\begin{equation}
\label{eq31-pf-prop-comparison-tf}
\big\lVert (\lambda - \mathfrak{X}_t )^{-1} \big\rVert \leqslant C_1 \;,\hspace{5mm}
\big\lVert (\lambda - \widetilde{\mathfrak{X}}_t )^{-1} \big\rVert \leqslant C_1 \;,\hspace{5mm}
\big\lVert \mathfrak{X}_t - \widetilde{\mathfrak{X}}_t \big\rVert \leqslant C_1(1+\sqrt{t})\delta \;.
\end{equation}
Let $f^\wedge\big(\widetilde{A}'',h^{\widetilde{W}^\bullet}_t\big)$ be as in \eqref{eq-def-fhat}
with $(\n^{W^\bullet},\partial,h^{W^\bullet})$
replaced by $(\n^{\widetilde{W}^\bullet},\widetilde{\partial},h^{\widetilde{W}^\bullet})$.
For $t>0$,
by \eqref{eq2-def-fhat},
we have
\begin{align}
\label{eq32-pf-prop-comparison-tf}
\begin{split}
& f^\wedge\big(A'',h^{W^\bullet}_t\big) -
f^\wedge\big(\widetilde{A}'',h^{\widetilde{W}^\bullet}_t\big) \\
& = \frac{1}{2\pi i} \int_{\partial U}
\varphi \tr\bigg[ (-1)^{N^{W^\bullet}}\frac{N^{W^\bullet}}{2}
\Big( (\lambda - \mathfrak{X}_t )^{-1} - (\lambda - \widetilde{\mathfrak{X}}_t )^{-1} \Big) \bigg]
f'(\lambda) d\lambda \\
& = \frac{1}{2\pi i} \int_{\partial U}
\varphi \tr\bigg[ (-1)^{N^{W^\bullet}}\frac{N^{W^\bullet}}{2}
(\lambda - \mathfrak{X}_t )^{-1}
(\mathfrak{X}_t-\widetilde{\mathfrak{X}}_t)
(\lambda - \widetilde{\mathfrak{X}}_t )^{-1} \bigg]
f'(\lambda) d\lambda \;.
\end{split}
\end{align}
By
\eqref{eq31-pf-prop-comparison-tf} and
\eqref{eq32-pf-prop-comparison-tf},
we have
\begin{equation}
\label{eq3-pf-prop-comparison-tf}
\Big| f^\wedge\big(A'',h^{W^\bullet}_t\big) -
f^\wedge\big(\widetilde{A}'',h^{\widetilde{W}^\bullet}_t\big) \Big|
\leqslant  C_2 (1+\sqrt{t}) \delta \;.
\end{equation}

Proceeding the same way as in the proofs of \cite[Prop. 2.18, Thm. 2.13]{bl}
and applying \eqref{eq-estim-omega},
the third inequality in \eqref{eq1-prop-comparison-tf},
\eqref{eqx-pf-prop-comparison-tf}
and \eqref{eqh-pf-prop-comparison-tf},
we obtain the following estimates:
for $0<t<1$,
\begin{equation}
\label{eq1-pf-prop-comparison-tf}
\Big|  f^\wedge\big(A'',h^{W^\bullet}_t\big)
- \frac{1}{2} \chi'(W^\bullet) \Big|
\leqslant C_3 t \;,\hspace{5mm}
\Big| f^\wedge\big(\widetilde{A}'',h^{\widetilde{W}^\bullet}_t\big)
- \frac{1}{2} \chi'(W^\bullet) \Big|
\leqslant C_3 t \;;
\end{equation}
for $t>1$,
\begin{equation}
\label{eq2-pf-prop-comparison-tf}
\Big|  f^\wedge\big(A'',h^{W^\bullet}_t\big)
- \frac{1}{2} \chi'(H^\bullet) \Big|
\leqslant \frac{C_4}{\sqrt{t}} \;,\hspace{5mm}
\Big| f^\wedge\big(\widetilde{A}'',h^{\widetilde{W}^\bullet}_t\big)
- \frac{1}{2} \chi'(H^\bullet) \Big|
\leqslant \frac{C_4}{\sqrt{t}} \;.
\end{equation}

By \eqref{eq-def-torsion-form}, \eqref{eq-assume-rk}
and \eqref{eq3-pf-prop-comparison-tf}-\eqref{eq2-pf-prop-comparison-tf},
we have
\begin{align}
\begin{split}
& \Big| \mathscr{T}\big(\n^{W^\bullet},\partial,h^{W^\bullet}\big)
- \mathscr{T}\big(\n^{\widetilde{W}^\bullet},\widetilde{\partial},h^{\widetilde{W}^\bullet}\big) \Big| \\
& \leqslant \; 2 \int_0^{\delta} C_3 t \frac{dt}{t}
+ \int_{\delta}^{\delta^{-1}} C_2 (1+\sqrt{t}) \delta \frac{dt}{t}
+ 2 \int_{\delta^{-1}}^{+\infty} \frac{C_4}{\sqrt{t}} \frac{dt}{t}
\leqslant\; C_5 \delta^{1/2} \;.
\end{split}
\end{align}
This completes the proof of Proposition \ref{prop-comparison-tf}.
\end{proof}

\begin{rem}
\label{lem-notlfat}
Let $\mu\in\Omega^1\big(S,\mathrm{End}(\widetilde{W}^\bullet)\big)$.
We assume that $\mu$ preserves the degree, i.e.,
$\mu\Big(\smooth\big(S,W^k\big)\Big)\subseteq\Omega^1\big(S,W^k\big)$ for $k=0,\cdots,n$.
Set
\begin{equation}
f^\wedge(\widetilde{\partial},h^{\widetilde{W}^\bullet}_t,\mu) =
\varphi \tr \left[ (-1)^{N^{\widetilde{W}^\bullet}}\frac{N^{\widetilde{W}^\bullet}}{2}
f'\Big( \frac{\sqrt{t}}{2}(\widetilde{\partial^*}-\widetilde{\partial}) + \frac{1}{2}\mu \Big) \right] \;.
\end{equation}
Let $\mathscr{T}\big(\widetilde{\partial},h^{\widetilde{W}^\bullet},\mu\big)$
be as in \eqref{eq-def-torsion-form} with $f^\wedge(A'',h^{W^\bullet}_t)$
replaced by $f^\wedge(\widetilde{\partial},h^{\widetilde{W}^\bullet}_t,\mu)$.
Then Proposition \ref{prop-comparison-tf} holds with
$\omega^{\widetilde{W}^\bullet}$
replaced by $\mu$
and $\mathscr{T}\big(\n^{\widetilde{W}^\bullet},\widetilde{\partial},h^{\widetilde{W}^\bullet}\big)$
replaced by $\mathscr{T}\big(\widetilde{\partial},h^{\widetilde{W}^\bullet},\mu\big)$.
\end{rem}

Let $(F,\n^F)$ be a flat complex vector bundle over $S$.
Let $h^F_1$ and $h^F_2$ be Hermitian metrics on $F$.
Let $\omega^F_1$ (resp. $\omega^F_2$) be as in \eqref{intro-def-omegaF}
with $h^F$ replaced by $h^F_1$ (resp. $h^F_2$).
We consider the chain complex $F \xrightarrow{\mathrm{Id}} F$,
where the first $F$ is equipped with Hermitian metric $h^F_1$
and the second $F$ is equipped with Hermitian metric $h^F_2$.
Let $\mathscr{T}\big(\n^F,h^F_1,h^F_2\big)$ be its torsion form.

Let $l>0$ such that
\begin{equation}
\big\lVert \omega^F_1 \big\rVert \leqslant l \;.
\end{equation}

\begin{cor}
\label{cor-comparison-tf}
There exists a function
$C: \N \times \N \times \R_+ \rightarrow \R_+$
such that for any $(F,\n^F,h^F_1,h^F_2)$ and $l$ as above,
if there exists $\delta\in(0,13^{-1})$
satisfying
\begin{equation}
\label{eq1-cor-comparison-tf}
- \delta h^F_1 \leqslant h^F_2 - h^F_1 \leqslant \delta h^F_1 \;,\hspace{5mm}
\big\lVert \omega^F_2 - \omega^F_1 \big\rVert \leqslant \delta \;,
\end{equation}
then
\begin{equation}
\label{eq-cor-comparison-tf}
\Big| \mathscr{T}\big(\n^F,h^F_1,h^F_2\big) \Big|
\leqslant C\big(\dim S, \mathrm{rk}(F),l\big) \delta^{1/2} \;.
\end{equation}
\end{cor}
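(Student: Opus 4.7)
My plan is to apply Proposition \ref{prop-comparison-tf} to the pair of chain complexes $(F\oplus F,\Id_F,\n^F\oplus\n^F,h^F_1\oplus h^F_2)$ and $(F\oplus F,\Id_F,\n^F\oplus\n^F,h^F_1\oplus h^F_1)$, using $\alpha = \Id_{F\oplus F}$. By construction the first complex has torsion exactly $\mathscr{T}(\n^F,h^F_1,h^F_2)$. I then separately show that the torsion of the second (reference) complex vanishes, which combined with Proposition \ref{prop-comparison-tf} will give the stated bound.

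Checking the hypotheses of Proposition \ref{prop-comparison-tf} is routine: $\alpha^*\tilde\partial-\partial = 0$; the metric difference $\alpha^*h^{\tilde W^\bullet} - h^{W^\bullet} = 0\oplus(h^F_1-h^F_2)$ is bounded by $2\delta\,h^{W^\bullet}$ in view of \eqref{eq1-cor-comparison-tf} and $\delta<1/13$; and $\alpha^*\omega^{\tilde W^\bullet}-\omega^{W^\bullet} = 0\oplus(\omega^F_1-\omega^F_2)$ has norm at most $\delta$. Both complexes are acyclic with equal ranks; the operator $(\partial^*+\partial)^2$ on $W^\bullet$ acts as $(h^F_1)^{-1}h^F_2$ on each summand, with positive spectrum in $[1-\delta,1+\delta]$, so $\lambda_\mathrm{min},\lambda_\mathrm{max}$ can be chosen close to $1$ with bounded ratio. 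Together with $\|\omega^{W^\bullet}\|\leq l+\delta$, Proposition \ref{prop-comparison-tf} (after absorbing constants into $C$) yields $|\mathscr{T}(\n^F,h^F_1,h^F_2)-\mathscr{T}(\n^F,h^F_1,h^F_1)|\leq C\delta^{1/2}$.

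The heart of the proof is showing $\mathscr{T}(\n^F,h^F_1,h^F_1) = 0$. With equal metrics $\tilde\partial^* = \Id_F$, so viewing $\tilde W^\bullet = \mathbb{C}^2\otimes F$ one has $\mathfrak{X}_t = \tfrac{\sqrt{t}}{2}\,J\otimes I_F + \tfrac{1}{2}\,I_2\otimes\omega^F_1$, where $J = \bigl(\begin{smallmatrix}0 & 1\\ -1 & 0\end{smallmatrix}\bigr)$. Diagonalizing $J$ to eigenvalues $\pm i$, the trace reduces to $-\tfrac{1}{4}\tr_F\bigl[f'(\tfrac{i\sqrt{t}}{2}+\tfrac{\omega^F_1}{2}) + f'(-\tfrac{i\sqrt{t}}{2}+\tfrac{\omega^F_1}{2})\bigr]$. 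Expanding in Taylor series around $\pm i\sqrt{t}/2$, odd powers of $\omega^F_1$ cancel between the two summands, leaving a sum of terms involving $\tr_F\bigl((\omega^F_1)^{2k}\bigr)$. Since $\omega^F_1\in\Omega^1(S,\End F)$ has odd total degree in the graded-commutative algebra $\Omega^\bullet(S,\End F)$, graded cyclicity of the trace forces $\tr_F\bigl((\omega^F_1)^{2k}\bigr)=0$ for every $k\geq 1$. Only the scalar $k=0$ term survives, giving $f^\wedge(A'',h^{\tilde W^\bullet}_t) = -\tfrac{\mathrm{rk}(F)}{2}f'(i\sqrt{t}/2)$. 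Since $\chi'(H^\bullet)=0$ and $\chi'(\tilde W^\bullet) = -\mathrm{rk}(F)$, this exactly cancels the scalar subtraction in \eqref{eq-def-torsion-form}, so the integrand vanishes identically and the reference torsion is zero.

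The main obstacle is precisely this last vanishing: one must verify that graded cyclicity kills all the higher-form contributions and that the remaining scalar piece is exactly neutralized by the subtraction in Definition \ref{def-tf}. Once this is in place, the estimate follows mechanically from Proposition \ref{prop-comparison-tf}.
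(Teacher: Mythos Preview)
Your proposal is correct and follows essentially the same approach as the paper: apply Proposition~\ref{prop-comparison-tf} to compare $\mathscr{T}(\nabla^F,h^F_1,h^F_2)$ with $\mathscr{T}(\nabla^F,h^F_1,h^F_1)$, and use that the latter vanishes. The paper's proof is a single sentence that simply asserts $\mathscr{T}(\nabla^F,h^F_1,h^F_1)=0$ and invokes Proposition~\ref{prop-comparison-tf}; you supply a full justification of that vanishing by direct computation of $f^\wedge$, which is a welcome addition.
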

\begin{proof}
Note that $\mathscr{T}\big(\n^F,h^F_1,h^F_1\big)=0$,
the inequality \eqref{eq-cor-comparison-tf} is a direct consequence of Proposition \ref{prop-comparison-tf}.
\end{proof}

\subsection{Analytic torsion forms}
\label{subsec-atf}

Let $\pi: M \rightarrow S$ be a smooth fibration with compact fiber $Z$.
Let $N = \partial M$.
We assume that $\pi\big|_N: N \rightarrow S$ is a smooth fibration with fiber $Y$.
Then we have $Y = \partial Z$.

We identify a tubular neighborhood of $N \subseteq M$ with $[-1,0]\times N$ such that
$N$ is identified with $\{0\}\times N$
and the following diagram commutes,
\begin{equation}
\xymatrix{
[-1,0] \times N \ar@{^{(}->}[r] \ar[d]_{\mathrm{pr}_2} & M \ar[d]^{\pi} \\
N \ar[r]^{\pi|_N} & S \;,}
\end{equation}
where $\mathrm{pr}_2: [-1,0]\times N \rightarrow N$ is the projection to the second factor.

Let $T^HM \subseteq TM$ be a horizontal sub bundle of $TM$, i.e.,
\begin{equation}
\label{eq-decomposition-TM}
TM = T^HM \oplus TZ \;.
\end{equation}
Then we have
\begin{equation}
\label{eq-id-wedge}
\Lambda^\bullet(T^*M)
= \Lambda^\bullet(T^{H,*}M) \otimes \Lambda^\bullet(T^*Z)
\simeq \pi^*\big(\Lambda^\bullet(T^*S)\big) \otimes \Lambda^\bullet(T^*Z) \;.
\end{equation}
We assume that $T^HM$ is product on $[-1,0]\times N$, i.e.,
\begin{equation}
T^HM\big|_N \subseteq TN \;,\hspace{5mm}
T^HM\big|_{[-1,0]\times N} = \mathrm{pr}_2^*\big(T^HM\big|_N\big) \;.
\end{equation}
We remark that $T^HN := T^HM\big|_N \subseteq TN$ is a horizontal sub bundle of $TN$, i.e.,
\begin{equation}
TN = T^HN \oplus TY \;.
\end{equation}

Let $g^{TZ}$ be a Riemannian metric on $TZ$.
Let $g^{TY}$ be the Riemannian metric on $TY$ induced by $g^{TZ}$ via the embedding $N = \partial M \hookrightarrow M$.
Let $(u,y)\in[-1,0]\times N$ be coordinates.
We assume that $g^{TZ}$ is product on $[-1,0]\times N$, i.e.,
\begin{equation}
g^{TZ}_{(u,y)} = du^2 + g^{TY}_y \;.
\end{equation}

Let $(F,\n^F)$ be a flat complex vector bundle over $M$.
We trivialize $F\big|_{[-1,0]\times N}$ along the curve $[-1,0] \ni u \mapsto (u,y)$
using the parallel transport with respect to $\n^F$.
We have
\begin{equation}
\label{eq-tr-F-tub}
(F,\n^F)\big|_{[-1,0]\times N} = \mathrm{pr}_2^*(F\big|_N,\n^F\big|_N) \;.
\end{equation}

Let $h^F$ be a Hermitian metric on $F$.
We assume that $h^F$ is product on $[-1,0]\times N$, i.e.,
under the identification \eqref{eq-tr-F-tub},
we have
\begin{equation}
h^F\big|_{[-1,0]\times N} = \mathrm{pr}_2^* \big(h^F\big|_N\big) \;.
\end{equation}

Set $\mathscr{F} = \Omega^\bullet(Z,F)$,
which is a $\Z$-graded complex vector bundle of infinite dimension over $S$.
By \eqref{eq-id-wedge},
we have the formal identity $\Omega^\bullet(M,F) = \Omega^\bullet(S,\mathscr{F})$.

For $U\in TS$,
let $U^H \in T^HM$ be its horizontal lift, i.e., $\pi_*U^H = U$.
For $U\in\smooth(S,TS)$,
let $L_{U^H}$ be the Lie differentiation operator acting on $\Omega^\bullet(M,F)$.
For $U\in\smooth(S,TS)$ and $s\in\Omega^\bullet(S,\mathscr{F}) = \Omega^\bullet(M,F)$,
we define
\begin{equation}
\label{eq-def-n-F}
\n^\mathscr{F}_U s = L_{U^H} s \;.
\end{equation}
Then $\n^\mathscr{F}$ is a connection on $\mathscr{F}$ preserving the grading.

Let $P^{TZ}: TM \rightarrow TZ$ be the projection with respect to \eqref{eq-decomposition-TM}.
For $U,V\in\smooth(S,TS)$,
set
\begin{equation}
\label{eq-def-mathcalT}
\mathcal{T}(U,V) = - P^{TZ}[U^H,V^H] \in \smooth(M,TZ) \;.
\end{equation}
Then $\mathcal{T}\in\smooth\big(M,\pi^*\big(\Lambda^2(T^*S)\big)\otimes TZ\big)$.
Let $i_\mathcal{T}\in\smooth\big(M,\pi^*\big(\Lambda^2(T^*S)\big)\otimes\mathrm{End}\big(\Lambda^\bullet(T^*Z)\big)\big)$
be the interior multiplication by $\mathcal{T}$ in the vertical direction.

The flat connection $\n^F$ (resp. $\n^F\big|_Z$)
naturally extends to an exterior differentiation operator
on $\Omega^\bullet(M,F)$ (resp. $\Omega^\bullet(Z,F) = \mathscr{F}$),
which we denote by $d^M$ (resp. $d^Z$).
In the sense of \cite[\textsection 2(a)]{bl},
the operator $d^M$ is a superconnection of total degree $1$ on $\mathscr{F}$.
By \cite[Prop. 3.4]{bl},
we have
\begin{equation}
d^M = d^Z + \n^\mathscr{F} + i_\mathcal{T} \;.
\end{equation}

Let $\mathcal{T}^*\in\smooth\big(M,\pi^*\big(\Lambda^2(T^*S)\big)\otimes T^*Z\big)$
be the dual of $\mathcal{T}$ with respect to $g^{TZ}$.

Let $h^\mathscr{F}$ be the $L^2$-metric on $\mathscr{F}$
with respect to $g^{TZ}$ and $h^F$.
Let $d^{M,*}, d^{Z,*},\n^{\mathscr{F},*}$
be the formal adjoints of  $d^M,d^Z,\n^\mathscr{F}$ with respect to $h^\mathscr{F}$
in the sense of \cite[Def. 1.6]{bl}.
By \cite[Prop. 3.7]{bl},
we have
\begin{equation}
d^{M,*} = d^{Z,*} + \n^\mathscr{F,*} - \mathcal{T}^*\!\wedge \;.
\end{equation}

Let $N^{TZ}$ be the number operator on $\Lambda^\bullet(T^*Z)$,
i.e., $N^{TZ}\big|_{\Lambda^p(T^*Z)} = p \Id$.
Then $N^{TZ}$ acts on $\mathscr{F}$ in the obvious way.
For $t>0$,
let $d^{M,*}_t$ be the formal adjoints of  $d^M$ with respect to
$h^\mathscr{F}_t := t^{N^{TZ}}h^\mathscr{F}$.
We have
\begin{equation}
d^{M,*}_t = t d^{Z,*} + \n^\mathscr{F,*} - \frac{1}{t} \mathcal{T}^*\!\wedge \;.
\end{equation}
Set
\begin{align}
\label{eq-def-Dt}
\begin{split}
\mathscr{D}_t & = t^{N^{TZ}/2} \big( d^{M,*}_t - d^M \big) t^{-N^{TZ}/2} \\
& = \frac{\sqrt{t}}{2} \big(d^{Z,*}-d^Z\big)
+ \frac{1}{2}\big(\n^{\mathscr{F},*}-\n^\mathscr{F}\big)
- \frac{1}{2\sqrt{t}}\big(\mathcal{T}^*\!\wedge+i_\mathcal{T}\big) \;.
\end{split}
\end{align}
We denote
\begin{equation}
\label{eq-def-omegaF}
\omega^\mathscr{F} = \n^{\mathscr{F},*} - \n^\mathscr{F} \in \Omega^1(S,\mathrm{End}(\mathscr{F})) \;.
\end{equation}
For $X\in TZ$,
we denote by $X^*\in T^*Z$ its dual with respect to $g^{TZ}$.
For $X\in TZ$,
we denote
\begin{equation}
\label{eq-def-hatc}
\hat{c}(X) = X^*\!\wedge + i_X \in \mathrm{End}(\Lambda^\bullet(T^*Z)) \;.
\end{equation}
By \eqref{eq-def-Dt}-\eqref{eq-def-hatc},
we have
\begin{equation}
\label{eq-Dt}
\mathscr{D}_t =
\frac{\sqrt{t}}{2} \big(d^{Z,*}-d^Z\big) + \frac{1}{2}\omega^\mathscr{F} - \frac{1}{2\sqrt{t}}\hat{c}(\mathcal{T}) \;.
\end{equation}
In particular,
\begin{equation}
\label{eq-Dt2}
\mathscr{D}_t^2 =
- \frac{t}{4} \big(d^{Z,*}d^Z+d^Zd^{Z,*}\big) + \text{nilpotent operator} \;,
\end{equation}
where $d^{Z,*}d^Z+d^Zd^{Z,*}$ is the fiberwise Hodge Laplacian.

By \eqref{eq-Dt2},
the operator $\mathscr{D}_t^2$ is fiberwise essentially self-adjoint
with respect to the absolute boundary condition (see \eqref{eq-def-abs-bd}).
Its self-adjoint extension with respect to the absolute boundary condition will still be denoted by $\mathscr{D}_t^2$.
Let $\mathrm{End}_\mathrm{tr}(\mathscr{F}) \subseteq \mathrm{End}(\mathscr{F})$
be the sub vector bundle of trace class operators.
Recall that $f'(z) = (1+2z^2)e^{z^2}$.
By \eqref{eq-Dt2},
we have $f'\big(\mathscr{D}_t^2\big)\in\Omega^\bullet\big(\mathrm{End}_\mathrm{tr}(\mathscr{F})\big)$.

Let $\tr: \mathrm{End}_\mathrm{tr}(\mathscr{F}) \rightarrow \C$ be the trace map,
which extends to $\tr: \mathrm{End}_\mathrm{tr}(\mathscr{F})\otimes\Lambda^\bullet(T^*S) \rightarrow \Lambda^\bullet(T^*S)$.
Let $\varphi$ be as in \eqref{eq-def-varphi}.

Let $H^\bullet(Z,F)$ be the fiberwise singular cohomology of $Z$ with coefficients in $F$.
Then $H^\bullet(Z,F)$ is a $\Z$-graded complex vector bundle over $S$.
We denote
\begin{equation}
\label{eq-def-chiprim}
\chi'(Z,F) = \sum_{p=0}^{\dim Z} (-1)^pp\, \mathrm{rk}\big(H^p(Z,F)\big) \;.
\end{equation}

Now we recall the definition of analytic torsion forms \cite[Def. 2.18]{imrn-zhu}, \cite[Def. 3.22]{bl}.

\begin{defn}
\label{def-atf}
The analytic torsion form associated with $(T^HM,g^{TZ},h^F)$ is defined by
\begin{align}
\label{eq-def-atf}
\begin{split}
\mathscr{T}(T^HM,g^{TZ},h^F) & = - \int_0^{+\infty}
\bigg\{ \varphi \tr\Big[(-1)^{N^{TZ}}\frac{N^{TZ}}{2}f'\big(\mathscr{D}_t\big)\Big]
- \frac{\chi'(Z,F)}{2} \\
& \hspace{15mm} - \Big(\frac{\dim Z \mathrm{rk}(F)\chi(Z)}{4} - \frac{\chi'(Z,F)}{2}\Big)f'\Big(\frac{i\sqrt{t}}{2}\Big) \bigg\}
\frac{dt}{t} \;.
\end{split}
\end{align}
The convergence of the integral in \eqref{eq-def-atf} follows from
the family local index theorem \cite[Thm 3.21]{bl} \cite[Thm 2.17]{imrn-zhu}.
And $d \mathscr{T}(T^HM,g^{TZ},h^F)$ is given by \eqref{eq-d-tf} with $j=0$.
\end{defn}

Recall that $Q^S$ is the vector space of real even differential forms on $S$
and $Q^{S,0} \subseteq Q^S$ is the sub vector space of exact forms.
The analytic torsion form $\mathscr{T}(T^HM,g^{TZ},h^F)$ is viewed as an element in $Q^S/Q^{S,0}$.

\section{Finite dimensional model}
\label{sect-model}

The construction in this section
may be viewed as a model of the problem addressed in this paper,
in which the fibration has  zero-dimensional fibers.
This section is organized as follows.
In \textsection \ref{subsect-simplical-complex},
we construct a short exact sequence of chain complexes from a pair of linear maps.
In \textsection \ref{subsec-tf-model},
we extend the constructions in \textsection \ref{subsect-simplical-complex}
to flat complex vector bundles.

\subsection{Chain complexes from a pair of linear maps}
\label{subsect-simplical-complex}

Let $W_1$, $W_2$ and $V$ be finite dimensional complex vector spaces.
Let $\tau_1: W_1\rightarrow V$
and $\tau_2: W_2\rightarrow V$ be linear maps.
We define a chain complex
$\big(C^\bullet(\tau_1,\tau_2),\partial\big)$
as follows,
\begin{align}
\label{eq1-def-complex}
\begin{split}
0 \rightarrow
C^0(\tau_1,\tau_2) := W_1\oplus W_2 & \xrightarrow{\partial}
C^1(\tau_1,\tau_2) := V \rightarrow 0  \\
(w_1,w_2) & \mapsto \tau_2(w_2) - \tau_1(w_1) \;.
\end{split}
\end{align}

We denote
\begin{equation}
\label{eq-def-V1-V2}
V_1 = \mathrm{Im}(\tau_1) \subseteq V \;,\hspace{5mm}
V_2 = \mathrm{Im}(\tau_2) \subseteq V \;.
\end{equation}
We define a chain complex
$\big(C^\bullet_\mathrm{r}(\tau_1,\tau_2),\partial\big)$
as follows,
\begin{align}
\label{eq-def-complex-r}
\begin{split}
0 \rightarrow
C^0_\mathrm{r}(\tau_1,\tau_2) := V_1\oplus V_2 & \xrightarrow{\partial}
C^1_\mathrm{r}(\tau_1,\tau_2) := V \rightarrow 0 \\
(v_1,v_2) & \mapsto v_2 - v_1 \;.
\end{split}
\end{align}

We denote
\begin{equation}
\label{eq-def-K}
K_1 = \mathrm{Ker}(\tau_1) \subseteq W_1 \;,\hspace{5mm}
K_2 = \mathrm{Ker}(\tau_2) \subseteq W_2 \;.
\end{equation}
We have a short exact sequence of chain complexes,
\begin{equation}
\label{eq-ses-CCr}
\xymatrix{
0 \ar[r] &
0 \ar[rr]  &&
C^1(\tau_1,\tau_2) \ar[rr]^{\mathrm{Id}} &&
C^1_\mathrm{r}(\tau_1,\tau_2)  \ar[r] &
0 \\
0 \ar[r] &
K_1 \oplus K_2 \ar[rr]  \ar[u] &&
C^0(\tau_1,\tau_2) \ar[rr]^{\tau_1\oplus\tau_2} \ar[u]_{\partial} &&
C^0_\mathrm{r}(\tau_1,\tau_2)  \ar[r] \ar[u]_{\partial} &
0  \;,}
\end{equation}
where $K_1\oplus K_2 \rightarrow C^0(\tau_1,\tau_2) = W_1 \oplus W_2$
is the direct sum of the embeddings in \eqref{eq-def-K}.

Set
\begin{equation}
\label{eq-def-Cj}
C^\bullet_0 = C^\bullet(\tau_1,\tau_2) \;,\;
C^\bullet_1 = C^\bullet(\tau_1,\Id_V) \;,\;
C^\bullet_2 = C^\bullet(\Id_V,\tau_2) \;,\;
C^\bullet_3 = C^\bullet(\Id_V,\Id_V)\;.
\end{equation}
We define
\begin{equation}
\alpha_1: C^\bullet_0 \rightarrow C^\bullet_1 \;,\hspace{5mm}
\alpha_2: C^\bullet_0 \rightarrow C^\bullet_2 \;,\hspace{5mm}
\beta_1: C^\bullet_1 \rightarrow C^\bullet_3 \;,\hspace{5mm}
\beta_2: C^\bullet_2 \rightarrow C^\bullet_3
\end{equation}
as follows,
\begin{align}
\begin{split}
& \alpha_1\big|_{C^0_0} = \Id_{W_1} \oplus \tau_2 \;,\hspace{5mm}
\alpha_2\big|_{C^0_0} = \tau_1 \oplus \Id_{W_2} \;,\hspace{5mm}
\alpha_1\big|_{C^1_0} = \alpha_2\big|_{C^1_0} = \Id_V \;,\\
& \beta_1\big|_{C^0_1} = \tau_1 \oplus \Id_V \;,\hspace{5mm}
\beta_2\big|_{C^0_2} = \Id_V \oplus \tau_2 \;,\hspace{5mm}
\beta_1\big|_{C^1_1} = \beta_2\big|_{C^1_2} = \Id_V \;.
\end{split}
\end{align}
We have a short exact sequence of chain complexes,
\begin{equation}
\label{eq-ses-CCC}
\xymatrix{
0 \ar[r] &
C^\bullet_0 \ar[r]^{\hspace{-5mm}\alpha_1 \oplus \alpha_2} &
C^\bullet_1 \oplus C^\bullet_2 \ar[r]^{\hspace{5mm}\beta_2-\beta_1} &
C^\bullet_3 \ar[r] &
0 \;.}
\end{equation}
For $j=0,1,2,3$,
let $H^k\big(C^\bullet_j,\partial\big)$
be the $k$-th cohomology group of $\big(C^\bullet_j,\partial\big)$,
i.e.,
\begin{equation}
H^k\big(C^\bullet_j,\partial\big) =
\frac{\mathrm{Ker}\big(\partial: C^k_j\rightarrow C^{k+1}_j\big)}
{\mathrm{Im}\big(\partial: C^{k-1}_j\rightarrow C^k_j\big)} \;.
\end{equation}
From \eqref{eq-ses-CCC},
we get a long exact sequence of cohomology groups,
\begin{equation}
\label{eq-mv-sequence-I123}
\xymatrix{
\cdots \ar[r] &
H^k\big(C^\bullet_0,\partial\big) \ar[r] &
H^k\big(C^\bullet_1 \oplus C^\bullet_2,\partial\big)  \ar[r] &
H^k\big(C^\bullet_3,\partial\big)  \ar[r] &
\cdots \;.}
\end{equation}
We denote
\begin{equation}
\label{eq-def-W12}
W_{12} = \Big\{(w_1,w_2)\in W_1 \oplus W_2 \;:\;\tau_1(w_1)=\tau_2(w_2)\Big\} \;.
\end{equation}
A direct calculation yields
\begin{align}
\label{eq-HC}
\begin{split}
& H^0\big( C^\bullet_0,\partial \big) = W_{12} \;,\hspace{5mm}
H^0\big( C^\bullet_1 \oplus C^\bullet_2,\partial \big) = W_1 \oplus W_2 \;,\hspace{5mm}
H^0\big( C^\bullet_3,\partial \big) = V \;,\\
& H^1\big( C^\bullet_0,\partial \big) = V/\big(V_1 + V_2\big) \;,\hspace{5mm}
H^1\big( C^\bullet_1 \oplus C^\bullet_2,\partial \big) =
H^1\big( C^\bullet_3,\partial \big) = 0 \;.
\end{split}
\end{align}
Thus the long exact sequence \eqref{eq-mv-sequence-I123} is
\begin{equation}
\xymatrix{
0 \ar[r] &
W_{12} \ar@{^{(}->}[r]  &
W_1 \oplus W_2 \ar[r]^{\hspace{5mm}\tau_2-\tau_1} &
V  \ar@{->>}[r] &
V/\big(V_1 + V_2\big) \ar[r] &
0  \;,}
\end{equation}
where $W_{12} \hookrightarrow W_1 \oplus W_2$
is the direct sum of the obvious embeddings
$W_{12}\hookrightarrow W_1$ and $W_{12}\hookrightarrow W_2$.

\subsection{A flat family of complexes}
\label{subsec-tf-model}

Now let $W_1$, $W_2$ and $V$ be flat complex vector bundles over a smooth manifold $S$.
Let $\tau_1: W_1\rightarrow V$
and $\tau_2: W_2\rightarrow V$
be morphisms between flat complex vector bundles.
Then the chain complexes
$\big(C^\bullet_j,\partial\big)$ ($j=0,1,2,3$)
considered in \textsection \ref{subsect-simplical-complex}
become chain complexes of flat complex vector bundles over $S$.

Let $h^{W_1}$, $h^{W_2}$ and $h^V$
be Hermitian metrics on $W_1$, $W_2$ and $V$.
For $j=0,1,2,3$,
we construct a Hermitian metric $h^{C^\bullet_j}= h^{C^0_j} \oplus h^{C^1_j}$ on $C^\bullet_j$ as follows,
\begin{align}
\begin{split}
& h^{C^0} =
h^{W_1} \oplus h^{W_2} \;;\hspace{5mm}
h^{C^0_3} =
\frac{1}{2} h^V \oplus \frac{1}{2} h^V \;;\hspace{5mm}
h^{C^0_j} =
h^{W_j} \oplus \frac{1}{2} h^V
\hspace{5mm} \text{for } j=1,2 \;;\\
& h^{C^1_j} = h^V
\hspace{5mm}\text{for } j=0,1,2,3 \;.
\end{split}
\end{align}
Recall that $Q^{S,0} \subseteq Q^S \subseteq \Omega^\bullet(S)$
were defined in the paragraph containing \eqref{intro-dT}.
Let $\mathscr{T}_j \in Q^S$
be the torsion form (cf. \textsection \ref{subsec-tf})
associated with $\big(C^\bullet_j,\partial,h^{C^\bullet_j}\big)$.

The exact sequence \eqref{eq-mv-sequence-I123}
becomes an exact sequence of flat complex vector bundles.
Let $\mathscr{T}_\mathscr{H} \in Q^S$
be the torsion form (cf. \textsection \ref{subsec-tf})
associated with the exact sequence \eqref{eq-mv-sequence-I123}
equipped with Hermitian metrics
induced by $h^{C^\bullet_j}$ via \eqref{eq-hodge-dim-fini}.

The following theorem is a consequence of \cite[Thm. 7.37]{g-a2},
which may be viewed as an analogue of a result of Ma on the Bott-Chern forms \cite[Thm 1.2]{ma-ajm},
and may also be viewed as a finite dimensional version of \cite[Thm 0.1]{ma}.

\begin{thm}
\label{thm-model-tf-gluing}
The following equation holds,
\begin{equation}
\mathscr{T} - \mathscr{T}_1 - \mathscr{T}_2 + \mathscr{T}_3 + \mathscr{T}_\mathscr{H}
\in Q^{S,0}  \;.
\end{equation}
\end{thm}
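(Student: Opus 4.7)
The plan is to recognize the identity as a special case of the general gluing formula for Bismut-Lott torsion forms attached to a short exact sequence of complexes of flat vector bundles, due to Goette \cite[Thm.~7.37]{g-a2} (which extends \cite[Thm.~2.24]{bl} from a single flat bundle to a complex of them; compare also \cite[Thm.~1.2]{ma-ajm} and \cite[Thm.~0.1]{ma}). The key structural input is already assembled in Section~\ref{subsect-simplical-complex}: the sequence \eqref{eq-ses-CCC} is a short exact sequence of chain complexes of flat vector bundles over $S$, and its associated long exact sequence in cohomology is precisely \eqref{eq-mv-sequence-I123}. Thus the generic object $\mathscr{T}_\mathscr{H}$ produced by applying Goette's theorem matches, tautologically, the torsion form $\mathscr{T}_\mathscr{H}$ in the statement.

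Applying Goette's theorem to \eqref{eq-ses-CCC} with the Hermitian metrics specified in Section~\ref{subsec-tf-model}, one obtains, modulo $Q^{S,0}$, an identity of the form
\[
\mathscr{T}\bigl(C_1^\bullet \oplus C_2^\bullet\bigr) - \mathscr{T}\bigl(C_0^\bullet\bigr) - \mathscr{T}\bigl(C_3^\bullet\bigr) - \mathscr{T}_\mathscr{H} \in Q^{S,0}.
\]
The additivity of the Bismut-Lott torsion form under direct sums of complexes, which is immediate from the splitting of the trace in \eqref{eq-def-fhat} along a direct sum, gives $\mathscr{T}(C_1^\bullet \oplus C_2^\bullet) = \mathscr{T}_1 + \mathscr{T}_2$. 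Since $\mathscr{T} = \mathscr{T}_0$ in the notation of the theorem, rearranging signs yields the claimed identity.

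The step requiring the most care, and the main obstacle, is the metric bookkeeping needed to invoke Goette's theorem. The factors of $\tfrac{1}{2}$ built into $h^{C_1^0}$, $h^{C_2^0}$, $h^{C_3^0}$ are tailored so that the decomposition of $(V \oplus V, \tfrac{1}{2}h^V \oplus \tfrac{1}{2}h^V)$ along the diagonal $\{(v,v)\}$ and antidiagonal $\{(v,-v)\}$ copies of $V$ is orthogonal, with each summand isometric to $(V,h^V)$; consequently, the sub-bundle metric on $C_0^\bullet \hookrightarrow C_1^\bullet \oplus C_2^\bullet$ and the quotient metric on $C_3^\bullet$ induced from $C_1^\bullet \oplus C_2^\bullet$ agree with the prescribed ones, and any residual discrepancy contributes only an element of $Q^{S,0}$ by Corollary~\ref{cor-comparison-tf}. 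One additionally needs to trace the $L^2$-metrics on $H^\bullet(C_j^\bullet, \partial)$ induced by \eqref{eq-hodge-dim-fini} through \eqref{eq-mv-sequence-I123} and verify that the Bismut-Lott torsion of this long exact sequence, with the induced cohomology metrics, indeed coincides with $\mathscr{T}_\mathscr{H}$. Once these fiberwise linear-algebra identifications are carried out, the theorem reduces to a direct invocation of \cite[Thm.~7.37]{g-a2}.
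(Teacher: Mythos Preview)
Your approach is essentially the same as the paper's: the paper states the theorem without proof, attributing it directly to \cite[Thm.~7.37]{g-a2} (with the same cross-references to \cite{ma-ajm} and \cite{ma}), and your proposal correctly identifies the short exact sequence \eqref{eq-ses-CCC} and its cohomology sequence \eqref{eq-mv-sequence-I123} as the data to which Goette's result is applied.

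One minor correction: your appeal to Corollary~\ref{cor-comparison-tf} for handling residual metric discrepancies is misplaced---that corollary gives quantitative $\mathscr{C}^0$ bounds, not exactness. If the induced and prescribed metrics differ (as they do by a factor of $2$ at degree $1$, since the diagonal embedding $V\hookrightarrow V\oplus V$ with metric $h^V\oplus h^V$ induces $2h^V$, not $h^V$), the correct device is the anomaly formula \cite[Thm.~2.24]{bl}, which shows that changing metrics alters the torsion form only within $Q^{S,0}$. With that adjustment, your argument goes through.
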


\section{Gluing formula for analytic torsion forms}
\label{sect-construction}

This section is the heart of this paper.
The central idea is
to deform the metrics on $TZ$ and $F$ such that
the gluing formula considered in Theorem \ref{thm-gluing}
degenerates to the gluing formula given in Theorem \ref{thm-model-tf-gluing}.
This section is organized as follows.
In \textsection \ref{subsec-RT-deformation},
we introduce a two-parameter deformation of the objects constructed in the introduction.
In \textsection \ref{subsec-intermediate},
we prove Theorem \ref{thm-gluing}.
The proof is based on several intermediate results.
Their proofs are delayed to
\textsection \ref{sect-al-wd}, \ref{sect-tf}, \ref{sect-mv}.

\subsection{A two-parameter deformation}
\label{subsec-RT-deformation}

Recall that $\pi_R: M_R \rightarrow S$ was constructed
in the paragraph containing \eqref{intro-piR}.
By the second identity in \eqref{intro-INR-MR},
we may view $M_1''$, $M_2''$ and $IN_R$ as subsets of $M_R$.
Set
\begin{equation}
M_{1,R} = M_1'' \cup IN_R \;,\hspace{5mm}
M_{2,R} = M_2'' \cup IN_R \;,\hspace{5mm}
M_{3,R} = IN_R \;.
\end{equation}
For convenience,
we denote $M_{0,R} = M_R$.
For $j=0,1,2,3$,
set
\begin{equation}
\pi_{j,R} = \pi_R\big|_{M_{j,R}}: M_{j,R} \rightarrow S \;.
\end{equation}
Let $Z_{j,R}$ be the fiber of $\pi_{j,R}$.
We denote $Z_R = Z_{0,R}$.

Recall that the diffeomorphism $\varphi_R: M \rightarrow M_R$
was constructed in \eqref{intro-varphiR}.
Recall that $T^HM \subseteq TM$ was constructed
in the paragraph containing \eqref{intro-THM}.
Set
\begin{equation}
\label{eq0-THMR}
T^HM_R = \varphi_{R,*}\big(T^HM\big) \subseteq TM_R \;.
\end{equation}
Then we have
\begin{equation}
\label{eq-THMR}
TM_R = T^HM_R \oplus TZ_R \;.
\end{equation}
Recall that $T^HN \subseteq TN$ was constructed in the paragraph containing \eqref{intro-TN}.
By \eqref{intro-THM-THN}, \eqref{intro-varphiR} and \eqref{eq0-THMR},
we have
\begin{equation}
T^HM_R \big|_{IN_R} = \mathrm{pr}_2^*\big(T^HN\big) \;,
\end{equation}
where $\mathrm{pr}_2: IN_R = [-R,R]\times N \rightarrow N$
is the projection to the second factor.
For $j=0,1,2,3$,
set
\begin{equation}
T^HM_{j,R} = T^HM_R\big|_{M_{j,R}} \subseteq TM_{j,R} \;.
\end{equation}

Recall that the metric $g^{TZ_R}$ on $TZ_R$ was constructed in \eqref{intro-gTZR}.
For $j=0,1,2,3$,
set
\begin{equation}
g^{TZ_{j,R}} = g^{TZ_R}\big|_{M_{j,R}} \;.
\end{equation}

Recall that
the flat complex vector bundle $(F,\n^F)$ over $Z_R$
and the Hermitian metric $h^F$ on $F$
were constructed in the paragraph containing \eqref{intro-hFT}.
We remark that
\eqref{eq-intro-nF} and \eqref{eq-intro-hF}
hold with $IN$ replaced by $IN_R$.

Let $f_\infty: [-1,1] \rightarrow \R$ be as in \eqref{intro-finf}.
We further assume that
\begin{align}
\label{eq-def-finf}
\begin{split}
& f_\infty(s) = f_\infty(-s) \;,\hspace{5mm}
\big| f'_\infty(s) \big| \leqslant 2 \;,\hspace{5mm}
\text{for } |s| \leqslant 1 \;;\\
& f_\infty(s) = 1 - s^2/2 \;,\hspace{5mm}
\text{for } |s|\leqslant \frac{1}{4} \;;\\
& f_\infty(s) =
(s-b)^2/2 \;,\hspace{5mm}
\text{for } b = \pm 1 ,\; |s-b|\leqslant \frac{1}{4} \;.
\end{split}
\end{align}
Let $\chi: \R \rightarrow \R$ be a smooth function such that
\begin{equation}
\label{eq-def-chi}
0 \leqslant \chi \leqslant 1 \;,\hspace{5mm}
\chi\big|_{]-\infty,1/4]} = 0 \;,\hspace{5mm}
\chi\big|_{[1/2,+\infty[} = 1 \;,\hspace{5mm}
0 \leqslant \chi' \leqslant 8 \;.
\end{equation}
As $f_\infty'$ is an odd function,
for $T\geqslant 0$,
there exists a unique smooth function $f_T: [-1,1] \rightarrow \R$ satisfying
\begin{equation}
\label{eq-def-fT}
f_T(-1) = f_T(1) = 0 \;,\hspace{5mm}
f'_T(s) = f'_\infty(s) \chi\big(e^{T^2}(1-|s|)\big) \;.
\end{equation}
By \eqref{eq-def-finf}-\eqref{eq-def-fT},
the following uniform estimates hold,
\begin{equation}
\label{eq-compare-f-fT}
f_T(s) = f_\infty(s) + \mathscr{O}\big(e^{-T^2}\big) \;,\hspace{5mm}
f_T'(s) = f'_\infty(s) + \mathscr{O}\big(e^{-T^2}\big) \;,\hspace{5mm}
f_T''(s) = \mathscr{O}\big(1\big) \;.
\end{equation}
Moreover,
we have $\mathrm{supp}(f_T) \subseteq \big[-1+e^{-T^2}/4,1-e^{-T^2}/4\big]$.
We will view $f_T$ as a smooth function on $M_R$ in the sense of \eqref{intro-fT-MR}.
Set
\begin{equation}
h^F_T = e^{-2Tf_T}h^F \;.
\end{equation}

For $j=0,1,2,3$,
let
\begin{equation}
\label{eq-def-TjRT}
\mathscr{T}_{j,R,T} \in Q^S
\end{equation}
be the analytic torsion form (cf. Definition \ref{def-atf}) associated with
\begin{equation}
\big(\pi_{j,R},T^HM_{j,R},g^{TZ_{j,R}},F\big|_{M_{j,R}},\nabla^F\big|_{M_{j,R}},h^F_T\big|_{M_{j,R}}\big) \;.
\end{equation}

For $j=0,1,2,3$,
let $d^{Z_{j,R}}$ be the de Rham operator on $\Omega^\bullet(Z_{j,R},F)$.
Let $\big\lVert\cdot\big\rVert_{Z_{j,R}}$ be the $L^2$-metric on $\Omega^\bullet(Z_{j,R},F)$
with respect to $g^{TZ_{j,R}}$ and $h^F$.
Let $d^{Z_{j,R},*}$ be the formal adjoint of $d^{Z_{j,R}}$
with respect to $\big\lVert\cdot\big\rVert_{Z_{j,R}}$.
Set
\begin{align}
\label{eq-def-DRT-conj}
\begin{split}
& \diffjRT = e^{-Tf_T}d^{Z_{j,R}}e^{Tf_T} \;,\hspace{5mm}
\stdiffjRT = e^{Tf_T}d^{Z_{j,R},*}e^{-Tf_T} \;,\\
& \DjRT = \diffjRT + \stdiffjRT \;.
\end{split}
\end{align}
We remark that $e^{Tf_T} \DjRT e^{-Tf_T}$
is the Hodge de Rham operator with respect to $g^{TZ_{j,R}}$ and $h^F_T$.
The self-adjoint extension of $\DjRT$
with domain $\mathrm{Dom}\big(\DjRT\big) = \Omega^\bullet_\mathrm{abs}(Z_{j,R},F)$
(cf. \cite[(1.4)]{pzz})
will still be denoted by $\DjRT$.
By the Hodge theorem (cf. \cite[Thm 3.1]{bm2} \cite[Thm. 1.1]{pzz}), the following map is bijective,
\begin{align}
\label{eq-hodge-DRT}
\begin{split}
\Ker\big(\DjRT\big) & \rightarrow H^\bullet(Z_{j,R},F) \\
\omega & \mapsto \big[e^{Tf_T}\omega\big] \;.
\end{split}
\end{align}

For $j=0,1,2,3$,
let $h^{H^\bullet(Z_j,F)}_{R,T}$ be the Hermitian metric on
$H^\bullet(Z_j,F) = H^\bullet(Z_{j,R},F)$
induced by $\big\lVert\cdot\big\rVert_{Z_{j,R}}$
via the identification \eqref{eq-hodge-DRT}.
Let
\begin{equation}
\mathscr{T}_{\mathscr{H},R,T}\in Q^S
\end{equation}
be the torsion form
(\cite[\textsection 2]{bl}, cf. \textsection \ref{subsec-tf})
associated with the exact sequence \eqref{eq-mv-top}
equipped with Hermitian metrics $\big(h^{H^\bullet(Z_j,F)}_{R,T}\big)_{j=0,1,2,3}$.

\subsection{Several intermediate results}
\label{subsec-intermediate}

We fix a constant $0<\kappa<1/3$.

\begin{thm}
\label{thm-central-spgap}
There exists $\alpha>0$ such that
for $j=0,1,2,3$ and $T = R^\kappa \gg 1$,
we have
\begin{equation}
\label{eq-thm-central-spgap}
\Sp \Big(R\DjRT\Big) \subseteq
\big]-\infty,-\alpha\sqrt{T}\big]
\cup \big[-1,1\big] \cup
\big[\alpha\sqrt{T},+\infty\big[ \;.
\end{equation}
\end{thm}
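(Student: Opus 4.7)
My approach combines the adiabatic-limit analysis of \cite{pzz} with the Witten-deformation framework of \cite{bz}. On $IN_R$, since $f_T(u,y)=f_T(u/R)$ we have $df_T=(1/R)f_T'(u/R)\,du$, and
\[
R\DjRT = RD^{Z_{j,R}} + T f_T'(u/R)\,\hat c(du).
\]
Because $\{D^{Z_{j,R}},\hat c(du)\}=0$ in the product structure of $IN_R$, a direct computation yields
\[
(R\DjRT)^2 = R^2(D^{Z_{j,R}})^2 + T^2\bigl(f_T'(u/R)\bigr)^2 + T\,f_T''(u/R)\,c(du)\hat c(du),
\]
the last term being a bounded Clifford-valued zero-order operator of size $O(T)$. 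Off $IN_R$, where $f_T\equiv 0$, the operator reduces to $RD^{Z_{j,R}}$.

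I would then apply the IMS localization formula with a partition of unity $\sum_k\psi_k^2=1$, whose bumps vary on scale $R$ in $u$, adapted to the following regions of $Z_{j,R}$: (i) compact subsets of the interiors of the bulk pieces (if any) away from $\partial M_j''$; (ii) a neighborhood $\{|u|\le\varepsilon_0 R\}$ of the interior critical point $v=0$ of $f_\infty$; (iii) collars straddling the endpoint critical loci $v=\pm 1$, i.e.\ the gluing (or boundary) hypersurfaces $\{u=\pm R\}$; and (iv) the transition regions of $IN_R$ where $|f_T'(u/R)|\ge c_0>0$. Since $|d\psi_k|=O(1/R)$, the IMS commutator errors are $\|[R\DjRT,\psi_k]\|=O(1)$, well below the target $\sqrt{T}$-gap. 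On the transition regions the Witten potential alone gives $(R\DjRT)^2 \ge c_0^2 T^2 - O(T) \gg \alpha^2 T$. On the bulk interior regions, the absence of compactly supported harmonic forms yields $(R\DjRT)^2 = R^2(D^{Z_{j,R}})^2 \ge R^2 c_1^2 \gg T$ for test forms supported in a fixed compact subset.

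Near each Morse critical point of $f_\infty$ I would rescale by $\sigma=\sqrt{T}\,u/R$ (and analogously $\tau=\sqrt{T}(R\mp u)/R$ at the endpoints). The rescaled model squared operator becomes the harmonic oscillator $T(-\partial_\sigma^2+\sigma^2\pm 1)$, with spectrum $\{0,2T,4T,\ldots\}$ on the appropriate eigenspace of $c(du)\hat c(du)$, so that $|R\DjRT|$ has model spectrum $\{0,\sqrt{2T},\sqrt{4T},\dots\}$. In the full fibered problem, decomposing test forms along eigenforms of the fiberwise operator $D^Y$, modes orthogonal to $\ker D^Y$ acquire an extra $R\lambda_k^Y\gg\sqrt{T}$, while modes in $\ker D^Y$ see only the one-dimensional harmonic-oscillator model and contribute a finite-dimensional space of true small eigenvalues together with a $\sqrt{T}$ gap above them. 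The endpoint critical regions require matching half-line Witten models to the bulk operator $RD^{Z_{j,R}}|_{M_j''}$ across the collars; this relies on the one-dimensional analysis of \textsection \ref{sect-dim-one-wd}. Assembling the region-wise lower bounds by a min-max argument then yields \eqref{eq-thm-central-spgap}.

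The hardest part, I expect, is the simultaneous control of the two asymptotics: the cylinder scale $R$, the Witten scale $\sqrt{T}$, the $O(T)$ Hessian error, and the $O(1)$ IMS commutators must all be balanced. The matching at the endpoint critical loci is especially delicate since $f_T$ is essentially flat there, being identically zero in an $e^{-T^2}$-neighborhood of $v=\pm 1$ and interpolating to $f_\infty$ only further in, so the Witten-potential bound fails locally and one must rely on the bulk spectral gap through a quantitative Helffer-Sj\"ostrand-type tunneling estimate. Keeping all these errors strictly below $\sqrt{T}$ is precisely what enforces the constraint $T=R^\kappa$ with $\kappa<1/3$.
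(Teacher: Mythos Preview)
Your IMS-localization scheme is a genuinely different route from the paper's argument, and the gap lies exactly where you flagged the difficulty --- but the mechanism you propose for the endpoint matching is not the right one.

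The paper never localizes. Instead, given an eigensection $\omega$ of $R\DjRT$ with eigenvalue $\lambda$ in the putative gap, it projects $\omega|_{IY_R}$ onto its zero-mode $\omega^{\mathrm{zm}}\in\Omega^\bullet(I_R,\hh(Y,F))$. Two lemmas carry the weight: Lemma~\ref{prop-zm-minoration} (proved by contradiction, using the approximate kernel construction of Proposition~\ref{prop-FRT}) shows that $\omega^{\mathrm{zm}}$ dominates $\omega$; and Lemma~\ref{prop-zm-sca-estimate} shows, via Green's formula and the scattering theory of $D^{Z_{j,\infty}}$, that
\[
\omega^{\mathrm{zm},+}\big|_{\partial Z_{j,0}} \approx C_j(0)\,\omega^{\mathrm{zm},-}\big|_{\partial Z_{j,0}},
\]
i.e.\ $\omega^{\mathrm{zm}}$ nearly lies in $\LL^\bullet_j=\Ker(\Id-C_j(0))$ at each end. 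This is precisely the boundary condition of the one-dimensional model $D^{\hh(Y,F)}_{T,\bd}$ of \textsection\ref{subsect-wd-interval}, so Proposition~\ref{prop-Df-eigen-approx} forces $\lambda$ to be close to $\Sp(D^{\hh(Y,F)}_{T,\bd})$, and Theorem~\ref{thm-witten-spec} finishes.

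Your plan misses this scattering step. The subspace $V_j=\LL^\bullet_{j,\mathrm{abs}}\subseteq\hh(Y,F)$ is \emph{global} data --- it is the image of $H^\bullet(Z_j,F)\to H^\bullet(Y,F)$ --- and it cannot be recovered from a local ``bulk spectral gap'' or a Helffer--Sj\"ostrand tunneling estimate. In fact the bulk pieces have no spectral gap in the sense you need: $D^{Z_{j,\infty}}$ has both an $L^2$-kernel (the space $K^\bullet_j$) and a continuous spectrum down to $0$, with generalized eigensections whose zero-mode asymptotics are governed by $C_j(\lambda)$. What determines the effective boundary condition is not tunneling through a barrier but reflection at the bulk, encoded in $C_j(0)$. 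Without this, your reference to ``the one-dimensional analysis of \textsection\ref{sect-dim-one-wd}'' is circular: that section \emph{assumes} $V_1,V_2\subseteq V$ are given.

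There is also a secondary issue with your cutoffs. You claim $|d\psi_k|=O(1/R)$ uniformly, but the bulk pieces $Z_{j,0}$ have fixed diameter $O(1)$, so a cutoff separating a compact interior subset of $Z_{j,0}$ from its collar necessarily has $|d\psi_k|=O(1)$, producing an IMS error $\|[R\DjRT,\psi_k]\|^2=O(R^2)\gg T$. The only way to keep $O(1/R)$ gradients is to let region~(iii) swallow all of $Z_{j,0}$ plus a length-$O(R)$ piece of the cylinder --- but then the whole analysis lives in region~(iii), and you are back to the scattering problem above.
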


Let $\mathscr{E}^{[-1,1]}_{j,R,T}\subseteq\Omega^\bullet(Z_{j,R},F)$
be eigenspace of $R\DjRT$ associated with eigenvalues in $[-1,1]$.
Since $\diffjRT$ commutes with $\DsjRT$,
we have a finite dimensional complex
\begin{equation}
\label{eq-def-ERT}
\big(\mathscr{E}^{[-1,1]}_{j,R,T}\,,\,\diffjRT\big) \;.
\end{equation}

Recall that
the chain complexes of flat complex vector bundles $\big(C^\bullet_j,\partial\big)$ with $j=0,1,2,3$
were constructed in \textsection \ref{subsec-tf-model}.
Their construction depends on the morphisms
$\tau_1: W_1 \rightarrow V$ and
$\tau_2: W_2 \rightarrow V$.
In the sequel, we take
\begin{equation}
\label{eq-complex-evaluation}
V = H^\bullet(Y,F) \;,\hspace{5mm}
W_j = H^\bullet(Z_j,F) \;,\hspace{5mm}
\tau_j\big([\alpha]\big) = [\alpha]\big|_Y \hspace{5mm}
\text{ for } \; j=1,2 \;.
\end{equation}
Then $W_1$, $W_2$ and $V$ are $\Z$-graded.
We will use the notations
$W^\bullet_1$, $W^\bullet_2$, $V^\bullet$
and $\big(C^{\bullet,\bullet}_j,\partial\big)$
to emphasis the grading, i.e.,
$C^{0,k}_0 = W^k_1 \oplus W^k_2$, $C^{1,k}_0 = V^k$, etc.

Now we construct a Hermitian metric on $C^{\bullet,\bullet}_j$.
Let $\DY$ be the Hodge de Rham operator on $\Omega^\bullet(Y,F)$
with respect to $g^{TY}$ and $h^F\big|_N$.
We denote
$\hh(Y,F) = \Ker\big(\DY\big)$.
For $j=1,2$,
let
\begin{equation}
\label{eq-hh}
\hha(Z_{j,\infty},F) \subseteq
\Big\{ \omega\in\Omega^\bullet(Z_{j,\infty},F) \;:\;
d^{Z_{j,\infty}}\omega = d^{Z_{j,\infty},*}\omega = 0 \Big\}
\times \hh(Y,F)
\end{equation}
be as in \cite[(2.52)]{pzz}.
By \cite[Prop. 3.16, Thm. 3.19]{pzz},
the map
\begin{align}
\label{eq-hha2coh}
\begin{split}
\hha(Z_{j,\infty},F) & \rightarrow H^\bullet(Z_{j,\infty},F) = W^\bullet_j \\
(\omega,\hat{\omega}) & \mapsto [\omega]
\end{split}
\end{align}
is bijective.
By \cite[(2.39), (2.52)]{pzz},
the following diagram commutes,
\begin{equation}
\label{eq-commute-hha12}
\xymatrix{
\hha(Z_{j,\infty},F)
\ar[r]
\ar[d]_{(\omega,\hat{\omega})\mapsto\hat{\omega}}
& W^\bullet_j
\ar[d]^{[\omega]\mapsto[\omega]|_Y} \\
\hh(Y,F) \ar[r]^{\hspace{5mm}\hat{\omega}\mapsto[\hat{\omega}]}
& V^\bullet \;. }
\end{equation}
Let $\Djinf$ be the Hodge de Rham operator on $\Omega^\bullet(Z_{j,\infty},F)$
with respect to $g^{TZ_{j,\infty}}$ and $h^F$.
By \cite[(2.40)]{pzz},
we have
\begin{equation}
\label{eq-WKK}
W^\bullet_j =
K^\bullet_j \oplus
K^{\bullet,\perp}_j
\end{equation}
with
\begin{align}
\label{eq-def-WKK}
\begin{split}
K^\bullet_j &
= \Big\{ [\omega] \;:\;
(\omega,\hat{\omega})\in\hha(Z_{j,\infty},F)\;,\;
\hat{\omega} = 0 \Big\} \;,\\
K^{\bullet,\perp}_j &
= \Big\{ [\omega] \;:\;
(\omega,\hat{\omega})\in\hha(Z_{j,\infty},F)\;,\\
& \hspace{25mm} \omega \text{ generalized eigensection of } \Djinf  \text{ associated with } 0 \Big\} \;.
\end{split}
\end{align}
\begin{rem}
As a convention,
a generalized eigenvalue (resp. eigensection)
is always associated with the absolutely continuous spectrum.
In other words,
a generalized eigenvalue (resp. eigensection)
is not an eigenvalue (resp. eigensection).
\end{rem}

By \eqref{eq-commute-hha12},
the definition of $K^\bullet_j$ in \eqref{eq-def-WKK}
is compatible with \eqref{eq-def-K}.
We construct a Hermitian metric
$h^{K^\bullet_j}$ on $K^\bullet_j$
as follows:
for $(\omega,\hat{\omega})\in\hha(Z_{j,\infty},F)$
with $\hat{\omega}=0$,
\begin{equation}
\label{eq-def-hK}
h^{K^\bullet_j}\big([\omega],[\omega]\big)
= \big\lVert\omega\big\rVert^2_{Z_{j,\infty}} \;.
\end{equation}
By \cite[(2.53)]{pzz},
we have $\big\lVert\omega\big\rVert^2_{Z_{j,\infty}}<+\infty$.
Hence $h^{K^\bullet_j}$ is well-defined.
We construct a Hermitian metric
$h^{K^{\bullet,\perp}_j}$ on $K^{\bullet,\perp}_j$
as follows:
for $(\omega,\hat{\omega})\in\hha(Z_{j,\infty},F)$
with $\omega$ a generalized eigensection of $\Djinf$,
\begin{equation}
h^{K^{\bullet,\perp}_j}\big([\omega],[\omega]\big)
= \big\lVert\hat{\omega}\big\rVert^2_Y \;.
\end{equation}
Set
\begin{equation}
\label{eq-def-hW}
h^{W^\bullet_j}_{R,T} =
h^{K^\bullet_j} \oplus
\frac{\sqrt{\pi}}{2} R T^{-1/2} h^{K^{\bullet,\perp}_j} \;.
\end{equation}
Let $h^{V^\bullet}$ be the Hermitian metric on $V^\bullet$
induced by $\big\lVert\cdot\big\rVert_Y$
via the identification $V^\bullet = \hh(Y,F)$
induced by the Hodge theory.
Set
\begin{equation}
\label{eq-def-hVRT}
h^{V^\bullet}_{R,T} =
\sqrt{\pi}  R T^{-1/2} h^{V^\bullet} \;.
\end{equation}
We construct a Hermitian metric
$h^{C^{\bullet,\bullet}_j}_{R,T}$ on $C^{\bullet,\bullet}_j$
as follows,
\begin{align}
\label{eq-def-hC}
\begin{split}
& h^{C^{0,\bullet}_0}_{R,T} =
h^{W^\bullet_1}_{R,T} \oplus h^{W^\bullet_2}_{R,T} \;,\\
& h^{C^{0,\bullet}_j}_{R,T} =
h^{W^\bullet_j}_{R,T} \oplus \frac{1}{2} h^{V^\bullet}_{R,T}
\hspace{5mm}\text{for } j=1,2 \;,\\
& h^{C^{0,\bullet}_3}_{R,T} =
\frac{1}{2} h^{V^\bullet}_{R,T} \oplus \frac{1}{2} h^{V^\bullet}_{R,T} \;,\\
& h^{C^{1,\bullet}_j}_{R,T} = h^{V^\bullet}_{R,T}
\hspace{5mm}\text{for } j=0,1,2,3 \;.
\end{split}
\end{align}

For a positive function $G(R,T)$ on $R,T$
and a two-parameter family of operators $A_{R,T}\in\mathrm{End}\big(C^{\bullet,\bullet}_j\big)$,
we denote $A_{R,T} = \mathscr{O}_{R,T}\big(G(R,T)\big)$
if there exists $C>0$ such that
the operator norm of $A_{R,T}$ with respect to $h^{C^{\bullet,\bullet}_j}_{R,T}$
is bounded by $C G(R,T)$.

\begin{thm}
\label{thm-central-SRT}
There exist linear maps
\begin{equation}
\mathscr{S}_{j,R,T}: \; C^{\bullet,\bullet}_j \rightarrow \mathscr{E}^{[-1,1]}_{j,R,T}
\end{equation}
with $j=0,1,2,3$ such that
\begin{itemize}
\item[-] the map $\mathscr{S}_{j,R,T}$ preserves the grading, i.e.,
\begin{equation}
\mathscr{S}_{j,R,T}\big(C^{p,q}_j\big)
\subseteq \mathscr{E}^{[-1,1]}_{j,R,T} \cap \Omega^{p+q}(Z_{j,R},F) \;;
\end{equation}
\item[-] for $T = R^\kappa \gg 1$,
the map $\mathscr{S}_{j,R,T}$ is bijective
(as a consequence,
$\dim \mathscr{E}^{[-1,1]}_{j,R,T}$ is independent of $T = R^\kappa$);
\item[-] for $T = R^\kappa \gg 1$ and $\sigma\in C^{\bullet,\bullet}_j$,
we have
\begin{align}
\begin{split}
\Big\lVert \mathscr{S}_{j,R,T}(\sigma) \Big\rVert^2_{Z_{j,R}}
 = h^{C^{\bullet,\bullet}_j}_{R,T}(\sigma,\sigma)
\Big( 1 + \mathscr{O}\big(R^{-1/2+\kappa/4}\big) \Big) \;;
\end{split}
\end{align}
\item[-] for $T = R^\kappa \gg 1$,
we have
\begin{equation}
\label{eq4-thm-central-SRT}
\mathscr{S}_{j,R,T}^{-1}\circ \diffjRT \circ \mathscr{S}_{j,R,T} =
\pi^{-1/2} R^{-1} T^{1/2} e^{-T}
\Big( \partial + \mathscr{O}_{R,T}\big(R^{-1/2+\kappa/4}\big) \Big) \;.
\end{equation}
\end{itemize}
\end{thm}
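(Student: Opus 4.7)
The strategy is to construct explicit quasi-modes of $R\DjRT$ from two geometrically distinct sources: for the degree-zero part $C^{0,\bullet}_j$ of the model, from extended $L^2$-harmonic forms on the limit spaces $Z_{j,\infty}$ recorded by $\hha(Z_{j,\infty},F)$, and for the degree-one part $C^{1,\bullet}_j=V^\bullet$, from a one-dimensional Witten-type ground state concentrated at the maximum of $f_T$ in the middle of the neck $IN_R$. The model map $\partial$ of \eqref{eq1-def-complex} will emerge as the $e^{-T}$-suppressed tunnelling amplitude from the cylinder ends to the barrier at $u=0$.

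Given $\sigma\in C^{0,\bullet}_j$, decompose its $W^\bullet_k$-components via \eqref{eq-WKK} and represent each class by a pair $(\omega,\hat\omega)\in\hha(Z_{j,\infty},F)$ under \eqref{eq-hha2coh}. Define $\mathscr{S}_{j,R,T}(\sigma)$ on $Z_{j,R}$ by truncating $\omega$ along the neck $IN_R$, using the cylinder structure so that deep inside $IN_R$ the section matches the constant extension of $\hat\omega$ along $[-R,R]$. For $K^\bullet_j$-modes only exponentially small tails remain, while $K^{\bullet,\perp}_j$-modes pick up mass proportional to half the neck length, giving the $\tfrac{\sqrt\pi}{2}RT^{-1/2}$ normalization in \eqref{eq-def-hW}. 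For $\tau\in V^\bullet=\hh(Y,F)$ represented by a harmonic form $\hat\tau$ on $Y$, set $\mathscr{S}_{j,R,T}(\tau)$ to be supported in $IN_R$ and equal there, up to a normalization $c_{R,T}$, to
\begin{equation*}
c_{R,T}^{-1}\, e^{T(f_T(u/R)-1)}\, du \wedge \mathrm{pr}_2^*\hat\tau\;,
\end{equation*}
the rescaled $\diffjRT$-harmonic one-dimensional Witten ground state at $u=0$ tensored with $\hat\tau$. The Gaussian integral
\begin{equation*}
\int_{-R}^{R} e^{2T(f_T(u/R)-1)}\,du = R\sqrt{\pi/T}\,\bigl(1+\mathscr{O}(T^{-1})\bigr)
\end{equation*}
pins down $c_{R,T}$ and matches the metric \eqref{eq-def-hVRT}.

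The three claims are verified as follows. For the norm identity, orthogonality to leading order holds because $K^\bullet_j$-modes are concentrated away from the neck, the $K^{\bullet,\perp}_j$-modes carry no $du$-component while the $V^\bullet$-modes are purely $du$-type, and the remaining overlaps are exponentially small in $T$. For the spectral containment in $\mathscr{E}^{[-1,1]}_{j,R,T}$, a direct computation yields $\lVert R\DjRT\,\mathscr{S}_{j,R,T}(\sigma)\rVert_{Z_{j,R}}=\mathscr{O}(e^{-T})$, so Cor.\ \ref{cor-proj-estimate-better} together with the spectral gap of Thm.\ \ref{thm-central-spgap} shows that replacing $\mathscr{S}_{j,R,T}(\sigma)$ by its $P^{[-1,1]}$-projection incurs polynomially small errors, and also forces $\dim\mathscr{E}^{[-1,1]}_{j,R,T}=\dim C^{\bullet,\bullet}_j$, hence bijectivity. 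For \eqref{eq4-thm-central-SRT}, applying $\diffjRT=e^{-Tf_T}d^{Z_{j,R}}e^{Tf_T}$ to $\mathscr{S}_{j,R,T}(\sigma)$ with $\sigma\in C^{0,\bullet}_j$ kills the section on $Z_j''$ modulo $\mathscr{O}(e^{-T})$ and produces on $IN_R$ a one-dimensional Witten coboundary of the constant $\hat\omega$-extension; this is precisely an $e^{-T}$-suppressed multiple of the $V^\bullet$-mode indexed by $[\hat\omega]$. The commutativity \eqref{eq-commute-hha12} identifies this boundary datum with $\tau_k[\omega]$, which by \eqref{eq1-def-complex} is the model $\partial$; the explicit scalar one-dimensional computation of \textsection \ref{sect-dim-one-wd} produces the prefactor $\pi^{-1/2}R^{-1}T^{1/2}e^{-T}$.

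The principal technical difficulty lies in the $K^{\bullet,\perp}_j$-contributions, which are not $L^2$-harmonic on $Z_{j,\infty}$, so their truncation on $Z_{j,R}$ produces residuals that must be controlled uniformly as $T=R^\kappa\to+\infty$; sharpening both the metric and intertwining errors to the prescribed order $R^{-1/2+\kappa/4}$ requires combining the one-dimensional Witten asymptotics on $[-R,R]$ developed in \textsection \ref{sect-dim-one-wd} with the scattering estimates for $\Djinf$ from \cite{pzz}. The constraint $\kappa<1/3$ enters through the balance between the Gaussian width $R/\sqrt{T}=R^{1-\kappa/2}$, the neck length $R$ and the large-eigenvalue gap $\sqrt{T}=R^{\kappa/2}$.
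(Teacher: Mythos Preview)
Your construction of the quasi-modes is essentially the one the paper uses, and the computation of the de Rham action via a Stokes-type/one-dimensional tunnelling argument is also on the right track. There is, however, a genuine gap in the bijectivity step.

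\medskip

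\textbf{The gap: surjectivity.} You write that the bound $\lVert R\DjRT\,\mathscr{S}_{j,R,T}(\sigma)\rVert=\mathscr{O}(e^{-T})$ together with the spectral gap ``also forces $\dim\mathscr{E}^{[-1,1]}_{j,R,T}=\dim C^{\bullet,\bullet}_j$''. It does not. Quasi-mode estimates give you that the $P^{[-1,1]}$-projection of your approximate eigensections is injective, hence $\dim\mathscr{E}^{[-1,1]}_{j,R,T}\geqslant\dim C^{\bullet,\bullet}_j$. They say nothing about the reverse inequality: there could a priori be additional eigensections of $R\DjRT$ with eigenvalue in $[-1,1]$ that are not close to the image of your map. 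Ruling this out is the hard half of Theorem \ref{thm-SRT-bij}. The paper's argument (Lemmas \ref{prop-zm-minoration}, \ref{prop-zm-sca-estimate} and Step~3 of Theorem \ref{thm-SRT-bij}) shows that \emph{any} eigensection $\omega$ of $R\DRT$ with small nonzero eigenvalue has its zero-mode $\omega^{\mathrm{zm}}$ carrying most of the $L^2$-mass and approximately satisfying the boundary conditions of the one-dimensional operator $D^{\hh(Y,F)}_{T,\bd}$; this produces an injective map $\pi_{R,T}$ from $\mathscr{S}^H_{R,T}(U^{\bullet,\bullet})\oplus\mathscr{E}^{[-1,1]\backslash\{0\}}_{0,R,T}$ into the one-dimensional small-eigenvalue space $\mathscr{E}^{[-1,1]}_T$, whose dimension is known by Theorem \ref{thm-witten-estimates}. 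Only then does a dimension count close. Your plan does not contain this ingredient.

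\medskip

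\textbf{A secondary point.} The paper does not take a single truncation of $\omega$; it uses a \emph{pair} $F^+_{R,T},G^+_{R,T}$ (see \eqref{eq1-def-FRTplus}) with $\stdiffRT G^+_{R,T}=0$ exactly and $\diffRT F^+_{R,T}=\mathscr{O}(e^{-aT})$, and defines $\mathscr{S}_{R,T}\big|_{C^{0,\bullet}}=P^{[-1,1]}_{R,T}G^+_{R,T}$. The $d^*_T$-closedness of $G^+_{R,T}$ is what gives the clean filtration statement $\diffRT\mathscr{S}_{R,T}(C^{0,\bullet})\subseteq\mathscr{S}_{R,T}(C^{1,\bullet})$ (Proposition \ref{prop-SRT-grading}) and the exact orthogonality of $\mathscr{S}_{R,T}(C^{0,\bullet})$ and $\mathscr{S}_{R,T}(C^{1,\bullet})$ (Proposition \ref{prop-perp-GI}). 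The pair $(F^+,G^+)$ together feeds into Corollary \ref{cor-proj-estimate-better} to control $\lVert(\Id-P^{[-1,1]})G^+\rVert$. If you only build one truncation, you should check which of these two roles it is meant to play and supply the other.
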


For ease of notations, we denote
\begin{equation}
\label{eq-def-partial-T}
\partial_T = \pi^{-1/2} T^{1/2} e^{-T} \partial
:\; C^{0,\bullet}_j \rightarrow C^{1,\bullet}_j \;.
\end{equation}
Let $\widehat{\mathscr{T}}^k_{j,R,T}\in Q^S$
be the torsion form (cf. \textsection \ref{subsec-tf}) associated with $\big(C^{\bullet,k}_j,R^{-1}\partial_T,h^{C^{\bullet,k}_j}_{R,T}\big)$.
We view $\big(C^{\bullet,\bullet}_j,R^{-1}\partial_T\big)$ as a complex,
whose component of degree $k$ is given by $\bigoplus_{p+q=k}C^{p,q}_j$.
Let $\widehat{\mathscr{T}}_{j,R,T}\in Q^S$
be the torsion form associated with $\big(C^{\bullet,\bullet}_j,R^{-1}\partial_T,h^{C^{\bullet,\bullet}_j}_{R,T}\big)$.
The following identity is a consequence of \cite[Thm. 7.37]{g-a2},
which may be viewed as an analogue of a result of Ma on the Bott-Chern forms \cite[Thm 1.2]{ma-ajm},
and may also be viewed a finite dimensional version of \cite[Thm 0.1]{ma},
\begin{equation}
\label{eq-def-TR}
\widehat{\mathscr{T}}_{j,R,T}
= \sum_{k=0}^{\dim Z} (-1)^k \widehat{\mathscr{T}}^k_{j,R,T} \;.
\end{equation}

For $G(R,T)$ a positive function on $R,T\geqslant 1$
and $\big(\tau_{R,T}\big)_{R,T\geqslant 1}$ a family of differential forms on $S$ with values in a Hermitian vector bundle $\big(E,\big\lVert\cdot\big\rVert_E\big)$,
we write
\begin{equation}
\label{eq-OE}
\tau_{R,T} = \mathscr{O}_E\big(G(R,T)\big) \;,
\end{equation}
if there exists $C>0$
such that the $\mathscr{C}^0$-norm of $\tau_{R,T}$ is dominated by $C G(R,T)$ for $R,T\geqslant 1$.
We remark that $\mathscr{O}_E(\cdot)$ is independent of the norm $\big\lVert\cdot\big\rVert_E$.
If $E$ is a trivial line bundle,
we abbreviate \eqref{eq-OE} as $\tau_{R,T} = \mathscr{O}\big(G(R,T)\big)$.

We equip $Q^S$ with the $\mathscr{C}^0$-norm.
Then,
by the de Rham theorem (cf. \cite[Thm. 1.1 (d)]{bm}),
$Q^{S,0} \subseteq Q^S$ is closed.
We equip $Q^S/Q^{S,0}$ with quotient norm.
For a family of elements in $Q^S/Q^{S,0}$ parameterized by $R,T\geqslant 1$,
we use the same notation as in \eqref{eq-OE} with the $\mathscr{C}^0$-norm replaced by the quotient norm.

\begin{thm}
\label{thm-central-tf}
For $T = R^\kappa \gg 1$,
the following identity holds in $Q^S/Q^{S,0}$,
\begin{equation}
\label{eq-thm-central-tf}
\sum_{j=0}^3 (-1)^{j(j-3)/2} \mathscr{T}_{j,R,T} =
\sum_{j=0}^3 (-1)^{j(j-3)/2} \widehat{\mathscr{T}}_{j,R,T}
+ \mathscr{O}\big(R^{-\kappa/4}\big) \;.
\end{equation}
\end{thm}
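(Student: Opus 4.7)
The plan is, for each $j=0,1,2,3$, to split
\begin{equation*}
\mathscr{T}_{j,R,T} = \mathscr{T}^{\mathrm{sm}}_{j,R,T} + \mathscr{T}^{\mathrm{lg}}_{j,R,T}
\end{equation*}
into the contributions of small and large eigenvalues of $R\DjRT$, to show that the small piece matches $\widehat{\mathscr{T}}_{j,R,T}$ term by term in $Q^S/Q^{S,0}$, and that the alternating sum $\sum_j (-1)^{j(j-3)/2}\mathscr{T}^{\mathrm{lg}}_{j,R,T}$ is small. Concretely, after the scaling $t \mapsto t/R^2$ in Definition \ref{def-atf} (so that the fiber super-Dirac part becomes $\tfrac{\sqrt t}{2}R^{-1}(\stdiffjRT - \diffjRT)$, matching the scaling of Theorems \ref{thm-central-spgap} and \ref{thm-central-SRT}), I would insert the spectral projection $P^{[-1,1]}_{j,R,T}$ onto $\mathscr{E}^{[-1,1]}_{j,R,T}$ inside the superconnection trace and redistribute the constants $\chi'(Z_j,F)/2$ and the $f'(i\sqrt t/2)$ correction so that both pieces have convergent $t$-integrals.

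For $\mathscr{T}^{\mathrm{sm}}_{j,R,T}$, I would transport the restricted data to the model complex $(C^{\bullet,\bullet}_j, R^{-1}\partial_T, h^{C^{\bullet,\bullet}_j}_{R,T})$ via $\mathscr{S}_{j,R,T}$. By the four bullets of Theorem \ref{thm-central-SRT}, the pulled-back metric, connection piece and differential agree with the model data up to $\mathscr{O}(R^{-1/2+\kappa/4})$ in the sense needed by Proposition \ref{prop-comparison-tf} and Remark \ref{lem-notlfat} (the latter is necessary because $\mathscr{S}_{j,R,T}$ is not a morphism of flat bundles, so the pulled-back connection-like term only plays the role of $\mu$). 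This comparison bound, with $\delta = \mathscr{O}(R^{-1/2+\kappa/4})$, gives
\begin{equation*}
\mathscr{T}^{\mathrm{sm}}_{j,R,T} = \widehat{\mathscr{T}}_{j,R,T} + \mathscr{O}(R^{-1/4+\kappa/8}) \quad \text{in } Q^S/Q^{S,0},
\end{equation*}
and since $\kappa<1/3$ one has $-1/4+\kappa/8<-\kappa/4$, fitting the target error.

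For the large-eigenvalue piece, Theorem \ref{thm-central-spgap} gives $|R\DjRT|\geq\alpha\sqrt T$ on the range of $\Id - P^{[-1,1]}_{j,R,T}$, yielding exponential decay in $T = R^\kappa$ of the $t$-large part of the integrand. The $t$-small part does not decay termwise, so here I would use the finite propagation speed for the wave operator associated with $\DjRT$ (in the spirit of \cite{pzz} and using \cite{tay,mm}) to represent $f'(\mathscr{D}^{j,R,T}_t)(\Id - P^{[-1,1]}_{j,R,T})$ as a cutoff wave-kernel convolution supported in a thin neighborhood of each base point. Away from $IN_R$, the four manifolds $Z_R, Z_{1,R}, Z_{2,R}, IY_R$ coincide locally, so the alternating pattern $(+,-,-,+)$ of the localized large-eigenvalue traces cancels pointwise; the remaining contributions, concentrated near $IN_R$, are controlled by the spectral gap.

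The main obstacle is to carry out this decomposition uniformly in $t$ across the three natural regimes ($t \ll R^{-2}$ where local index theory dominates, $R^{-2} \ll t \ll R^2 T^{-1}$ where finite propagation speed and heat-kernel localization are needed, and $t \gg R^2 T^{-1}$ where only the small-eigenvalue part survives), while keeping the cohomological and topological counterterms properly redistributed so that each $\mathscr{T}^{\mathrm{sm}}_{j,R,T}$ and $\mathscr{T}^{\mathrm{lg}}_{j,R,T}$ is separately convergent and the Mayer--Vietoris cancellation of the large-eigenvalue traces is transparent.
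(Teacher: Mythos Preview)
Your overall strategy is the paper's: reduce $\mathscr{T}_{j,R,T}$ to the torsion form $\widetilde{\mathscr{T}}_{j,R}$ of the superconnection projected to $\mathscr{E}^{[-1,1]}_{j,R,T}$ (this is Proposition~\ref{prop-fwedge-proj}), then compare $\widetilde{\mathscr{T}}_{j,R}$ to $\widehat{\mathscr{T}}_{j,R,T}$ via $\mathscr{S}_{j,R,T}$ and Remark~\ref{lem-notlfat}. One omission in your second step: Theorem~\ref{thm-central-SRT} controls the transported metric and differential, but says nothing about the connection one-form. The paper needs the separate Lemmas~\ref{lem-SRT-omega} and~\ref{lem-SRT-omegaRT} to show that $\mathscr{S}_{j,R,T}^{-1}\circ\big(P^{[-1,1]}_{j,R,T}\omega^{\mathscr{F}_{j,R}}P^{[-1,1]}_{j,R,T}\big)\circ\mathscr{S}_{j,R,T}$ matches $\omega^{C^{\bullet,\bullet}_j}_{R,T}$ up to $\mathscr{O}_{R,T}(R^{-1/2+\kappa/4})$; this is what feeds the $\mu$-slot of Remark~\ref{lem-notlfat}.

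The genuine gap is in your small-$t$ treatment of the large-eigenvalue remainder. You propose to localize $f'(\mathscr{D}_{j,R,t})\big(\Id - P^{[-1,1]}_{j,R,T}\big)$ by finite propagation speed and then cancel pointwise in the alternating sum, but the spectral projector is non-local, so this composition has no support property and the cancellation mechanism you describe does not apply to it. The paper sidesteps this by splitting in \emph{time}, not by eigenvalue: one cuts at the single threshold $t = R^{2-\kappa/2}$. For $t\leqslant R^{2-\kappa/2}$ one writes $f' = F_\varsigma + G_\varsigma$ with $\varsigma = tR^{-2+\kappa/4}$, applies finite propagation speed to $F_\varsigma(\mathscr{D}_{j,R,t})$ on the \emph{full} operator (so the alternating sum of traces vanishes identically, Proposition~\ref{prop-fps}), and bounds $G_\varsigma(\mathscr{D}_{j,R,t})$ in trace norm by Schatten estimates (Lemma~\ref{lem-fps-G}, Proposition~\ref{prop-fps-G}); separately, since the eigenvalues on $\mathscr{E}^{[-1,1]}_{j,R,T}$ are $\mathscr{O}(e^{-T})$, one checks by direct computation that the small-time piece of $\widetilde{\mathscr{T}}_{j,R}$ produces the \emph{same} explicit integral (see \eqref{eq6-pf-prop-fwedge-proj}--\eqref{eq8-pf-prop-fwedge-proj}). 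Only for $t\geqslant R^{2-\kappa/2}$ is the full-versus-projected comparison made directly, via the resolvent bound of Lemma~\ref{prop-ltime-proj}, which gives $\big\lVert f'(\mathscr{D}_{j,R,t}) - f'(\widetilde{\mathscr{D}}_{j,R,t})\big\rVert_1 = \mathscr{O}(R^{1-\kappa/2})\,t^{-1/2}$ and integrates over $[R^{2-\kappa/2},\infty)$ to $\mathscr{O}(R^{-\kappa/4})$. In short, the eigenvalue split you want is only performed after the time cutoff, and for small $t$ both the full and projected pieces are handled independently and matched against a common constant.
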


We have a Mayer-Vietoris exact sequence of flat complex vector bundles over $S$,
\begin{align}
\label{eq-mv-model-k}
\begin{split}
0 \rightarrow
H^0\big( C^{\bullet,k}_0,R^{-1}\partial_T \big) & \rightarrow
H^0\big( C^{\bullet,k}_1 \oplus C^{\bullet,k}_2,R^{-1}\partial_T \big) \\
& \rightarrow
H^0\big( C^{\bullet,k}_3,R^{-1}\partial_T \big) \rightarrow
H^1\big( C^{\bullet,k}_0,R^{-1}\partial_T \big) \rightarrow 0 \;,
\end{split}
\end{align}
which is induced by \eqref{eq-ses-CCC} with $\partial$ replaced by $R^{-1}\partial_T$.
We equip the cohomology groups in \eqref{eq-mv-model-k}
with Hermitian metrics
induced by $h^{C^{\bullet,\bullet}_j}_{R,T}$
via \eqref{eq-hodge-dim-fini}.
Let $\widehat{\mathscr{T}}_{\mathscr{H},R,T}^k\in Q^S$ be the torsion form
(cf. \textsection \ref{subsec-tf})
associated with the exact sequence \eqref{eq-mv-model-k}.
Set
\begin{equation}
\label{eq-def-THR}
\widehat{\mathscr{T}}_{\mathscr{H},R,T} =
\sum_k (-1)^k \widehat{\mathscr{T}}_{\mathscr{H},R,T}^k \in Q^S\;.
\end{equation}

\begin{thm}
\label{thm-central-mv}
For $T = R^\kappa \gg 1$,
the following identity holds in $Q^S/Q^{S,0}$,
\begin{equation}
\mathscr{T}_{\mathscr{H},R,T} =
\widehat{\mathscr{T}}_{\mathscr{H},R,T} + \mathscr{O}\big(R^{-1/4+\kappa/8}\big) \;.
\end{equation}
\end{thm}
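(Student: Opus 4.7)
The plan is to deduce Theorem \ref{thm-central-mv} from Proposition \ref{prop-comparison-tf} by identifying the Mayer-Vietoris long exact sequence \eqref{eq-mv-top}, viewed as an acyclic complex equipped with the metrics $h^{H^\bullet(Z_j,F)}_{R,T}$, with the direct sum over $k$ of the model short exact sequences \eqref{eq-mv-model-k}. On cohomology the identification is induced by the asymptotic bijections $[\mathscr{S}^H_{j,R,T}]_T : H^\bullet(C^{\bullet,\bullet}_j, \partial) \to H^\bullet(Z_j, F)$ coming from Theorem \ref{thm-central-SRT}. For $j = 1, 2, 3$ the cohomology of $(C^{\bullet,\bullet}_j, \partial)$ is concentrated in position $0$ and equals $H^\bullet(Z_j, F)$ by \eqref{eq-complex-evaluation}, so the identification is essentially canonical. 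For $j = 0$ the bijection $[\mathscr{S}^H_{0,R,T}]_T$ respects the short exact sequence of flat vector bundles
\begin{equation*}
0 \to H^1(C^{\bullet,\bullet}_0, \partial) \to H^\bullet(Z, F) \to H^0(C^{\bullet,\bullet}_0, \partial) \to 0
\end{equation*}
mentioned at the end of the introduction, whose graded pieces realize $\mathrm{im}(\delta^{k-1})$ and $\ker(\tau^k)$ inside the Mayer-Vietoris long exact sequence.

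Concretely, I first modify $\mathscr{S}_{j,R,T}$ into a genuine chain map by an operator of size $\mathscr{O}_{R,T}(R^{-1/2+\kappa/4})$ (possible thanks to \eqref{eq4-thm-central-SRT}), and then compose with the Hodge projection \eqref{eq-hodge-DRT} to descend to cohomology. Under the resulting identification of the MV sequence with the direct sum of the model short exact sequences, the norm estimate of Theorem \ref{thm-central-SRT} gives a relative error $\mathscr{O}(R^{-1/2+\kappa/4})$ between the metrics $h^{H^\bullet(Z_j,F)}_{R,T}$ and the pushforwards of $h^{C^{\bullet,\bullet}_j}_{R,T}$. The boundary-map hypothesis of Proposition \ref{prop-comparison-tf} is automatic since, by \eqref{eq-complex-evaluation}, the model $\tau_j$'s are literally the restrictions to $Y$ appearing in the MV sequence. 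The connection-one-form hypothesis then follows by differentiating the metric comparison along $S$ and exploiting flatness. Proposition \ref{prop-comparison-tf} finally produces an error $\mathscr{O}\bigl((R^{-1/2+\kappa/4})^{1/2}\bigr) = \mathscr{O}(R^{-1/4+\kappa/8})$, matching the statement.

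The main obstacle is the case $j = 0$: the Hodge metric $h^{H^\bullet(Z,F)}_{R,T}$ a priori couples the two graded pieces of the MV extension, whereas the model metric $h^{C^{\bullet,\bullet}_0}_{R,T}$ treats them as an orthogonal direct sum with strikingly different scalings $1$ and $\sqrt{\pi}RT^{-1/2}$ (cf. \eqref{eq-def-hW}--\eqref{eq-def-hC}). The spectral gap of Theorem \ref{thm-central-spgap}, which separates the genuine harmonic forms from eigenvalues of order $\sqrt{T}/R$, is precisely what forces the off-diagonal block of the Hodge metric to be small enough that it cannot spoil the orthogonality beyond the $\mathscr{O}(R^{-1/2+\kappa/4})$ threshold; unpacking this decoupling in detail, using the structure of $\hha(Z_{j,\infty}, F)$ from \eqref{eq-hh}--\eqref{eq-def-WKK}, is the analytic heart of the argument.
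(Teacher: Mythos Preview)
Your plan has a real gap at the connecting homomorphism. While the restriction maps $H^k(Z_j,F)\to H^k(Y,F)$ are indeed ``literally'' the model $\tau_j$, the Mayer--Vietoris boundary map $\delta:V^{k-1}\to H^k(Z,F)$ is not preserved by your identification. The paper computes explicitly (see \eqref{eq14-pf-prop-vert-mv}) that on the $V^{k-1}_{\mathrm{quot}}$ summand one has $[\mathscr{S}^H_{R,T}]_T = a_{R,T}\,\delta$ with $a_{R,T}\sim\sqrt{\pi}\,R T^{-1/2}e^{T}$. Hence, pulling back the MV differential through $[\mathscr{S}^H_{0,R,T}]_T$, the map $V^{k-1}\to V^{k-1}_{\mathrm{quot}}$ becomes $a_{R,T}^{-1}p$ rather than the model projection $p$; with the metrics $h^{V^\bullet}_{R,T}$ on both sides, the discrepancy $\lVert(a_{R,T}^{-1}-1)p\rVert$ is of order $1$, not $\mathscr{O}(R^{-1/2+\kappa/4})$, so the first hypothesis of Proposition~\ref{prop-comparison-tf} fails outright. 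Your last paragraph flags a metric decoupling issue, but the obstruction already lives at the level of the differentials.

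The paper circumvents this by \emph{not} trying to match the full MV sequence with a single model complex. Instead it truncates the MV sequence degree by degree (Proposition~\ref{prop-decomp-mv}), which via \cite[Thm.~A1.4]{bl} splits $\mathscr{T}_{\mathscr{H},R,T}$ into ``horizontal'' four-term pieces $W^k_{12}\to W^k_1\oplus W^k_2\to V^k\to V^k_{\mathrm{quot}}$ and ``vertical'' three-term pieces $V^{k-1}_{\mathrm{quot}}\to H^k(Z,F)\to W^k_{12}$ containing $\delta$. Crucially, on $V^\bullet_{\mathrm{quot}}$ the paper does not use the Hodge metric from $h^{V^\bullet}_{R,T}$ but the rescaled one $a_{R,T}^{-2}h^{V^\bullet}_{R,T}$; this choice is \emph{designed} so that the diagram \eqref{eq1-pf-prop-vert-mv} makes the vertical identification an approximate isometry, after which Proposition~\ref{prop-comparison-tf} shows $\mathscr{T}^k_{\mathrm{vert},R,T}=\mathscr{O}(R^{-1/4+\kappa/8})$. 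The horizontal pieces are then compared to $\widehat{\mathscr{T}}^k_{\mathscr{H},R,T}$ by another application of \cite[Thm.~A1.4]{bl} and Corollary~\ref{cor-comparison-tf}. To salvage your direct approach you would at minimum need to insert this $a_{R,T}$ rescaling into your identification on the $V^\bullet_{\mathrm{quot}}$ factor and then argue separately that the extra scalar does not affect the torsion modulo exact forms --- which is essentially the paper's filtration argument in disguise.
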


\begin{proof}[Proof of Theorem \ref{thm-gluing}]
By Theorem \ref{thm-model-tf-gluing},
\eqref{eq-def-TR} and \eqref{eq-def-THR},
the following identity holds in $Q^S/Q^{S,0}$,
\begin{equation}
\label{eq2-pf-thm-central}
\sum_{j=0}^3 (-1)^{j(j-3)/2} \widehat{\mathscr{T}}_{j,R,T}
+ \widehat{\mathscr{T}}_{\mathscr{H},R,T}
= 0 \;.
\end{equation}
By Theorems \ref{thm-central-tf}, \ref{thm-central-mv}
and \eqref{eq2-pf-thm-central},
the following identity holds in $Q^S/Q^{S,0}$
as $T = R^\kappa \rightarrow +\infty$,
\begin{equation}
\label{eq-thm-central}
\sum_{j=0}^3 (-1)^{j(j-3)/2} \mathscr{T}_{j,R,T} + \mathscr{T}_{\mathscr{H},R,T}
= \mathscr{O}\big(R^{-\kappa/4}\big) \;.
\end{equation}
On the other hand,
using the anomaly formula \cite[Thm. 3.24]{bl} \cite[Thm. 1.5]{israel-zhu} in the same way as in \cite[\textsection 1.7]{israel-zhu},
we can show that the left hand side of \eqref{eq-thm-central} is independent of $R$ and $T$.
Hence,
for any $R\geqslant 1$ and $T\geqslant 0$,
we have
\begin{equation}
\label{eq2-thm-central}
\sum_{j=0}^3 (-1)^{j(j-3)/2} \mathscr{T}_{j,R,T} + \mathscr{T}_{\mathscr{H},R,T} \in Q^{S,0} \;.
\end{equation}
Taking $R=1$ and $T=0$ in \eqref{eq2-thm-central},
we obtain \eqref{eq-thm-gluing}.
This completes the proof of Theorem \ref{thm-gluing}.
\end{proof}

\section{One-dimensional Witten type deformation}
\label{sect-dim-one-wd}

The construction in this section
may be viewed as a model of the problem addressed in this paper,
in which the fibration has one-dimensional fibers.
This one-dimensional model and the zero-dimensional model constructed in \textsection \ref{sect-model}
are linked by a Witten deformation, i.e., to take $T\rightarrow +\infty$.
This section is organized as follows.
In \textsection \ref{subsec-hodge-interval},
we construct a sheaf $\mathscr{V}$ on $[-1,1]$
and establish a Hodge theorem for $\mathscr{V}$.
In \textsection \ref{subsect-wd-interval},
we consider a Witten type deformation of the Hodge Laplacian in \textsection \ref{subsec-hodge-interval}.
In \textsection \ref{subsec-wd-cylinder},
we consider a Witten type deformation of the Hodge Laplacian on a cylinder.

\subsection{Hodge theory for an interval}
\label{subsec-hodge-interval}

We denote $I = [-1,1]$.
Let $u\in I$ be the coordinate.
Let $V$ be a finite dimensional complex vector space.
Let $V_1,V_2 \subseteq V$ be vector subspaces.
We construct a sheaf $\mathscr{V}$ on $I$ as follows:
for any open subset $U\subseteq I$,
\begin{align}
\begin{split}
\mathscr{V}(U) =
\Big\{ & \text{ locally constant function }
\alpha: U \rightarrow V \;:\; \\
& \hspace{20mm} \alpha(-1) \in V_1 \; \text{ if } -1\in U \;,\;
\alpha(1) \in V_2 \; \text{ if } 1\in U \Big\} \;.
\end{split}
\end{align}
We construct sheaves $\big(\mathscr{R}^k\big)_{k=0,1}$ on $I$ as follows:
for any open subset $U\subseteq I$,
\begin{align}
\begin{split}
& \mathscr{R}^0(U) = \Big\{s\in\smooth(U,V)\;:\;
s(-1) \in V_1 \; \text{ if } -1\in U \;,\;
s(1) \in V_2 \; \text{ if } 1\in U \Big\} \;,\\
& \mathscr{R}^1(U) = \Omega^1(U,V) \;.
\end{split}
\end{align}
Let $i: \mathscr{V}\rightarrow\mathscr{R}^0$ is the obvious injection.
Let $d: \mathscr{R}^0\rightarrow\mathscr{R}^1$ be the de Rham operator.
Then
\begin{equation}
\xymatrix{
\mathscr{V} \ar[r]^i & \mathscr{R}^0 \ar[r]^d & \mathscr{R}^1
}
\end{equation}
is a resolution of $\mathscr{V}$ by fine sheaves.
Let $H^\bullet(I,\mathscr{V})$ be the sheaf theoretic cohomology of $I$ with coefficients in $\mathscr{V}$.
We have
\begin{equation}
H^\bullet(I,\mathscr{V}) = H^\bullet(\mathscr{R}^\bullet(I),d) \;.
\end{equation}
Then a direct calculation yields
\begin{equation}
\label{eq-coh-sheaf-model}
H^0(I,\mathscr{V}) = V_1 \cap V_2 \;,\hspace{5mm}
H^1(I,\mathscr{V}) = V/(V_1+V_2) \;.
\end{equation}

Let $h^V$ be a Hermitian metric on $V$.
We denote $V[du] = V \oplus  V du = V \otimes \Lambda^\bullet(T^*I)$.
Let $\big\lVert\cdot\big\rVert_{V[du]}$ be the norm on $V[du]$ induced by
$h^V$ and the metric on $\Lambda^\bullet(T^*I)$ such that $\big|du\big|=1$.
We introduce the following Clifford actions on $V[d u]$,
\begin{equation}
\label{eq-def-cchat}
c = d u \wedge - \; i_{\frac{\partial}{\partial u}} \;,\hspace{5mm}
\hat{c} = d u \wedge + \; i_{\frac{\partial}{\partial u}} \;.
\end{equation}
Then $c$ (resp. $\hat{c}$) is skew-adjoint (resp. self-adjoint)
with respect to $\big\lVert\cdot\big\rVert_{V[du]}$.
Moreover,
\begin{equation}
\label{eq-c-hatc-algebra}
c^2 = -1 \;,\hspace{5mm} {\hat{c}}^2 = 1 \;,\hspace{5mm} c\hat{c} + \hat{c}c = 0 \;.
\end{equation}

For $u\in I$ and $\omega\in\Omega^\bullet(I,V)=\smooth(I,V[du])$,
we denote by $\omega_u\in V[d u]$ the value of $\omega$ at $u$.
Let $\big\lVert\cdot\big\rVert_{[-1,1]}$ be the $L^2$-norm on $\Omega^\bullet(I,V)$,
i.e.,
\begin{equation}
\big\lVert \omega \big\rVert_{[-1,1]}^2 =
\int_{-1}^1 \big\lVert\omega_u\big\rVert_{V[du]}^2 du \;.
\end{equation}
Let $d^V$ be the de Rham operator on $\Omega^\bullet(I,V)$.
Let $d^{V,*}$ be its formal adjoint.
Set
\begin{equation}
\DV = d^V + d^{V,*} \;.
\end{equation}
Then,
by \eqref{eq-def-cchat},
we have
\begin{equation}
\DV = c \frac{\partial}{\partial u} \;,\hspace{5mm}
\DsV = - \frac{\partial^2}{\partial u^2} \;.
\end{equation}

For $j=1,2$,
let $V_j^\perp\subseteq V$ be the orthogonal complement of $V_j\subseteq V$ with respect to $h^V$.
Set
\begin{equation}
\Omega^\bullet_\bd(I,V) =
\Big\{ \omega\in\Omega^\bullet(I,V) \;:\;
\omega_{-1}\in V_1 \oplus V_1^\perp d u \;,\; \omega_{1}\in V_2 \oplus V_2^\perp d u \Big\} \;.
\end{equation}
Let $\DVbd$ be the self-adjoint extension of $\DV$ with
domain $\mathrm{Dom}\big(\DVbd\big) = \Omega^\bullet_\bd(I,V)$.
We will also consider $\DsVbd$ with domain
\begin{equation}
\mathrm{Dom}\big(\DsVbd\big) =
\Big\{ \omega\in\Omega^\bullet_\bd(I,V) \;:\; \DV\omega\in\Omega^\bullet_\bd(I,V) \Big\}\;.
\end{equation}
We have
\begin{equation}
\label{eq-ker-dirac-model}
\Ker\big(\DsVbd\big)\Big|_{\Omega^0_\bd(I,V)}
= V_1 \cap V_2 \;,\hspace{5mm}
\Ker\big(\DsVbd\big)\Big|_{\Omega^1_\bd(I,V)}
= \big(V_1^\perp \cap V_2^\perp\big) d u \;,
\end{equation}
where the right hand sides are viewed as constant functions on $I$ with values in $V[d u]$.
From \eqref{eq-coh-sheaf-model} and
\eqref{eq-ker-dirac-model},
we get a natural isomorphism
\begin{equation}
\label{eq-hodge-model}
\Ker\big(\DsVbd\big) \simeq H^\bullet(I,\mathscr{V}) \;.
\end{equation}

\subsection{Witten type deformation on an interval}
\label{subsect-wd-interval}

For $T \geqslant 0$,
set
\begin{equation}
\label{eq-def-dVT}
d^V_{T} = e^{-Tf_T} d^V e^{Tf_T} \;,\hspace{5mm}
d^{V,*}_{T} = e^{Tf_T} d^{V,*} e^{-Tf_T} \;,\hspace{5mm}
\DVT = d^V_T + d^{V,*}_T \;,
\end{equation}
where $f_T$ was defined by \eqref{eq-def-fT}.
The operator $\DVT$ is formally self-adjoint with respect to $\big\lVert\cdot\big\rVert_{[-1,1]}$.
We have
\begin{equation}
\label{eq-DV-Tf}
\DVT = \DV + Tf'_T\hat{c} =
c \frac{\partial}{\partial u} + Tf'_T\hat{c} \;,\hspace{5mm}
\DsVT =  -\frac{\partial^2}{\partial u^2}+ Tf''_Tc\hat{c} + T^2|f'_T|^2 \;.
\end{equation}
Let $\DVTbd$ be the self-adjoint extension of $\DVT$  with domain
$\mathrm{Dom}\big(\DVTbd\big) = \Omega^\bullet_\bd(I,V)$.

\begin{thm}
\label{thm-witten-spec}
There exist $\beta>\alpha>0$ such that for $T \gg 1$,
we have
\begin{equation}
\label{eq-1-thm-witten-spec}
\Sp\big(\DVTbd\big) \subseteq
\big]-\infty,-\alpha\sqrt{T}\big] \cup
\big[-\beta\sqrt{T}e^{-T},\beta\sqrt{T}e^{-T}\big] \cup
\big[\alpha\sqrt{T},+\infty\,\big[ \;.
\end{equation}
\end{thm}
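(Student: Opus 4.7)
The plan is to follow the classical Witten deformation paradigm, applied to
\eqref{eq-DV-Tf}. The function $f_\infty$ has three critical points in
$[-1,1]$: the interior maximum $u=0$ with $f''_\infty(0)=-1$ (Morse index
$1$, $f_\infty(0)=1$) and the two boundary minima $u=\pm 1$ with
$f''_\infty(\pm 1)=1$ (Morse index $0$, $f_\infty(\pm 1)=0$). Since
$c\hat{c}$ acts as $-\Id$ on $\Omega^0(I,V)$ and $+\Id$ on $\Omega^1(I,V)$,
one expects an approximate null space $\mathscr F\subseteq
\Omega^\bullet_\bd(I,V)$ of $\DsVTbd$ of dimension
$N:=\dim V_1+\dim V_2+\dim V$, consisting of quasi-modes concentrated near
each critical point: $\dim V_1$ (resp.\ $\dim V_2$) modes in degree $0$ at
$u=-1$ (resp.\ $u=1$), and $\dim V$ modes in degree $1$ at $u=0$.

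The first step will be a lower bound $\DsVTbd\geq\alpha^2 T$ on the
orthogonal complement of $\mathscr F$, via IMS localization with a partition
of unity $1=\phi_{-1}^2+\phi_0^2+\phi_1^2+\phi_*^2$, each $\phi_b$ of
$O(1)$-support around $b$ and $\phi_*$ supported away from the critical
points. On $\mathrm{supp}\,\phi_*$ one has $|f'_T|\geq c_0>0$ for $T\gg 1$,
so the Schr\"odinger potential alone yields $\phi_*\DsVT\phi_*\geq
c_0^2 T^2\phi_*^2$. Near each critical point $b$, the rescaling
$v=\sqrt T(u-b)$ reduces $\phi_b\DsVT\phi_b$ to a harmonic oscillator
$T(-\partial_v^2+v^2\pm 1)+\mathscr O(1)$ on the full or half-line, carrying
the absolute boundary condition from $\Omega^\bullet_\bd$ when $b=\pm 1$;
its spectrum consists of a zero eigenspace of the expected multiplicity and
a gap $\geq 2T$ above. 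Absorbing the IMS correction
$-\sum_a(\phi_a')^2=\mathscr O(1)$ then yields the lower bound.

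The second step will be the sharp upper bound $\|\DVT\phi\|_{[-1,1]}\leq
\beta\sqrt T e^{-T}\|\phi\|_{[-1,1]}$ for $\phi\in\mathscr F$. The
appropriate WKB/Agmon quasi-modes are
\[
\phi_{-1,v_1}=\rho_{-1}e^{-Tf_\infty}v_1,\quad
\phi_{1,v_2}=\rho_1 e^{-Tf_\infty}v_2,\quad
\phi_{0,v}=\rho_0 e^{T(f_\infty-1)}v\,du,
\]
with $v_j\in V_j$ and $v\in V$, the cutoffs $\rho_{\pm 1}$ vanishing near
$u=0$ and $\rho_0$ vanishing near $u=\pm 1$, each transition region placed
at the neighboring critical point and of width $T^{-1/2}$. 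The intertwining
identities $d^V_T(e^{-Tf_T}\cdot)=e^{-Tf_T}d^V(\cdot)$ and
$d^{V,*}_T(e^{Tf_T}\cdot)=e^{Tf_T}d^{V,*}(\cdot)$ show, on the set
$\{f_T=f_\infty\}$, that $\DVT\phi_{b,v}$ is supported where $\rho_b'$ is,
with pointwise norm at most $C|\rho_b'(u)|e^{-T|f_\infty(u)-f_\infty(b)|}|v|$,
up to an $\mathscr O(e^{-T^2})$ contribution from the boundary layer. Since
$\rho_b'$ is supported where $f_\infty$ attains its opposite extremum (value
$1$), the resulting $L^2$-integral at the optimized width $T^{-1/2}$ is of
order $T e^{-2T}\|\phi\|^2$. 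The min-max principle then produces $N$
eigenvalues of $\DVTbd$ of absolute value $\leq\beta\sqrt T e^{-T}$, which
combined with the first step gives \eqref{eq-1-thm-witten-spec}.

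The main obstacle will be to achieve the sharp exponent $e^{-T}$ (rather
than $e^{-cT}$ with $c<1$): this dictates that the quasi-modes be built
from the exact kernel of $d^V_T$ (via the ansatz $e^{-Tf_\infty}v$) rather
than from naive Gaussians, and that the cutoff transition be placed exactly
at the neighboring critical point where $f_\infty$ attains its maximum.
A parallel delicacy is compatibility of the WKB ansatz with the absolute
boundary condition at $u=\pm 1$: the restriction $\omega(\pm 1)\in V_{1,2}$
forces $v_1\in V_1$ and $v_2\in V_2$, producing precisely the multiplicities
$\dim V_{1,2}$ in the approximate null space; and the fact that
$f_T-f_\infty=\mathscr O(e^{-T^2})$ only in the exponentially thin boundary
layer is what allows us to treat $f_T$ and $f_\infty$ as interchangeable in
the WKB analysis.
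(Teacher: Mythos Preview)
Your approach is correct and takes a somewhat different route from the paper's. The paper first replaces $f_T$ by $f_\infty$ via the operator-norm perturbation $\lVert\DVT-\wDVT\rVert=\mathscr O(Te^{-T^2})$, then builds \emph{Gaussian} quasi-modes $\phi_{j,T}(u)=(1-\chi)\exp(-T(u-b_j)^2/2)$ with cutoffs at fixed $O(1)$ distance from each critical point, packages them into a map $J_T:C^\bullet_\mathrm{r}\to\Omega^\bullet_\bd(I,V)$, and then invokes the Bismut--Zhang machinery of \cite[\S 6]{bz2} wholesale. Those Gaussian quasi-modes by themselves give only $\lVert\wDVT\phi\rVert/\lVert\phi\rVert=\mathscr O(e^{-cT})$ with a small $c$ (of order $10^{-2}$, set by the fixed cutoff position); the sharp $\sqrt Te^{-T}$ in \cite{bz2} comes from a second step, the explicit tunneling computation identifying $d^V_T$ on the low eigenspace with a multiple of $\partial$. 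Your choice of Agmon/WKB quasi-modes $e^{\mp Tf_\infty}$ with the cutoff transition placed at the \emph{neighboring} critical point collapses those two steps into one: since $f_\infty$ attains its opposite extremum (value $1$) exactly there, the quasi-mode estimate already yields $\lVert\DVT\phi\rVert^2/\lVert\phi\rVert^2=\mathscr O(Te^{-2T})$, and min-max finishes. This is closer in spirit to Helffer--Sj\"ostrand than to Bismut--Zhang and is more self-contained for the bound needed in Theorem~\ref{thm-witten-spec}; the trade-off is that the paper's route, via \cite{bz2}, simultaneously produces the structural identification of $\mathscr E^{[-1,1]}_T$ with $C^\bullet_\mathrm{r}$ used downstream. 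One small imprecision in your Step~1: IMS localization does not literally give ``$\DsVTbd\geq\alpha^2T$ on $\mathscr F^\perp$'' since your $\mathscr F$ is not invariant; what it gives (via the half-line harmonic oscillators with the mixed $\bd$ boundary condition, exactly as you describe) is that the $(N{+}1)$-th eigenvalue of $\DsVTbd$ is $\geq\alpha^2T$, and this is what combines with your quasi-mode bound.
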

\begin{proof}
Recall that $f_\infty$ was defined by \eqref{eq-def-finf}.
For $T\geqslant 0$,
set
\begin{equation}
\label{eq0-pf-thm-witten-spec}
\wDVT = \DV + Tf'_\infty\hat{c} \;.
\end{equation}
Let $\wDVTbd$ be the self-adjoint extension of $\wDVT$
with domain $\mathrm{Dom}\big(\wDVTbd\big) = \Omega^\bullet_\bd(I,V)$.
By \eqref{eq-compare-f-fT},
\eqref{eq-DV-Tf}
and \eqref{eq0-pf-thm-witten-spec},
the operator norm of $\DVT - \wDVT$ is bounded by $\mathscr{O}\big(Te^{-T^2}\big)$.
Hence it is sufficient to show that
there exist $\beta>\alpha>0$ such that
\begin{equation}
\label{eq1-pf-thm-witten-spec}
\Sp\Big(\wDVTbd\Big) \subseteq
\big]-\infty,-2\alpha\sqrt{T}\big] \cup
\big[-\beta\sqrt{T}e^{-T}/2,\beta\sqrt{T}e^{-T}/2\big] \cup
\big[2\alpha\sqrt{T},+\infty\,\big[ \;.
\end{equation}

Recall that $\chi$ was defined by \eqref{eq-def-chi}.
For $T\geqslant 0$,
we construct smooth functions $\phi_{1,T}, \phi_{2,T}, \phi_{3,T}: I \rightarrow \R$ as follows,
\begin{align}
\begin{split}
\phi_{1,T}(u) = \phi_{2,T}(-u) & = \big(1-\chi(4u+4)\big)\exp\big(-T(u+1)^2/2\big) \;,\\
\phi_{3,T}(u) & = \big(1-\chi(4|u|)\big)\exp\big(-Tu^2/2\big) \;,\hspace{5mm}\text{for } u\in I \;.
\end{split}
\end{align}
Let $\big(C^\bullet_\mathrm{r},\partial\big)$ be the complex in \eqref{eq-def-complex-r} associated with $V_1,V_2\subseteq V$.
For $T\geqslant 0$,
we construct a linear map $J_T: C^\bullet_\mathrm{r} \rightarrow \Omega^\bullet(I,V)$ as follows,
\begin{align}
\label{eq2-pf-thm-witten-spec}
\begin{split}
\text{for } (v_1,v_2)\in V_1 \oplus V_2 & = C^0_\mathrm{r} \;,\hspace{2.5mm}
J_T(v_1,v_2) = \phi_{1,T} v_1 + \phi_{2,T} v_2 \in \smooth(I,V) \,;\\
\text{for } v\in V & = C^1_\mathrm{r} \;,\hspace{2.5mm}
J_T(v) = \phi_{3,T} du \otimes v \in \Omega^1(I,V) \;.
\end{split}
\end{align}
Proceeding in the same way as in \cite[\textsection 6]{bz2}
with $J_T$ in \cite[Def. 6.5]{bz2} replaced by the $J_T$ constructed in \eqref{eq2-pf-thm-witten-spec},
we obtain \eqref{eq1-pf-thm-witten-spec}.
This completes the proof of Theorem \ref{thm-witten-spec}.
\end{proof}

For $\Lambda\subseteq \R$,
we denote by $E^\Lambda_T$ the eigenspace of $\DVTbd$ associated with eigenvalues in $\Lambda$.

\begin{thm}
\label{thm-witten-estimates}
For $T \gg 1$,
we have
$\dim E^{[-1,1]}_T = \dim C^\bullet_\mathrm{r}$.
\end{thm}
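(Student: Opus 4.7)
The plan is to bound $\dim E^{[-1,1]}_T$ both below and above by $\dim C^\bullet_\mathrm{r}$. By the spectral gap \eqref{eq-1-thm-witten-spec}, for $T \gg 1$ one has $E^{[-1,1]}_T = E^{[-\beta\sqrt{T}e^{-T},\beta\sqrt{T}e^{-T}]}_T$, so what we are counting is the small-eigenvalue subspace.

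\emph{Lower bound.} I reuse the map $J_T \colon C^\bullet_\mathrm{r} \to \Omega^\bullet_\bd(I,V)$ from \eqref{eq2-pf-thm-witten-spec}. Its three building blocks $\phi_{1,T}, \phi_{2,T}, \phi_{3,T}$ are Gaussian-type bumps with pairwise disjoint supports near $-1$, $1$, $0$, so $J_T$ is injective. A direct calculation, exploiting the identity $\phi_{j,T}' = -T f'_\infty \phi_{j,T}$ on the bulk of each support together with $f_T = f_\infty$ there, produces an exact cancellation between $c\, \partial_u$ and $T f'_T \hat c$ in \eqref{eq-DV-Tf}. The only residual contributions come from the support of $\chi'$ (where $\phi_{j,T}$ is already exponentially small in $T$) and from the boundary layer of width $e^{-T^2}$, yielding
\begin{equation*}
\big\lVert \DVTbd J_T \sigma \big\rVert^2_{[-1,1]} \leqslant C e^{-cT} \big\lVert \sigma \big\rVert^2, \qquad \big\lVert J_T \sigma \big\rVert^2_{[-1,1]} \geqslant c' T^{-1/2} \big\lVert \sigma \big\rVert^2.
\end{equation*}
Applying Proposition \ref{prop-proj-estimate} with $\beta = 1$ and using the spectral gap, the spectral projection restricted to $J_T(C^\bullet_\mathrm{r})$ is injective for $T \gg 1$, so $\dim E^{[-1,1]}_T \geqslant \dim C^\bullet_\mathrm{r}$.

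\emph{Upper bound via IMS localization.} Fix a smooth partition of unity $\phi_{-1}^2 + \phi_0^2 + \phi_1^2 + \phi_\mathrm{reg}^2 = 1$ on $I$, with each $\phi_a$ supported in a fixed small neighborhood of $a \in \{-1, 0, 1\}$ and $\phi_\mathrm{reg}$ supported in $\{u \in I \colon |f'_\infty(u)| \geqslant c_0\}$. The IMS localization identity
\begin{equation*}
\big\langle \omega, \DsVTbd \omega \big\rangle_{[-1,1]} = \sum_a \big\lVert \DVTbd (\phi_a \omega) \big\rVert^2_{[-1,1]} - \sum_a \big\lVert \phi_a' \, \omega \big\rVert^2_{[-1,1]}
\end{equation*}
reduces the estimate to local analysis. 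By \eqref{eq-DV-Tf} and \eqref{eq-compare-f-fT}, on $\mathrm{supp}(\phi_\mathrm{reg})$ the scalar potential $T^2 (f'_T)^2$ gives $\DsVT \geqslant c_0^2 T^2 / 2$. Near each critical point $a$, the rescaling $u = a + s / \sqrt{T}$ converts $\DsVT$ into $T$ times the harmonic oscillator $-\partial_s^2 + s^2 + \sigma_a c \hat c$ with $\sigma_a = \mathrm{sgn}(f''_\infty(a)) = \pm 1$, acting on $\R$ at $a = 0$ or on a half-line at $a = \pm 1$ with boundary conditions at $s = 0$ inherited from the decomposition $V = V_j \oplus V_j^\perp$. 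Using the eigenvalues $-1$ (degree $0$) and $+1$ (degree $1$) of $c\hat c$, one checks that at the boundary minimum $a = -1$ only the combination (degree $0$, $V_1$-component, Neumann) aligns the Clifford sign against the $+1$ Hessian to produce $\dim V_1$ zero-modes, while the other three combinations of (degree) $\times$ ($V_1$ vs.\ $V_1^\perp$) yield lowest eigenvalues $\geqslant 2T$; the case $a = 1$ is symmetric, and at $a = 0$ the Morse index $1$ selects the degree-$1$ sector of full dimension $\dim V$. Let $\mathscr{K} \subseteq \Omega^\bullet_\bd(I, V)$ denote the span of the corresponding approximate ground states, of total dimension $\dim V_1 + \dim V_2 + \dim V = \dim C^\bullet_\mathrm{r}$. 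For $\omega$ orthogonal to $\mathscr{K}$, each local term in the IMS formula is bounded below by $T\lVert \phi_a\omega\rVert^2 - O(\lVert\phi_a\omega\rVert^2)$ (next local eigenvalue minus the cut-off error $|\phi_a'|^2 = O(1)$), whence $\langle \omega, \DsVTbd \omega\rangle \geqslant (T - O(1)) \lVert \omega\rVert^2$. By min-max, this forces $\dim E^{[-1, 1]}_T \leqslant \dim \mathscr{K} = \dim C^\bullet_\mathrm{r}$ for $T \gg 1$.

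The main obstacle is the bookkeeping of boundary conditions at $a = \pm 1$: one must track how the domain restriction $\omega_{\pm 1} \in V_j \oplus V_j^\perp\, du$ splits into four combinations of (form degree, $V_j$ vs.\ $V_j^\perp$) carrying alternating Neumann/Dirichlet conditions, and verify that only the (degree $0$, $V_j$, Neumann) combination produces zero-modes while the remaining three give local spectrum of size $\gtrsim 2T$. This is the exact mechanism by which the $\dim C^\bullet_\mathrm{r} = \dim V_1 + \dim V_2 + \dim V$ count from the lower bound is matched by the upper bound.
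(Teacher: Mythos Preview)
Your proof is correct and follows the same strategy that the paper outsources to \cite[\textsection 6]{bz2}: a lower bound via the test map $J_T$ and an upper bound of Helffer--Sj\"ostrand type. The paper's only extra step is to first prove the result for $\wDVTbd$ (built from $f_\infty$, which is exactly quadratic near each critical point) and then transfer to $\DVTbd$ via the perturbation bound $\lVert\DVT-\wDVT\rVert_\infty=\mathscr{O}(Te^{-T^2})$ together with the spectral gap; you instead work directly with $f_T$, which is fine since $f_T=f_\infty$ on the supports of your cutoffs away from a set of measure $e^{-T^2}$ and $f_T''=\mathscr{O}(1)$ there.

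One point in your IMS upper bound deserves a sentence of justification. The inequality ``each local term is bounded below by $T\lVert\phi_a\omega\rVert^2$'' requires $\phi_a\omega$ to be (almost) orthogonal to the \emph{local} ground state at $a$, whereas your hypothesis is global orthogonality $\omega\perp\mathscr{K}$. The passage works because you can choose the partition so that $\phi_a\equiv 1$ on the support of the $J_T$--bump $\psi_a$ and $\phi_b\psi_a=0$ for $b\neq a$; then $\langle\phi_a\omega,\psi_a\rangle=\langle\omega,\phi_a\psi_a\rangle=\langle\omega,\psi_a\rangle=0$, and since $\psi_a$ differs from the exact model ground state only by an $\mathscr{O}(e^{-cT})$ tail, the local gap applies. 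With that remark added, your min--max conclusion is valid. Your boundary bookkeeping at $a=\pm1$ (four sectors, only the degree-$0$, $V_j$-component with Neumann condition producing zero modes) is correct and is exactly the mechanism that selects $\dim V_1+\dim V_2+\dim V=\dim C^\bullet_\mathrm{r}$.
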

\begin{proof}
Let $\wDVTbd$ be as in the proof of Theorem \ref{thm-witten-spec}.
Let $\widetilde{E}^{[-1,1]}_T$ be the eigenspace of $\wDVTbd$
associated with eigenvalues in $[-1,1]$.
Proceeding in the same way as in \cite[\textsection 6]{bz2}
with $J_T$ in \cite[Def. 6.5]{bz2} replaced by the $J_T$ constructed in \eqref{eq2-pf-thm-witten-spec},
we obtain $\dim \widetilde{E}^{[-1,1]}_T = \dim C^\bullet_\mathrm{r}$.
On the other hand,
by the proof of Theorem \ref{thm-witten-spec},
we have $\dim E^{[-1,1]}_T = \dim \widetilde{E}^{[-1,1]}_T$.
This completes the proof of Theorem \ref{thm-witten-estimates}.
\end{proof}

For $j=1,2$,
let
\begin{equation}
P_j : V[d u] \rightarrow V_j \oplus V_j^\perp du
\end{equation}
be orthogonal projections with respect to $\big\lVert\cdot\big\rVert_{V[du]}$.
We denote
$P_j^\perp = \mathrm{Id} - P_j$.
Let
\begin{equation}
\label{eq-def-PLambdaT}
P^\Lambda_T : \Omega^\bullet(I,V) \rightarrow E^\Lambda_T
\end{equation}
be the orthogonal projection with respect to $\big\lVert\cdot\big\rVert_{[-1,1]}$.
For $\omega\in\Omega^\bullet(I,V)$, we denote
\begin{equation}
\big\lVert\omega\big\rVert_{V[du],\mathrm{max}}
= \max\Big\{\big\lVert\omega_u\big\rVert_{V[du]}\;:\;u\in[-1,1]\Big\} \;.
\end{equation}

\begin{prop}
\label{prop-Df-eigen-approx}
For $T \gg 1$,
$\e>0$,
$0<\epsilon<\sqrt{T}$,
$-\sqrt{T}<\lambda<\sqrt{T}$
and $\omega\in\Omega^\bullet(I,V)$
satisfying
\begin{equation}
\label{eq1-prop-Df-eigen-approx}
\DVT\omega = \lambda\omega \;,\hspace{5mm}
\big\lVert P_1^\perp\omega_{-1} \big\rVert_{V[du]} + \big\lVert P_2^\perp\omega_1 \big\rVert_{V[du]}
\leqslant \epsilon\e \big\lVert\omega\big\rVert_{V[du],\mathrm{max}}  \;,
\end{equation}
we have
\begin{equation}
\label{eq2-prop-Df-eigen-approx}
\Big\lVert \omega - P^{[\lambda-\epsilon,\lambda+\epsilon]}_T\omega \Big\rVert_{[-1,1]}
= \mathscr{O}\big(T^{3/2}\big) \e \big\lVert \omega \big\rVert_{[-1,1]}  \;.
\end{equation}
\end{prop}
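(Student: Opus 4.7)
The obstacle in Proposition \ref{prop-Df-eigen-approx} is that $\omega$ is assumed to satisfy $\DVT\omega = \lambda\omega$ only as a smooth form on $I$, and in general does not lie in $\mathrm{Dom}(\DVTbd) = \Omega^\bullet_\bd(I,V)$; the second inequality in \eqref{eq1-prop-Df-eigen-approx} is precisely the quantitative control on the failure of the boundary condition. The plan is therefore to (i) correct $\omega$ by a small form $\varpi$ supported in narrow collars near $\{-1,1\}$ so that $w := \omega - \varpi$ lies in the domain, (ii) apply the spectral theorem for the self-adjoint $\DVTbd$ to $w$, and (iii) transfer the bound back to $\omega$ via the triangle inequality.

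For the construction, set $a = P_1^\perp\omega_{-1}$ and $b = P_2^\perp\omega_1$, fix a scale $\delta = T^{-1/2}$ (the natural semiclassical length near the wells of $f_T$), and pick a smooth $\rho \colon \R \to [0,1]$ compactly supported in $[0,1)$ with $\rho(0)=1$. Define
\begin{equation*}
\varpi(u) = \rho\bigl((u+1)/\delta\bigr)\, a \; + \; \rho\bigl((1-u)/\delta\bigr)\, b \;.
\end{equation*}
Then $P_1^\perp(\omega-\varpi)_{-1}=0$ and $P_2^\perp(\omega-\varpi)_1=0$ by construction, so $w \in \mathrm{Dom}(\DVTbd)$. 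Exploiting $f_\infty(u) = (u\mp 1)^2/2$ near $u = \pm 1$ from \eqref{eq-def-finf} together with \eqref{eq-compare-f-fT} gives $|Tf'_T| \lesssim T\delta = T^{1/2}$ on the support of $\varpi$. A direct computation using \eqref{eq-DV-Tf} then yields
\begin{equation*}
\|\varpi\|_{[-1,1]}^2 \lesssim \delta\bigl(\|a\|^2+\|b\|^2\bigr), \qquad
\|(\DVT-\lambda)\varpi\|_{[-1,1]}^2 \lesssim \bigl(\delta^{-1} + T^2\delta^3 + \lambda^2\delta\bigr)\bigl(\|a\|^2+\|b\|^2\bigr) \lesssim T^{1/2}\bigl(\|a\|^2+\|b\|^2\bigr),
\end{equation*}
using $|\lambda|\leq\sqrt{T}$; the choice $\delta = T^{-1/2}$ is exactly what balances the three contributions of the second estimate.

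Since $(\DVTbd-\lambda)w = -(\DVT-\lambda)\varpi$, Proposition \ref{prop-proj-estimate} applied to the shifted operator $\DVTbd-\lambda$ with $\beta = \epsilon^2$ (equivalently, the spectral theorem for $\DVTbd$) gives $\|w - P_T^{[\lambda-\epsilon,\lambda+\epsilon]} w\|_{[-1,1]} \leq \epsilon^{-1}\|(\DVT-\lambda)\varpi\|_{[-1,1]} \lesssim \epsilon^{-1}T^{1/4}(\|a\|+\|b\|)$. Combining with $\|\varpi\| \lesssim T^{-1/4}(\|a\|+\|b\|)$ and the triangle inequality (using $\epsilon<\sqrt{T}$ so that $\epsilon^{-1}T^{1/4}$ dominates $T^{-1/4}$), one obtains
\begin{equation*}
\bigl\|\omega - P_T^{[\lambda-\epsilon,\lambda+\epsilon]}\omega\bigr\|_{[-1,1]} \lesssim \epsilon^{-1}T^{1/4}(\|a\|+\|b\|) \leq T^{1/4}\e\,\|\omega\|_{V[du],\mathrm{max}},
\end{equation*}
where the last step is the hypothesis.

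The final task is to bound $\|\omega\|_{V[du],\mathrm{max}}$ by $\|\omega\|_{[-1,1]}$. Rewriting the eigenvalue equation via $c^2 = -1$ as $\partial_u\omega = c(\lambda - Tf'_T\hat{c})\omega$ with coefficient $\mathscr{O}(T)$ (since $|f'_T|\leq 2$), a pointwise Gronwall argument on $g(u) = \|\omega(u)\|_{V[du]}^2$ yields $g(u) \leq e^{CT|u-v|}g(v)$; integrating over $v\in I$ against $e^{-CT|u-v|}$ and using $\int_{-1}^{1} e^{-CT|u-v|}\,dv \gtrsim T^{-1}$ produces $\|\omega\|_{V[du],\mathrm{max}} \lesssim T^{1/2}\|\omega\|_{[-1,1]}$. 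Inserting this gives $\|\omega - P_T^{[\lambda-\epsilon,\lambda+\epsilon]}\omega\|_{[-1,1]} \lesssim T^{3/4}\e\,\|\omega\|_{[-1,1]}$, which is strictly stronger than the stated $\mathscr{O}(T^{3/2})$ bound. The hardest point is really the identification of $\delta = T^{-1/2}$ as the Agmon scale for the Witten Laplacian and the balancing of the three competing terms in $\|(\DVT-\lambda)\varpi\|$; once the scale is fixed, the spectral-theoretic conclusion is immediate.
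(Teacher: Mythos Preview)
Your argument is correct and follows the same three-step scheme as the paper: correct $\omega$ near $\pm 1$ by a cutoff so that it lies in $\mathrm{Dom}(\DVTbd)$, apply the spectral estimate (Proposition~\ref{prop-proj-estimate}) to the corrected section, and finally convert $\|\omega\|_{V[du],\max}$ to $\|\omega\|_{[-1,1]}$. The differences are quantitative refinements, not a different route. The paper uses cutoff width $\delta\sim T^{-1}$ (via $\chi_T(u)=\chi(Tu-T+1)$), obtaining $\|(\DVTbd-\lambda)\omega'\|\lesssim T^{1/2}\epsilon\e\|\omega\|_{V[du],\max}$, and then bounds $\|\omega\|_{V[du],\max}$ by the one-dimensional Sobolev inequality together with $\|\partial_u\omega\|\lesssim T\|\omega\|$, giving $\|\omega\|_{V[du],\max}\lesssim T\|\omega\|_{[-1,1]}$ and hence the stated $\mathscr{O}(T^{3/2})$. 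Your choice $\delta=T^{-1/2}$ balances the three contributions $\delta^{-1},\,T^2\delta^3,\,\lambda^2\delta$ at $T^{1/2}$ and gains a factor $T^{1/4}$ in the second step; your Gronwall argument on $g(u)=\|\omega_u\|^2$ (indeed $|g'|\leqslant CTg$ since the $\lambda c$ term contributes only to the imaginary part) gives $\|\omega\|_{V[du],\max}\lesssim T^{1/2}\|\omega\|_{[-1,1]}$ and gains a further $T^{1/2}$. The resulting $\mathscr{O}(T^{3/4})$ is genuinely sharper, though the paper's cruder constant already suffices for all downstream uses (the proofs of Theorems~\ref{thm-central-spgap} and~\ref{thm-SRT-bij}).
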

\begin{proof}
Set $\chi_T(u) = \chi(Tu-T+1)$,
where $\chi$ was constructed in \eqref{eq-def-chi}.
We construct $\omega'\in\Omega^\bullet_\bd(I,V)$ as follows,
\begin{align}
\label{eq1-pf-prop-Df-eigen-approx}
\begin{split}
\omega'_u = \left\{
\begin{array}{rl}
\omega_u - \chi_T(-u)P_1^\perp\omega_{-1}  & \text{if } u < 0 \;,\vspace{1mm}\\
\omega_u - \chi_T(u)P_2^\perp\omega_1 & \text{if } u \geqslant 0 \;.
\end{array} \right.
\end{split}
\end{align}
By the inequality in \eqref{eq1-prop-Df-eigen-approx}, \eqref{eq1-pf-prop-Df-eigen-approx}
and the assumption $0<\epsilon<\sqrt{T}$,
we have
\begin{equation}
\label{eq2-pf-prop-Df-eigen-approx}
\big\lVert \omega - \omega' \big\rVert_{[-1,1]} =
\mathscr{O}\big(T^{-1/2}\big)
\epsilon\e \big\lVert\omega\big\rVert_{V[du],\mathrm{max}}
= \mathscr{O}\big(1\big)
\e \big\lVert\omega\big\rVert_{V[du],\mathrm{max}} \;.
\end{equation}

A direct calculation yields
\begin{align}
\label{eq3-pf-prop-Df-eigen-approx}
\begin{split}
& \Big(\big(\DVTbd-\lambda\big)\omega'\Big)_u \\
& = \left\{
\begin{array}{rl}
\chi_T'(-u) c P_1^\perp\omega_{-1}
+ \big(\lambda-Tf'_T\hat{c}\big) \chi_T(-u) P_1^\perp\omega_{-1}
& \text{if } u < 0\;,\vspace{1mm}\\
-\chi_T'(u) c P_2^\perp\omega_1
+ \big(\lambda-Tf'_T\hat{c}\big) \chi_T(u) P_2^\perp\omega_1
& \text{if } u \geqslant 0\;.
\end{array} \right.
\end{split}
\end{align}
By the inequality in \eqref{eq1-prop-Df-eigen-approx},
\eqref{eq3-pf-prop-Df-eigen-approx}
and the construction of $\chi_T$,
we get
\begin{equation}
\label{eq4-pf-prop-Df-eigen-approx}
\Big\lVert \big(\DVTbd-\lambda\big)\omega'\Big\rVert_{[-1,1]} =
\mathscr{O}\big(\sqrt{T}\big) \epsilon\e \big\lVert\omega\big\rVert_{V[du],\mathrm{max}} \;.
\end{equation}
By Proposition \ref{prop-proj-estimate} and \eqref{eq4-pf-prop-Df-eigen-approx},
we have
\begin{equation}
\label{eq5-pf-prop-Df-eigen-approx}
\Big\lVert \omega' - P^{[\lambda-\epsilon,\lambda+\epsilon]}_T\omega' \Big\rVert_{[-1,1]} =
\mathscr{O}\big(\sqrt{T}\big) \e \big\lVert\omega\big\rVert_{V[du],\mathrm{max}} \;.
\end{equation}

By \eqref{eq-c-hatc-algebra}, \eqref{eq-DV-Tf} and the identity in \eqref{eq1-prop-Df-eigen-approx},
we have
\begin{equation}
\label{eq6a-pf-prop-Df-eigen-approx}
\frac{\partial}{\partial u}\omega =
\big( Tf'_Tc\hat{c} - \lambda c \big) \omega \;.
\end{equation}
Let $\big\lVert\cdot\big\rVert_{H^1,[-1,1]}$
be the $H^1$-norm on $\Omega^\bullet(I,V)$.
By Sobolev inequality and \eqref{eq6a-pf-prop-Df-eigen-approx},
\begin{equation}
\label{eq6-pf-prop-Df-eigen-approx}
\big\lVert\omega\big\rVert_{V[du],\mathrm{max}} =
\mathscr{O}\big(1\big) \big\lVert\omega\big\rVert_{H^1,[-1,1]} =
\mathscr{O}\big(T\big) \big\lVert\omega\big\rVert_{[-1,1]} \;.
\end{equation}

From \eqref{eq2-pf-prop-Df-eigen-approx},
\eqref{eq5-pf-prop-Df-eigen-approx}
and \eqref{eq6-pf-prop-Df-eigen-approx},
we obtain \eqref{eq2-prop-Df-eigen-approx}.
This completes the proof of Proposition \ref{prop-Df-eigen-approx}.
\end{proof}

\subsection{Witten type deformation on a cylinder}
\label{subsec-wd-cylinder}

Let $(Y,g^{TY})$ be a closed Riemannian manifold.
For $R>0$, we denote $I_R=[-R,R]$ and $IY_R= I_R\times Y$.
Let $(u,y)\in[-R,R]\times Y$ be the coordinates.
We will also use the coordinates
$(s,y) = (u/R,y)\in [-1,1]\times Y$.
We equip $T(IY_R)$ with the Riemannian metric
$d u^2 + g^{TY}$.

Let $F$ be a flat complex vector bundle over $Y$.
Let $h^F$ be a Hermitian metric on $F$.
The pull-back of $F$ (resp. $h^F$)
via the canonical projection $IY_R \rightarrow Y$
will still be denoted by $F$ (resp. $h^F$).
Let $\DY$ be the Hodge de Rham operator on $\Omega^\bullet(Y,F)$.
Under the identification
\begin{align}
\begin{split}
\Omega^\bullet(I_R,\Omega^\bullet(Y,F))
& \rightarrow \Omega^\bullet(IY_R,F) \\
\sigma + du \otimes \tau
& \mapsto \sigma + du \wedge \tau \;,\hspace{5mm}
\text{for } \sigma,\tau\in\smooth(I_R,\Omega^\bullet(Y,F)) \;,
\end{split}
\end{align}
the Hodge de Rham operator on $\Omega^\bullet(IY_R,F)$ is given by
\begin{equation}
\DIYR  = \hat{c}c \DY + c \; \frac{\partial}{\partial u} =
R^{-1} \Big(  \hat{c}c R\DY + c \; \frac{\partial}{\partial s} \Big) \;,
\end{equation}
where the term $\hat{c}c = i_{\frac{\partial}{\partial u}} du\wedge - du\wedge i_{\frac{\partial}{\partial u}}$
comes from the fact that $\DY$ anti-commutes with $du\wedge$.

Recall that the function $f_T:I\rightarrow\R$ was constructed in \eqref{eq-def-fT}.
We will view $f_T$ as a function on $IY_R$, i.e.,
$f_T(s,y) = f_T(s)$,
$f_T(u,y) = f_T(u/R)$.
We denote
\begin{equation}
\label{eq-def-fTder}
f_T' = \frac{\partial}{\partial s} f_T = R \frac{\partial}{\partial u} f_T \;.
\end{equation}

For $T\geqslant 0$,
let $\wDIYRT$ be the Hodge de Rham operator
with respect to
$d u^2 + g^{TY}$ and
$h^F_T := e^{-2Tf_T}h^F$.
Set $\DIYRT = e^{-Tf_T} \wDIYRT e^{Tf_T}$.
We have
\begin{align}
\label{eq-def-DRT-cylinder}
\begin{split}
& R \DIYRT
= \hat{c}c R\DY  + c \; \frac{\partial}{\partial s} + Tf'_T\hat{c} \;,\\
& R^2 \DsIYRT
= R^2 \DsY - \frac{\partial^2}{\partial s^2} + Tf''_Tc\hat{c} + T^2|f'_T|^2 \;.
\end{split}
\end{align}

Let
\begin{equation}
\label{eq-Emu}
\Omega^\bullet(Y,F) = \bigoplus_\mu \mathscr{E}^\mu(Y,F)
\end{equation}
be the spectral decomposition with respect to $\DY$,
i.e., $\DY\big|_{\mathscr{E}^\mu(Y,F)} = \mu\Id$.
We denote $\hh(Y,F) = \Ker\big(\DY\big) = \mathscr{E}^0(Y,F)$.
We have the formal decomposition
\begin{equation}
\Omega^\bullet(IY_R, F) =
\bigoplus_\mu \Omega^\bullet\big(I_R,\mathscr{E}^\mu(Y,F)\big) \;.
\end{equation}
Let $D^{\mathscr{E}^\mu(Y,F)}_T$ be the operator $\DVT$ in \textsection \ref{subsect-wd-interval}
with $V$ replaced by $\mathscr{E}^\mu(Y,F)$.
We have
\begin{equation}
\label{eq-dirac-cylinder-spec-expansion}
R\DIYRT\Big|_{\Omega^\bullet(I_R,\mathscr{E}^\mu(Y,F))} =
R\mu \,\hat{c}c + D^{\mathscr{E}^\mu(Y,F)}_T =
R\mu \,\hat{c}c + c\frac{\partial}{\partial s} + Tf'_T\hat{c} \;.
\end{equation}
As a consequence, we have
\begin{equation}
\label{eq-square-witten-cylinder-decomp}
R^2\DsIYRT\Big|_{\Omega^\bullet(I_R,\mathscr{E}^\mu(Y,F))} =
R^2\mu^2 -\frac{\partial^2}{\partial s^2} + T^2|f'_T|^2 + Tf''_Tc\hat{c} \;.
\end{equation}
In particular, we have
\begin{equation}
\label{eq-dirac-cylinder-spec-expansion-hh}
R\DIYRT\Big|_{\Omega^\bullet(I_R,\hh(Y,F))} =
D^{\hh(Y,F)}_T =
c\frac{\partial}{\partial s} + Tf'_T\hat{c} \;.
\end{equation}

For $\alpha>0$,
we define $C_\alpha: \R_+ \times \R_+ \times [-1,1] \rightarrow \R$ as follows,
\begin{equation}
C_\alpha(a,b,s) =
\big(1-e^{-4\alpha}\big)^{-1}
\Big( \big(a-be^{-2\alpha}\big) e^{\alpha(-s-1)}
+ \big(b-ae^{-2\alpha}\big) e^{\alpha(s-1)} \Big) \;.
\end{equation}
Then the following identities hold,
\begin{equation}
C_\alpha(a,b,-1) = a \;,\hspace{5mm}
C_\alpha(a,b,1)  = b \;,\hspace{5mm}
\left( \frac{\partial^2}{\partial s^2} - \alpha^2 \right) C_\alpha(a,b,s) = 0 \;.
\end{equation}

\begin{lemme}
\label{lem-cylinder-nz-estimation}
There exists $\alpha>0$
such that for
$T = R^\kappa \gg 1$,
$\mu\in\Sp\big(\DY\big)\backslash\{0\}$,
$\omega\in \Omega^\bullet\big(I_R,\mathscr{E}^\mu(Y,F)\big)$
and $-\sqrt{R}\leqslant\lambda\leqslant\sqrt{R}$ satisfying
\begin{equation}
\label{eq1-lem-cylinder-nz-estimation}
\Big(R\mu \,\hat{c}c + D^{\mathscr{E}^\mu(Y,F)}_T\Big)\omega  = \lambda \omega \;,
\end{equation}
we have
\begin{equation}
\label{eq2-lem-cylinder-nz-estimation}
\big\lVert \omega_s \big\rVert^2_Y \leqslant
 C_{\alpha R}\left(\big\lVert\omega_{-1}\big\rVert^2_Y,
\big\lVert\omega_1\big\rVert^2_Y, s \right) \;,\hspace{5mm}
\text{for } s\in[-1,1] \;.
\end{equation}
\end{lemme}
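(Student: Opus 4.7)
The plan is to derive a coercive second-order differential inequality for $\phi(s) := \| \omega_s \|^2_Y$, and then conclude by comparison with the explicit solution $C_{\alpha R}$ using the maximum principle. First, I would square the eigenvalue equation \eqref{eq1-lem-cylinder-nz-estimation}; by the decomposition \eqref{eq-square-witten-cylinder-decomp} this yields
\begin{equation*}
\partial_s^2 \omega \;=\; \big( R^2 \mu^2 - \lambda^2 + T^2 |f'_T|^2 \big) \omega + T f''_T \, c \hat{c} \, \omega \;,
\end{equation*}
where $c \hat c$ is self-adjoint of operator norm $1$ (a direct consequence of \eqref{eq-c-hatc-algebra}).

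Differentiating $\phi(s) = \langle \omega_s, \omega_s \rangle_Y$ twice, substituting the identity above, and dropping the nonnegative term $2 \| \partial_s \omega_s \|^2_Y$, I would obtain
\begin{equation*}
\phi''(s) \;\geqslant\; 2 \big( R^2 \mu^2 - \lambda^2 + T^2 |f'_T|^2 - T |f''_T| \big) \phi(s) \;.
\end{equation*}
Since $Y$ is closed, the nonzero spectrum of $\DY$ is bounded below in absolute value by some $\mu_1 > 0$; I would fix any $\alpha \in (0, \mu_1)$. Using $\mu^2 \geqslant \mu_1^2$, $\lambda^2 \leqslant R$, $T^2 |f'_T|^2 \geqslant 0$, and $T |f''_T| = \mathscr{O}(R^\kappa)$ with $\kappa < 1/3$ (which follows from \eqref{eq-compare-f-fT}), the term $R^2 \mu_1^2$ dominates the remaining contributions for $R \gg 1$, and hence $\phi''(s) \geqslant (\alpha R)^2 \phi(s)$.

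Finally, the explicit function $\psi(s) := C_{\alpha R}\big(\phi(-1), \phi(1), s\big)$ satisfies $\psi''(s) = (\alpha R)^2 \psi(s)$ with the same boundary values as $\phi$ at $s = \pm 1$. Setting $\eta := \phi - \psi$, we have $\eta(\pm 1) = 0$ and $\eta'' - (\alpha R)^2 \eta \geqslant 0$; the weak maximum principle for the operator $-\partial_s^2 + (\alpha R)^2$ (with strictly positive zeroth order coefficient) then rules out a positive interior maximum of $\eta$, forcing $\eta \leqslant 0$ on $[-1,1]$. This is precisely \eqref{eq2-lem-cylinder-nz-estimation}. The main obstacle is verifying that the lower order perturbation $T f''_T c \hat{c}$ does not destroy the coercivity of $\phi''$; this is where the constraint $\kappa < 1/3$ (controlling the size of $T |f''_T|$) and the spectral gap $\mu \in \Sp(\DY) \setminus \{0\}$ (providing the leading term $R^2 \mu_1^2$) are essential, both being required to guarantee the comparison inequality uniformly in $\lambda \in [-\sqrt{R}, \sqrt{R}]$, nonzero $\mu$, and $R \gg 1$.
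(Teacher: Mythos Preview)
Your proposal is correct and follows essentially the same route as the paper: derive the second-order ODE for $\omega$ from \eqref{eq-square-witten-cylinder-decomp}, turn it into the differential inequality $\phi'' \geqslant (\alpha R)^2 \phi$ for $\phi(s)=\lVert\omega_s\rVert_Y^2$ using the spectral gap and the bound $T|f''_T|=\mathscr{O}(R^\kappa)$, and conclude via the maximum principle by comparison with $C_{\alpha R}$. The only cosmetic difference is that the paper fixes $\alpha$ equal to the spectral gap $\mu_1$ and checks the coefficient is at least $\alpha^2 R^2/2$ (the factor $2$ in the product rule then yields exactly $(\alpha R)^2$), whereas you pick any $\alpha<\mu_1$; both choices work.
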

\begin{proof}
We assert that
for $g\in\smooth([-1,1],\R_+)$ satisfying $\left( \frac{\partial^2}{\partial s^2} - \alpha^2 R^2 \right)g \geqslant 0$,
we have $g(s) \leqslant C_{\alpha R}\big(g(-1),g(1),s\big)$ for $s\in[-1,1]$.
To prove the assertion, we take $s_0\in[-1,1]$ such that
\begin{equation}
h := g - C_{\alpha R}\big(g(-1),g(1),\cdot\big)\in\smooth([-1,1],\R)
\end{equation}
reaches its maximum value at $s_0$.
If $h(s_0)>0$, then $s_0 \neq \pm 1$ and $h''(s_0)>0$, which is a contradiction.

Now it remains to show that
\begin{equation}
\label{eqc-pf-lem-cylinder-nz-estimation}
\left( \frac{\partial^2}{\partial s^2} - \alpha^2 R^2 \right)
\big\lVert \omega_s \big\rVert^2_Y \geqslant 0 \;.
\end{equation}
By \eqref{eq-dirac-cylinder-spec-expansion},
\eqref{eq-square-witten-cylinder-decomp}
and \eqref{eq1-lem-cylinder-nz-estimation},
we have
\begin{equation}
\label{eq1-pf-lem-cylinder-nz-estimation}
\frac{\partial^2}{\partial s^2} \omega_s =
\Big( R^2\mu^2 + Tf''_T(s)c\hat{c} + T^2|f'_T|^2(s) - \lambda^2 \Big)\omega_s \;.
\end{equation}
Set $\alpha = \min\Big\{ |\mu| : \mu\in\Sp\big(\DY\big)\backslash\{0\} \Big\}$.
For $R,T,\mu,\lambda$ satisfying the hypothesis of Lemma \ref{lem-cylinder-nz-estimation},
we have
\begin{equation}
\label{eq2-pf-lem-cylinder-nz-estimation}
R^2\mu^2 + Tf''_T(s)c\hat{c} + T^2|f'_T|^2(s) - \lambda^2 \geqslant \alpha^2 R^2/2 \;.
\end{equation}
From \eqref{eq1-pf-lem-cylinder-nz-estimation},
\eqref{eq2-pf-lem-cylinder-nz-estimation}
and the obvious identity
\begin{equation}
\frac{\partial^2}{\partial s^2} \big\lVert \omega_s \big\rVert^2_Y =
\Big\langle \frac{\partial^2}{\partial s^2}\omega_s,\omega_s \Big\rangle_Y +
\Big\langle \omega_s,\frac{\partial^2}{\partial s^2}\omega_s \Big\rangle_Y +
2 \Big\lVert \frac{\partial}{\partial s}\omega_s \Big\rVert^2_Y \;,
\end{equation}
we obtain \eqref{eqc-pf-lem-cylinder-nz-estimation}.
This completes the proof of Lemma \ref{lem-cylinder-nz-estimation}.
\end{proof}

\begin{lemme}
\label{lem-cylinder-zm-estimation}
For $\omega\in \Omega^\bullet\big(I,\hh(Y,F)\big)$ and $\lambda\in\R$ satisfying
\begin{equation}
\label{eq-zm-omega-eigen-mu}
D^{\hh(Y,F)}_T \omega = \lambda \omega \;,
\end{equation}
we have
\begin{equation}
\label{eq-lem-cylinder-zm-estimation}
\frac{\partial}{\partial s} \big\langle c\,\omega_s,\omega_s \big\rangle_Y = 0 \;,\hspace{5mm}
\text{for } s\in[-1,1] \;.
\end{equation}
\end{lemme}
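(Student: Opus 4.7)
The plan is a direct algebraic calculation relying only on the explicit form of $D^{\hh(Y,F)}_T$ from \eqref{eq-dirac-cylinder-spec-expansion-hh} and the Clifford/adjointness properties of $c,\hat{c}$. By \eqref{eq-dirac-cylinder-spec-expansion-hh}, the eigenvalue equation \eqref{eq-zm-omega-eigen-mu} reads $c\,\partial_s\omega_s + Tf_T'\,\hat{c}\,\omega_s = \lambda\omega_s$, i.e.
\begin{equation*}
c\,\partial_s\omega_s = \lambda\,\omega_s - Tf_T'\,\hat{c}\,\omega_s.
\end{equation*}
Applying the Leibniz rule and substituting, I would obtain
\begin{equation*}
\frac{\partial}{\partial s}\langle c\,\omega_s,\omega_s\rangle_Y
= \lambda\lVert\omega_s\rVert_Y^2 - Tf_T'\,\langle\hat{c}\,\omega_s,\omega_s\rangle_Y + \langle c\,\omega_s,\partial_s\omega_s\rangle_Y.
\end{equation*}

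For the last term, I would use the skew-adjointness of $c$ (noted just after \eqref{eq-def-cchat}, inherited on the tensor product $\hh(Y,F)\otimes\Lambda^\bullet(T^*I)$ since $c,\hat{c}$ act only on the second factor while $\langle\cdot,\cdot\rangle_Y$ pairs both factors naturally) to rewrite $\langle c\,\omega_s,\partial_s\omega_s\rangle_Y = -\langle\omega_s,c\,\partial_s\omega_s\rangle_Y$, and substitute the eigenvalue equation again to get
\begin{equation*}
\langle c\,\omega_s,\partial_s\omega_s\rangle_Y = -\lambda\lVert\omega_s\rVert_Y^2 + Tf_T'\,\langle\omega_s,\hat{c}\,\omega_s\rangle_Y.
\end{equation*}
Since $\hat{c}$ is self-adjoint (also from \eqref{eq-def-cchat}), the scalar $\langle\hat{c}\,\omega_s,\omega_s\rangle_Y$ is real, hence equals $\langle\omega_s,\hat{c}\,\omega_s\rangle_Y$. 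Adding the two pieces, the $\lambda$-terms and the $Tf_T'$-terms cancel in pairs, giving \eqref{eq-lem-cylinder-zm-estimation}.

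The argument is purely algebraic: no boundary calculation, integration by parts in $s$, or use of any specific property of $f_T$ beyond its presence as a scalar factor is required, and there is no genuine obstacle. Conceptually, the identity expresses conservation of the current $\langle c\,\omega_s,\omega_s\rangle_Y$ along harmonic modes for the one-dimensional operator $c\,\partial_s + Tf_T'\,\hat{c}$, mirroring the role played by Lemma \ref{lem-cylinder-nz-estimation} for the nonzero-mode sector and to be used subsequently in controlling the small eigensections of $R\DIYRT$ in the zero-mode component.
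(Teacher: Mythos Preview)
Your proof is correct and follows essentially the same route as the paper's: both differentiate $\langle c\,\omega_s,\omega_s\rangle_Y$ via Leibniz, substitute the eigenvalue equation, and use that $c$ is skew-adjoint and $\hat{c}$ is self-adjoint to see the cancellation. The only cosmetic difference is that the paper first multiplies \eqref{eq-zm-omega-eigen-mu} by $c$ (using $c^2=-1$) to write $\partial_s\omega_s=(Tf_T'c\hat{c}-\lambda c)\omega_s$ before substituting, whereas you work directly with $c\,\partial_s\omega_s$.
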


\begin{proof}
By \eqref{eq-dirac-cylinder-spec-expansion-hh}
and \eqref{eq-zm-omega-eigen-mu},
we have
\begin{equation}
\label{eq1-pf-lem-cylinder-zm-estimation}
\frac{\partial}{\partial s} \omega_s =
\big( Tf'_T(s)c\hat{c} -\lambda c \big) \omega_s \;.
\end{equation}
Note that $c$ is skew-adjoint and that $\hat{c}$ is self-adjoint,
equation \eqref{eq-lem-cylinder-zm-estimation}
follows from \eqref{eq-c-hatc-algebra}
and \eqref{eq1-pf-lem-cylinder-zm-estimation}.
This completes the proof of Lemma \ref{lem-cylinder-zm-estimation}.
\end{proof}

\section{Adiabatic limit and Witten type deformation}
\label{sect-al-wd}

The purpose of this section is to prove Theorems \ref{thm-central-spgap} and \ref{thm-central-SRT}.

First we summarize the proof of Theorem \ref{thm-central-spgap} given in this section.
Let $\lambda$ be a reasonably small eigenvalue of $R\DRT$.
Let $\omega$ be an eigensection associated with $\lambda$.
First,
in Lemma \ref{prop-zm-minoration},
we show that $\omega^\mathrm{zm}$ (the zero-mode of $\omega$, see \eqref{eq-def-zmnz}) is the principal contributor to the norm of $\omega$. 
Second,
in Lemma \ref{prop-zm-sca-estimate},
we show that $\omega^\mathrm{zm}$ almost lies in the domain of $D^{\hh(Y,F)}_{T,\bd}$,
which is the operator in \eqref{eq-DV-Tf} with $V = \hh(Y,F) := \Ker(D^Y)$.
Combining the results above,
we show that $\lambda$ is very close to an eigenvalue of $D^{\hh(Y,F)}_{T,\bd}$.
On the other hand,
by Theorem \ref{thm-witten-spec},
$D^{\hh(Y,F)}_{T,\bd}$ satisfies the desired spectral gap.
Hence so does $R\DRT$.

Concerning the proof of Theorem \ref{thm-central-SRT}, 
the idea is to explicitly construct $G_{R,T}^+: C^{0,\bullet}_0 \rightarrow \Omega^\bullet(Z_R,F)$ and $I_{R,T}^+: C^{1,\bullet}_0 \rightarrow \Omega^\bullet(Z_R,F)$
(see \eqref{eq1-def-FRTplus} and \eqref{eq1-def-IRTplus}).
The map $\mathscr{S}_{R,T}: C^{\bullet,\bullet}_0 \rightarrow \Omega^\bullet(Z_R,F)$
is then defined by composing $G_{R,T}^+ \oplus I_{R,T}^+$
with the orthogonal projection to the eigenspace of $\DRT$ associated with small eigenvalues.
In the proof of the injectivity of $\mathscr{S}_{R,T}$,
the most subtle part is the injectivity of $\mathscr{S}_{R,T}\big|_{C^{0,\bullet}_0}$.
This is obtained by
constructing an auxiliary map $F_{R,T}^+: C^{0,\bullet}_0 \rightarrow \Omega^\bullet(Z_R,F)$ and
applying Proposition \ref{prop-proj-estimate-better} with $w = F_{R,T}^+$ and $v = G_{R,T}^+$
(see Proposition \ref{prop-FRTplus}).
The proof of the surjectivity of $\mathscr{S}_{R,T}$ highly relies on the results mentioned in the last paragraph:
we reduce the problem to $D^{\hh(Y,F)}_{T,\bd}$
whose spectrum is studied in \textsection \ref{subsect-wd-interval}.

For various reasons,
the kernel of $\DRT$ needs to be studied separately.
Here the strategy is exactly the same as in the last paragraph.
We explicitly construct $F_{R,T}, G_{R,T}: H^0(C^{\bullet,\bullet}_0,\partial) \rightarrow \Omega^\bullet(Z_R,F)$
and $I_{R,T}, J_{R,T}: H^1(C^{\bullet,\bullet}_0,\partial) \rightarrow \Omega^\bullet(Z_R,F)$
(see \eqref{eq1-def-FRT} and \eqref{eq1-def-IRT}).
We construct a bijection $\mathscr{S}_{R,T}^H: H^\bullet(C^{\bullet,\bullet}_0,\partial) \rightarrow \Ker\big(\DRT\big)$
by composing $F_{R,T} \oplus I_{R,T}$ with the orthogonal projection to the kernel of $\DRT$.
To show the injectivity of $\mathscr{S}_{R,T}^H$,
we apply Proposition \ref{prop-proj-estimate-better} with $w = F_{R,T} \oplus I_{R,T}$ and $v = G_{R,T} \oplus J_{R,T}$.

This section is organized as follows.
In \textsection  \ref{subsec-approx-kernel},
we estimate the kernel of the Witten Laplacian.
In \textsection  \ref{subsec-approx-small-eigen},
we estimate the eigenspace of the Witten Laplacian associated with small eigenvalues.
Theorem \ref{thm-central-spgap} will be proved in this subsection.
In \textsection \ref{subsec-approx-de-rham},
we estimate the action of the de Rham operator on
the eigenspace associated with small eigenvalues.
In \textsection \ref{subsec-approx-metrics},
we estimate the $L^2$-metric on the eigenspace associated with small eigenvalues.
Theorem \ref{thm-central-SRT} will be proved in this subsection.

\subsection{Kernel of $\DRT$}
\label{subsec-approx-kernel}
Recall that $\DRT$ was defined in \eqref{eq-def-DRT-conj}.
For convenience,
we denote $\DRzero = \DRT\big|_{T=0}$.
By elliptic estimate (see the proof of \cite[Prop. 3.4]{pzz}),
we may define the $H^1$-norm on $\Omega^\bullet(Z_R,F)$ as follows:
for $\omega\in\Omega^\bullet(Z_R,F)$,
\begin{equation}
\label{eq-def-H1-norm}
\big\lVert \omega \big\rVert^2_{H^1,Z_R} =
\big\lVert \omega \big\rVert^2_{Z_R} + \big\lVert \DRzero\omega \big\rVert^2_{Z_R} \;.
\end{equation}

We fix $\kappa\in]0,1/3[$ as in \eqref{eq-intro-s-gap}.

\begin{prop}
\label{prop-sobolev}
For $T = R^\kappa \gg 1$ and $\omega\in\Omega^\bullet(Z_R,F)$,
we have
\begin{equation}
\label{eq-prop-sobolev}
\big\lVert\omega\big\rVert^2_{H^1,Z_R} \leqslant
2 \big\lVert\omega\big\rVert^2_{Z_R} + \big\lVert\DRT\omega\big\rVert^2_{Z_R} \;.
\end{equation}
\end{prop}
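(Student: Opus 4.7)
The plan is to control $\DRzero := \DRT\big|_{T=0}$ by $\DRT$ via the explicit decomposition
\begin{equation*}
\DRT = \DRzero + T\,\hat{c}(\nabla f_T),
\end{equation*}
which follows from \eqref{eq-def-DRT-conj}, \eqref{eq-def-hatc} and the identity $(df_T\wedge)^* = i_{\nabla f_T}$. The key geometric input is that $f_T$ on $Z_R$ is the pullback of $f_T\colon[-1,1]\to\R$ via $u\mapsto u/R$ on the cylindrical region $IN_R$, and vanishes outside; hence on $Z_R$ one has $|\nabla f_T| = \mathscr{O}(R^{-1})$ and $|\nabla^2 f_T| = \mathscr{O}(R^{-2})$ uniformly.

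The first step is to expand
\begin{equation*}
\big\lVert \DRT\omega \big\rVert^2_{Z_R} = \big\lVert \DRzero\omega \big\rVert^2_{Z_R} + 2T\,\mathrm{Re}\big\langle \DRzero\omega, \hat{c}(\nabla f_T)\omega \big\rangle_{Z_R} + T^2 \big\lVert \hat{c}(\nabla f_T)\omega \big\rVert^2_{Z_R}.
\end{equation*}
Since $\hat{c}(V)^2 = |V|^2 \Id$, the third term is bounded by $T^2\sup|\nabla f_T|^2\,\big\lVert \omega \big\rVert^2_{Z_R} = \mathscr{O}(T^2/R^2)\big\lVert \omega \big\rVert^2_{Z_R}$. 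For the cross term, since $\DRzero$ and $\hat{c}(\nabla f_T)$ are both formally self-adjoint, its real part equals $\tfrac{1}{2}\langle \{\DRzero, \hat{c}(\nabla f_T)\}\omega, \omega\rangle_{Z_R}$. Using Cartan's formula $\{d^{Z_R}, i_{\nabla f_T}\} = L_{\nabla f_T}$ together with its adjoint and the vanishings $\{d^{Z_R}, df_T\wedge\} = 0 = \{d^{Z_R,*}, i_{\nabla f_T}\}$, a short computation yields
\begin{equation*}
\{\DRzero, \hat{c}(\nabla f_T)\} = L_{\nabla f_T} + L_{\nabla f_T}^*.
\end{equation*}
The first-order parts $\pm\nabla_{\nabla f_T}$ cancel in this sum, so the right-hand side is a zeroth-order operator whose pointwise norm is $\mathscr{O}(|\nabla^2 f_T|) = \mathscr{O}(R^{-2})$; hence the cross term contributes at most $\mathscr{O}(T/R^2)\big\lVert \omega \big\rVert^2_{Z_R}$.

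Combining both estimates and using $T = R^\kappa$ with $\kappa < 1/3 < 1$, one obtains
\begin{equation*}
\Big| \big\lVert \DRT\omega \big\rVert^2_{Z_R} - \big\lVert \DRzero\omega \big\rVert^2_{Z_R} \Big| \leq C R^{2\kappa - 2}\,\big\lVert \omega \big\rVert^2_{Z_R} \leq \big\lVert \omega \big\rVert^2_{Z_R}
\end{equation*}
for $R$ sufficiently large. Rearranging gives $\big\lVert \DRzero\omega \big\rVert^2_{Z_R} \leq \big\lVert \omega \big\rVert^2_{Z_R} + \big\lVert \DRT\omega \big\rVert^2_{Z_R}$; adding $\big\lVert \omega \big\rVert^2_{Z_R}$ to both sides and invoking \eqref{eq-def-H1-norm} yields \eqref{eq-prop-sobolev}. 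The only delicate point is the cancellation of first-order terms in $\{\DRzero, \hat{c}(\nabla f_T)\}$, a standard Bochner--Weitzenb\"ock feature of Witten-type deformations; this cancellation is what lets the slower Hessian scale $R^{-2}$ dominate the cross term and makes the whole estimate uniformly small as $R\to\infty$.
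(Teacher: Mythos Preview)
Your proof is correct and follows essentially the same approach as the paper. Both arguments amount to showing the operator inequality $\DsRT + \Id \geqslant \DsRzero$ (equivalently, $\|\DRT\omega\|^2 + \|\omega\|^2 \geqslant \|\DRzero\omega\|^2$) and then adding $\|\omega\|^2$. The paper obtains this in one line by citing the explicit Lichnerowicz-type formula \eqref{eq-def-DRT-cylinder} on the cylinder, which gives $\DsRT - \DsRzero = R^{-2}\big(Tf''_T\,c\hat{c} + T^2|f'_T|^2\big)$ directly; your Cartan-formula computation of $\{\DRzero,\hat{c}(\nabla f_T)\} = L_{\nabla f_T} + L_{\nabla f_T}^*$ simply re-derives the first of these two terms in a coordinate-free way, and your estimate of the square term is the second. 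So the content is identical, just packaged slightly differently.
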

\begin{proof}
By \eqref{eq-compare-f-fT}, \eqref{eq-def-DRT-cylinder}
and the assumption $T = R^\kappa$,
we have
\begin{equation}
\label{eq2-pf-prop-sobolev}
\DsRT + \Id \geqslant \DsRzero \;.
\end{equation}
From \eqref{eq-def-H1-norm} and \eqref{eq2-pf-prop-sobolev},
we obtain \eqref{eq-prop-sobolev}.
This completes the proof of Proposition \ref{prop-sobolev}.
\end{proof}

We will always use the canonical isometric embeddings
\begin{equation}
\label{eq-can-iso}
IY_R \subseteq Z_{j,R} \subseteq Z_{j,\infty} \;,\hspace{5mm}
Z_{j,R} \subseteq Z_R \;,\hspace{5mm} \text{for } j=1,2 \;.
\end{equation}
Recall that the vector subspaces
$\hh(Y,F)\subseteq\Omega^\bullet(Y,F)$ and
$\mathscr{E}^\mu(Y,F)\subseteq\Omega^\bullet(Y,F)$
were defined in the paragraph containing \eqref{eq-Emu}.
For $\omega\in \Omega^\bullet(Z_{j,R},F)$ with $j=0,1,2,3$,
we have the orthogonal decomposition
\begin{equation}
\label{eq-def-zmnz}
\omega\big|_{IY_R} = \omega^\mathrm{zm} + \omega^\mathrm{nz} \;,
\end{equation}
with
\begin{equation}
\omega^\mathrm{zm} \in \Omega^\bullet\big(I_R,\hh(Y,F)\big) \;,\hspace{5mm}
\omega^\mathrm{nz} \in \bigoplus_{\mu\neq 0} \Omega^\bullet\big(I_R,\mathscr{E}^\mu(Y,F)\big) \;.
\end{equation}
We call $\omega^\mathrm{zm}$ (resp. $\omega^\mathrm{nz}$)
the zero-mode (resp. non-zero-mode) of $\omega$.

Recall that $\hha(Z_{j,\infty},F) \subseteq \hh(Z_{j,\infty},F)$ was defined in \eqref{eq-hh}.
Let
\begin{equation}
\label{eq1-def-resol}
\Res\;,\;\stRes:\; \hha(Z_{1,\infty},F) \rightarrow \Omega^\bullet([0,+\infty)\times Y,F)
\end{equation}
be as in \cite[(2.44)]{pzz} with $X_\infty$ replaced by $Z_{1,\infty}$
and $\hh(X_\infty,F)$ replaced by $\hha(Z_{1,\infty},F)$.
Here we recall their construction.
We identify $Z_{1,\infty}$ with $Z_{1,0}\cup[0,+\infty[\times Y$.
By \cite[Prop. 2.5]{pzz} and \cite[(2.10)]{pzz},
for $(\omega,\hat{\omega})\in\hha(Z_{1,\infty},F)$,
we have
\begin{equation}
\label{eq0-def-Res}
\omega\big|_{[0,+\infty[\times Y}
= \hat{\omega} + \omega^\mathrm{nz}
= \hat{\omega} +
\sum_{\mu\neq 0,\; \mu\in\Sp(D^Y)} e^{-|\mu|u} \big( \tau_{\mu,1} - du\wedge\tau_{\mu,2} \big) \;,
\end{equation}
where $\hat{\omega}\in\hh(Y,F)[du]$ is viewed as a constant section in
\begin{equation}
\smooth\big([0,+\infty[,\hh(Y,F)[du]\big)
= \Omega^\bullet\big([0,+\infty[,\hh(Y,F)\big)
\subseteq \Omega^\bullet\big([0,+\infty[\times Y,F\big) \;,
\end{equation}
and $\tau_{\mu,1},\tau_{\mu,2}\in\Omega^\bullet(Y,F)$ satisfy
\begin{equation}
\label{eq1-def-Res}
d^Y \tau_{\mu,1} = d^{Y,*} \tau_{\mu,2} = 0 \;,\hspace{5mm}
d^{Y,*} \tau_{\mu,1} = |\mu| \tau_{\mu,2} \;,\hspace{5mm}
d^Y \tau_{\mu,2} = |\mu| \tau_{\mu,1} \;.
\end{equation}
We define
\begin{align}
\label{eq-def-Res}
\begin{split}
\Res(\omega,\hat{\omega}) & = \sum_{\mu\neq 0,\; \mu\in\Sp(D^Y)} \frac{1}{|\mu|} e^{-|\mu|u} \tau_{\mu,2} \;,\\
\stRes(\omega,\hat{\omega}) & = \sum_{\mu\neq 0,\; \mu\in\Sp(D^Y)} \frac{1}{|\mu|} e^{-|\mu|u} du\wedge\tau_{\mu,1} \;.
\end{split}
\end{align}

Let
\begin{equation}
\label{eq2-def-resol}
\Omega^\bullet([0,+\infty)\times Y,F) \rightarrow \Omega^\bullet(IY_R,F)
\end{equation}
be induced by the isometric identifications $IY_R = [-R,R] \times Y \simeq [0,2R]\times Y \hookrightarrow [0,+\infty)\times Y$.
Composing \eqref{eq1-def-resol} and \eqref{eq2-def-resol},
we get
\begin{equation}
\Res\;,\;\stRes:\; \hh(Z_{1,\infty},F) \rightarrow \Omega^\bullet(IY_R,F) \;.
\end{equation}
We construct
\begin{equation}
\Res\;,\;\stRes:\; \hh(Z_{2,\infty},F) \rightarrow \Omega^\bullet(IY_R,F)
\end{equation}
in the same way.
For $j=1,2$ and $(\omega,\hat{\omega})\in\hh(Z_{j,\infty},F)$,
we have
\begin{align}
\label{eq-resol-RdFRdFstar}
\begin{split}
& d^{Z_R} \Res(\omega,\hat{\omega}) =
d^{Z_R,*}\stRes(\omega,\hat{\omega}) = \omega^\mathrm{nz} \;,\\
& d^{Z_R,*} \Res(\omega,\hat{\omega}) =
d^{Z_R} \stRes(\omega,\hat{\omega}) = 0 \;,\\
& i_{\frac{\partial}{\partial u}} \Res(\omega,\hat{\omega}) =
du\wedge \stRes(\omega,\hat{\omega}) = 0 \;.
\end{split}
\end{align}

Set
\begin{equation}
\label{eq-def-hha12}
\hha(Z_{12,\infty},F) =
\Big\{(\omega_1,\omega_2,\hat{\omega}) \;:\:
(\omega_j,\hat{\omega})\in \hha(Z_{j,\infty},F) \hspace{5mm}\text{for }j=1,2
\Big\} \;.
\end{equation}

Recall that the smooth function $\chi: \R \rightarrow \R$ was defined in \eqref{eq-def-chi}.
Set
\begin{equation}
\label{eq-def-chi12}
\chi_1(s) = 1-\chi\big(4(s+1)\big) \;,\hspace{5mm}
\chi_2(s) = 1-\chi\big(4(1-s)\big) \;.
\end{equation}
We will view $\chi_j$  ($j=1,2$) as functions on $IY_R$,
i.e.,
\begin{equation}
\label{eq-chi-IYR}
\chi_j(s,y) = \chi_j(s) \;,\hspace{5mm}
\chi_j(u,y) = \chi_j(u/R) \;.
\end{equation}
Following \cite[(3.26)]{pzz},
we define
\begin{equation}
\label{eq0-def-FRT}
F_{R,T}, G_{R,T}: \hha(Z_{12,\infty},F) \rightarrow \Omega^\bullet(Z_R,F)
\end{equation}
as follows:
for $(\omega_1,\omega_2,\hat{\omega})\in\hha(Z_{12,\infty},F)$,
\begin{align}
\label{eq1-def-FRT}
\begin{split}
& F_{R,T}(\omega_1,\omega_2,\hat{\omega}) \big|_{Z_{j,0}} =
G_{R,T}(\omega_1,\omega_2,\hat{\omega}) \big|_{Z_{j,0}} = \omega_j \;,\hspace{5mm} \text{for } j=1,2 \;,\\
& F_{R,T}(\omega_1,\omega_2,\hat{\omega}) \big|_{IY_R}
= e^{-Tf_T}\hat{\omega}
+ e^{-Tf_T}d^{Z_R}\Big( \chi_1\Res(\omega_1,\hat{\omega})
+ \chi_2\Res(\omega_2,\hat{\omega}) \Big) \;,\\
& G_{R,T}(\omega_1,\omega_2,\hat{\omega}) \big|_{IY_R}
= e^{-Tf_T}\hat{\omega}
+ e^{Tf_T}d^{Z_R,*}\Big( \chi_1\stRes(\omega_1,\hat{\omega})
+ \chi_2\stRes(\omega_2,\hat{\omega}) \Big) \;,
\end{split}
\end{align}
where we use the identifications in \eqref{eq-can-iso}.
By \eqref{eq-resol-RdFRdFstar},
$F_{R,T}$ and $G_{R,T}$ are well-defined.
Similarly to \cite[(3.28)]{pzz},
by \eqref{eq-def-DRT-conj} and the identities $D^Y \hat{\omega}_j = i_{\frac{\partial}{\partial u}} \hat{\omega}_j = 0$ for $j=1,2$,
we have
\begin{equation}
\label{eq3-def-FRT}
\diffRT F_{R,T}(\omega_1,\omega_2,\hat{\omega}) =
\stdiffRT G_{R,T}(\omega_1,\omega_2,\hat{\omega}) = 0 \;.
\end{equation}

Let
$P_{R,T} : \Omega^\bullet(Z_R,F) \rightarrow \Ker\big(\DRT\big)$
be the orthogonal projection with respect the $L^2$-metric induced by $g^{TZ_R}$ and $h^F$.

\begin{prop}
\label{prop-FRT}
For $T = R^\kappa \gg 1$ and $(\omega_1,\omega_2,\hat{\omega})\in\hha(Z_{12,\infty},F)$,
we have
\begin{equation}
\label{eq-prop-FRT}
\Big\lVert \big( \Id - P_{R,T} \big)F_{R,T}(\omega_1,\omega_2,\hat{\omega}) \Big\rVert^2_{H^1,Z_R}
= \mathscr{O}\big(R^{-2+\kappa}\big) \Big(\big\lVert\omega_1\big\rVert^2_{Z_{1,0}} + \big\lVert\omega_2\big\rVert^2_{Z_{2,0}} \Big) \;.
\end{equation}
\end{prop}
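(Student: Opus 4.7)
My plan is to compare $F_{R,T}$ with $G_{R,T}$ by applying Corollary \ref{cor-proj-estimate-better} to the pair $(F_{R,T}, G_{R,T})$. Thanks to \eqref{eq3-def-FRT} one has $\diffRT F_{R,T} = 0$ and $\stdiffRT G_{R,T} = 0$, so in the setup of Corollary \ref{cor-proj-estimate-better} with $D = \DRT$, $w = F_{R,T}$, $v = G_{R,T}$, $\alpha = 0$ and $\gamma = 0$, the spectral projection $P^{[0,0]}$ onto $\Ker(\DsRT) = \Ker(\DRT)$ coincides with $P_{R,T}$ by the Hodge theorem. This yields
\[
\bigl\|(\Id - P_{R,T})F_{R,T}\bigr\|_1^2 \leqslant 2\,\bigl\|F_{R,T} - G_{R,T}\bigr\|_1^2,
\]
where $\|\omega\|_1^2 = \|\omega\|_{Z_R}^2 + \|\DRT\omega\|_{Z_R}^2$. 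Combined with Proposition \ref{prop-sobolev} (giving $\|\omega\|_{H^1,Z_R}^2 \leqslant 2\|\omega\|_1^2$), the proof reduces to showing that both $\|F_{R,T} - G_{R,T}\|_{Z_R}^2$ and $\|\DRT(F_{R,T} - G_{R,T})\|_{Z_R}^2$ are $\mathscr{O}(R^{-2+\kappa})(\|\omega_1\|_{Z_{1,0}}^2 + \|\omega_2\|_{Z_{2,0}}^2)$.

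A direct computation using \eqref{eq-resol-RdFRdFstar}, the Leibniz rule, and the identities $d^{Z_R}\chi_j = R^{-1}\chi_j'(s)\,du$ and $d^{Z_R,*}(\chi_j \stRes(\omega_j,\hat\omega)) = \chi_j \omega_j^{\mathrm{nz}} - R^{-1}\chi_j'(s) i_{\partial_u}\stRes(\omega_j,\hat\omega)$ shows that $F_{R,T} - G_{R,T}$ vanishes on $Z_{1,0}\cup Z_{2,0}$ and, on $IY_R$, equals
\[
-2\sinh(Tf_T)\sum_{j=1,2}\chi_j\, \omega_j^{\mathrm{nz}} \;+\; R^{-1}\sum_{j=1,2}\chi_j'(s)\bigl[e^{-Tf_T}\, du\wedge\Res(\omega_j,\hat\omega) + e^{Tf_T}\, i_{\partial_u}\stRes(\omega_j,\hat\omega)\bigr].
\]
The second sum is supported in the transition region of the $\chi_j$, at distance at least $R/16$ from the boundary of the cylinder, where both $\Res$ and $\stRes$ decay like $\mathscr{O}(e^{-\mu_0 R/16})$ by \eqref{eq-def-Res} (with $\mu_0 = \min\{|\mu|>0 : \mu\in\Sp(\DY)\}$); together with $e^{\pm Tf_T} \leqslant e^{T/128}$ on this region and $T = R^\kappa$ with $\kappa < 1$, this term is exponentially small in $R$. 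For the first sum, parametrizing by $v = \mathrm{dist}(u, \partial Z_{j,0})$ and using $Tf_T \leqslant Tv^2/(2R^2)$, $\|\omega_j^{\mathrm{nz}}(u,\cdot)\|_Y \leqslant C e^{-\mu_0 v}\|\omega_j\|_{Z_{j,0}}$, and $\kappa<1$ to bound $e^{Tf_T}$ by a constant on the relevant range, the integral $\int_0^\infty (Tv^2/R^2)^2 e^{-2\mu_0 v}\,dv = \mathscr{O}(T^2/R^4)$ gives $\|F_{R,T} - G_{R,T}\|_{Z_R}^2 = \mathscr{O}(R^{-4+2\kappa})(\|\omega_1\|_{Z_{1,0}}^2 + \|\omega_2\|_{Z_{2,0}}^2)$, stronger than required.

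For $\|\DRT(F_{R,T} - G_{R,T})\|_{Z_R}^2 = \|\stdiffRT F_{R,T} - \diffRT G_{R,T}\|_{Z_R}^2$, I would perform an analogous calculation: each additional derivative lands either on a cutoff function (bringing a factor $R^{-1}\chi_j'$ or $R^{-1}\chi_j''$), on the Witten weight (bringing $R^{-1}Tf_T'$, of order $T/R = R^{\kappa-1}$ on the bulk support), or on the resolvents (leaving their exponential structure intact). The worst-case contribution arises from the Witten-derivative acting on the Sinh-term above, which introduces one extra factor of $R^{\kappa-1}$ compared with the $L^2$-estimate, yielding the bound $\mathscr{O}(R^{-2+\kappa})$ --- precisely the exponent in the statement.

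The main technical difficulty is the careful bookkeeping in this $\DRT$-norm estimate: one must identify every new term produced by differentiating the Witten weight $e^{\pm Tf_T}$, control them by exploiting the concentration of $f_T'$ on a region of width $R/\sqrt{T}$ around the critical points of $f_\infty$, and verify that the interplay between the exponential decay of $\Res$, $\stRes$, $\omega_j^{\mathrm{nz}}$ and the weights $e^{\pm Tf_T}$ does not spoil the polynomial bound. The hypothesis $\kappa < 1/3$ is precisely what guarantees the stated $\mathscr{O}(R^{-2+\kappa})$ control and leaves the needed room for the subsequent estimates in Theorem \ref{thm-central-SRT}.
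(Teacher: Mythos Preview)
Your approach is essentially identical to the paper's: both apply Corollary~\ref{cor-proj-estimate-better} to the pair $(F_{R,T},G_{R,T})$ with $\alpha=\gamma=0$, reduce to bounding $\lVert F_{R,T}-G_{R,T}\rVert_{Z_R}^2$ and $\lVert \DRT(F_{R,T}-G_{R,T})\rVert_{Z_R}^2$ via the explicit formula for the difference on $IY_R$, and then pass to the $H^1$-norm through Proposition~\ref{prop-sobolev}. Two small remarks: the paper actually writes out $\DRT(F_{R,T}-G_{R,T})$ explicitly (see \eqref{eq16-pf-prop-FRT}) rather than arguing heuristically about ``one extra factor of $R^{\kappa-1}$'' (your arithmetic there does not quite land on $R^{-2+\kappa}$, though the stated bound is of course valid as an upper bound), and only $\kappa<1$ is used in this proposition --- the sharper constraint $\kappa<1/3$ enters elsewhere.
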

\begin{proof}
The proof follows closely the proof of \cite[Prop. 3.5]{pzz}.
It consists of several steps.

\noindent \textit{Step 1}.
We calculate $(F_{R,T}-G_{R,T})(\omega_1,\omega_2,\hat{\omega})$
and $\DRT (F_{R,T}-G_{R,T})(\omega_1,\omega_2,\hat{\omega})$.

Recall that $IY_R = [-R,R]\times Y$.
By \eqref{eq-def-chi12} and \eqref{eq-chi-IYR},
we have
\begin{equation}
\label{eq11-pf-prop-FRT}
\chi_2\big|_{[-R,0]\times Y} = 0 \;.
\end{equation}
By \eqref{eq-resol-RdFRdFstar}, \eqref{eq1-def-FRT} and \eqref{eq11-pf-prop-FRT},
we have
\begin{align}
\label{eq12-pf-prop-FRT}
\begin{split}
(F_{R,T}-G_{R,T})(\omega_1,\omega_2,\hat{\omega})\Big|_{[-R,0] \times Y}
& =  \chi_1\Big(e^{-Tf_T} - e^{Tf_T}\Big)\omega_1^\mathrm{nz} \\
& \hspace{-20mm} + \frac{\partial\chi_1}{\partial u} \Big( e^{-Tf_T} du\wedge \Res(\omega_1,\hat{\omega})
+ e^{Tf_T} i_{\frac{\partial}{\partial u}} \stRes(\omega_1,\hat{\omega}) \Big) \;.
\end{split}
\end{align}
By \eqref{eq0-def-Res}, \eqref{eq1-def-Res}, \eqref{eq-def-Res} and \eqref{eq-resol-RdFRdFstar},
we have
\begin{align}
\label{eq13-pf-prop-FRT}
\begin{split}
& d^{Z_R} \Big(du\wedge\Res(\omega_1,\hat{\omega})\Big) =
-du\wedge\omega^\mathrm{nz}_1 \;,\hspace{5mm}
d^{Z_R,*}\Big(i_{\frac{\partial}{\partial u}} \stRes(\omega_1,\hat{\omega})\Big) = -i_{\frac{\partial}{\partial u}} \omega^\mathrm{nz}_1 \;,\\
& d^{Z_R,*} \Big(du\wedge\Res(\omega_1,\hat{\omega})\Big) =
-i_{\frac{\partial}{\partial u}}\omega^\mathrm{nz}_1 \;,\hspace{5mm}
d^{Z_R} \Big(i_{\frac{\partial}{\partial u}} \stRes(\omega_1,\hat{\omega})\Big) =
-du\wedge \omega^\mathrm{nz}_1 \;.
\end{split}
\end{align}
By \eqref{eq-def-DRT-conj},
the third identity in \eqref{eq-resol-RdFRdFstar}
and \eqref{eq13-pf-prop-FRT},
we have
\begin{align}
\label{eq14-pf-prop-FRT}
\begin{split}
& \DRT \Big(du\wedge\Res(\omega_1,\hat{\omega})\Big) =
- du\wedge\omega^\mathrm{nz}_1
- i_{\frac{\partial}{\partial u}} \omega^\mathrm{nz}_1
+ T\frac{\partial f_T}{\partial u} \Res(\omega_1,\hat{\omega}) \;,\\
& \DRT \Big(i_{\frac{\partial}{\partial u}} \stRes(\omega_1,\hat{\omega})\Big) =
- du\wedge\omega^\mathrm{nz}_1
- i_{\frac{\partial}{\partial u}} \omega^\mathrm{nz}_1
+ T\frac{\partial f_T}{\partial u} \stRes(\omega_1,\hat{\omega}) \;.
\end{split}
\end{align}
By \eqref{eq12-pf-prop-FRT}
and \eqref{eq14-pf-prop-FRT},
we have
\begin{align}
\label{eq16-pf-prop-FRT}
\begin{split}
& \DRT (F_{R,T}-G_{R,T})(\omega_1,\omega_2,\hat{\omega})\Big|_{[-R,0] \times Y} \\
& = 2T \frac{\partial f_T}{\partial u} \chi_1 \Big(e^{-Tf_T} i_{\frac{\partial}{\partial u}} - e^{Tf_T}du\wedge\Big)\omega^\mathrm{nz}_1
 -2\frac{\partial\chi_1}{\partial u}\Big(e^{-Tf_T} i_{\frac{\partial}{\partial u}} + e^{Tf_T}du\wedge \Big) \omega_1^\mathrm{nz} \\
& \hspace{5mm} + 2T \frac{\partial f_T}{\partial u}\frac{\partial\chi_1}{\partial u} \Big( e^{-Tf_T} \Res(\omega_1,\hat{\omega})
+ e^{Tf_T} \stRes(\omega_1,\hat{\omega}) \Big) \\
& \hspace{5mm} + \frac{\partial^2\chi_1}{\partial u^2} \Big( - e^{-Tf_T} \Res(\omega_1,\hat{\omega})
+  e^{Tf_T} \stRes(\omega_1,\hat{\omega}) \Big) \;.
\end{split}
\end{align}

\noindent \textit{Step 2}.
We estimate $\Big\lVert (F_{R,T}-G_{R,T})(\omega_1,\omega_2,\hat{\omega}) \Big\rVert_{Z_R}$
and $\Big\lVert \DRT (F_{R,T}-G_{R,T})(\omega_1,\omega_2,\hat{\omega}) \Big\rVert_{Z_R}$.

For
\begin{equation}
\label{eq21-pf-prop-FRT}
\tau\in
\Big\{ \; \omega^\mathrm{nz}_1 \;,\;
\Res(\omega_1,\hat{\omega}) \;,\;
\stRes(\omega_1,\hat{\omega}) \; \Big\}
\end{equation}
and $-R\leqslant u\leqslant 0$,
by \eqref{eq0-def-Res} and \eqref{eq-def-Res},
we have
\begin{equation}
\label{eq22-pf-prop-FRT}
\big\lVert \tau \big\rVert_{\{u\}\times Y}
= \mathscr{O}\big(e^{-a(R+u)}\big) \big\lVert\omega^\mathrm{nz}_1\big\rVert_{\partial Z_{1,0}}
= \mathscr{O}\big(e^{-a(R+u)}\big) \big\lVert\omega_1\big\rVert_{\partial Z_{1,0}} \;,
\end{equation}
where $a>0$ is a universal constant.
Since $\omega_1\in\Ker\big(\Doneinf\big)$,
by the Trace theorem for Sobolev spaces,
we have
\begin{equation}
\label{eq23-pf-prop-FRT}
\big\lVert\omega_1\big\rVert_{\partial Z_{1,0}} =
\mathscr{O}\big(1\big) \big\lVert\omega_1\big\rVert_{Z_{1,0}} \;.
\end{equation}
By \eqref{eq22-pf-prop-FRT} and \eqref{eq23-pf-prop-FRT},
for $-R\leqslant u\leqslant 0$,
we have
\begin{equation}
\label{eq24-pf-prop-FRT}
\big\lVert \tau \big\rVert_{\{u\}\times Y}
= \mathscr{O}\big(e^{-a(R+u)}\big) \big\lVert\omega_1\big\rVert_{Z_{1,0}} \;.
\end{equation}

By \eqref{eq-def-fT}, \eqref{eq-def-chi12} and \eqref{eq-chi-IYR},
for $-R\leqslant u\leqslant 0$,
we have
\begin{align}
\label{eq25-pf-prop-FRT}
\begin{split}
& \big|f_T\big| = \mathscr{O}\big(R^{-2}\big) (R+u)^2 \;,\hspace{5mm}
\left|\frac{\partial f_T}{\partial u}\right| = \mathscr{O}\big(R^{-2}\big) (R+u) \;,\\
& \big|\chi_1\big| \leqslant 1  \;,\hspace{5mm}
\left|\frac{\partial\chi_1}{\partial u}\right| = \mathscr{O}\big(R^{-1}\big) \;,\hspace{5mm}
\left|\frac{\partial^2\chi_1}{\partial u^2}\right| = \mathscr{O}\big(R^{-2}\big) \;.
\end{split}
\end{align}

By \eqref{eq12-pf-prop-FRT},
\eqref{eq16-pf-prop-FRT},
\eqref{eq24-pf-prop-FRT},
\eqref{eq25-pf-prop-FRT}
and the assumption $T = R^\kappa$,
we have
\begin{align}
\label{eq26-pf-prop-FRT}
\begin{split}
\Big\lVert (F_{R,T}-G_{R,T})(\omega_1,\omega_2,\hat{\omega})\Big\rVert^2_{[-R,0] \times Y} +
\Big\lVert \DRT (F_{R,T}-G_{R,T})(\omega_1,\omega_2,\hat{\omega})\Big\rVert^2_{[-R,0] \times Y} & \\
= \mathscr{O}\big(R^{-2+\kappa}\big)\big\lVert\omega_1\big\rVert^2_{Z_{1,0}} & \;.
\end{split}
\end{align}
The same argument also shows that
\eqref{eq26-pf-prop-FRT} holds with
$[-R,0] \times Y$ replaced by $[0,R] \times Y$ and
$\big\lVert\omega_1\big\rVert^2_{Z_{1,0}}$ replaced by $\big\lVert\omega_2\big\rVert^2_{Z_{2,0}}$.
On the other hand,
by \eqref{eq1-def-FRT},
we have
\begin{equation}
\label{eq27-pf-prop-FRT}
(F_{R,T}-G_{R,T})(\omega_1,\omega_2,\hat{\omega}) \Big|_{Z_{1,0}\cup Z_{2,0}} = 0 \;.
\end{equation}
By \eqref{eq26-pf-prop-FRT}
and \eqref{eq27-pf-prop-FRT},
we have
\begin{align}
\label{eq28-pf-prop-FRT}
\begin{split}
\Big\lVert (F_{R,T}-G_{R,T})(\omega_1,\omega_2,\hat{\omega})\Big\rVert^2_{Z_R}
+ \Big\lVert \DRT (F_{R,T}-G_{R,T})(\omega_1,\omega_2,\hat{\omega})\Big\rVert^2_{Z_R} & \\
= \mathscr{O} \big(R^{-2+\kappa}\big) \Big(\big\lVert\omega_1\big\rVert^2_{Z_{1,0}} + \big\lVert\omega_2\big\rVert^2_{Z_{2,0}} \Big) & \;.
\end{split}
\end{align}

The estimates in \textsection \ref{subsect-hodge} hold
with $\big(W^\bullet, \partial,\big\lVert\cdot\big\rVert\big)$
replaced by $\big(\Omega^\bullet(Z_R,F),\diffRT,\big\lVert\cdot\big\rVert_{Z_R}\big)$.
Applying
\eqref{eq3-def-FRT},
\eqref{eq28-pf-prop-FRT}
and Corollary \ref{cor-proj-estimate-better}
with $\gamma = 0$,
$w = F_{R,T}(\omega_1,\omega_2,\hat{\omega})$ and
$v = G_{R,T}(\omega_1,\omega_2,\hat{\omega})$,
we get
\begin{align}
\label{eq7-pf-prop-FRT}
\begin{split}
\Big\lVert \big( \Id - P_{R,T} \big)F_{R,T}(\omega_1,\omega_2,\hat{\omega}) \Big\rVert^2_{Z_R}
+ \Big\lVert \DRT \big( \Id - P_{R,T} \big)F_{R,T}(\omega_1,\omega_2,\hat{\omega}) \Big\rVert^2_{Z_R} & \\
= \mathscr{O}\big(R^{-2+\kappa}\big) \Big(\big\lVert\omega_1\big\rVert^2_{Z_{1,0}} + \big\lVert\omega_2\big\rVert^2_{Z_{2,0}} \Big) & \;.
\end{split}
\end{align}
From Proposition \ref{prop-sobolev} and \eqref{eq7-pf-prop-FRT},
we obtain \eqref{eq-prop-FRT}.
This completes the proof of Proposition \ref{prop-FRT}.
\end{proof}

For $j=1,2$,
let
\begin{equation}
\label{eq-def-LL}
\LL^\bullet_{j,\mathrm{abs}} \subseteq \hh(Y,F) \;,\hspace{5mm}
\LL^\bullet_{j,\mathrm{rel}} \subseteq \hh(Y,F)du \;,\hspace{5mm}
\LL^\bullet_j = \LL^\bullet_{j,\mathrm{abs}} \oplus \LL^\bullet_{j,\mathrm{rel}}
\end{equation}
be as in \cite[(2.47),(2.49)]{pzz} with $X_\infty$ replaced by $Z_{j,\infty}$. 
More precisely, 
under the identification $\hh(Y,F) = H^\bullet(Y,F)$,
we have 
\begin{equation}
\label{eq1-LL}
\LL^\bullet_{j,\mathrm{abs}} = \mathrm{Im}\big( H^\bullet(Z_j,F) \rightarrow H^\bullet(Y,F) \big) \;,
\end{equation}
where the map is induced by $Y = \partial Z_j \hookrightarrow Z_j$, 
and
\begin{equation}
\label{eq2-LL}
\LL^{\bullet+1}_{j,\mathrm{rel}} = du \LL^{\bullet,\perp}_{j,\mathrm{abs}} \;,
\end{equation}
where $\LL^{\bullet,\perp}_{j,\mathrm{abs}} \subseteq \hh(Y,F)$ is the orthogonal complement of $\LL^\bullet_{j,\mathrm{abs}}$ with respect to $\big\lVert\cdot\big\rVert_Y$.
Hence we have
\begin{equation}
\LL^{\bullet+1}_{1,\mathrm{rel}}\cap\LL^{\bullet+1}_{2,\mathrm{rel}} =
du \big( \LL^{\bullet,\perp}_{1,\mathrm{abs}}\cap\LL^{\bullet,\perp}_{2,\mathrm{abs}} \big) \;.
\end{equation}
Let $\hhr(Z_{j,\infty},F)$ be as in \cite[(2.52)]{pzz} with $X_\infty$ replaced by $Z_{j,\infty}$.
For $\hat{\omega}\in \LL^{\bullet,\perp}_{1,\mathrm{abs}}\cap\LL^{\bullet,\perp}_{2,\mathrm{abs}}$,
let
\begin{equation}
\label{eq01-def-IRT}
(\omega_1,du\wedge\hat{\omega})\in\mathscr{H}^{\bullet+1}_\mathrm{rel}(Z_{1,\infty},F) \;,\hspace{5mm}
(\omega_2,du\wedge\hat{\omega})\in\mathscr{H}^{\bullet+1}_\mathrm{rel}(Z_{2,\infty},F)
\end{equation}
be the unique element
such that $\omega_1$ (resp. $\omega_2$) is
a generalized eigensection of $\Doneinf$ (resp. $\Dtwoinf$).
The existence and uniqueness are guaranteed by \cite[(2.40)]{pzz}.

Similarly to \eqref{eq-def-chi12},
set
\begin{equation}
\label{eq-def-chi3}
\chi_3(s) = 1 - \chi\big(4|s|\big) \;.
\end{equation}
We will view $\chi_3$ as a function on $IY_R$
in the same way as $\chi_1,\chi_2$ in \eqref{eq-chi-IYR}.
We define
\begin{equation}
\label{eq0-def-IRT}
I_{R,T}, J_{R,T}: \LL^{\bullet,\perp}_{1,\mathrm{abs}}\cap\LL^{\bullet,\perp}_{2,\mathrm{abs}}
\rightarrow \Omega^{\bullet+1}(Z_R,F)
\end{equation}
as follows:
for $\hat{\omega}\in \LL^{\bullet,\perp}_{1,\mathrm{abs}}\cap\LL^{\bullet,\perp}_{2,\mathrm{abs}}$,
\begin{align}
\label{eq1-def-IRT}
\begin{split}
& I_{R,T}(\hat{\omega}) \big|_{Z_{j,0}} = 0 \;,\hspace{5mm}
J_{R,T}(\hat{\omega}) \big|_{Z_{j,0}} = e^{-T}\omega_j \;,\hspace{5mm} \text{for } j=1,2 \;,\\
& I_{R,T}(\hat{\omega}) \big|_{IY_R} = \chi_3 e^{Tf_T-T}du\wedge\hat{\omega} \;,\\
& J_{R,T}(\hat{\omega}) \big|_{IY_R} = e^{Tf_T-T}du\wedge\hat{\omega} \\
& + e^{Tf_T-T}d^{Z_R,*}\Big( \chi_1\stRes(\omega_1,du\wedge\hat{\omega})
+ \chi_2\stRes(\omega_2,du\wedge\hat{\omega}) \Big) \;.
\end{split}
\end{align}
By \eqref{eq-def-fT}, \eqref{eq-def-DRT-conj} and \eqref{eq-resol-RdFRdFstar},
$I_{R,T}$ and $J_{R,T}$ are well-defined.
Moreover,
we have
\begin{equation}
\label{eq3-def-IRT}
\diffRT I_{R,T}(\hat{\omega}) =
\stdiffRT J_{R,T}(\hat{\omega}) = 0 \;.
\end{equation}

\begin{prop}
\label{prop-IRT}
There exists $a>0$ such that
for $T = R^\kappa \gg 1$ and $\hat{\omega}\in\LL^{\bullet,\perp}_{1,\mathrm{abs}}\cap\LL^{\bullet,\perp}_{2,\mathrm{abs}}$,
we have
\begin{equation}
\label{eq-prop-IRT}
\Big\lVert \big( \Id - P_{R,T} \big)I_{R,T}(\hat{\omega}) \Big\rVert^2_{H^1,Z_R}
= \mathscr{O}\big(e^{-aT}\big) \big\lVert\hat{\omega}\big\rVert^2_Y \;.
\end{equation}
\end{prop}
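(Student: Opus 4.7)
The plan is to mimic the proof of Proposition \ref{prop-FRT}, with $(F_{R,T},G_{R,T})$ replaced by $(I_{R,T},J_{R,T})$, the error scale $\mathscr{O}(R^{-2+\kappa})$ replaced by $\mathscr{O}(e^{-aT})$, and \eqref{eq3-def-FRT} replaced by \eqref{eq3-def-IRT}. Concretely, I will apply Corollary \ref{cor-proj-estimate-better} (for the norm $\big\lVert\cdot\big\rVert_1$) to $w=I_{R,T}(\hat{\omega})$ and $v=J_{R,T}(\hat{\omega})$ with $\alpha=\gamma=0$, which is legitimate because \eqref{eq3-def-IRT} gives $\diffRT I_{R,T}(\hat{\omega})=\stdiffRT J_{R,T}(\hat{\omega})=0$. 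This reduces \eqref{eq-prop-IRT} to an estimate of the form
\begin{equation*}
\big\lVert (I_{R,T}-J_{R,T})(\hat{\omega}) \big\rVert_{Z_R}^2
+\big\lVert \DRT(I_{R,T}-J_{R,T})(\hat{\omega}) \big\rVert_{Z_R}^2
= \mathscr{O}\big(e^{-aT}\big)\big\lVert\hat{\omega}\big\rVert_Y^2,
\end{equation*}
after which Proposition \ref{prop-sobolev} converts the $\big\lVert\cdot\big\rVert_1$-bound into the required $H^1$-bound.

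The first step is to read off $(I_{R,T}-J_{R,T})(\hat{\omega})$ from \eqref{eq1-def-IRT}: it equals $-e^{-T}\omega_j$ on $Z_{j,0}$ ($j=1,2$), and on $IY_R$ it equals
\begin{equation*}
(\chi_3-1)\,e^{Tf_T-T}du\wedge\hat{\omega}
-e^{Tf_T-T}d^{Z_R,*}\!\Big(\chi_1\stRes(\omega_1,du\wedge\hat{\omega})+\chi_2\stRes(\omega_2,du\wedge\hat{\omega})\Big).
\end{equation*}
Correspondingly one would compute $\DRT I_{R,T}(\hat{\omega})=\stdiffRT I_{R,T}(\hat{\omega})$, which by direct calculation (using $d^Y\hat{\omega}=0$ so that $d^{Z_R}(g(u)\,du\wedge\hat{\omega})=0$) is supported on $\mathrm{supp}(\partial_s\chi_3)$, and similarly $\DRT J_{R,T}(\hat{\omega})=\diffRT J_{R,T}(\hat{\omega})$ reduces to contributions supported on $\mathrm{supp}(\chi_1)\cup\mathrm{supp}(\chi_2)$ together with the differentiation of $e^{2Tf_T-T}$.

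The second, and decisive, step is to extract a uniform exponential decay factor $e^{-aT}$ on each of these supports. Recall from \eqref{eq-def-chi}, \eqref{eq-def-chi12} and \eqref{eq-def-chi3} that $\mathrm{supp}(1-\chi_3)\subseteq\{|s|\geqslant 1/16\}$, $\mathrm{supp}(\partial_s\chi_3)\subseteq\{1/16\leqslant|s|\leqslant 1/8\}$ and $\mathrm{supp}(\chi_1)\cup\mathrm{supp}(\chi_2)\subseteq\{|s|\geqslant 7/8\}$; on each of these sets \eqref{eq-def-finf} and \eqref{eq-compare-f-fT} yield $f_T(s)\leqslant 1-c$ for some $c>0$, hence $e^{Tf_T-T}=\mathscr{O}(e^{-cT})$ uniformly for $T\gg 1$. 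Together with $\big\lVert\omega_j\big\rVert_{Z_{j,0}}=\mathscr{O}\big(\big\lVert\hat{\omega}\big\rVert_Y\big)$ (the map $\hat{\omega}\mapsto\omega_j$ from $\LL^{\bullet,\perp}_{1,\mathrm{abs}}\cap\LL^{\bullet,\perp}_{2,\mathrm{abs}}$ into the smooth generalized eigensections on the compact set $Z_{j,0}$ being continuous) and the polynomial (in $R,T$) $\mathscr{C}^2$-bounds on $\stRes(\omega_j,du\wedge\hat{\omega})$ coming from \eqref{eq-def-Res}, each piece in the expression above inherits an $\mathscr{O}(e^{-aT})$ prefactor (any $0<a<c$ works for $T=R^\kappa$ large).

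The third step simply assembles the estimates and plugs them into Corollary \ref{cor-proj-estimate-better} and Proposition \ref{prop-sobolev}, which concludes the proof. The \emph{hard part} will be Step 2: one must verify carefully that every term in $\DRT(I_{R,T}-J_{R,T})(\hat{\omega})$ on $IY_R$, and in particular the second-order derivatives arising from $d^{Z_R}d^{Z_R,*}(\chi_j\stRes(\omega_j,du\wedge\hat{\omega}))$ together with the cross-terms produced when $d^{Z_R}$ hits the scalar factor $e^{2Tf_T-T}$, is localized on a set where $e^{Tf_T-T}$ is exponentially small, and that the potential $T$-polynomial growth caused by the $Tf_T'$, $Tf_T''$ factors in \eqref{eq-DV-Tf} and by the derivatives of the cutoffs $\chi_1,\chi_2,\chi_3$ is absorbed by the exponential $e^{-cT}$.
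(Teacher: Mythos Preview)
Your proposal is correct and follows exactly the approach the paper itself takes: the paper's proof of Proposition~\ref{prop-IRT} consists of the single sentence ``We proceed in the same way as in the proof of Proposition~\ref{prop-FRT}. The map $I_{R,T}$ (resp.\ $J_{R,T}$) plays the role of $F_{R,T}$ (resp.\ $G_{R,T}$).'' You have correctly identified the substitution and the decisive new ingredient (the exponential smallness of $e^{Tf_T-T}$ on the supports of $1-\chi_3$, $\partial_s\chi_3$, and $\chi_1,\chi_2$), which is precisely what replaces the $\mathscr{O}(R^{-2+\kappa})$ bound of Proposition~\ref{prop-FRT} by $\mathscr{O}(e^{-aT})$.
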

\begin{proof}
We proceed in the same way as in the proof of Proposition \ref{prop-FRT}.
The map $I_{R,T}$ (resp. $J_{R,T}$) plays the role of  $F_{R,T}$ (resp. $G_{R,T}$).
\end{proof}

By
\eqref{eq-def-W12},
\eqref{eq-complex-evaluation},
\eqref{eq-commute-hha12}
and \eqref{eq-def-hha12},
we have
\begin{equation}
\label{eq-iso-hha12}
W^\bullet_{12} \simeq \hha(Z_{12,\infty},F) \;.
\end{equation}
By
\eqref{eq-def-V1-V2},
\eqref{eq-complex-evaluation},
\eqref{eq-commute-hha12} 
and \eqref{eq1-LL}, 
we have
\begin{equation}
\label{eq-LLVj}
V^\bullet_j \simeq \LL^\bullet_{j,\mathrm{abs}} \;,\hspace{5mm} \text{for } j=1,2 \;.
\end{equation}
As a consequence,
we have
\begin{equation}
\label{eq-iso-llr12}
V^\bullet/(V^\bullet_1+V^\bullet_2) \simeq
\LL^{\bullet,\perp}_{1,\mathrm{abs}}\cap\LL^{\bullet,\perp}_{2,\mathrm{abs}} \;.
\end{equation}
Recall that the complex $(C^{\bullet,\bullet}_0,\partial)$ was defined by
\eqref{eq1-def-complex} and \eqref{eq-complex-evaluation}.
By \eqref{eq-HC},
\eqref{eq-iso-hha12} and
\eqref{eq-iso-llr12},
we have
\begin{equation}
\label{eq-id-HCHmathscr}
H^0(C^{\bullet,\bullet}_0,\partial)
\simeq \hha(Z_{12,\infty},F) \;,\hspace{5mm}
H^1(C^{\bullet,\bullet}_0,\partial)
\simeq \LL^{\bullet,\perp}_{1,\mathrm{abs}}\cap\LL^{\bullet,\perp}_{2,\mathrm{abs}} \;.
\end{equation}
We define a map
\begin{align}
\label{eq2-def-SHRT}
\begin{split}
& \mathscr{S}^H_{R,T}: H^\bullet(C^{\bullet,\bullet}_0,\partial) \rightarrow \Ker\big(\DRT\big) \;,\\
& \mathscr{S}^H_{R,T}\Big|_{H^0(C^{\bullet,\bullet}_0,\partial)} = P_{R,T}F_{R,T} \;,\hspace{5mm}
\mathscr{S}^H_{R,T}\Big|_{H^1(C^{\bullet,\bullet}_0,\partial)} = P_{R,T}I_{R,T} \;.
\end{split}
\end{align}

\begin{thm}
\label{thm-SRTH-bij}
For $T = R^\kappa \gg 1$,
the map $\mathscr{S}^H_{R,T}$ is bijective.
\end{thm}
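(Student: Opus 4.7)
The plan is to deduce the bijectivity from injectivity plus a dimension count. For the dimension count, rank--nullity applied degree by degree to the Mayer--Vietoris sequence \eqref{eq-mv-top} yields
\begin{equation*}
\dim H^n(Z,F) = \dim W^n_{12} + \dim\bigl(V^{n-1}/(V^{n-1}_1+V^{n-1}_2)\bigr),
\end{equation*}
which via \eqref{eq-HC} and the identifications \eqref{eq-id-HCHmathscr} is precisely the total-degree-$n$ component of $H^\bullet(C^{\bullet,\bullet}_0,\partial)$. Together with the Hodge isomorphism \eqref{eq-hodge-DRT} applied to $Z_R$, this gives $\dim\ker(\DRT)=\dim H^\bullet(C^{\bullet,\bullet}_0,\partial)$, and the equality respects the total grading. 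So it suffices to prove injectivity.

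For injectivity, following the strategy flagged in the introduction, I would apply Proposition~\ref{prop-proj-estimate-better} with $\gamma=0$ to $w = F_{R,T}(\sigma)+I_{R,T}(\tau)$ and $v = G_{R,T}(\sigma)+J_{R,T}(\tau)$, where $(\sigma,\tau)\in\hha(Z_{12,\infty},F)\oplus(\LL^{\bullet,\perp}_{1,\mathrm{abs}}\cap\LL^{\bullet,\perp}_{2,\mathrm{abs}})$. By \eqref{eq3-def-FRT}--\eqref{eq3-def-IRT} we have $\diffRT w = 0$ and $\stdiffRT v = 0$, so the hypotheses hold with $\alpha=0$, and Propositions \ref{prop-FRT} and \ref{prop-IRT} already provide (or are easily adapted to yield, via the same \textit{Step 1}--\textit{Step 2} computation as in the proof of Proposition \ref{prop-FRT})
\begin{equation*}
\|w-v\|^2_{Z_R} = \mathscr{O}\bigl(R^{-2+\kappa}\bigr)\bigl(\|\omega_1\|^2_{Z_{1,0}}+\|\omega_2\|^2_{Z_{2,0}}\bigr) + \mathscr{O}\bigl(e^{-aT}\bigr)\|\hat\tau\|^2_Y.
\end{equation*}
The proposition then transfers this bound to $\|(I-P_{R,T})w\|^2_{Z_R}$. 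It remains to produce a matching lower bound on $\|w\|^2_{Z_R}$. Reading off \eqref{eq1-def-FRT}--\eqref{eq1-def-IRT}: on $Z_{j,0}$ one has $F_{R,T}(\sigma)|_{Z_{j,0}}=\omega_j$ and $I_{R,T}(\tau)|_{Z_{j,0}}=0$, so the pieces $\|\omega_1\|^2_{Z_{1,0}}+\|\omega_2\|^2_{Z_{2,0}}$ survive at full strength. On $IY_R$, the leading zero-modes are $e^{-Tf_T}\hat\omega$ (no $du$-factor) from $F_{R,T}(\sigma)$ and $\chi_3 e^{Tf_T-T}du\wedge\hat\tau$ (with $du$-factor) from $I_{R,T}(\tau)$; these are pointwise orthogonal, and using $f_T(s)\approx 1-s^2/2$ near $s=0$ from \eqref{eq-def-finf}, the cylinder integration contributes respectively $\sim R\|\hat\omega\|^2_Y$ and $\sim R T^{-1/2}\sqrt\pi\,\|\hat\tau\|^2_Y$ (the latter by Gaussian concentration of $e^{2(Tf_T-T)}$). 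Since $\kappa\in(0,1/3)$, the scales $1$, $R$ and $RT^{-1/2}=R^{1-\kappa/2}$ all dominate the errors $R^{-2+\kappa}$ and $e^{-aT}$, so for $T=R^\kappa\gg 1$,
\begin{equation*}
\|\mathscr{S}^H_{R,T}(\sigma,\tau)\|^2_{Z_R} \geq \tfrac{1}{2}\|w\|^2_{Z_R} > 0 \quad\text{whenever } (\sigma,\tau)\neq 0,
\end{equation*}
giving injectivity and therefore, by the dimension count, bijectivity.

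The main obstacle is the cross-term bookkeeping inside the lower bound on $\|w\|^2_{Z_R}$: although the two leading zero-modes on $IY_R$ sit in different $du$-parities and are orthogonal, the correction terms $d^{Z_R}(\chi_j\Res(\omega_j,\hat\omega))$ in $F_{R,T}$ do couple with the $du$-factor of $I_{R,T}(\tau)$, and similarly with $\stRes$-terms in $J_{R,T}$. Absorbing these into a negligible remainder requires combining the pointwise exponential decay of $\Res,\stRes$ (as in \eqref{eq22-pf-prop-FRT} together with the trace estimate \eqref{eq23-pf-prop-FRT}) with the Witten weights $e^{\pm Tf_T}$, and then using Cauchy--Schwarz to dominate the resulting cross-integrals by a small multiple of $\|\omega_1\|^2_{Z_{1,0}}+\|\omega_2\|^2_{Z_{2,0}}+RT^{-1/2}\|\hat\tau\|^2_Y$ for $T=R^\kappa$ large.
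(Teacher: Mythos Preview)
Your proof is correct and follows the same strategy as the paper: dimension count via Mayer--Vietoris plus injectivity from Propositions~\ref{prop-FRT} and~\ref{prop-IRT}. The ``main obstacle'' you flag is in fact absent: since $\chi_1\chi_3=\chi_2\chi_3=0$ by \eqref{eq-def-chi12} and \eqref{eq-def-chi3}, and the remaining zero-mode $e^{-Tf_T}\hat\omega$ carries no $du$-factor, one has $\langle F_{R,T}(\sigma),I_{R,T}(\tau)\rangle_{Z_R}=0$ exactly (this is \eqref{eq1-pf-thm-SRTH-bij} in the paper), so your cross-term bookkeeping and Gaussian estimates are unnecessary; the paper simply uses the crude lower bounds $\lVert F_{R,T}(\sigma)\rVert^2_{Z_R}\geq\lVert\omega_1\rVert^2_{Z_{1,0}}+\lVert\omega_2\rVert^2_{Z_{2,0}}$ and $\lVert I_{R,T}(\hat\tau)\rVert^2_{Z_R}\geq\lVert\hat\tau\rVert^2_Y$.
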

\begin{proof}
By \eqref{eq0-def-FRT}, \eqref{eq1-def-FRT}, \eqref{eq0-def-IRT}, \eqref{eq1-def-IRT}
and the fact that $\chi_1\chi_3 = \chi_2\chi_3 = 0$,
for $(\omega_1,\omega_2,\hat{\omega})\in\hha(Z_{12,\infty},F)$
and $\hat{\tau}\in \LL^{\bullet,\perp}_{1,\mathrm{abs}}\cap\LL^{\bullet,\perp}_{2,\mathrm{abs}}$,
we have
\begin{align}
\label{eq1-pf-thm-SRTH-bij}
\begin{split}
& \Big\Vert F_{R,T}(\omega_1,\omega_2,\hat{\omega}) \Big\rVert^2_{Z_R}
\geqslant \big\lVert \omega_1 \big\rVert^2_{Z_{1,0}} + \big\lVert \omega_2 \big\rVert^2_{Z_{2,0}} \;,\hspace{5mm}
\Big\Vert I_{R,T}(\hat{\tau}) \Big\rVert^2_{Z_R}
\geqslant \big\lVert \hat{\tau} \big\rVert^2_Y \;,\\
& \Big\langle F_{R,T}(\omega_1,\omega_2,\hat{\omega}),I_{R,T}(\hat{\tau}) \Big\rangle_{Z_R} = 0 \;.
\end{split}
\end{align}
By Propositions \ref{prop-FRT}, \ref{prop-IRT}
and \eqref{eq1-pf-thm-SRTH-bij},
we have
\begin{equation}
\label{eq2-pf-thm-SRTH-bij}
\Big\lVert P_{R,T}F_{R,T}(\omega_1,\omega_2,\hat{\omega}) +
P_{R,T}I_{R,T}(\hat{\tau}) \Big\rVert^2_{Z_R}
\geqslant \frac{1}{2} \Big(
\big\lVert \omega_1 \big\rVert^2_{Z_{1,0}}
+ \big\lVert \omega_2 \big\rVert^2_{Z_{2,0}}
+ \big\lVert \hat{\tau} \big\rVert^2_Y \Big) \;.
\end{equation}
By \eqref{eq-id-HCHmathscr}-\eqref{eq2-def-SHRT}
and \eqref{eq2-pf-thm-SRTH-bij},
the map $\mathscr{S}^H_{R,T}$ is injective.
On the other hand,
by the exactness of \eqref{eq-mv-top},
the construction of $(C^{\bullet,\bullet}_0,\partial)$
and the Hodge theorem,
we have
\begin{equation}
\dim H^\bullet(C^{\bullet,\bullet}_0,\partial) =
\dim H^\bullet(Z,F) =
\dim \Ker\big(\DRT\big) \;.
\end{equation}
Hence the map $\mathscr{S}^H_{R,T}$ is bijective.
This completes the proof of Theorem \ref{thm-SRTH-bij}.
\end{proof}

For $\omega\in\Omega^\bullet(Z_R,F)$ satisfying $\diffRT\omega = 0$,
i.e., $d^{Z_R} \big( e^{Tf_T} \omega \big) = 0$,
we denote
\begin{equation}
\label{eq-def-T-class}
\big[\omega\big]_T = \big[e^{Tf_T}\omega\big]
\in H^\bullet(Z_R,F) = H^\bullet(Z,F) \;.
\end{equation}

\begin{cor}
\label{cor-SRTH-bij}
For $T = R^\kappa \gg 1$,
the map
\begin{align}
\label{eq-cor-SRTH-bij}
\begin{split}
\big[\mathscr{S}^H_{R,T}\big]_T : \; H^\bullet(C^{\bullet,\bullet}_0,\partial) & \rightarrow H^\bullet(Z,F) \\
\sigma & \mapsto \big[\mathscr{S}^H_{R,T}(\sigma)\big]_T
\end{split}
\end{align}
is bijective.
\end{cor}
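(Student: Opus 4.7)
The plan is to recognize $\bigl[\mathscr{S}^H_{R,T}\bigr]_T$ as the composition of two bijections that have already been established, so essentially no new work is required.

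More precisely, observe that by definition \eqref{eq2-def-SHRT}, the map $\mathscr{S}^H_{R,T}$ takes values in $\Ker\bigl(\DRT\bigr)$ (not merely in $\Ker\bigl(\diffRT\bigr)$), so for any $\sigma \in H^\bullet(C^{\bullet,\bullet}_0,\partial)$ the element $\omega := \mathscr{S}^H_{R,T}(\sigma)$ satisfies $\diffRT\omega = 0$ and the notation $[\omega]_T$ of \eqref{eq-def-T-class} is justified. I would then factor
\begin{equation*}
\bigl[\mathscr{S}^H_{R,T}\bigr]_T \;=\; \Phi_{R,T} \circ \mathscr{S}^H_{R,T},
\end{equation*}
where $\Phi_{R,T} : \Ker\bigl(\DRT\bigr) \to H^\bullet(Z,F)$ is the map $\omega \mapsto [e^{Tf_T}\omega]$. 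By Theorem \ref{thm-SRTH-bij}, the first factor $\mathscr{S}^H_{R,T}$ is bijective for $T = R^\kappa \gg 1$. By the Hodge theorem \eqref{eq-hodge-DRT} (applied with $j=0$), the second factor $\Phi_{R,T}$ is also bijective. Hence the composition is bijective, which gives the claim.

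There is essentially no obstacle: the only thing to verify is that the ``$[\cdot]_T$'' in the statement of the corollary (defined in \eqref{eq-def-T-class} on all of $\Ker\bigl(\diffRT\bigr)$) restricts on $\Ker\bigl(\DRT\bigr) \subseteq \Ker\bigl(\diffRT\bigr)$ to exactly the Hodge-theoretic isomorphism of \eqref{eq-hodge-DRT}, which is immediate by comparing formulas. Thus Corollary \ref{cor-SRTH-bij} is a direct consequence of Theorem \ref{thm-SRTH-bij} together with the Hodge theorem.
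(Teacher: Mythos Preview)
Your proof is correct and follows exactly the same approach as the paper: the paper's own proof is the single sentence ``This is a direct consequence of the Hodge theorem and Theorem~\ref{thm-SRTH-bij},'' and your factorization $\bigl[\mathscr{S}^H_{R,T}\bigr]_T = \Phi_{R,T} \circ \mathscr{S}^H_{R,T}$ makes this explicit.
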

\begin{proof}
This is a direct consequence of the Hodge theorem and Theorem \ref{thm-SRTH-bij}.
\end{proof}

\begin{rem}
\label{rem-bij-compatible}
By Corollary \ref{cor-SRTH-bij} and \eqref{eq-id-HCHmathscr},
we have a bijection
\begin{equation}
\label{eq1-rem-bij-compatible}
\hha(Z_{12,\infty},F) \oplus \Big( \LL^{\bullet,\perp}_{1,\mathrm{abs}}\cap\LL^{\bullet,\perp}_{2,\mathrm{abs}} \Big)
\xrightarrow{\sim} H^\bullet(Z,F) \;.
\end{equation}
Set
\begin{equation}
\label{eq2-rem-bij-compatible}
\hh(Z_{12,\infty},F) =
\Big\{(\omega_1,\omega_2,\hat{\omega}) \;:\:
(\omega_j,\hat{\omega})\in \hh(Z_{j,\infty},F) \hspace{5mm}\text{for }j=1,2 \Big\} \;.
\end{equation}
By \cite[Thm. 3.7]{pzz} and the Hodge theorem,
we have a bijection
\begin{equation}
\label{eq3-rem-bij-compatible}
\hh(Z_{12,\infty},F) \xrightarrow{\sim} H^\bullet(Z,F) \;.
\end{equation}
The vector spaces in \eqref{eq1-rem-bij-compatible} and \eqref{eq3-rem-bij-compatible} are linked by the short exact sequence
\begin{equation}
0 \rightarrow
\hha(Z_{12,\infty},F) \rightarrow
\hh(Z_{12,\infty},F) \rightarrow
\LL^{\bullet,\perp}_{1,\mathrm{abs}}\cap\LL^{\bullet,\perp}_{2,\mathrm{abs}}
\rightarrow 0 \;,
\end{equation}
which follows from \eqref{eq-def-hha12}, \eqref{eq2-rem-bij-compatible} and \cite[(2.49)-(2.53)]{pzz}.
\end{rem}

\subsection{Eigenspace of $\DRT$ associated with small eigenvalues}
\label{subsec-approx-small-eigen}

For $\omega\in\Omega^\bullet(IY_R,F) = \smooth([-R,R],\hh(Y,F)[du])$,
we denote
\begin{equation}
\big\lVert\omega\big\rVert_{Y,\mathrm{max}} =
\max\Big\{\big\lVert\omega_u\big\rVert_Y\;:\;u\in[-R,R]\Big\} \;.
\end{equation}

\begin{lemme}
\label{prop-zm-minoration}
For $T = R^\kappa \gg 1$ and $\omega\in\Omega^\bullet(Z_R,F)$ an eigensection of $R\DRT$
associated with eigenvalue $\lambda\in\big[-\sqrt{R},\sqrt{R}\big]\backslash\{0\}$,
we have
\begin{equation}
\big\lVert \omega \big\rVert_{Z_{1,0} \cup Z_{2,0}}
= \mathscr{O}\big(1\big) \big\lVert\omega^\mathrm{zm}\big\rVert_{Y,\mathrm{max}} \;.
\end{equation}
\end{lemme}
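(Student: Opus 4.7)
The plan is to exploit the fact that on $Z_{1,0}\cup Z_{2,0}$ the Witten function $f_T$ vanishes, so the eigenvalue equation simplifies to $D^{Z_R}\omega=(\lambda/R)\omega$ with $|\lambda/R|\leq R^{-1/2}$. Morally, this forces $\omega|_{Z_{j,0}}$ to lie, up to small errors, in the image of $\hha(Z_{j,\infty},F)$ restricted to $Z_{j,0}$; and such harmonic forms have norm controlled by their asymptotic value on the cylinder end, which corresponds to $\omega^{\mathrm{zm}}$.

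First I would establish a bulk-to-collar estimate. Choose a cutoff $\chi_j\colon Z_R\to[0,1]$ with $\chi_j=1$ on $Z_{j,0}$ and supported in $Z_{j,0}\cup U_j$, where $U_j\subset IY_R$ is a collar of $\partial Z_{j,0}$ of fixed width in the $u$-coordinate. Using
\[
R\DRT(\chi_j\omega)=\lambda\chi_j\omega+Rc(\nabla\chi_j)\omega
\]
together with a boundary regularity estimate for the Dirac-type operator on $Z_{j,0}$ (the small factor $|\lambda/R|\lesssim R^{-1/2}$ is absorbed for $R$ large), I expect to arrive at
\[
\|\omega\|_{Z_{j,0}}\leq C\|\omega\|_{U_j}.
\]

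Second, on $U_j$ I would decompose $\omega|_{U_j}=\omega^{\mathrm{zm}}+\omega^{\mathrm{nz}}$. The zero-mode contribution satisfies $\|\omega^{\mathrm{zm}}\|_{U_j}\leq |U_j|^{1/2}\|\omega^{\mathrm{zm}}\|_{Y,\mathrm{max}}$ by definition. For the non-zero-mode part, I would apply Lemma \ref{lem-cylinder-nz-estimation} componentwise on $\Omega^\bullet(I_R,\mathscr{E}^\mu(Y,F))$ for $\mu\neq 0$, obtaining
\[
\|\omega^\mu_s\|_Y^2\leq C_{\alpha R}\bigl(\|\omega^\mu_{-1}\|_Y^2,\|\omega^\mu_{1}\|_Y^2,s\bigr),
\]
which concentrates $\omega^{\mathrm{nz}}$ at the cylinder endpoint $s=(-1)^j$ adjacent to $Z_{j,0}$.

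The main technical obstacle is to control the endpoint value $\|\omega^{\mathrm{nz}}_{s=(-1)^j}\|_Y$ by $\|\omega^{\mathrm{zm}}\|_{Y,\mathrm{max}}$ without triggering a circular dependence on $\|\omega\|_{Z_{j,0}}$ through the Sobolev trace inequality. I expect this to close via a bootstrap argument that uses the spectral gap of $R\DY$ in the non-zero-mode sector: by \eqref{eq-dirac-cylinder-spec-expansion} each mode $\mu\neq 0$ contributes at least $R|\mu|\gg\sqrt{R}\geq|\lambda|$ to the effective spectrum, so the eigenvalue window $[-\sqrt{R},\sqrt{R}]$ forces the $\omega^{\mathrm{nz}}$-mass (including its endpoint values) to be strictly subordinate to that of $\omega^{\mathrm{zm}}$. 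Iterating with the bulk-to-collar estimate, in the spirit of Proposition \ref{prop-proj-estimate-better}, should close the loop and yield the desired bound.
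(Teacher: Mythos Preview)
Your proposal has a genuine gap at exactly the point you flag as ``the main technical obstacle,'' and the bootstrap you sketch does not close it. Two concrete issues:

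\emph{First}, the bulk-to-collar inequality $\|\omega\|_{Z_{j,0}}\leq C\|\omega\|_{U_j}$ does not follow from the cutoff identity you write down. That identity gives you $(R\DRT-\lambda)(\chi_j\omega)=Rc(\nabla\chi_j)\omega$, but to invert $R\DRT-\lambda$ on $Z_{j,0}\cup U_j$ with a useful bound you would need a uniform spectral gap for the corresponding boundary value problem, which you do not have. In fact the inequality is false in general for sections that look like an $L^2$-harmonic form on $Z_{j,\infty}$ (an element of $K^\bullet_j$): such a section has $\omega^{\mathrm{zm}}=0$, its non-zero-mode part decays exponentially into the collar, so $\|\omega\|_{U_j}$ is small while $\|\omega\|_{Z_{j,0}}$ is of order one. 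Your direct estimates on the cylinder cannot distinguish this situation from a genuine nonzero-eigenvalue eigensection.

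\emph{Second}, and relatedly, the hypothesis $\lambda\neq 0$ is essential --- the statement is false for kernel elements arising from $K^\bullet_j$ --- yet your argument never uses it in a substantive way. The spectral gap of $R\DY$ in the non-zero-mode sector only tells you that $\omega^{\mathrm{nz}}$ decays exponentially \emph{away from the endpoints}; it says nothing about the size of the endpoint values themselves, which are determined by matching with $Z_{j,0}$ and hence feed straight back into $\|\omega\|_{Z_{j,0}}$ via the trace theorem. Proposition~\ref{prop-proj-estimate-better} is a finite-dimensional Hodge lemma and does not supply the missing mechanism.

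The paper's proof resolves this by contradiction and compactness. Assuming a sequence $\omega_i$ with $\|\omega_i\|_{Z_{1,0}\cup Z_{2,0}}=1$ and $\|\omega_i^{\mathrm{zm}}\|_{Y,\max}\to 0$, one extracts (via Rellich on each $Z_{1,r}$) a limit $\omega_\infty$ on $Z_{1,\infty}$ with $D^{Z_{1,\infty}}\omega_\infty=0$ and $\omega_\infty^{\mathrm{zm}}=0$, hence $\omega_\infty$ is $L^2$ and $(\omega_\infty,0,0)\in\hha(Z_{12,\infty},F)$. Then one sets $\mu_i=P_{R_i,T_i}F_{R_i,T_i}(\omega_\infty,0,0)\in\Ker(D^{Z_{R_i}}_{T_i})$ and uses Proposition~\ref{prop-FRT} to show $\langle\mu_i,\omega_i\rangle_{Z_{R_i}}\to\|\omega_\infty\|^2_{Z_{1,\infty}}>0$. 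But $\lambda_i\neq 0$ forces $\langle\mu_i,\omega_i\rangle_{Z_{R_i}}=0$, a contradiction. This is precisely where $\lambda\neq 0$ enters, and it is the ingredient your direct approach lacks.
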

\begin{proof}
We will follow \cite[Lemma 3.10]{pzz}.
Suppose,
on the contrary,
that there exist
$R_i\rightarrow\infty$,
$T_i = R_i^\kappa$,
$\lambda_i\in[-\sqrt{R_i},\sqrt{R_i}]\backslash\{0\}$ and
$\omega_i\in\Omega^\bullet(Z_{R_i},F)$ such that
\begin{equation}
\label{eq1-pf-prop-zm-minoration}
R_iD^{Z_{R_i}}_{T_i}\omega_i = \lambda_i\omega_i \;,\hspace{5mm}
\big\lVert \omega_i \big\rVert_{Z_{1,0} \cup Z_{2,0}} = 1 \;,\hspace{5mm}
\lim_{i\rightarrow\infty} \big\lVert\omega^\mathrm{zm}_i\big\rVert_{Y,\mathrm{max}} = 0 \;.
\end{equation}
Without loss of generality,
we may assume that
\begin{equation}
\label{eq2a-pf-prop-zm-minoration}
\liminf_{i\rightarrow\infty} \big\lVert \omega_i \big\rVert_{Z_{1,0}} > 0 \;.
\end{equation}

\noindent \textit{Step 1}.
We extract a convergent subsequence of $\big(\omega_i\big)_i$.

By the Trace theorem for Sobolev spaces and \eqref{eq1-pf-prop-zm-minoration},
we have
\begin{equation}
\label{eq2b-pf-prop-zm-minoration}
\big\lVert \omega_i \big\rVert_{\partial Z_{1,0}} = \mathscr{O}\big(1\big) \;,\hspace{5mm}
\big\lVert \omega_i \big\rVert_{\partial Z_{2,0}} = \mathscr{O}\big(1\big) \;.
\end{equation}
By Lemma \ref{lem-cylinder-nz-estimation}, \eqref{eq-def-zmnz} and \eqref{eq2b-pf-prop-zm-minoration},
we have
\begin{equation}
\label{eqb0-pf-prop-zm-minoration}
\big\lVert \omega_i^\mathrm{nz} \big\rVert_{Y,\mathrm{max}}
= \mathscr{O}\big(1\big) \;.
\end{equation}
By \eqref{eq1-pf-prop-zm-minoration} and \eqref{eqb0-pf-prop-zm-minoration},
we have
\begin{equation}
\label{eqb1-pf-prop-zm-minoration}
\big\lVert \omega_i \big\rVert_{Y,\mathrm{max}}
\leqslant \big\lVert \omega_i^\mathrm{zm} \big\rVert_{Y,\mathrm{max}} + \big\lVert \omega_i^\mathrm{nz} \big\rVert_{Y,\mathrm{max}}
= \mathscr{O}\big(1\big) \;.
\end{equation}

For $r\in\N$ and $R\geqslant r$,
let $IY_r \subseteq Z_{1,r}\subseteq Z_{1,\infty}$ and $Z_{1,r}\subseteq Z_R$ be the canonical isometric embeddings.
By \eqref{eq-def-fTder} and \eqref{eq-def-DRT-cylinder},
we have
\begin{equation}
\label{eq3-pf-prop-zm-minoration}
R\DRT\big|_{Z_{1,0}} = R \Doneinf\big|_{Z_{1,0}} \;,\hspace{5mm}
R\DRT\big|_{IY_r} = R \Doneinf\big|_{IY_r} + Tf'_T \hat{c}\big|_{IY_r} \;.
\end{equation}
By \eqref{eq1-pf-prop-zm-minoration}
and \eqref{eq3-pf-prop-zm-minoration},
we have
\begin{equation}
\label{eq4-pf-prop-zm-minoration}
\Doneinf \omega_i\big|_{Z_{1,0}}
= \lambda_iR_i^{-1}\omega_i\big|_{Z_{1,0}} \;,\hspace{5mm}
\Doneinf \omega_i\big|_{IY_r}
= \Big(\lambda_iR_i^{-1} -
R_i^{-1}T_if_{T_i}'\hat{c}\Big)\omega_i\big|_{IY_r} \;.
\end{equation}
Since $\lambda_iR_i^{-1}\rightarrow 0$
and $R_i^{-1}T_i\rightarrow 0$,
by the second identity in \eqref{eq25-pf-prop-FRT},
\eqref{eq1-pf-prop-zm-minoration},
\eqref{eqb1-pf-prop-zm-minoration}
and \eqref{eq4-pf-prop-zm-minoration},
the series $\big(\omega_i\big|_{Z_{1,r}}\big)_i$
is $H^1$-bounded.
Using Rellich's lemma,
by extracting a subsequence,
we may suppose that
$\omega_i\big|_{Z_{1,r}}$ is $L^2$-convergent.
Applying \eqref{eq4-pf-prop-zm-minoration} once again,
we see that $\big(\omega_i\big|_{Z_{1,r}}\big)_i$ is $H^1$-Cauchy.
Let $\omega_{\infty,r}$ be the limit of $\big(\omega_i\big|_{Z_{1,r}}\big)_i$,
which is at least a $H^1$-current on $Z_{1,r}$ with values in $F$.
Taking the limit of \eqref{eq4-pf-prop-zm-minoration},
we get
\begin{equation}
\label{eq6-pf-prop-zm-minoration}
\Doneinf\omega_{\infty,r}\big|_{Z_{1,r}} = 0 \;.
\end{equation}
Since $\Doneinf$ is elliptic,
equation \eqref{eq6-pf-prop-zm-minoration} implies
$\omega_{\infty,r}\in\Omega^\bullet(Z_{1,r},F)$.

The standard diagonal argument allows us to extract a subsequence $\big(\omega_{i_j}\big)_j$ of $\big(\omega_i\big)_i$
such that for any $r\in\N$,
$\big(\omega_{i_j}\big|_{Z_{1,r}}\big)_j$ converges to $\omega_{r,\infty}$ in $H^1$-norm.
Now we replace $\big(\omega_i\big)_i$ by $\big(\omega_{i_j}\big)_j$.
There exists $\omega_{\infty}\in\Omega^\bullet(Z_{1,\infty},F)$ such that
for any $r\in\N$,
\begin{equation}
\label{eq7-pf-prop-zm-minoration}
\omega_i\big|_{Z_{1,r}} \rightarrow \omega_\infty\big|_{Z_{1,r}}
\hspace{5mm}\text{in } H^1\text{-norm} \;.
\end{equation}
Since \eqref{eq6-pf-prop-zm-minoration} holds for all $r\in\N$,
we have
\begin{equation}
\label{eq8-pf-prop-zm-minoration}
\Doneinf\omega_\infty = 0 \;.
\end{equation}

\noindent \textit{Step 2}.
We show that $\omega_\infty$ is $L^2$-integrable.

By the Trace theorem for Sobolev spaces,
\eqref{eqb1-pf-prop-zm-minoration}
and \eqref{eq7-pf-prop-zm-minoration},
we have
\begin{equation}
\label{eq9-pf-prop-zm-minoration}
\big\lVert \omega_\infty \big\rVert_{Y,\mathrm{max}}
< +\infty \;.
\end{equation}
By \eqref{eq1-pf-prop-zm-minoration} and \eqref{eq7-pf-prop-zm-minoration},
we have
\begin{equation}
\label{eq9a-pf-prop-zm-minoration}
\omega_\infty^\mathrm{zm} = 0 \;.
\end{equation}
By \cite[(2.10)]{pzz} and \eqref{eq8-pf-prop-zm-minoration}-\eqref{eq9a-pf-prop-zm-minoration},
there exists $a>0$ such that for $r>0$,
we have
\begin{equation}
\label{eq16-pf-prop-zm-minoration}
\big\lVert \omega_\infty \big\rVert_{\partial Z_{1,r}}
= \mathscr{O}\big(e^{-ar}\big) \;.
\end{equation}
In particular,
$\omega_\infty$ is $L^2$-integrable.

\noindent \textit{Step 3}.
We look for a contradiction.

By \eqref{eq-hh}, \eqref{eq-def-hha12}, \eqref{eq8-pf-prop-zm-minoration} and \eqref{eq16-pf-prop-zm-minoration},
we have
\begin{equation}
\label{eq10-pf-prop-zm-minoration}
(\omega_{\infty},0,0)\in\hha(Z_{12,\infty},F) \;.
\end{equation}
Recall that
$P_{R,T}F_{R,T}: \hha(Z_{12,\infty},F) \rightarrow \Ker\big(\DRT\big)$
was constructed in \textsection \ref{subsec-approx-kernel}.
Set
\begin{equation}
\label{eq11-pf-prop-zm-minoration}
\mu_i = P_{R_i,T_i}F_{R_i,T_i}(\omega_\infty,0,0)
\in \Ker\big(D^{Z_{R_i}}_{T_i}\big) \subseteq \Omega^\bullet(Z_{R_i},F) \;.
\end{equation}

We decompose $Z_{R_i}$ into two pieces $Z_{R_i} = Z_{1,R_i}\cup Z_{2,0}$.
We have
\begin{equation}
\label{eq12-pf-prop-zm-minoration}
\big\langle\mu_i,\omega_i\big\rangle_{Z_{R_i}} -
\big\langle\omega_\infty,\omega_i\big\rangle_{Z_{1,R_i}}
= \big\langle\mu_i-\omega_\infty,\omega_i\big\rangle_{Z_{1,R_i}}
+ \big\langle\mu_i,\omega_i\big\rangle_{Z_{2,0}} \;.
\end{equation}
By \eqref{eq1-pf-prop-zm-minoration}
and \eqref{eqb1-pf-prop-zm-minoration},
we have
\begin{equation}
\label{eq13-pf-prop-zm-minoration}
\big\lVert\omega_i\big\rVert^2_{Z_{1,R_i}} +
\big\lVert\omega_i\big\rVert^2_{Z_{2,0}} =
\big\lVert\omega_i\big\rVert^2_{Z_{R_i}} = \mathscr{O}\big(R_i\big) \;.
\end{equation}
By Proposition \ref{prop-FRT}
and \eqref{eq11-pf-prop-zm-minoration},
we have
\begin{equation}
\label{eq14a-pf-prop-zm-minoration}
\big\lVert \mu_i - F_{R_i,T_i}(\omega_\infty,0,0) \big\rVert^2_{Z_{R_i}}
= \mathscr{O}\big(R_i^{-2+\kappa}\big) \;.
\end{equation}
By \eqref{eq-resol-RdFRdFstar}, \eqref{eq-def-chi12}, \eqref{eq-chi-IYR}, \eqref{eq1-def-FRT}, \eqref{eq24-pf-prop-FRT} and \eqref{eq25-pf-prop-FRT},
we have
\begin{equation}
\label{eq14b-pf-prop-zm-minoration}
\big\lVert F_{R_i,T_i}(\omega_\infty,0,0) - \omega_\infty \big\rVert^2_{Z_{1,R_i}}
= \mathscr{O}\big(R_i^{-2+\kappa}\big) \;,\hspace{5mm}
\big\lVert F_{R_i,T_i}(\omega_\infty,0,0) \big\rVert^2_{Z_{2,0}} = 0 \;.
\end{equation}
By \eqref{eq14a-pf-prop-zm-minoration} and \eqref{eq14b-pf-prop-zm-minoration},
we have
\begin{equation}
\label{eq14-pf-prop-zm-minoration}
\big\lVert\mu_i-\omega_\infty\big\rVert^2_{Z_{1,R_i}}
= \mathscr{O}\big(R_i^{-2+\kappa}\big) \;,\hspace{5mm}
\big\lVert\mu_i\big\rVert^2_{Z_{2,0}}
= \mathscr{O}\big(R_i^{-2+\kappa}\big) \;.
\end{equation}
By \eqref{eq12-pf-prop-zm-minoration}, \eqref{eq13-pf-prop-zm-minoration} and \eqref{eq14-pf-prop-zm-minoration},
we have
\begin{equation}
\label{eq15-pf-prop-zm-minoration}
\big\langle\mu_i,\omega_i\big\rangle_{Z_{R_i}} -
\big\langle\omega_\infty,\omega_i\big\rangle_{Z_{1,R_i}}
= \mathscr{O}\big(R_i^{-1/2+\kappa/2}\big) \;.
\end{equation}

By the dominated convergence theorem,
\eqref{eqb1-pf-prop-zm-minoration},
\eqref{eq7-pf-prop-zm-minoration} and
\eqref{eq16-pf-prop-zm-minoration},
we have
\begin{equation}
\label{eq17-pf-prop-zm-minoration}
\lim_{i\rightarrow +\infty}\big\langle\omega_\infty,\omega_i\big\rangle_{Z_{1,R_i}} =
\big\lVert\omega_\infty\big\rVert_{Z_{1,\infty}}^2 \;.
\end{equation}
Since $\kappa\in]0,1/3[$,
by \eqref{eq15-pf-prop-zm-minoration}
and \eqref{eq17-pf-prop-zm-minoration},
we have
\begin{equation}
\lim_{i\rightarrow +\infty} \big\langle\mu_i,\omega_i\big\rangle_{Z_{R_i}} =
\big\lVert\omega_\infty\big\rVert_{Z_{1,\infty}}^2 \;.
\end{equation}
By \eqref{eq2a-pf-prop-zm-minoration}
and \eqref{eq7-pf-prop-zm-minoration},
we have $\big\lVert\omega_\infty\big\rVert_{Z_{1,\infty}}^2>0$.
Thus $\big\langle\mu_i,\omega_i\big\rangle_{Z_{R_i}}\neq 0$ for $i$ large enough.
But,
by \eqref{eq1-pf-prop-zm-minoration},
\eqref{eq11-pf-prop-zm-minoration}
and the assumption $\lambda_i\neq 0$,
we have $\big\langle\mu_i,\omega_i\big\rangle_{Z_{R_i}}=0$.
Contradiction.
This completes the proof of Lemma \ref{prop-zm-minoration}.
\end{proof}

Recall that the operator $c$ was defined in \eqref{eq-def-cchat}.
For $\sigma\in\hh(Y,F)[du]$,
we denote $\sigma = \sigma^+ + \sigma^-$
such that $c \sigma^\pm = \mp i \sigma^{\pm}$.

For $j=1,2$,
let $C_j(\lambda)\in\mathrm{End}\big(\hh(Y,F)[du]\big)$
be the scattering matrix as in \cite[(2.32)]{pzz} with $X_\infty$ replaced by $Z_{j,\infty}$.
By \cite[(2.35)]{pzz},
we have
\begin{equation}
\label{eq-anticomm-cc}
c \hspace{0.5mm} C_j(\lambda) = - C_j(\lambda) c \;.
\end{equation}

\begin{lemme}
\label{prop-zm-sca-estimate}
For $T = R^\kappa \gg 1$ and $\omega\in\Omega^\bullet(Z_R,F)$
an eigensection of $R\DRT$ associated with eigenvalue $\lambda\in\big[-\sqrt{R},\sqrt{R}\big]\backslash\{0\}$,
we have
\begin{equation}
\label{eq-prop-zm-sca-estimate}
\Big\lVert
\omega^{\mathrm{zm},+} - C_j\big(\lambda/R\big)\omega^{\mathrm{zm},-}
\Big\rVert^2_{\partial Z_{j,0}}
= \mathscr{O}\big(R^{-2+\kappa}\big) \big\lVert \omega \big\rVert^2_{Z_{1,0}\cup Z_{2,0}}
\;,\; \text{ for } j=1,2 \;.
\end{equation}
\end{lemme}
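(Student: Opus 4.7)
Since $f_T$ vanishes identically on $Z_{j,0}$, the operator $R\DRT$ there agrees with $R\,D^{Z_{j,\infty}}$, so $\omega|_{Z_{j,0}}$ is a classical solution of $(D^{Z_{j,\infty}}-\lambda/R)\omega=0$. The plan is to compare this solution with a model bounded generalized eigensection of $D^{Z_{j,\infty}}$ whose scattering identity holds by construction, and then to bound the resulting mismatch on $\partial Z_{j,0}$.

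Concretely, set $\sigma^{-}:=\omega^{\mathrm{zm},-}|_{\partial Z_{j,0}}$ and let $\widetilde\omega$ be the unique bounded generalized eigensection of $D^{Z_{j,\infty}}$ at eigenvalue $\lambda/R$ whose zero-mode on the cylindrical end equals $e^{-i(\lambda/R)u}\sigma^{-}+e^{i(\lambda/R)u}C_j(\lambda/R)\sigma^{-}$ and whose non-zero-mode decays exponentially (cf.\ \eqref{eq0-def-Res}--\eqref{eq-def-Res}). By the defining property of the scattering matrix in \cite[(2.32)]{pzz}, $\widetilde\omega^{\mathrm{zm},+}|_{\partial Z_{j,0}}=C_j(\lambda/R)\sigma^{-}$ holds exactly. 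The difference $\eta:=\omega-\widetilde\omega$ on $Z_{j,0}$ then satisfies $(D^{Z_{j,\infty}}-\lambda/R)\eta=0$ with $\eta^{\mathrm{zm},-}|_{\partial Z_{j,0}}=0$, and its $\mathrm{zm},+$ boundary value is exactly $\omega^{\mathrm{zm},+}|_{\partial Z_{j,0}}-C_j(\lambda/R)\omega^{\mathrm{zm},-}|_{\partial Z_{j,0}}$, which is the quantity to be estimated.

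To bound $\eta^{\mathrm{zm},+}|_{\partial Z_{j,0}}$, I would apply Green's identity for $D^{Z_{j,\infty}}-\lambda/R$ on $Z_{j,0}$: pairing $\eta$ against suitable auxiliary solutions of the same equation produces a boundary pairing that, via the Clifford relations $c\sigma^{\pm}=\mp i\sigma^{\pm}$ and the vanishing of $\eta^{\mathrm{zm},-}$, isolates $\eta^{\mathrm{zm},+}$ up to a non-zero-mode contribution. The contribution from $\widetilde\omega^{\mathrm{nz}}$ is controlled by construction, since by design it sits in the decaying subspace. The contribution from $\omega^{\mathrm{nz}}|_{\partial Z_{j,0}}$ is tamed by extending into the collar $IY_R$ and invoking Lemma \ref{lem-cylinder-nz-estimation}: the component of $\omega^{\mathrm{nz}}|_{\partial Z_{j,0}}$ that would grow along the cylindrical end is forced to be exponentially small in $R$. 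The interior $L^2$-norm of $\omega$ on $Z_{j,0}$ enters directly on the right, while that of $\widetilde\omega$ is dominated by $\lVert\sigma^{-}\rVert_Y$ via the trace theorem combined with Lemma \ref{prop-zm-minoration}.

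The main obstacle is producing the sharp power $R^{-2+\kappa}$. This factor reflects the size of the Witten perturbation $Tf'_T\hat c$ near $\partial Z_{j,0}$: from \eqref{eq-def-finf}, $f_T(s)\approx(s+1)^2/2$ for $s$ close to $-1$ (and analogously near $s=1$), so that $Tf'_T=\mathscr{O}(T/R)=\mathscr{O}(R^{\kappa-1})$ in the $u$-strip immediately adjacent to the boundary, and this perturbation must be tracked through the matching between $\omega$ and $\widetilde\omega$. A softer fallback, patterned on Lemma \ref{prop-zm-minoration}, is a contradiction argument: normalize $\lVert\omega_i\rVert_{Z_{1,0}\cup Z_{2,0}}=1$, assume the scattering deficit exceeds any multiple of $R_i^{-2+\kappa}$, extract an $H^1_{\mathrm{loc}}$-subsequential limit on $Z_{j,\infty}$ which is a bounded generalized eigensection of $D^{Z_{j,\infty}}$ at eigenvalue $0$ violating $C_j(0)$, and contradict the definition of the scattering matrix. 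This route yields the qualitative statement but not, by itself, the explicit power $R^{-2+\kappa}$.
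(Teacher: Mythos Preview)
Your overall strategy---compare $\omega$ with the model generalized eigensection $\widetilde\omega$ of $D^{Z_{j,\infty}}$ at eigenvalue $\lambda/R$ and read off the scattering deficit from a Green-type boundary pairing---is precisely what the paper does. But there is a genuine gap in the execution, and it is exactly the point you flag as the ``main obstacle''.

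Applying Green's identity on $Z_{j,0}$ alone cannot work. On $Z_{j,0}$ the difference $\eta=\omega-\widetilde\omega$ satisfies the \emph{undeformed} equation, so Green gives $\langle c\eta,\eta\rangle_{\partial Z_{j,0}}=0$, hence
\[
\big\lVert\eta^{\mathrm{zm},+}\big\rVert^2_{\partial Z_{j,0}}
=\big|\langle c\eta^{\mathrm{nz}},\eta^{\mathrm{nz}}\rangle_{\partial Z_{j,0}}\big| \; .
\]
This non-zero-mode pairing is a priori $\mathscr{O}(1)$: Lemma~\ref{lem-cylinder-nz-estimation} says $\omega^{\mathrm{nz}}$ decays \emph{into} the cylinder, not that it (or any ``growing component'' of it) is small \emph{at} $\partial Z_{j,0}$; and $\widetilde\omega^{\mathrm{nz}}$ at $\partial Z_{j,0}$ is likewise only $\mathscr{O}(1)$. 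So no factor $R^{-2+\kappa}$ appears.

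The paper resolves this by pushing the comparison into the cylinder. One defines $\omega''$ on $Z_{j,R}$ with $\omega''|_{Z_{j,0}}=\widetilde\omega|_{Z_{j,0}}$ and $\omega''^{\mathrm{nz}}=\widetilde\omega^{\mathrm{nz}}$, but with $\omega''^{\mathrm{zm}}$ solving the \emph{Witten-deformed} zero-mode ODE. Then $\mu=\omega-\omega''$ satisfies $R\DRT\mu=\lambda\mu-Tf'_T\hat c\,\widetilde\omega^{\mathrm{nz}}$ on the collar. Applying Green on $Z_{j,R/2}$ now produces three pieces: (i) a non-zero-mode boundary term at $\partial Z_{j,R/2}$, exponentially small; (ii) a zero-mode boundary term which, by the conservation law of Lemma~\ref{lem-cylinder-zm-estimation} for $\mu^{\mathrm{zm}}$, equals $-i\lVert\mu^{\mathrm{zm}}\rVert^2_{\partial Z_{j,0}}=-i\lVert\eta^{\mathrm{zm},+}\rVert^2_{\partial Z_{j,0}}$; and (iii) a volume term $2iT\,\mathrm{Im}\langle\mu^{\mathrm{nz}},f'_T\hat c\,\widetilde\omega^{\mathrm{nz}}\rangle_{[0,R]\times Y}$. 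Using $|f'_T|=\mathscr{O}(r/R)$ and the exponential decay of both non-zero modes gives this integral as $\mathscr{O}(T/R)\,\lVert\omega\rVert^2_{Z_{1,0}\cup Z_{2,0}}$; dividing by the factor $R$ from Green's formula yields exactly $\mathscr{O}(R^{-2+\kappa})$. The two ingredients you are missing are thus the extension $\omega''$ (with its asymmetric zero/non-zero-mode construction) and the use of Lemma~\ref{lem-cylinder-zm-estimation} to transport the pairing from $\partial Z_{j,R/2}$ back to $\partial Z_{j,0}$. Your fallback contradiction argument, as you correctly note, does not give the quantitative power.
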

\begin{proof}
We will follow \cite[Lemma 3.12]{pzz}.
We only prove \eqref{eq-prop-zm-sca-estimate} for $j=1$.

Let ${\omega}'\in\Omega^\bullet(Z_{1,\infty},F)$
be the unique generalized eigensection of $\Doneinf$ associated with eigenvalue $\lambda/R$
satisfying
\begin{equation}
\label{eq1-pf-prop-zm-sca-estimate}
{{\omega}'}^{\mathrm{zm},-}\big|_{\partial Z_{1,0}} = \omega^{\mathrm{zm},-} \big|_{\partial Z_{1,0}} \in \hh(Y,F)[du] \;.
\end{equation}
The existence and uniqueness of ${\omega}'$ are guaranteed by \cite[Prop. 2.4]{pzz}.
Moreover,
by \cite[Prop. 2.4]{pzz} and \eqref{eq1-pf-prop-zm-sca-estimate},
we have
\begin{equation}
\label{eq2-pf-prop-zm-sca-estimate}
{{\omega}'}^{\mathrm{zm},+}\big|_{\partial Z_{1,0}}
= C_1 \big( \lambda/R \big) {{\omega}'}^{\mathrm{zm},-}\big|_{\partial Z_{1,0}}
= C_1 \big( \lambda/R \big) \omega^{\mathrm{zm},-}\big|_{\partial Z_{1,0}} \;.
\end{equation}
By the theory of ordinary differential equation,
there exists ${\omega}''\in\Omega^\bullet(Z_{1,R},F)$ satisfying
\begin{equation}
\label{eq3-pf-prop-zm-sca-estimate}
{\omega}''\big|_{Z_{1,0}} = {\omega}'\big|_{Z_{1,0}} \;,\hspace{5mm}
{{\omega}''}^\mathrm{nz} = {{\omega}'}^\mathrm{nz} \;,\hspace{5mm}
R\DRT\big|_{IY_R} {{\omega}''}^\mathrm{zm} = \lambda{{\omega}''}^\mathrm{zm} \;.
\end{equation}
Set
\begin{equation}
\label{eq4-pf-prop-zm-sca-estimate}
\mu = \omega\big|_{Z_{1,R}} - {\omega}''\big|_{Z_{1,R}}  \;.
\end{equation}
By \eqref{eq1-pf-prop-zm-sca-estimate}-\eqref{eq4-pf-prop-zm-sca-estimate}, we have
\begin{equation}
\label{eq5-pf-prop-zm-sca-estimate}
\mu^\mathrm{zm} \big|_{\partial Z_{1,0}} =
\left( \omega^{\mathrm{zm},+ } - C_1\big(\lambda/R\big) \omega^{\mathrm{zm},-} \right) \big|_{\partial Z_{1,0}} \;.
\end{equation}
By the construction of $\mu$,
we have $R\DRT\big|_{IY_R} \mu^\mathrm{zm} = \lambda\mu^\mathrm{zm}$.
By Lemma \ref{lem-cylinder-zm-estimation},
\eqref{eq-anticomm-cc} and \eqref{eq5-pf-prop-zm-sca-estimate},
we have
\begin{align}
\label{eq6-pf-prop-zm-sca-estimate}
\begin{split}
\big\langle c\mu^\mathrm{zm},\mu^\mathrm{zm} \big\rangle_{\partial Z_{1,R/2}} & =
\big\langle c\mu^\mathrm{zm},\mu^\mathrm{zm} \big\rangle_{\partial Z_{1,0}} \\ & =
- i \big\lVert \mu^\mathrm{zm} \big\rVert^2_{\partial Z_{1,0}} =
- i \Big\lVert \omega^{\mathrm{zm},+ } - C_1\big(\lambda/R\big) \omega^{\mathrm{zm},-} \Big\rVert^2_{\partial Z_{1,0}} \;.
\end{split}
\end{align}

By the Trace theorem for Sobolev spaces and Proposition \ref{prop-sobolev},
we have
\begin{equation}
\label{eq7a-pf-prop-zm-sca-estimate}
\big\lVert \omega \big\rVert_{\partial Z_{1,0}}
= \mathscr{O}\big(1\big)
\big\lVert \omega \big\rVert_{Z_{1,0}} \;,\hspace{5mm}
\big\lVert \omega \big\rVert_{\partial Z_{2,0}}
= \mathscr{O}\big(1\big)
\big\lVert \omega \big\rVert_{Z_{2,0}} \;.
\end{equation}
Applying Lemma \ref{lem-cylinder-nz-estimation} to $\omega^\mathrm{nz}$,
there exists a universal constant $a>0$ such that
\begin{equation}
\label{eq7b-pf-prop-zm-sca-estimate}
\big\lVert\omega^\mathrm{nz}\big\rVert_{\partial Z_{1,r}}
= \mathscr{O}\big(e^{-ar}\big)
\Big( \big\lVert \omega^\mathrm{nz} \big\rVert_{\partial Z_{1,0}} +
      \big\lVert \omega^\mathrm{nz} \big\rVert_{\partial Z_{2,0}} \Big) \;,
\hspace{5mm} \text{for } 0\leqslant r \leqslant R/2 \;.
\end{equation}
By \cite[Prop. 2.4]{pzz},
we have $\big\lVert {\omega'}^\mathrm{nz}\big\rVert_{Z_{1,\infty}\backslash Z_{1,0}} < +\infty$.
Moreover,
by \cite[(2.10),(2.38)]{pzz},
there exists a universal constant $a>0$ such that
\begin{equation}
\label{eq7c-pf-prop-zm-sca-estimate}
\big\lVert {\omega'}^\mathrm{nz} \big\rVert_{\partial Z_{1,r}}
= \mathscr{O}\big(e^{-ar}\big)
\big\lVert {\omega'}^\mathrm{nz} \big\rVert_{Z_{1,\infty}\backslash Z_{1,0}}
= \mathscr{O}\big(e^{-ar}\big)
\big\lVert {\omega'}^\mathrm{zm} \big\rVert_{\partial Z_{1,0}} \;,
\hspace{5mm} \text{for } r \geqslant 0 \;.
\end{equation}
Since $C_1\big(\lambda/R\big)$ is unitary,
\eqref{eq1-pf-prop-zm-sca-estimate}
and \eqref{eq2-pf-prop-zm-sca-estimate}
imply
\begin{equation}
\label{eq7d-pf-prop-zm-sca-estimate}
\big\lVert {\omega'}^\mathrm{zm} \big\rVert^2_{\partial Z_{1,0}} =
\big\lVert {\omega'}^{\mathrm{zm},-} \big\rVert^2_{\partial Z_{1,0}} +
\big\lVert {\omega'}^{\mathrm{zm},+} \big\rVert^2_{\partial Z_{1,0}} =
2 \big\lVert \omega^{\mathrm{zm},-} \big\rVert^2_{\partial Z_{1,0}} \;.
\end{equation}
By \eqref{eq7a-pf-prop-zm-sca-estimate}-\eqref{eq7d-pf-prop-zm-sca-estimate},
we have
\begin{equation}
\label{eq8a-pf-prop-zm-sca-estimate}
\big\lVert\omega^\mathrm{nz}\big\rVert_{\partial Z_{1,r}}
= \mathscr{O}\big(e^{-ar}\big)
\big\lVert \omega \big\rVert_{Z_{1,0}\cup Z_{2,0}} \;,\hspace{5mm}
\big\lVert{\omega'}^\mathrm{nz}\big\rVert_{\partial Z_{1,r}}
= \mathscr{O}\big(e^{-ar}\big)
\big\lVert \omega \big\rVert_{Z_{1,0}\cup Z_{2,0}} \;.
\end{equation}
By the second identity in \eqref{eq3-pf-prop-zm-sca-estimate},  \eqref{eq4-pf-prop-zm-sca-estimate}
and \eqref{eq8a-pf-prop-zm-sca-estimate},
we have
\begin{equation}
\label{eq8b-pf-prop-zm-sca-estimate}
\big\lVert\mu^\mathrm{nz}\big\rVert_{\partial Z_{1,r}}
= \mathscr{O}\big(e^{-ar}\big)
\big\lVert \omega \big\rVert_{Z_{1,0}\cup Z_{2,0}} \;.
\end{equation}

We identify $IY_R\subseteq Z_{1,R}$ with $[0,2R]\times Y$.
By the construction of $\mu$ and \eqref{eq3-pf-prop-zm-minoration},
we have
\begin{equation}
\label{eq9p-pf-prop-zm-sca-estimate}
R\DRT\mu\big|_{Z_{1,0}} = \lambda\mu \;,\hspace{5mm}
R\DRT\mu\big|_{[0,2R]\times Y} = \lambda\mu - Tf_T'\hat{c}{\omega'}^\mathrm{nz} \;.
\end{equation}
By \eqref{eq-def-zmnz} and \eqref{eq9p-pf-prop-zm-sca-estimate},
we have
\begin{equation}
\label{eq9-pf-prop-zm-sca-estimate}
\big\langle R\DRT\mu,\mu \big\rangle_{Z_{1,R/2}}
- \big\langle \mu,R\DRT\mu \big\rangle_{Z_{1,R/2}}
= 2i T  \mathrm{Im} \left\langle \mu^\mathrm{nz},
f_T'\hat{c}{\omega'}^\mathrm{nz}\right\rangle_{[0,R]\times Y} \;.
\end{equation}
On the other hand,
by Green's formula (cf. \cite[(2.8)]{pzz}),
we have
\begin{equation}
\label{eq10-pf-prop-zm-sca-estimate}
\big\langle R\DRT\mu,\mu \big\rangle_{Z_{1,R/2}}
- \big\langle \mu,R\DRT\mu \big\rangle_{Z_{1,R/2}}
= R\big\langle c\mu,\mu \big\rangle_{\partial Z_{1,R/2}} \;.
\end{equation}
By \eqref{eq9-pf-prop-zm-sca-estimate}, \eqref{eq10-pf-prop-zm-sca-estimate}
and the assumption $T = R^\kappa$,
we have
\begin{align}
\label{eq11-pf-prop-zm-sca-estimate}
\begin{split}
\Big| \big\langle c\mu^\mathrm{zm},\mu^\mathrm{zm} \big\rangle_{\partial Z_{1,R/2}}
+ \big\langle c\mu^\mathrm{nz},\mu^\mathrm{nz} \big\rangle_{\partial Z_{1,R/2}}\Big|
& = \Big|\big\langle c\mu,\mu \big\rangle_{\partial Z_{1,R/2}}\Big| \\
& \leqslant  2R^{-1+\kappa} \left| \left\langle \mu^\mathrm{nz},
f_T'\hat{c}{\omega'}^\mathrm{nz}\right\rangle_{[0,R]\times Y}\right| \;.
\end{split}
\end{align}

By \eqref{eq-def-finf}
and \eqref{eq-compare-f-fT},
we have
\begin{equation}
\label{eq12a-pf-prop-zm-sca-estimate}
\big|f'_T\big|_{\partial Z_{1,r}} = \mathscr{O}\big(R^{-1}\big) r \;.
\end{equation}
By \eqref{eq8a-pf-prop-zm-sca-estimate}
\eqref{eq8b-pf-prop-zm-sca-estimate} and
\eqref{eq12a-pf-prop-zm-sca-estimate},
we have
\begin{align}
\label{eq12-pf-prop-zm-sca-estimate}
\begin{split}
& \left| \left\langle \mu^\mathrm{nz},
f_T'\hat{c}{\omega'}^\mathrm{nz}\right\rangle_{[0,R]\times Y}\right| \\
& \leqslant \int_0^R
\big|f_T'\big|_{\partial Z_{1,r}}
\big\lVert\mu^\mathrm{nz}\big\rVert_{\partial Z_{1,r}}
\big\lVert{\omega'}^\mathrm{nz}\big\rVert_{\partial Z_{1,r}}
dr
= \mathscr{O}\big(R^{-1}\big)
\big\lVert \omega \big\rVert^2_{Z_{1,0}\cup Z_{2,0}} \;.
\end{split}
\end{align}
By \eqref{eq8b-pf-prop-zm-sca-estimate},
\eqref{eq11-pf-prop-zm-sca-estimate} and
\eqref{eq12-pf-prop-zm-sca-estimate},
we have
\begin{equation}
\label{eq13-pf-prop-zm-sca-estimate}
\Big| \big\langle c\mu^\mathrm{zm},\mu^\mathrm{zm} \big\rangle_{\partial Z_{1,R/2}} \Big|
= \mathscr{O}\big(R^{-2+\kappa}\big)
\big\lVert \omega \big\rVert^2_{Z_{1,0}\cup Z_{2,0}} \;.
\end{equation}
From \eqref{eq6-pf-prop-zm-sca-estimate} and
\eqref{eq13-pf-prop-zm-sca-estimate},
we obtain \eqref{eq-prop-zm-sca-estimate} with $j=1$.
This completes the proof of Lemma \ref{prop-zm-sca-estimate}.
\end{proof}

\begin{proof}[Proof of Theorem \ref{thm-central-spgap}]
First we consider the case $j=0$.

Let $\omega\in\Omega^\bullet(Z_R,F)$ be an eigensection of $R\DRT$
associated with eigenvalue $\lambda\in[-\sqrt{T},\sqrt{T}]\backslash\{0\}$.
By Lemmas \ref{prop-zm-minoration},
\ref{prop-zm-sca-estimate},
we have $\omega^\mathrm{zm}\neq 0$ and
\begin{equation}
\label{eq1-pf-thm-central-spgap}
\big\lVert \omega^{\mathrm{zm},+ }
- C_j\big(\lambda/R\big) \omega^{\mathrm{zm},-} \big\rVert_{\partial Z_{j,0}}
= \mathscr{O}\big(R^{-1+\kappa/2}\big)
\big\lVert \omega^\mathrm{zm} \big\rVert_{Y,\mathrm{max}}
\;,\;\text{ for } j=1,2 \;.
\end{equation}
Since $\lambda\mapsto C_j(\lambda)$ is analytic (cf. \cite[\textsection 4]{m} \cite[Prop. 2.3]{pzz}),
by \eqref{eq1-pf-thm-central-spgap}
and the assumption $|\lambda|\leqslant T^{1/2} = R^{\kappa/2}$,
we have
\begin{equation}
\big\lVert \omega^{\mathrm{zm},+ }
- C_j\big(0\big) \omega^{\mathrm{zm},-} \big\rVert_{\partial Z_{j,0}}
= \mathscr{O}\big(R^{-1+\kappa/2}\big)
\big\lVert \omega^\mathrm{zm} \big\rVert_{Y,\mathrm{max}}
\;,\;\text{ for } j=1,2 \;.
\end{equation}
Moreover,
as $C_j(0)$ is unitary and $\big(C_j(0)\big)^2=\mathrm{Id}$ (cf. \cite[Prop. 2.3]{pzz}),
we have
\begin{equation}
\label{eq2-pf-thm-central-spgap}
\big\lVert \omega^\mathrm{zm}
- C_j\big(0\big) \omega^\mathrm{zm} \big\rVert_{\partial Z_{j,0}}
= \mathscr{O}\big(R^{-1+\kappa/2}\big)
\big\lVert \omega^\mathrm{zm} \big\rVert_{Y,\mathrm{max}}
\;,\;\text{ for } j=1,2 \;.
\end{equation}

By \cite[(2.48)]{pzz} and \eqref{eq-def-LL},
we have
\begin{equation}
\label{eq-LL-Ker}
\LL^\bullet_j = \Ker\big(\mathrm{Id}-C_j(0)\big) \;.
\end{equation}
For $j=1,2$,
let
\begin{equation}
\label{eq-def-hhY-P1P2}
P_j: \hh(Y,F)[du] \rightarrow \LL^\bullet_j
\end{equation}
be orthogonal projections with respect to $\big\lVert\cdot\big\rVert_Y$.
We denote
\begin{equation}
\label{eq-def-hhY-P1P2perp}
P_j^\perp = \mathrm{Id} - P_j \;.
\end{equation}
By \eqref{eq-LL-Ker},
the estimate \eqref{eq2-pf-thm-central-spgap} is equivalent to the follows,
\begin{equation}
\label{eq3-pf-thm-central-spgap}
\left\lVert P_j^\perp\omega^\mathrm{zm} \right\rVert_{\partial Z_{j,0}}
= \mathscr{O}\big(R^{-1+\kappa/2}\big)
\big\lVert \omega^\mathrm{zm} \big\rVert_{Y,\mathrm{max}}
\;,\;\text{ for } j=1,2 \;.
\end{equation}

Let $D^{\hh(Y,F)}_{T,\bd}$ be the operator $\DVTbd$ in \eqref{eq-def-dVT} with
\begin{equation}
V = \hh(Y,F) \;,\hspace{5mm}
V_j = \LL^\bullet_{j,\mathrm{abs}} \;,\hspace{5mm} \text{for } j=1,2 \;.
\end{equation}
Applying Proposition \ref{prop-Df-eigen-approx}
to \eqref{eq3-pf-thm-central-spgap}
with $\epsilon = R^{-1+3\kappa}$
and using the assumption $T = R^\kappa$,
we get
\begin{equation}
\label{eq4-pf-thm-central-spgap}
\big\lVert
\omega^\mathrm{zm}
- P^{[\lambda-\epsilon,\lambda+\epsilon]}_T\omega^\mathrm{zm}
\big\rVert_{IY_R}
= \mathscr{O}\big(R^{-\kappa/2}\big) \big\lVert \omega^\mathrm{zm} \big\rVert_{IY_R} \;.
\end{equation}
By \eqref{eq4-pf-thm-central-spgap},
for $T = R^\kappa \gg 1$,
we have $P^{[\lambda-\epsilon,\lambda+\epsilon]}_T\omega^\mathrm{zm}\neq 0$.
As a consequence,
\begin{equation}
\label{eq5-pf-thm-central-spgap}
[\lambda-\epsilon,\lambda+\epsilon] \cap \Sp\big(D^{\hh(Y,F)}_{T,\bd}\big) \neq \emptyset \;.
\end{equation}
From Theorem \ref{thm-witten-spec}
and \eqref{eq5-pf-thm-central-spgap},
we obtain \eqref{eq-thm-central-spgap} with $j=0$.

We turn to the cases $j=1,2,3$.
Proceeding in the same way as in
\cite[\textsection 3.5]{pzz},
we may replace $Z_{j,R}$ by its 'double',
which is a compact manifold without boundary.
Then we apply \eqref{eq-thm-central-spgap} with $j=0$.
This completes the proof of Theorem \ref{thm-central-spgap}.
\end{proof}

For convenience, we denote
\begin{equation}
\label{eq-def-hhsqcup}
\hha(Z_{1,\infty} \sqcup Z_{2,\infty},F) =
\hha(Z_{1,\infty},F) \oplus \hha(Z_{2,\infty},F) \;.
\end{equation}
Similarly to the constructions of $F_{R,T}$ and $G_{R,T}$
in \textsection \ref{subsec-approx-kernel},
we define
\begin{equation}
\label{eq0-def-FRTplus}
F^+_{R,T}, G^+_{R,T}: \hha(Z_{1,\infty} \sqcup Z_{2,\infty},F) \rightarrow \Omega^\bullet(Z_R,F)
\end{equation}
as follows:
for $(\omega_1,\hat{\omega}_1,\omega_2,\hat{\omega}_2)\in\hha(Z_{1,\infty} \sqcup Z_{2,\infty},F)$,
\begin{align}
\label{eq1-def-FRTplus}
\begin{split}
F^+_{R,T}(\omega_1,\hat{\omega}_1,\omega_2,\hat{\omega}_2) \big|_{Z_{j,0}} & =
G^+_{R,T}(\omega_1,\hat{\omega}_1,\omega_2,\hat{\omega}_2) \big|_{Z_{j,0}} = \omega_j \;,\hspace{5mm} \text{for } j=1,2 \;,\\
F^+_{R,T}(\omega_1,\hat{\omega}_1,\omega_2,\hat{\omega}_2) \big|_{IY_R}
& = e^{-Tf_T} \Big( \chi_1\hat{\omega}_1 + \chi_2\hat{\omega}_2 \Big) \\
& \hspace{5mm} + e^{-Tf_T}d^{Z_R}\Big( \chi_1\Res(\omega_1,\hat{\omega}_1)
+ \chi_2\Res(\omega_2,\hat{\omega}_2) \Big) \;,\\
G^+_{R,T}(\omega_1,\hat{\omega}_1,\omega_2,\hat{\omega}_2) \big|_{IY_R}
& = e^{-Tf_T} \Big( \chi_1\hat{\omega}_1 + \chi_2\hat{\omega}_2 \Big) \\
& \hspace{5mm}  + e^{Tf_T}d^{Z_R,*} \Big( \chi_1\stRes(\omega_1,\hat{\omega}_1)
+ \chi_2\stRes(\omega_2,\hat{\omega}_2) \Big) \;.
\end{split}
\end{align}
By \cite[(2.52)]{pzz}, \eqref{eq-hh} and \eqref{eq-def-hhsqcup},
we have $d^{Y,*} \hat{\omega}_j = i_{\frac{\partial}{\partial u}} \hat{\omega}_j = 0$ for $j=1,2$.
Then,
by \eqref{eq-def-DRT-conj},
we have
\begin{equation}
\label{eq3a-def-FRTplus}
\stdiffRT e^{-Tf_T} \Big( \chi_1\hat{\omega}_1 + \chi_2\hat{\omega}_2 \Big)
=  e^{Tf_T} \Big( d^{Y,*} - i_{\frac{\partial}{\partial u}} \frac{\partial}{\partial u} \Big)
e^{-2Tf_T} \Big( \chi_1\hat{\omega}_1 + \chi_2\hat{\omega}_2 \Big) = 0 \;.
\end{equation}
By \eqref{eq1-def-FRTplus} and \eqref{eq3a-def-FRTplus},
we have
\begin{equation}
\label{eq3-def-FRTplus}
\stdiffRT G^+_{R,T}(\omega_1,\hat{\omega}_1,\omega_2,\hat{\omega}_2) = 0 \;.
\end{equation}

Let $P^{[-1,1]}_{R,T}:\Omega^\bullet(Z_R,F)\rightarrow\mathscr{E}^{[-1,1]}_{0,R,T}$
be the orthogonal projection with respect to $\big\lVert\cdot\big\rVert_{Z_R}$,
where $\mathscr{E}^{[-1,1]}_{0,R,T}\subseteq\Omega^\bullet(Z_R,F)$ was defined in \eqref{eq-def-ERT}.

\begin{prop}
\label{prop-FRTplus}
For $T = R^\kappa \gg 1$ and $(\omega_1,\hat{\omega}_1,\omega_2,\hat{\omega}_2)\in\hha(Z_{1,\infty} \sqcup Z_{2,\infty},F)$,
we have
\begin{equation}
\label{eq-prop-FRTplus}
\Big\lVert \big( \Id - P^{[-1,1]}_{R,T} \big)G^+_{R,T}(\omega_1,\hat{\omega}_1,\omega_2,\hat{\omega}_2) \Big\rVert^2_{H^1,Z_R}
= \mathscr{O}\big(R^{-2+\kappa}\big) \Big(\big\lVert\omega_1\big\rVert^2_{Z_{1,0}} + \big\lVert\omega_2\big\rVert^2_{Z_{2,0}} \Big) \;.
\end{equation}
\end{prop}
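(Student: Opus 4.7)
The strategy is to imitate the proof of Proposition \ref{prop-FRT}, replacing the use of Corollary \ref{cor-proj-estimate-better} with $\gamma = 0$ (which forces $w$ and $v$ into $\Ker(\DRT)$) by an application with $\gamma = R^{-2}$, since the projection $P^{[-1,1]}_{R,T}$ onto the small-eigenvalue subspace of $R\DRT$ corresponds to eigenvalues of $\DsRT$ lying in $[0,R^{-2}]$. We set $w = F^+_{R,T}(\omega_1,\hat{\omega}_1,\omega_2,\hat{\omega}_2)$ and $v = G^+_{R,T}(\omega_1,\hat{\omega}_1,\omega_2,\hat{\omega}_2)$. By \eqref{eq3-def-FRTplus}, we already have $\stdiffRT v = 0$, so the hypothesis $\big\lVert\partial^* v\big\rVert_1^2 \leq \alpha\gamma$ of (the $H^1$-version of) Proposition \ref{prop-proj-estimate-better} is satisfied trivially.

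The first step is to control $\diffRT w$. Using $\diffRT(e^{-Tf_T}\alpha) = e^{-Tf_T}d^{Z_R}\alpha$ together with $(d^{Z_R})^2 = 0$ and the fact that $\hat{\omega}_j$ is $u$-independent and $d^Y$-harmonic, I compute
\begin{equation*}
\diffRT F^+_{R,T}\big|_{IY_R} = e^{-Tf_T}\bigl(\partial_u\chi_1\, du\wedge\hat{\omega}_1 + \partial_u\chi_2\, du\wedge\hat{\omega}_2\bigr),
\end{equation*}
while $\diffRT F^+_{R,T}\big|_{Z_{j,0}} = d^{Z_R}\omega_j = 0$. The support of $\partial_u\chi_j$ lies in the transition region where $\chi_j$ is not locally constant, and on this region $f_T$ is bounded below by a positive universal constant (using \eqref{eq-def-finf} and \eqref{eq-compare-f-fT}), so $e^{-Tf_T} = \mathscr{O}(e^{-cT})$ with $c>0$. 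Combining with $|\partial_u\chi_j| = \mathscr{O}(R^{-1})$ and the trace inequality $\big\lVert\hat{\omega}_j\big\rVert_Y = \mathscr{O}(1)\big\lVert\omega_j\big\rVert_{Z_{j,0}}$ gives $\big\lVert\diffRT w\big\rVert^2 + \big\lVert\DRT\diffRT w\big\rVert^2 = \mathscr{O}(e^{-cT})\bigl(\big\lVert\omega_1\big\rVert^2_{Z_{1,0}} + \big\lVert\omega_2\big\rVert^2_{Z_{2,0}}\bigr)$, which is much smaller than $R^{-4+\kappa}$ for $T=R^\kappa\gg 1$.

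The second step is to estimate $\big\lVert F^+_{R,T} - G^+_{R,T}\big\rVert^2 + \big\lVert\DRT(F^+_{R,T} - G^+_{R,T})\big\rVert^2$. Since the terms $e^{-Tf_T}(\chi_1\hat{\omega}_1 + \chi_2\hat{\omega}_2)$ cancel between $F^+_{R,T}$ and $G^+_{R,T}$, the difference on $IY_R$ has exactly the structure of \eqref{eq12-pf-prop-FRT}--\eqref{eq16-pf-prop-FRT} (with the independent $\hat{\omega}_j$ now showing up only through $\Res(\omega_j,\hat{\omega}_j)$ and $\stRes(\omega_j,\hat{\omega}_j)$), while it vanishes on $Z_{1,0}\cup Z_{2,0}$. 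The same bounds used in the proof of Proposition \ref{prop-FRT}---exponential decay of $\Res$ and $\stRes$ (combined with the fact that $(\omega_j,\hat{\omega}_j) \in \hha(Z_{j,\infty},F)$ controls the non-zero-mode tails), together with $|f_T|, |f'_T|, |\partial_u\chi_j|, |\partial_u^2\chi_j|$ estimates from \eqref{eq25-pf-prop-FRT}---yield the bound $\mathscr{O}(R^{-2+\kappa})\bigl(\big\lVert\omega_1\big\rVert^2_{Z_{1,0}} + \big\lVert\omega_2\big\rVert^2_{Z_{2,0}}\bigr)$.

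The third step applies Corollary \ref{cor-proj-estimate-better} to the complex $\bigl(\Omega^\bullet(Z_R,F), \diffRT, \big\lVert\cdot\big\rVert_{Z_R}\bigr)$ with $\gamma = R^{-2}$, $\alpha$ and $\beta$ both of order $R^{-2+\kappa}$; the preceding two steps verify the hypotheses. One concludes $\big\lVert v - P^{[0,\gamma]}v\big\rVert_1^2 \leq 3\alpha + 2\beta = \mathscr{O}(R^{-2+\kappa})\bigl(\big\lVert\omega_1\big\rVert^2_{Z_{1,0}} + \big\lVert\omega_2\big\rVert^2_{Z_{2,0}}\bigr)$, and since $P^{[0,R^{-2}]}$ for $\DsRT$ coincides with $P^{[-1,1]}_{R,T}$, and the $H^1$-norm on $\Omega^\bullet(Z_R,F)$ is equivalent to $\big\lVert\cdot\big\rVert_1$ by Proposition \ref{prop-sobolev}, the desired inequality \eqref{eq-prop-FRTplus} follows. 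The main technical point---and the place requiring the most care---is verifying that the differences of $F^+_{R,T}$ and $G^+_{R,T}$ on the cylinder, as well as the derivative $\diffRT F^+_{R,T}$, are controlled uniformly in $(\omega_1,\hat{\omega}_1,\omega_2,\hat{\omega}_2)$ with the appropriate decay; this is where the interplay between the Witten conjugation factors $e^{\pm Tf_T}$, the geometric cutoffs $\chi_j$, and the exponentially-decaying tails of the resolvents $\Res$ and $\stRes$ must be handled exactly as in Step 2 of the proof of Proposition \ref{prop-FRT}.
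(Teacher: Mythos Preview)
Your proposal is correct and follows essentially the same approach as the paper's own proof: both verify that $F^+_{R,T}-G^+_{R,T}$ satisfies the same formula \eqref{eq12-pf-prop-FRT} as in Proposition \ref{prop-FRT} (yielding the $\mathscr{O}(R^{-2+\kappa})$ bound), both estimate $\diffRT F^+_{R,T}$ by $\mathscr{O}(e^{-aT})$ using the exponential smallness of $e^{-Tf_T}$ on the support of $\partial_u\chi_j$, and both conclude via Corollary \ref{cor-proj-estimate-better} and Proposition \ref{prop-sobolev}. Your write-up is in fact more explicit than the paper's (which simply cites \eqref{eq12-pf-prop-FRT} and states the resulting bounds); the only cosmetic imprecision is calling the $H^1$-norm ``equivalent'' to $\lVert\cdot\rVert_1$ when only the one-sided bound from Proposition \ref{prop-sobolev} is needed and used.
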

\begin{proof}
Though the constructions of $F^+_{R,T}$ and $G^+_{R,T}$
are different from the constructions of $F_{R,T}$ and $G_{R,T}$ in \eqref{eq1-def-FRT},
we can directly verify that $(F^+_{R,T}-G^+_{R,T})(\omega_1,\hat{\omega}_1,\omega_2,\hat{\omega}_2)$ satisfies \eqref{eq12-pf-prop-FRT}.
Then,
similarly to \eqref{eq28-pf-prop-FRT},
we have
\begin{align}
\label{eq1-pf-prop-FRTplus}
\begin{split}
\Big\lVert (F^+_{R,T}-G^+_{R,T})(\omega_1,\hat{\omega}_1,\omega_2,\hat{\omega}_2) \Big\rVert^2_{Z_R}
& = \mathscr{O}\big(R^{-2+\kappa}\big) \Big(\big\lVert\omega_1\big\rVert^2_{Z_{1,0}} + \big\lVert\omega_2\big\rVert^2_{Z_{2,0}} \Big) \;,\\
\Big\lVert \DRT (F^+_{R,T}-G^+_{R,T})(\omega_1,\hat{\omega}_1,\omega_2,\hat{\omega}_2) \Big\rVert^2_{Z_R}
& = \mathscr{O}\big(R^{-2+\kappa}\big) \Big(\big\lVert\omega_1\big\rVert^2_{Z_{1,0}} + \big\lVert\omega_2\big\rVert^2_{Z_{2,0}} \Big) \;.
\end{split}
\end{align}

By \eqref{eq-def-finf}, \eqref{eq-compare-f-fT}, \eqref{eq-def-DRT-conj},
\eqref{eq-def-chi12}, \eqref{eq-chi-IYR}, \eqref{eq1-def-FRTplus} and the identities $D^Y \hat{\omega}_j = 0$ for $j=1,2$,
we have
\begin{align}
\label{eq2-pf-prop-FRTplus}
\begin{split}
\Big\lVert \diffRT F^+_{R,T}(\omega_1,\hat{\omega}_1,\omega_2,\hat{\omega}_2) \Big\rVert^2_{Z_R}
& = \mathscr{O}\big(e^{-aT}\big) \Big(\big\lVert\omega_1\big\rVert^2_{Z_{1,0}} + \big\lVert\omega_2\big\rVert^2_{Z_{2,0}} \Big) \;,\\
\Big\lVert \DRT \diffRT F^+_{R,T}(\omega_1,\hat{\omega}_1,\omega_2,\hat{\omega}_2) \Big\rVert^2_{Z_R}
& = \mathscr{O}\big(e^{-aT}\big) \Big(\big\lVert\omega_1\big\rVert^2_{Z_{1,0}} + \big\lVert\omega_2\big\rVert^2_{Z_{2,0}} \Big) \;,
\end{split}
\end{align}
where $a>0$ is a universal constant.

By
Corollary \ref{cor-proj-estimate-better},
\eqref{eq3-def-FRTplus},
\eqref{eq1-pf-prop-FRTplus} and \eqref{eq2-pf-prop-FRTplus},
we have
\begin{align}
\label{eq3-pf-prop-FRTplus}
\begin{split}
& \Big\lVert \big( \Id - P^{[-1,1]}_{R,T} \big)G^+_{R,T}(\omega_1,\hat{\omega}_1,\omega_2,\hat{\omega}_2) \Big\rVert^2_{Z_R} \\
& \hspace{20mm} + \Big\lVert \DRT \big( \Id - P^{[-1,1]}_{R,T} \big)G^+_{R,T}(\omega_1,\hat{\omega}_1,\omega_2,\hat{\omega}_2) \Big\rVert^2_{Z_R} \\
& \hspace{60mm} = \mathscr{O}\big(R^{-2+\kappa}\big) \Big(\big\lVert\omega_1\big\rVert^2_{Z_{1,0}} + \big\lVert\omega_2\big\rVert^2_{Z_{2,0}} \Big) \;.
\end{split}
\end{align}
From Proposition \ref{prop-sobolev}
and \eqref{eq3-pf-prop-FRTplus},
we obtain \eqref{eq-prop-FRTplus}.
This completes the proof of Proposition \ref{prop-FRTplus}.
\end{proof}

We define
\begin{equation}
\label{eq0-def-IRTplus}
I^+_{R,T}: \hh(Y,F) \rightarrow \Omega^{\bullet+1}(Z_R,F)
\end{equation}
as follows:
for $\hat{\omega}\in \hh(Y,F)$,
\begin{align}
\label{eq1-def-IRTplus}
\begin{split}
I^+_{R,T}(\hat{\omega}) \big|_{Z_{j,0}} & = 0  \;,\hspace{5mm} \text{for } j=1,2 \;,\\
I^+_{R,T}(\hat{\omega}) \big|_{IY_R} & = \chi_3 e^{Tf_T-T}du\wedge\hat{\omega} \;.
\end{split}
\end{align}
We have
\begin{equation}
\label{eq3-def-IRTplus}
\diffRT I^+_{R,T}(\hat{\omega}) = 0 \;.
\end{equation}

\begin{prop}
\label{prop-IRTplus}
For $T = R^\kappa \gg 1$ and $\hat{\omega}\in \hh(Y,F)$,
we have
\begin{equation}
\label{eq-prop-IRTplus}
\Big\lVert \big( \Id - P^{[-1,1]}_{R,T} \big)I^+_{R,T}(\hat{\omega}) \Big\rVert^2_{H^1,Z_R}
= \mathscr{O}\big(e^{-aT}\big) \big\lVert\hat{\omega}\big\rVert^2_Y \;.
\end{equation}
\end{prop}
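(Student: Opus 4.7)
The plan is to mirror the strategy of Propositions \ref{prop-FRT}, \ref{prop-IRT} and \ref{prop-FRTplus}: derive an $L^2$-bound on $\DRT I^+_{R,T}(\hat\omega)$, then combine Corollary \ref{cor-proj-estimate-better} with Proposition \ref{prop-sobolev} to obtain the $H^1$-bound on the spectral projection error. The simplification here is considerable: since $I^+_{R,T}(\hat\omega)$ vanishes on $Z_{1,0}\cup Z_{2,0}$ and is the pure one-mode expression $\chi_3 e^{T(f_T-1)}\,du\wedge\hat\omega$ on $IY_R$, no contribution from the non-zero-mode resolvents $\Res$ and $\stRes$ will appear, and the identity $\diffRT I^+_{R,T}(\hat\omega)=0$ from \eqref{eq3-def-IRTplus} already reduces the work to controlling $\DRT I^+_{R,T}(\hat\omega)=\stdiffRT I^+_{R,T}(\hat\omega)$.

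The central step is a direct pointwise computation on $IY_R$. Writing $\phi:=\chi_3 e^{T(f_T-1)}$, the cylindrical formula \eqref{eq-def-DRT-cylinder} expresses $R\DIYRT$ as $\hat c c\,RD^Y + c\,\partial_s + Tf_T'\hat c$. Using $D^Y\hat\omega=0$, the Clifford identities $c(du\wedge\hat\omega)=-\hat\omega$ and $\hat c(du\wedge\hat\omega)=\hat\omega$, and the Leibniz rule $\partial_s\phi=\chi_3' e^{T(f_T-1)}+\chi_3 Tf_T' e^{T(f_T-1)}$, the contribution $Tf_T'\phi\,\hat\omega$ coming from the Witten term cancels exactly the contribution of $\chi_3 Tf_T' e^{T(f_T-1)}\hat\omega$ coming from $c\,\partial_s\phi$, leaving
\begin{equation*}
\DRT I^+_{R,T}(\hat\omega) = -R^{-1}\chi_3'(s)\, e^{T(f_T(s)-1)}\,\hat\omega \;.
\end{equation*}
Since $\chi_3'$ is supported in $|s|\in[1/4,1/2]$ and \eqref{eq-def-finf} together with \eqref{eq-compare-f-fT} yields $f_T(s)\leqslant 1-1/32+\mathscr{O}(e^{-T^2})$ on this set, the pointwise estimate $e^{T(f_T-1)}=\mathscr{O}(e^{-cT})$ holds with a fixed $c>0$. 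Integrating over the $u$-slab of length $\mathscr{O}(R)$ produces at worst a polynomial factor in $R$ that is absorbed by a slight loss in the exponent: for an appropriate $a>0$ and $T=R^\kappa\gg 1$,
\begin{equation*}
\big\lVert \DRT I^+_{R,T}(\hat\omega)\big\rVert_{Z_R}^2 = \mathscr{O}\big(e^{-2aT}\big)\,\big\lVert\hat\omega\big\rVert_Y^2 \;.
\end{equation*}

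Finally, combining \eqref{eq3-def-IRTplus} with the bound above and applying Corollary \ref{cor-proj-estimate-better} with $w=v=I^+_{R,T}(\hat\omega)$, $\gamma=1$ and $\beta=0$ to the complex $\big(\Omega^\bullet(Z_R,F),\diffRT\big)$, I obtain the same $\mathscr{O}(e^{-2aT})\lVert\hat\omega\rVert_Y^2$ bound for $\big\lVert(\Id-P^{[-1,1]}_{R,T})I^+_{R,T}(\hat\omega)\big\rVert^2_{Z_R}$ and for $\big\lVert \DRT(\Id-P^{[-1,1]}_{R,T})I^+_{R,T}(\hat\omega)\big\rVert^2_{Z_R}$; Proposition \ref{prop-sobolev} then upgrades this to the $H^1$-estimate claimed in \eqref{eq-prop-IRTplus}. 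I do not expect any serious obstacle: the cutoff derivative $\chi_3'$ and the Gaussian peak of $e^{T(f_T-1)}$ live on disjoint natural scales, so the boundary-layer bookkeeping that complicates Propositions \ref{prop-FRT} and \ref{prop-IRT} collapses, and the only substantive verification is the cancellation of the $Tf_T'\phi$ contributions, which is immediate from the Clifford identities on $du\wedge\hat\omega$.
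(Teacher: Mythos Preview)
Your approach matches the paper's: bound $\DRT I^+_{R,T}(\hat\omega)$ directly, invoke the abstract projection estimate, and upgrade to $H^1$ via Proposition~\ref{prop-sobolev}. Your explicit computation of $\DRT I^+_{R,T}(\hat\omega)=-R^{-1}\chi_3'\,e^{T(f_T-1)}\hat\omega$ and the cancellation you exhibit are correct. Two small repairs are needed.

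First, $P^{[-1,1]}_{R,T}$ is the spectral projection of $R\DRT$ (not $\DRT$) onto $[-1,1]$, hence of $\DsRT$ onto $[0,R^{-2}]$; in the language of Proposition~\ref{prop-proj-estimate-better} you should take $\gamma=R^{-2}$ rather than $\gamma=1$. This only introduces a factor $R^2$ into $\alpha$, which is swallowed by $e^{-aT}$. Second, and more substantively, Corollary~\ref{cor-proj-estimate-better} is the $\lVert\cdot\rVert_1$-version of Proposition~\ref{prop-proj-estimate-better}, so its hypothesis $\lVert\partial^* v\rVert_1^2\leqslant\alpha\gamma$ unpacks (using $\diffRT I^+_{R,T}(\hat\omega)=0$) to a bound on $\lVert\DRT I^+_{R,T}(\hat\omega)\rVert^2+\lVert\DsRT I^+_{R,T}(\hat\omega)\rVert^2$: you must also verify $\lVert\DsRT I^+_{R,T}(\hat\omega)\rVert^2=\mathscr{O}(e^{-aT})\lVert\hat\omega\rVert_Y^2$. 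This is precisely the second estimate in the paper's \eqref{eq1-pf-prop-IRTplus}, and it follows by applying $\DRT$ once more to your explicit expression---the cancellation no longer occurs, but the result is still supported where $e^{T(f_T-1)}$ is exponentially small and only a harmless polynomial-in-$T$ prefactor appears. (Incidentally, $\chi_3'$ is supported in $|s|\in[1/16,1/8]$, not $[1/4,1/2]$, though this does not affect the argument.)
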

\begin{proof}
By \eqref{eq-def-finf}, \eqref{eq-compare-f-fT}, \eqref{eq-def-DRT-conj}
and the construction of $I^+_{R,T}$ (see \eqref{eq0-def-IRTplus}),
we have
\begin{equation}
\label{eq1-pf-prop-IRTplus}
\Big\lVert \DRT I^+_{R,T}(\hat{\omega}) \Big\rVert^2_{Z_R}
= \mathscr{O}\big(e^{-aT}\big) \big\lVert\hat{\omega}\big\rVert^2_Y \;,\hspace{5mm}
\Big\lVert \DsRT I^+_{R,T}(\hat{\omega}) \Big\rVert^2_{Z_R}
= \mathscr{O}\big(e^{-aT}\big) \big\lVert\hat{\omega}\big\rVert^2_Y \;.
\end{equation}
By Corollary \ref{cor-proj-estimate-better}
and \eqref{eq1-pf-prop-IRTplus},
we have
\begin{equation}
\label{eq2-pf-prop-IRTplus}
\Big\lVert \big( \Id - P^{[-1,1]}_{R,T} \big)I^+_{R,T}(\hat{\omega}) \Big\rVert^2_{Z_R}
+ \Big\lVert \DRT\big( \Id - P^{[-1,1]}_{R,T} \big)I^+_{R,T}(\hat{\omega}) \Big\rVert^2_{Z_R}
= \mathscr{O}\big(e^{-aT}\big) \big\lVert\hat{\omega}\big\rVert^2_Y \;,
\end{equation}
where $a>0$ is a universal constant.
From Proposition \ref{prop-sobolev}
and \eqref{eq2-pf-prop-IRTplus},
we obtain \eqref{eq-prop-IRTplus}.
This completes the proof of Proposition \ref{prop-IRTplus}.
\end{proof}

We identify $C^{0,\bullet}_0 = W^\bullet_1 \oplus W^\bullet_2 = H^\bullet(Z_1,F) \oplus H^\bullet(Z_2,F)$
with $\hha(Z_{1,\infty} \sqcup Z_{2,\infty},F)$
via the map \eqref{eq-hha2coh}.
We identify $C^{1,\bullet}_0 = V^\bullet = H^\bullet(Y,F)$ with $\hh(Y,F)$
via the isomorphism $H^\bullet(Y,F) \simeq \hh(Y,F)$ given by the Hodge theorem.
We define a map
\begin{align}
\label{eq-def-SRT}
\begin{split}
& \mathscr{S}_{R,T}: C^{\bullet,\bullet}_0 \rightarrow \mathscr{E}^{[-1,1]}_{0,R,T} \;, \\
& \mathscr{S}_{R,T}\Big|_{C^{0,\bullet}_0} = P^{[-1,1]}_{R,T}G^+_{R,T} \;,\hspace{5mm}
\mathscr{S}_{R,T}\Big|_{C^{1,\bullet}_0} = P^{[-1,1]}_{R,T}I^+_{R,T} \;.
\end{split}
\end{align}

\begin{prop}
\label{prop-perp-GI}
The vector subspaces
$\mathscr{S}_{R,T}(C^{0,\bullet}_0), \mathscr{S}_{R,T}(C^{1,\bullet}_0)
\subseteq \Omega^\bullet(Z_R,F)$
are orthogonal with respect to $\big\langle\cdot,\cdot\big\rangle_{Z_R}$.
\end{prop}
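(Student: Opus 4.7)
The plan is to prove the orthogonality by establishing two separate vanishing results and then combining them via the identity
$$
\langle P\sigma, P\tau\rangle_{Z_R} = \langle\sigma,\tau\rangle_{Z_R} - \langle(\Id - P)\sigma, (\Id - P)\tau\rangle_{Z_R},
$$
valid for any orthogonal projection $P$ (using $P=P^*=P^2$ together with $\mathrm{Ran}(P)\perp\mathrm{Ran}(\Id-P)$). First, I will show that the unprojected pairing $\langle G^+_{R,T}(\omega_1,\hat{\omega}_1,\omega_2,\hat{\omega}_2), I^+_{R,T}(\hat{\omega})\rangle_{Z_R}$ vanishes identically. Second, I will show that the high-frequency complement pairing $\langle(\Id - P^{[-1,1]}_{R,T})G^+_{R,T}, (\Id - P^{[-1,1]}_{R,T})I^+_{R,T}\rangle_{Z_R}$ also vanishes. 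Combining these with the identity above yields $\langle P^{[-1,1]}_{R,T}G^+_{R,T}, P^{[-1,1]}_{R,T}I^+_{R,T}\rangle_{Z_R} = 0$, which is the claim.

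For the first vanishing, note that $I^+_{R,T}(\hat\omega)$ is supported in $IY_R$ by \eqref{eq1-def-IRTplus}, so the integral localizes there. On $IY_R$, $I^+_{R,T}(\hat\omega) = \chi_3 e^{Tf_T-T}\,du\wedge\hat\omega$ is a pure $du\wedge$-form whose coefficient $\hat\omega$ lies in $\hh(Y,F) = \Ker \DY$. Writing $G^+_{R,T}\big|_{IY_R} = \alpha + du\wedge\beta$ in the standard decomposition on the cylinder, the $\alpha$-part pairs trivially to zero with the pure $du\wedge$-form $I^+_{R,T}$. A direct computation using $d^{Z_R,*}(du\wedge\delta) = -\partial_u\delta - du\wedge d^{Y,*}\delta$ applied to the pure $du\wedge$-form $\chi_j\stRes(\omega_j,\hat\omega_j) = \chi_j\sum_{\mu\neq 0}\frac{1}{|\mu|}e^{-|\mu|u}\, du\wedge\tau_{\mu,1}$, together with the eigenrelation $d^{Y,*}\tau_{\mu,1} = |\mu|\tau_{\mu,2}$ from \eqref{eq1-def-Res}, identifies the $du\wedge$-coefficient of $G^+_{R,T}$ as $\beta = -e^{Tf_T}\sum_{j,\mu\neq 0}\chi_j e^{-|\mu|u}\tau_{\mu,2}$. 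Since each $\tau_{\mu,2}$ is an eigenform of $(\DY)^2$ with eigenvalue $\mu^2>0$, it is orthogonal in $L^2(Y)$ to $\hh(Y,F)$, so $\langle\beta, \chi_3 e^{Tf_T-T}\hat\omega\rangle_Y$ vanishes pointwise in $u$, giving $\langle G^+_{R,T}, I^+_{R,T}\rangle_{Z_R}=0$.

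For the second vanishing, I invoke \eqref{eq3-def-FRTplus} and \eqref{eq3-def-IRTplus}: $\stdiffRT G^+_{R,T}=0$ and $\diffRT I^+_{R,T}=0$. Since $\diffRT$ and $\stdiffRT$ commute with $\DsRT$, they commute with the spectral projector $P^{[-1,1]}_{R,T}$, hence $(\Id - P^{[-1,1]}_{R,T})G^+_{R,T}\in\Ker\stdiffRT$ and $(\Id - P^{[-1,1]}_{R,T})I^+_{R,T}\in\Ker\diffRT$. The range of $\Id - P^{[-1,1]}_{R,T}$ is the direct sum of eigenspaces of $R\DRT$ whose eigenvalue has absolute value $>1$, equivalently the direct sum of eigenspaces of $\DsRT$ with strictly positive eigenvalue; in particular $\Ker\DRT\subseteq\mathscr{E}^{[-1,1]}_{0,R,T}$ is excluded. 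On each such positive eigenspace $W_{\lambda^2}$, the orthogonal Hodge splitting \eqref{eq3-W-decomposition} gives $W_{\lambda^2} = W'_{\lambda^2}\oplus W''_{\lambda^2}$ with $W'_{\lambda^2}\subseteq\Ker\diffRT$ and $W''_{\lambda^2}\subseteq\Ker\stdiffRT$. Consequently the spectral components of $(\Id - P^{[-1,1]}_{R,T})G^+_{R,T}$ lie in the $W''$-summands while those of $(\Id - P^{[-1,1]}_{R,T})I^+_{R,T}$ lie in the $W'$-summands, and the $L^2$-pairing vanishes term by term.

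The main obstacle is the first vanishing: the explicit cancellation on the cylinder requires carefully unpacking the $du$-structure of $d^{Z_R,*}\circ\stRes$ and using the eigenrelations among the $\tau_{\mu,\cdot}$'s to pin down that the $du\wedge$-coefficient of $G^+_{R,T}$ lives entirely in the non-zero spectrum of $\DY$, hence is orthogonal in $L^2(Y)$ to the $\hh(Y,F)$-valued coefficient of $I^+_{R,T}$. The second step is then essentially formal, being a direct application of the finite-dimensional Hodge theory of \textsection\ref{subsect-hodge} inside each positive eigenspace of $\DsRT$.
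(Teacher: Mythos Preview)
Your proof is correct and follows the same overall scheme as the paper: split the pairing via the projection identity, show the unprojected pairing vanishes, and show the complement pairing vanishes using that $G^+_{R,T}\in\Ker\stdiffRT$ and $I^+_{R,T}\in\Ker\diffRT$ together with the Hodge decomposition on the strictly positive eigenspaces of $\DsRT$.

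The only difference is in the first step. You compute the $du$-structure of $G^+_{R,T}\big|_{IY_R}$ and observe that its $du\wedge$-coefficient is built from the $\tau_{\mu,2}$ with $\mu\neq 0$, hence is $L^2(Y)$-orthogonal to $\hh(Y,F)$. This is correct, but the paper gets the same vanishing in one line by noting that the supports of $G^+_{R,T}$ and $I^+_{R,T}$ are disjoint: from \eqref{eq-def-chi12} and \eqref{eq-def-chi3} one has $\chi_1\chi_3=\chi_2\chi_3=0$, so $G^+_{R,T}\big|_{IY_R}$ is supported in $\{|s|\geqslant 7/8\}$ while $I^+_{R,T}$ is supported in $\{|s|\leqslant 1/8\}$ (and $I^+_{R,T}$ vanishes on $Z_{1,0}\cup Z_{2,0}$). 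Your argument exploits orthogonality along $Y$; the paper's exploits disjointness along $[-R,R]$. The paper's version is shorter and avoids unpacking $d^{Z_R,*}\circ\stRes$, but yours would survive even if the cutoffs overlapped.
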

\begin{proof}
We consider $\sigma_0\in C^{0,\bullet}_0$
and $\sigma_1\in C^{1,\bullet}_0$.
Since the supports of $G_{R,T}^+(\sigma_0)$
and $I_{R,T}^+(\sigma_1)$ are mutually disjoint,
we have
\begin{equation}
\label{eq1-pf-prop-perp-GI}
\big\langle G_{R,T}^+(\sigma_0),I_{R,T}^+(\sigma_1) \big\rangle_{Z_R} = 0 \;.
\end{equation}
On the other hand,
by \eqref{eq3-def-FRTplus}
and \eqref{eq3-def-IRTplus},
we have
\begin{align}
\label{eq2-pf-prop-perp-GI}
\begin{split}
& G_{R,T}^+(\sigma_0) \in \Ker\big(\stdiffRT\big)
= \Ker\big(\DRT\big) \oplus \mathrm{Im}\big(\stdiffRT\big) \;,\\
& I_{R,T}^+(\sigma_1) \in \Ker\big(\diffRT\big)
= \Ker\big(\DRT\big) \oplus \mathrm{Im}\big(\diffRT\big) \;.
\end{split}
\end{align}
Since
$P_{R,T}^{\R\backslash[-1,1]}:=\mathrm{Id}-P_{R,T}^{[-1,1]}$
commutes with $\diffRT$ and $\stdiffRT$,
we have
\begin{equation}
\label{eq3-pf-prop-perp-GI}
P_{R,T}^{\R\backslash[-1,1]}  G_{R,T}^+(\sigma_0) \in \mathrm{Im}\big(\stdiffRT\big) \;,\hspace{5mm}
P_{R,T}^{\R\backslash[-1,1]}  I_{R,T}^+(\sigma_1) \in \mathrm{Im}\big(\diffRT\big) \;,
\end{equation}
which implies
\begin{equation}
\label{eq4-pf-prop-perp-GI}
\Big\langle P_{R,T}^{\R\backslash[-1,1]} G_{R,T}^+(\sigma_0),
P_{R,T}^{\R\backslash[-1,1]} I_{R,T}^+(\sigma_1) \Big\rangle_{Z_R} = 0 \;.
\end{equation}
From \eqref{eq1-pf-prop-perp-GI},
\eqref{eq4-pf-prop-perp-GI}
and the obvious identity
\begin{align}
\begin{split}
& \Big\langle G_{R,T}^+(\sigma_0),I_{R,T}^+(\sigma_1) \Big\rangle_{Z_R} \\
& = \Big\langle \mathscr{S}_{R,T}(\sigma_0),
\mathscr{S}_{R,T}(\sigma_1) \Big\rangle_{Z_R} +
\Big\langle P_{R,T}^{\R\backslash[-1,1]} G_{R,T}^+(\sigma_0),
P_{R,T}^{\R\backslash[-1,1]} I_{R,T}^+(\sigma_1) \Big\rangle_{Z_R} \;,
\end{split}
\end{align}
we obtain
$\Big\langle \mathscr{S}_{R,T}(\sigma_0),
\mathscr{S}_{R,T}(\sigma_1) \Big\rangle_{Z_R} = 0$.
This completes the proof of Proposition \ref{prop-perp-GI}.
\end{proof}

\begin{thm}
\label{thm-SRT-bij}
For $T = R^\kappa \gg 1$,
the map $\mathscr{S}_{R,T}$ is bijective.
\end{thm}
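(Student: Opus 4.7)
The plan is to establish injectivity of $\mathscr{S}_{R,T}$ first, using the error estimates of Propositions \ref{prop-FRTplus} and \ref{prop-IRTplus} together with matching lower bounds on $\lVert G^+_{R,T}(\sigma_0)\rVert_{Z_R}$ and $\lVert I^+_{R,T}(\sigma_1)\rVert_{Z_R}$, and then conclude bijectivity via a dimension count.

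First I derive the lower bounds. For $\sigma_0=(\omega_1,\hat\omega_1,\omega_2,\hat\omega_2)\in\hha(Z_{1,\infty}\sqcup Z_{2,\infty},F)$, the definition \eqref{eq1-def-FRTplus} forces $G^+_{R,T}(\sigma_0)|_{Z_{j,0}}=\omega_j$, whence the trivial bound $\lVert G^+_{R,T}(\sigma_0)\rVert^2_{Z_R}\geq \lVert\omega_1\rVert^2_{Z_{1,0}}+\lVert\omega_2\rVert^2_{Z_{2,0}}$. For $\hat\omega\in\hh(Y,F)$, the explicit form $I^+_{R,T}(\hat\omega)|_{IY_R}=\chi_3 e^{Tf_T-T}du\wedge\hat\omega$, the change of variable $u=Rs$, and the local expansion $f_T(s)=1-s^2/2$ near $s=0$ from \eqref{eq-def-finf} and \eqref{eq-compare-f-fT}, together with Laplace's method, yield
\begin{equation*}
\lVert I^+_{R,T}(\hat\omega)\rVert^2_{Z_R}=\sqrt{\pi}\,R\,T^{-1/2}\lVert\hat\omega\rVert_Y^2\bigl(1+o(1)\bigr),\quad T=R^\kappa\to\infty.
\end{equation*}

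Next I prove injectivity on each summand. If $\mathscr{S}_{R,T}(\sigma_0)=P^{[-1,1]}_{R,T}G^+_{R,T}(\sigma_0)=0$, then $G^+_{R,T}(\sigma_0)=(\Id-P^{[-1,1]}_{R,T})G^+_{R,T}(\sigma_0)$, so Proposition \ref{prop-FRTplus} combined with the first bound above gives $\lVert\omega_1\rVert^2_{Z_{1,0}}+\lVert\omega_2\rVert^2_{Z_{2,0}}=\mathscr{O}(R^{-2+\kappa})(\lVert\omega_1\rVert^2_{Z_{1,0}}+\lVert\omega_2\rVert^2_{Z_{2,0}})$, forcing $\omega_j|_{Z_{j,0}}=0$ for $T=R^\kappa\gg 1$. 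Aronszajn's unique continuation theorem applied to the harmonic forms $\omega_j$ on $Z_{j,\infty}$ then yields $\omega_j\equiv 0$, and \eqref{eq-hha2coh} gives $\sigma_0=0$. A parallel argument using Proposition \ref{prop-IRTplus} and the Laplace-method lower bound handles $C^{1,\bullet}_0$. Combined with the orthogonality of Proposition \ref{prop-perp-GI}, this yields injectivity of $\mathscr{S}_{R,T}$ on $C^{\bullet,\bullet}_0$.

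For surjectivity it suffices to prove $\dim\mathscr{E}^{[-1,1]}_{0,R,T}\leq\dim C^{\bullet,\bullet}_0$. Split the small eigenspace as $\Ker(\DRT)\oplus(\text{nonzero-small part})$. The kernel has dimension $\dim H^\bullet(Z,F)=\dim H^\bullet(C^{\bullet,\bullet}_0,\partial)$ by Theorem \ref{thm-SRTH-bij} and Hodge theory. For the nonzero-small part, Lemma \ref{prop-zm-minoration} together with unique continuation makes $\omega\mapsto\omega^{\mathrm{zm}}$ injective there, while Lemma \ref{prop-zm-sca-estimate} combined with Proposition \ref{prop-Df-eigen-approx} places $\omega^{\mathrm{zm}}$ approximately in the nonzero-eigenvalue spectral subspace of the one-dimensional model $D^{\hh(Y,F)}_{T,\bd}$ with boundary subspaces $\LL^\bullet_{j,\mathrm{abs}}$; Theorem \ref{thm-witten-estimates} and \eqref{eq-coh-sheaf-model} then bound the count by $2\dim(\LL^\bullet_{1,\mathrm{abs}}+\LL^\bullet_{2,\mathrm{abs}})$. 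Since $\mathrm{Im}(\partial)=\LL^\bullet_{1,\mathrm{abs}}+\LL^\bullet_{2,\mathrm{abs}}\subseteq\hh(Y,F)$ by \eqref{eq1-def-complex}, \eqref{eq-complex-evaluation}, \eqref{eq1-LL}, this matches $\dim C^{\bullet,\bullet}_0-\dim H^\bullet(C^{\bullet,\bullet}_0,\partial)$, closing the count. The main obstacle is precisely this last step: one must rigorously extract from each nonzero-small eigensection of $R\DRT$ a genuine approximate eigensection of the model $D^{\hh(Y,F)}_{T,\bd}$, which requires a careful quantitative combination of Lemma \ref{prop-zm-sca-estimate} and Proposition \ref{prop-Df-eigen-approx}.
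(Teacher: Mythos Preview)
Your injectivity argument is correct and essentially matches the paper's: the paper also deduces injectivity directly from Propositions \ref{prop-FRTplus}, \ref{prop-IRTplus}, \ref{prop-perp-GI} together with the lower bounds you wrote. (Your appeal to Aronszajn is a clean way to see that $\lVert\omega_j\rVert_{Z_{j,0}}=0$ forces $(\omega_j,\hat\omega_j)=0$.)

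The surjectivity argument, however, has a genuine gap. You want to bound $\dim\mathscr{E}^{[-1,1]\backslash\{0\}}_{0,R,T}$ by $2\dim(\LL^\bullet_{1,\mathrm{abs}}+\LL^\bullet_{2,\mathrm{abs}})$, i.e.\ by the dimension of the \emph{nonzero} part of $E^{[-1,1]}_T$. But Proposition \ref{prop-Df-eigen-approx} only tells you that $\omega^{\mathrm{zm}}$ is close to $P^{[\lambda-\epsilon,\lambda+\epsilon]}_T\omega^{\mathrm{zm}}$; to land in the nonzero part you would need $\epsilon<|\lambda|$, and you have \emph{no a priori lower bound} on the nonzero small eigenvalues $\lambda$ of $R\DRT$. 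If you instead settle for the bound $\dim\mathscr{E}^{[-1,1]\backslash\{0\}}_{0,R,T}\leq\dim E^{[-1,1]}_T=\dim C^\bullet_{\mathrm r}$, the count does not close: $\dim H^\bullet(C^{\bullet,\bullet}_0,\partial)+\dim C^\bullet_{\mathrm r}$ exceeds $\dim C^{\bullet,\bullet}_0$ by $\dim(V_1\cap V_2)+\dim(V_1^\perp\cap V_2^\perp)\geq 0$, with equality only in degenerate cases.

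The paper resolves this not by sharpening the spectral separation but by a different bookkeeping: it singles out a subspace $U^{\bullet,\bullet}\subseteq H^\bullet(C^{\bullet,\bullet}_0,\partial)$ (the generalized-eigensection part, complementary to $K^\bullet_1\oplus K^\bullet_2$) and shows that the map $\sigma\mapsto P^{[-1,1]}_T\sigma^{\mathrm{zm}}$ is injective on $\mathscr{S}^H_{R,T}(U^{\bullet,\bullet})\oplus\mathscr{E}^{[-1,1]\backslash\{0\}}_{0,R,T}$, via a Gram-matrix argument combining the almost-orthogonality of these pieces in $L^2(Z_R)$ with Steps 1--2 type estimates showing the norm is carried by the zero-mode. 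This gives $\dim\mathscr{E}^{[-1,1]\backslash\{0\}}_{0,R,T}+\dim U^{\bullet,\bullet}\leq\dim C^\bullet_{\mathrm r}$, and since $\dim H^\bullet(C^{\bullet,\bullet}_0,\partial)-\dim U^{\bullet,\bullet}=\dim K^\bullet_1+\dim K^\bullet_2=\dim C^{\bullet,\bullet}_0-\dim C^\bullet_{\mathrm r}$, the count closes exactly. The point is that one must feed part of $\Ker(\DRT)$ into the one-dimensional model alongside the nonzero-small eigensections, rather than trying to keep them separate.
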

\begin{proof}
We will use the identifications \eqref{eq-id-HCHmathscr}.
We construct a vector subspace
$U^{\bullet,\bullet}\subseteq H^\bullet(C^{\bullet,\bullet}_0,\partial)$
as follows,
\begin{align}
\label{eq0-pf-thm-SRT-bij}
\begin{split}
U^{0,\bullet} & = \Big\{(\omega_1,\omega_2,\hat{\omega})\in\hha(Z_{12,\infty},F)\;:\; \omega_j \text{ is a generalized eigensection} \\
& \hspace{60mm} \text{of } \Djinf \text{ associated with } 0 \;,\; j=1,2 \Big\} \;,\\
U^{1,\bullet} & = H^1(C^{\bullet,\bullet}_0,\partial)
= \LL^{\bullet,\perp}_{1,\mathrm{abs}} \cap \LL^{\bullet,\perp}_{2,\mathrm{abs}} \;.
\end{split}
\end{align}

\noindent \textit{Step 1}. We show that
for $\sigma\in \mathscr{S}^H_{R,T}\big(U^{0,\bullet}\big)$
or $\sigma\in \mathscr{S}^H_{R,T}\big(U^{1,\bullet}\big)$,
\begin{align}
\label{eq1-pf-thm-SRT-bij}
\begin{split}
& \big\lVert\sigma^\mathrm{zm}\big\rVert^2_{Y,\mathrm{max}}
= \mathscr{O}\big(R^{-1+\kappa}\big) \big\lVert\sigma^\mathrm{zm}\big\rVert^2_{IY_R} \;,\\
& \big\lVert\sigma\big\rVert^2_{Z_{1,0}\cup Z_{2,0}} +
\big\lVert\sigma^\mathrm{nz}\big\rVert^2_{IY_R}
= \mathscr{O}\big(1\big) \big\lVert\sigma^\mathrm{zm}\big\rVert^2_{Y,\mathrm{max}} \;.
\end{split}
\end{align}

By the construction of $\mathscr{S}^H$ (see \eqref{eq2-def-SHRT}),
for $\sigma\in \mathscr{S}^H_{R,T}\big(U^{0,\bullet}\big)$,
there exists $(\omega_1,\omega_2,\hat{\omega})\in U^{0,\bullet}$ such that
$\sigma = P_{R,T}F_{R,T}(\omega_1,\omega_2,\hat{\omega})$,
where $F_{R,T}$ was defined in \eqref{eq1-def-FRT}.
We denote $\widetilde{\sigma} = F_{R,T}(\omega_1,\omega_2,\hat{\omega})$.
By \eqref{eq1-def-FRT},
we have
\begin{equation}
\label{eq11-pf-thm-SRT-bij}
\widetilde{\sigma}^\mathrm{zm} = e^{-Tf_T}\hat{\omega} \;.
\end{equation}
By \eqref{eq-def-finf}, \eqref{eq-compare-f-fT}, \eqref{eq11-pf-thm-SRT-bij} and the assumption $T = R^\kappa$,
we have
\begin{equation}
\label{eq12-pf-thm-SRT-bij}
\big\lVert\widetilde{\sigma}^\mathrm{zm}\big\rVert^2_{Y,\mathrm{max}}
= \mathscr{O}\big(R^{-1+\kappa}\big)
\big\lVert\widetilde{\sigma}^\mathrm{zm}\big\rVert^2_{IY_R} \;.
\end{equation}
By \eqref{eq-resol-RdFRdFstar}, \eqref{eq1-def-FRT}, \eqref{eq24-pf-prop-FRT} and \cite[(2.36)-(2.38)]{pzz},
we have
\begin{align}
\label{eq13-pf-thm-SRT-bij}
\begin{split}
& \big\lVert\omega_1\big\rVert^2_{Z_{1,0}}+\big\lVert\omega_2\big\rVert^2_{Z_{2,0}}
= \big\lVert\widetilde{\sigma}\big\rVert^2_{Z_{1,0}\cup Z_{2,0}}
= \mathscr{O}\big(1\big) \big\lVert\widetilde{\sigma}^\mathrm{zm}\big\rVert^2_{Y,\mathrm{max}} \;,\\
& \big\lVert\widetilde{\sigma}^\mathrm{nz}\big\rVert^2_{IY_R}
= \mathscr{O}\big(1\big) \big\lVert\widetilde{\sigma}\big\rVert^2_{Z_{1,0}\cup Z_{2,0}} \;.
\end{split}
\end{align}
By \eqref{eq13-pf-thm-SRT-bij},
we have
\begin{equation}
\label{eq15-pf-thm-SRT-bij}
\big\lVert\widetilde{\sigma}\big\rVert^2_{Z_{1,0}\cup Z_{2,0}} +
\big\lVert\widetilde{\sigma}^\mathrm{nz}\big\rVert^2_{IY_R}
= \mathscr{O}\big(1\big) \big\lVert\widetilde{\sigma}^\mathrm{zm}\big\rVert^2_{Y,\mathrm{max}} \;.
\end{equation}
From the Trace theorem for Sobolev spaces,
Proposition \ref{prop-FRT}
and \eqref{eq12-pf-thm-SRT-bij}-\eqref{eq15-pf-thm-SRT-bij},
we obtain \eqref{eq1-pf-thm-SRT-bij}
with $\sigma\in \mathscr{S}^H_{R,T}\big(U^{0,\bullet}\big)$.

By the construction of $\mathscr{S}^H$ (see \eqref{eq2-def-SHRT}),
for $\sigma\in \mathscr{S}^H_{R,T}\big(U^{1,\bullet}\big)$,
there exists $\hat{\omega}\in U^{1,\bullet}$ such that
$\sigma = P_{R,T}I_{R,T}(\hat{\omega})$,
where $I_{R,T}$ was defined in \eqref{eq1-def-IRT}.
We denote $\widetilde{\sigma} = I_{R,T}(\hat{\omega})$.
By \eqref{eq1-def-IRT},
we have
\begin{align}
\label{eq17-pf-thm-SRT-bij}
\begin{split}
& \Big(1+\mathscr{O}\big(e^{-aT}\big)\Big) \big\lVert\hat{\omega}\big\rVert^2_Y
= \big\lVert\widetilde{\sigma}^\mathrm{zm}\big\rVert^2_{Y,\mathrm{max}}
= \mathscr{O}\big(R^{-1+\kappa}\big) \big\lVert\widetilde{\sigma}^\mathrm{zm}\big\rVert^2_{IY_R} \;,\\
& \big\lVert\widetilde{\sigma}^\mathrm{nz}\big\rVert^2_{IY_R}
= \big\lVert\widetilde{\sigma}\big\rVert^2_{Z_{1,0}\cup Z_{2,0}} = 0 \;,
\end{split}
\end{align}
where $a>0$ is a universal constant.
From the Trace theorem for Sobolev spaces,
Proposition \ref{prop-IRT}
and \eqref{eq17-pf-thm-SRT-bij},
we obtain \eqref{eq1-pf-thm-SRT-bij}
with $\sigma\in \mathscr{S}^H_{R,T}\big(U^{1,\bullet}\big)$.

\noindent \textit{Step 2}. We show that
for $\sigma\in\mathscr{E}^{\{\lambda\}}_{R,T}$ with $\lambda\in [-1,1]\backslash\{0\}$,
\begin{align}
\label{eq2-pf-thm-SRT-bij}
\begin{split}
& \big\lVert\sigma^\mathrm{zm}\big\rVert^2_{Y,\mathrm{max}}
= \mathscr{O}\big(R^{-1+2\kappa}\big) \big\lVert\sigma^\mathrm{zm}\big\rVert^2_{IY_R} \;,\\
& \big\lVert\sigma\big\rVert^2_{Z_{1,0}\cup Z_{2,0}} +
\big\lVert\sigma^\mathrm{nz}\big\rVert^2_{IY_R}
= \mathscr{O}\big(1\big) \big\lVert\sigma^\mathrm{zm}\big\rVert^2_{Y,\mathrm{max}} \;.
\end{split}
\end{align}

By the Trace theorem for Sobolev spaces,
Lemma \ref{lem-cylinder-nz-estimation} and Proposition \ref{prop-sobolev},
we have
\begin{equation}
\label{eq2a-pf-thm-SRT-bij}
\big\lVert\sigma^\mathrm{nz}\big\rVert^2_{IY_R} =
\mathscr{O}\big(1\big) \big\lVert\sigma^\mathrm{nz}\big\rVert^2_{\partial Z_{1,0}\cup\partial Z_{2,0}}
= \mathscr{O}\big(1\big) \big\lVert\sigma\big\rVert^2_{\partial Z_{1,0}\cup\partial Z_{2,0}}
= \mathscr{O}\big(1\big) \big\lVert\sigma\big\rVert^2_{Z_{1,0}\cup Z_{2,0}} \;.
\end{equation}
By \eqref{eq-dirac-cylinder-spec-expansion-hh},
$\sigma^\mathrm{zm}$ is an eigensection of $D^{\hh(Y,F)}_T$
associated with $\lambda$, i.e.,
\begin{equation}
\label{eq21-pf-thm-SRT-bij}
\Big(c\frac{\partial}{\partial s}+Tf'_T\hat{c}\Big) \sigma^\mathrm{zm} =
D^{\hh(Y,F)}_T\sigma^\mathrm{zm} =
\lambda \sigma^\mathrm{zm} \;.
\end{equation}
The first inequality in \eqref{eq2-pf-thm-SRT-bij}
follows from the Sobolev inequality,
\eqref{eq21-pf-thm-SRT-bij}
and the assumption $\lambda\in[-1,1]$.
The second inequality in \eqref{eq2-pf-thm-SRT-bij}
follows from Lemma \ref{prop-zm-minoration}
and \eqref{eq2a-pf-thm-SRT-bij}.

Let $\mathscr{E}^{[-1,1]}_T \subseteq \Omega^\bullet\big([-1,1],\hh(Y,F)\big)$
be the eigenspace of $D^{\hh(Y,F)}_T$ associated with eigenvalues in $[-1,1]$.
Let $P^{[-1,1]}_T: \Omega^\bullet\big([-1,1],\hh(Y,F)\big) \rightarrow \mathscr{E}^{[-1,1]}_T$
be the orthogonal projection.

\noindent \textit{Step 3}. We show that the map
\begin{align}
\label{eq3-pf-thm-SRT-bij}
\begin{split}
\pi_{R,T}:
\mathscr{S}^H_{R,T}\big(U^{\bullet,\bullet}\big) \oplus
\mathscr{E}^{[-1,1]\backslash\{0\}}_{0,R,T}
&
\rightarrow \mathscr{E}^{[-1,1]}_T \\
\sigma & \mapsto P^{[-1,1]}_T\sigma^\mathrm{zm}
\end{split}
\end{align}
is injective.

Let
\begin{equation}
\label{eq3a-pf-thm-SRT-bij}
\sigma_1,\cdots,\sigma_m \in
\mathscr{S}^H_{R,T}\big(U^{\bullet,\bullet}\big) \oplus
\mathscr{E}^{[-1,1]\backslash\{0\}}_{0,R,T}
\end{equation}
be a basis such that
each $\sigma_i$ belongs to one of the following vector spaces
\begin{equation}
\label{eq3b-pf-thm-SRT-bij}
\mathscr{S}^H_{R,T}\big(U^{0,\bullet}\big),\hspace{5mm}
\mathscr{S}^H_{R,T}\big(U^{1,\bullet}\big),\hspace{5mm}
\mathscr{E}^{\{\lambda\}}_{R,T} \hspace{5mm}
\text{with } \lambda\in[-1,1]\backslash\{0\} \;.
\end{equation}
We suppose that
for $i\neq j$
with $\sigma_i,\sigma_j$ belonging to
the same vector space in \eqref{eq3b-pf-thm-SRT-bij},
\begin{equation}
\label{eq3d-pf-thm-SRT-bij}
\big\langle \sigma_i,\sigma_j \big\rangle_{Z_R} = 0 \;.
\end{equation}

By the constructions of $F_{R,T}$ and $I_{R,T}$,
we have
\begin{equation}
\label{eq35-pf-thm-SRT-bij}
F_{R,T}\big(U^{0,\bullet}\big) \perp
I_{R,T}\big(U^{1,\bullet}\big) \;.
\end{equation}
By Propositions \ref{prop-FRT}, \ref{prop-IRT}
and \eqref{eq35-pf-thm-SRT-bij},
for $\sigma_i\in\mathscr{S}^H_{R,T}\big(U^{0,\bullet}\big)$
and $\sigma_j\in\mathscr{S}^H_{R,T}\big(U^{1,\bullet}\big)$,
we have
\begin{equation}
\label{eq36-pf-thm-SRT-bij}
\big\langle\sigma_i,\sigma_j\big\rangle_{Z_R}
=  \mathscr{O}\big(R^{-1+\kappa/2}\big)
\big\lVert\sigma_i\big\rVert_{Z_R}
\big\lVert\sigma_j\big\rVert_{Z_R} \;.
\end{equation}
Since $\mathscr{S}^H_{R,T}\big(U^{\bullet,\bullet}\big) \subseteq \Ker\big(\DRT\big)$,
for $\sigma_i\in\mathscr{S}^H_{R,T}\big(U^{\bullet,\bullet}\big)$
and $\sigma_j\in\mathscr{E}^{[-1,1]\backslash\{0\}}_{R,T}$,
we have
\begin{equation}
\label{eq37-pf-thm-SRT-bij}
\big\langle\sigma_i,\sigma_j\big\rangle_{Z_R} = 0 \;.
\end{equation}
By \eqref{eq3d-pf-thm-SRT-bij},
\eqref{eq36-pf-thm-SRT-bij} and
\eqref{eq37-pf-thm-SRT-bij},
we have
\begin{equation}
\label{eq38-pf-thm-SRT-bij}
\big\langle\sigma_i,\sigma_j\big\rangle_{Z_R}
= \Big( \delta_{ij} + \mathscr{O}\big(R^{-1+\kappa/2}\big) \Big)
\big\lVert\sigma_i\big\rVert_{Z_R}
\big\lVert\sigma_j\big\rVert_{Z_R} \;,
\end{equation}
where $\delta_{ij}$ is the Kronecker delta.

By Steps 1, 2 and the obvious identity
\begin{equation}
\big\langle\sigma_i,\sigma_j\big\rangle_{Z_R} =
\big\langle\sigma_i^\mathrm{zm},\sigma_j^\mathrm{zm}\big\rangle_{IY_R}
+ \big\langle\sigma_i^\mathrm{nz},\sigma_j^\mathrm{nz}\big\rangle_{IY_R}
+ \big\langle\sigma_i,\sigma_j\big\rangle_{Z_{1,0}\cup Z_{2,0}} \;,
\end{equation}
we have
\begin{equation}
\label{eq39-pf-thm-SRT-bij}
\big\langle\sigma_i^\mathrm{zm},\sigma_j^\mathrm{zm}\big\rangle_{IY_R}
= \big\langle\sigma_i,\sigma_j\big\rangle_{Z_R}
+ \mathscr{O}\big(R^{-1+2\kappa}\big)
\big\lVert\sigma_i\big\rVert_{Z_R}
\big\lVert\sigma_j\big\rVert_{Z_R} \;.
\end{equation}

Recall that the maps $P^\perp_j$ with $j=1,2$
were defined by \eqref{eq-def-hhY-P1P2}-\eqref{eq-def-hhY-P1P2perp}.
By \eqref{eq3-pf-thm-central-spgap},
for $\sigma\in \mathscr{E}^{\{\lambda\}}_{R,T}$
with $\lambda\in[-1,1]\backslash\{0\}$,
we have
\begin{equation}
\label{eq31-pf-thm-SRT-bij}
\Big\lVert P^\perp_j \sigma^\mathrm{zm} \Big\rVert^2_{\partial Z_{j,0}} =
\mathscr{O}\big(R^{-2+\kappa}\big) \big\lVert \sigma^\mathrm{zm} \big\rVert^2_{Y,\mathrm{max}}
\;,\hspace{5mm} \text{ for } j=1,2 \;.
\end{equation}
By the constructions of $F_{R,T}$ and  $I_{R,T}$,
for $\widetilde{\sigma}\in F_{R,T}\big(U^{0,\bullet}\big)$ or $\widetilde{\sigma}\in I_{R,T}\big(U^{1,\bullet}\big)$,
we have
\begin{equation}
\label{eq32-pf-thm-SRT-bij}
P^\perp_j \widetilde{\sigma}^\mathrm{zm} \big|_{\partial Z_{j,0}} = 0 \;,\hspace{5mm} \text{ for } j=1,2 \;.
\end{equation}
By the Trace theorem for Sobolev spaces,
Propositions \ref{prop-FRT}, \ref{prop-IRT},
\eqref{eq13-pf-thm-SRT-bij},
\eqref{eq17-pf-thm-SRT-bij} and
\eqref{eq32-pf-thm-SRT-bij},
for $\sigma\in \mathscr{S}^H_{R,T}\big(U^{0,\bullet}\big)$
or $\sigma\in \mathscr{S}^H_{R,T}\big(U^{1,\bullet}\big)$,
we have
\begin{equation}
\label{eq33-pf-thm-SRT-bij}
\Big\lVert P^\perp_j \sigma^\mathrm{zm} \Big\rVert^2_{\partial Z_{j,0}} =
\mathscr{O}\big(R^{-2+\kappa}\big) \big\lVert \sigma^\mathrm{zm} \big\rVert^2_{Y,\mathrm{max}} \;,
\hspace{5mm} \text{ for } j=1,2 \;.
\end{equation}
Applying Proposition \ref{prop-Df-eigen-approx}
to \eqref{eq31-pf-thm-SRT-bij} and \eqref{eq33-pf-thm-SRT-bij} with $\epsilon=1$,
we get
\begin{equation}
\label{eq34a-pf-thm-SRT-bij}
\Big\lVert \sigma^\mathrm{zm}_i - P^{[-2,2]}_T\sigma^\mathrm{zm}_i \Big\rVert_{IY_R} =
\mathscr{O}\big(R^{-1+3\kappa}\big)
\big\lVert \sigma^\mathrm{zm}_i \big\rVert_{IY_R} \;.
\end{equation}
By Theorem \ref{thm-witten-spec},
we have $P^{[-1,1]}_T = P^{[-2,2]}_T$.
Then equation \eqref{eq34a-pf-thm-SRT-bij} yields
\begin{equation}
\label{eq34-pf-thm-SRT-bij}
\big\langle P^{[-1,1]}_T\sigma_i^\mathrm{zm},P^{[-1,1]}_T\sigma_j^\mathrm{zm} \big\rangle_{IY_R}
= \big\langle\sigma_i^\mathrm{zm},\sigma_j^\mathrm{zm}\big\rangle_{IY_R}
+ \mathscr{O}\big(R^{-1+3\kappa}\big)
\big\lVert\sigma_i^\mathrm{zm}\big\rVert_{IY_R}
\big\lVert\sigma_j^\mathrm{zm}\big\rVert_{IY_R} \;.
\end{equation}

By \eqref{eq38-pf-thm-SRT-bij},
\eqref{eq39-pf-thm-SRT-bij}
and \eqref{eq34-pf-thm-SRT-bij},
we have
\begin{equation}
\label{eq3x-pf-thm-SRT-bij}
\big\langle P^{[-1,1]}_T\sigma_i^\mathrm{zm},P^{[-1,1]}_T\sigma_j^\mathrm{zm} \big\rangle_{IY_R}
= \Big( \delta_{ij} + \mathscr{O}\big(R^{-1+3\kappa}\big) \Big)
\big\lVert\sigma_i\big\rVert_{Z_R}
\big\lVert\sigma_j\big\rVert_{Z_R} \;.
\end{equation}
By \eqref{eq3-pf-thm-SRT-bij} and \eqref{eq3x-pf-thm-SRT-bij},
the Gram matrix
$\Big(\big\langle\pi_{R,T}(\sigma_i),\pi_{R,T}(\sigma_j)\big\rangle_{IY_R}\Big)_{1\leqslant i,j \leqslant m}$
is positive-definite.
Hence the map $\pi_{R,T}$ is injective.

\noindent \textit{Step 4}. We show that the map $\mathscr{S}_{R,T}$ is bijective.

By Theorems \ref{thm-witten-estimates}, \ref{thm-SRTH-bij} and Step 3,
we have
\begin{align}
\label{eq41-pf-thm-SRT-bij}
\begin{split}
\dim \Big( \mathscr{E}^{[-1,1]\backslash\{0\}}_{0,R,T} \Big) + \dim U^{\bullet,\bullet}
& = \dim \Big( \mathscr{E}^{[-1,1]\backslash\{0\}}_{0,R,T} \Big) + \dim \mathscr{S}^H_{R,T}\big(U^{\bullet,\bullet}\big) \\
& \leqslant \dim \mathscr{E}^{[-1,1]}_T
= \dim C^{\bullet,\bullet}_\mathrm{r} \;.
\end{split}
\end{align}
By Theorem \ref{thm-SRTH-bij},
we have
\begin{equation}
\label{eq43-pf-thm-SRT-bij}
\dim \Ker\big(\DRT\big) = \dim H^\bullet(C^{\bullet,\bullet}_0,\partial) \;.
\end{equation}
By \eqref{eq-def-K}, \eqref{eq-ses-CCr} and \eqref{eq-complex-evaluation},
we have
\begin{equation}
\dim C^{\bullet,\bullet}_0 - \dim C^{\bullet,\bullet}_\mathrm{r}
= \dim K^\bullet_1 + \dim K^\bullet_2 \;.
\end{equation}
By the construction of $U^{\bullet,\bullet}$
and \eqref{eq-commute-hha12}-\eqref{eq-def-WKK},
we have
\begin{equation}
\label{eq45-pf-thm-SRT-bij}
\dim H^\bullet(C^{\bullet,\bullet}_0,\partial) - \dim U^{\bullet,\bullet} =
\dim K^\bullet_1 + \dim K^\bullet_2 \;.
\end{equation}
From \eqref{eq41-pf-thm-SRT-bij}-\eqref{eq45-pf-thm-SRT-bij},
we obtain
\begin{equation}
\label{eq4-pf-thm-SRT-bij}
\dim \mathscr{E}^{[-1,1]}_{0,R,T} =
\dim \mathscr{E}^{[-1,1]\backslash\{0\}}_{0,R,T}
+ \dim \Ker\big(\DRT\big)
\leqslant \dim C^{\bullet,\bullet}_0  \;.
\end{equation}

By Propositions \ref{prop-FRTplus}-\ref{prop-perp-GI} and \eqref{eq-def-SRT},
the map $\mathscr{S}_{R,T}: \dim C^{\bullet,\bullet}_0 \rightarrow \mathscr{E}^{[-1,1]}_{R,T}$ is injective.
Then,
by \eqref{eq4-pf-thm-SRT-bij},
it is bijective.
This completes the proof of Theorem \ref{thm-SRT-bij}.
\end{proof}

\subsection{De Rham operator on $\mathscr{E}^{[-1,1]}_{0,R,T}$}
\label{subsec-approx-de-rham}

\begin{prop}
\label{prop-SRT-grading}
For $T = R^\kappa \gg 1$,
we have
\begin{equation}
\label{eq-prop-SRT-grading}
\diffRT \mathscr{S}_{R,T} \big(C^{1,\bullet}_0\big) = 0 \;,\hspace{5mm}
\diffRT \mathscr{S}_{R,T} \big(C^{0,\bullet}_0\big)
\subseteq \mathscr{S}_{R,T} \big(C^{1,\bullet}_0\big) \;.
\end{equation}
\end{prop}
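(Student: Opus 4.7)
The plan is to derive both assertions of Proposition \ref{prop-SRT-grading} purely formally from three facts already at our disposal: the defining identities $\diffRT I^+_{R,T}=0$ from \eqref{eq3-def-IRTplus} and $\stdiffRT G^+_{R,T}=0$ from \eqref{eq3-def-FRTplus}, the orthogonality of $\mathscr{S}_{R,T}\big(C^{0,\bullet}_0\big)$ and $\mathscr{S}_{R,T}\big(C^{1,\bullet}_0\big)$ established in Proposition \ref{prop-perp-GI}, and the bijectivity of $\mathscr{S}_{R,T}$ from Theorem \ref{thm-SRT-bij}. The only preliminary remark is that $P^{[-1,1]}_{R,T}$ is a spectral projection of $\DsRT = \diffRT\stdiffRT+\stdiffRT\diffRT$, so it commutes with both $\diffRT$ and $\stdiffRT$ (since $\diffRT\DsRT = \diffRT\stdiffRT\diffRT = \DsRT\diffRT$, and symmetrically for $\stdiffRT$).

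For the first identity in \eqref{eq-prop-SRT-grading}, the commutation just mentioned together with \eqref{eq3-def-IRTplus} gives immediately
\begin{equation*}
\diffRT \mathscr{S}_{R,T}\big|_{C^{1,\bullet}_0}
= \diffRT P^{[-1,1]}_{R,T} I^+_{R,T}
= P^{[-1,1]}_{R,T} \diffRT I^+_{R,T} = 0 \;.
\end{equation*}
Symmetrically, \eqref{eq3-def-FRTplus} yields $\stdiffRT \mathscr{S}_{R,T}\big|_{C^{0,\bullet}_0} = 0$, which will be the key input for the second identity.

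For the second inclusion, fix $\sigma \in C^{0,k}_0$. Since $\mathscr{S}_{R,T}$ preserves grading (evident from the constructions of $G^+_{R,T}$ and $I^+_{R,T}$), we have $\mathscr{S}_{R,T}(\sigma) \in \Omega^k(Z_R,F)$, and commutation of $\diffRT$ with $P^{[-1,1]}_{R,T}$ shows $\diffRT\mathscr{S}_{R,T}(\sigma) \in \mathscr{E}^{[-1,1]}_{0,R,T}\cap\Omega^{k+1}(Z_R,F)$. For any $\sigma'\in C^{0,k+1}_0$, the identity $\stdiffRT\mathscr{S}_{R,T}(\sigma')=0$ noted above gives, by adjointness,
\begin{equation*}
\big\langle \diffRT\mathscr{S}_{R,T}(\sigma),\mathscr{S}_{R,T}(\sigma')\big\rangle_{Z_R}
= \big\langle \mathscr{S}_{R,T}(\sigma),\stdiffRT\mathscr{S}_{R,T}(\sigma')\big\rangle_{Z_R}
= 0 \;,
\end{equation*}
so $\diffRT\mathscr{S}_{R,T}(\sigma)$ is orthogonal to $\mathscr{S}_{R,T}\big(C^{0,k+1}_0\big)$. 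Now Theorem \ref{thm-SRT-bij} together with grading preservation gives the decomposition $\mathscr{E}^{[-1,1]}_{0,R,T}\cap\Omega^{k+1}(Z_R,F) = \mathscr{S}_{R,T}\big(C^{0,k+1}_0\big)\oplus \mathscr{S}_{R,T}\big(C^{1,k}_0\big)$, and by Proposition \ref{prop-perp-GI} this is an \emph{orthogonal} direct sum. Therefore $\diffRT\mathscr{S}_{R,T}(\sigma)\in \mathscr{S}_{R,T}\big(C^{1,k}_0\big)\subseteq \mathscr{S}_{R,T}\big(C^{1,\bullet}_0\big)$, which is the desired inclusion.

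There is no real obstacle here: the whole statement is a formal consequence, assuming the groundwork of Propositions \ref{prop-perp-GI}, the defining identities \eqref{eq3-def-FRTplus}, \eqref{eq3-def-IRTplus}, and the bijectivity Theorem \ref{thm-SRT-bij}. The one conceptual step worth emphasizing is the use of Proposition \ref{prop-perp-GI} to upgrade the orthogonality computation into an actual containment inside $\mathscr{S}_{R,T}(C^{1,\bullet}_0)$; without that orthogonality, one would only conclude that $\diffRT\mathscr{S}_{R,T}(\sigma)$ avoids $\mathscr{S}_{R,T}(C^{0,\bullet}_0)$ in some weak sense, not that it lies in the complementary summand.
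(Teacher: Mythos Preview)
Your proof is correct and follows essentially the same approach as the paper's own proof. Both arguments use the commutation of $P^{[-1,1]}_{R,T}$ with $\diffRT$ and $\stdiffRT$, together with \eqref{eq3-def-IRTplus} and \eqref{eq3-def-FRTplus}, to obtain $\diffRT\mathscr{S}_{R,T}\big|_{C^{1,\bullet}_0}=0$ and $\stdiffRT\mathscr{S}_{R,T}\big|_{C^{0,\bullet}_0}=0$; then the orthogonal decomposition $\mathscr{E}^{[-1,1]}_{0,R,T}=\mathscr{S}_{R,T}(C^{0,\bullet}_0)\oplus\mathscr{S}_{R,T}(C^{1,\bullet}_0)$ from Proposition~\ref{prop-perp-GI} and Theorem~\ref{thm-SRT-bij} forces the image of $\diffRT$ on the first summand into the second. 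The only difference is cosmetic: you track the second grading $k$ explicitly, while the paper argues at the level of the full $C^{0,\bullet}_0$ and $C^{1,\bullet}_0$.
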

\begin{proof}
Since $\diffRT$ commutes with $P^{[-1,1]}_{R,T}$,
\eqref{eq3-def-IRTplus} and \eqref{eq-def-SRT} yield
\begin{equation}
\label{eq1-pf-prop-SRT-grading}
\diffRT \mathscr{S}_{R,T} \big(C^{1,\bullet}_0\big) =
\diffRT P^{[-1,1]}_{R,T} I^+_{R,T} \big(C^{1,\bullet}_0\big) =
P^{[-1,1]}_{R,T} \diffRT  I^+_{R,T} \big(C^{1,\bullet}_0\big) = 0 \;.
\end{equation}
Since $\stdiffRT$ commutes with $P^{[-1,1]}_{R,T}$,
\eqref{eq3-def-FRTplus} and \eqref{eq-def-SRT} yield
\begin{equation}
\label{eq2-pf-prop-SRT-grading}
\stdiffRT \mathscr{S}_{R,T} \big(C^{0,\bullet}_0\big) =
\stdiffRT P^{[-1,1]}_{R,T} G^+_{R,T} \big(C^{0,\bullet}_0\big) =
P^{[-1,1]}_{R,T} \stdiffRT  G^+_{R,T} \big(C^{0,\bullet}_0\big) = 0 \;.
\end{equation}
Thus $\mathscr{S}_{R,T} \big(C^{0,\bullet}_0\big)$ is perpendicular to the image of $\diffRT$.
On the other hand,
by Proposition \ref{prop-perp-GI} and Theorem \ref{thm-SRT-bij},
we have an orthogonal decomposition
\begin{equation}
\label{eq3-pf-prop-SRT-grading}
\mathscr{E}^{[-1,1]}_{0,R,T} =
\mathscr{S}_{R,T}\big(C^{0,\bullet}_0\big) \oplus \mathscr{S}_{R,T}\big(C^{1,\bullet}_0\big) \;.
\end{equation}
Hence $\diffRT \mathscr{S}_{R,T} \big(C^{0,\bullet}_0\big)$ must lie in $\mathscr{S}_{R,T} \big(C^{1,\bullet}_0\big)$.
This completes the proof of Proposition \ref{prop-SRT-grading}.
\end{proof}

For $\omega\in\Omega^\bullet(Z_R,F)$,
we will view $\omega^\mathrm{zm}$ as an element in
$\Omega^\bullet\big([-R,R],\hh(Y,F)\big)$.
Set
\begin{equation}
\label{eq-def-tauRT}
\tau_{R,T}(\omega) = \int_{-R}^R e^{Tf_T}\omega^\mathrm{zm} \in \hh(Y,F) \;.
\end{equation}

\begin{lemme}
\label{lem-approx-derham}
There exists $a>0$ such that
for $T = R^\kappa \gg 1$ and $\hat{\omega}\in\hh(Y,F)$,
we have
\begin{equation}
\label{eq-lem-approx-derham}
\Big\lVert e^{-T} \tau_{R,T}\Big(\mathscr{S}_{R,T}\big(\hat{\omega}\big)\Big)
- \sqrt{\pi} R^{1-\kappa/2} \hat{\omega} \Big\rVert_Y
= \mathscr{O}\big(e^{-aT}\big) \big\lVert\hat{\omega}\big\rVert_Y \;.
\end{equation}
\end{lemme}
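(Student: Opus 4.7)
The plan is to unpack the definition $\mathscr{S}_{R,T}(\hat\omega) = P^{[-1,1]}_{R,T} I^+_{R,T}(\hat\omega)$ and split
\[
\tau_{R,T}\bigl(\mathscr{S}_{R,T}(\hat\omega)\bigr)
= \tau_{R,T}\bigl(I^+_{R,T}(\hat\omega)\bigr)
- \tau_{R,T}\Bigl(\bigl(\mathrm{Id}-P^{[-1,1]}_{R,T}\bigr)I^+_{R,T}(\hat\omega)\Bigr).
\]
The first summand is the main term and should produce $\sqrt{\pi} R^{1-\kappa/2} e^{T} \hat\omega$ up to exponentially small corrections, while the second summand is the error, controlled via Proposition \ref{prop-IRTplus}.

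For the main term, I would use \eqref{eq1-def-IRTplus} and the fact that $\hat\omega \in \hh(Y,F)$ already lies in the zero-mode subspace, so $I^+_{R,T}(\hat\omega)^{\mathrm{zm}} = \chi_3(u/R) e^{Tf_T(u/R)-T} du\wedge \hat\omega$ on $IY_R$ (and vanishes on $Z_{1,0}\cup Z_{2,0}$). Substituting into \eqref{eq-def-tauRT} and changing variable $s=u/R$ gives
\[
e^{-T}\tau_{R,T}\bigl(I^+_{R,T}(\hat\omega)\bigr)
= R \int_{-1}^{1} \chi_3(s)\, e^{2T(f_T(s)-1)}\, ds \cdot \hat\omega.
\]
The integrand is controlled by Laplace's method around $s=0$: by \eqref{eq-def-finf} and \eqref{eq-compare-f-fT} one has $f_T(s)-1 = -s^2/2 + \mathscr{O}(e^{-T^2})$ on $|s|\leq 1/4$ (where $\chi_3=1$), while on $|s|\geq 1/4$ the estimates $|f_\infty'|\leq 2$ and $f_\infty(0)=1$ force $f_T(s)\leq 1-c$ for some $c>0$, so that part contributes $\mathscr{O}(e^{-cT})$. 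Rescaling $s = x/\sqrt{T}$ on the central piece yields
\[
R\int_{-1/4}^{1/4} e^{-Ts^2(1+o(1))}\, ds
= R T^{-1/2}\Bigl(\sqrt{\pi}+\mathscr{O}(e^{-T/32})\Bigr)
= \sqrt{\pi}\, R^{1-\kappa/2} + \mathscr{O}(e^{-aT}),
\]
where the last equality uses $T=R^\kappa$ to absorb the prefactor $R = T^{1/\kappa}$ into the exponential decay.

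For the error term, I would establish a crude a priori bound
\[
\bigl|\tau_{R,T}(\omega)\bigr|
\leq \bigl\lVert e^{Tf_T}\bigr\rVert_{L^2([-R,R])}\bigl\lVert \omega^{\mathrm{zm}}\bigr\rVert_{IY_R}
= \mathscr{O}\bigl(\sqrt{R}\,e^{T}\bigr)\bigl\lVert \omega\bigr\rVert_{IY_R}
\]
by Cauchy--Schwarz, using $f_T\leq 1$. Combined with Proposition \ref{prop-IRTplus}, this gives
\[
e^{-T}\Bigl|\tau_{R,T}\bigl((\mathrm{Id}-P^{[-1,1]}_{R,T})I^+_{R,T}(\hat\omega)\bigr)\Bigr|
= \mathscr{O}(\sqrt{R}\, e^{-aT/2})\lVert\hat\omega\rVert_Y,
\]
which is again $\mathscr{O}(e^{-a'T})\lVert\hat\omega\rVert_Y$ for some $a'>0$ because $\sqrt R$ is polynomial in $T$. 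Combining the two pieces yields \eqref{eq-lem-approx-derham}.

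The argument is essentially a Laplace integral calculation; there is no serious analytic obstacle once the decomposition is made. The only mild subtlety is bookkeeping: one must check that the polynomial-in-$R$ prefactors arising from both the crude bound on $\tau_{R,T}$ and the tail contribution in the Gaussian integral are dominated by the exponential decay $e^{-cT}$ under the regime $T=R^\kappa$ with $\kappa\in(0,1/3)$, which is straightforward since any such $\kappa$ makes $T$ grow as a positive power of $R$.
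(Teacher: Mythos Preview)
Your proposal is correct and follows essentially the same approach as the paper: compute $\tau_{R,T}\bigl(I^+_{R,T}(\hat\omega)\bigr)$ by a Laplace-type integral around $s=0$, then control the projection error via Proposition~\ref{prop-IRTplus} together with a Cauchy--Schwarz bound on $\tau_{R,T}$. One minor inaccuracy that does not affect the argument: from \eqref{eq-def-chi} and \eqref{eq-def-chi3} one has $\chi_3=1$ only on $|s|\leq 1/16$ and $\mathrm{supp}\,\chi_3\subseteq[-1/8,1/8]$, so your ``$|s|\geq 1/4$'' case is vacuous and the transition region $1/16\leq|s|\leq 1/8$ is already handled by the same Gaussian tail bound $e^{-Ts^2}\leq e^{-T/256}$.
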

\begin{proof}
By \eqref{eq-def-finf}, \eqref{eq-compare-f-fT}, \eqref{eq-def-chi3} and \eqref{eq1-def-IRTplus}
and the assumption $T = R^\kappa$,
we have
\begin{align}
\label{eq1-pf-lem-approx-derham}
\begin{split}
& \int_{-R}^R e^{Tf_T} \Big(I^+_{R,T}\big(\hat{\omega}\big)\Big)^\mathrm{zm}
= \int_{-R}^R \chi_3 e^{2Tf_T-2T}du \, e^T \hat{\omega} \\
& = \Big( 1 + \mathscr{O}\big(e^{-aT}\big)\Big) \int_{-\infty}^{+\infty} e^{-Tu^2/R^2}du \, e^T \hat{\omega}
= \Big( 1 + \mathscr{O}\big(e^{-aT}\big)\Big) \sqrt{\pi} R^{1-\kappa/2} e^T \hat{\omega}\;.
\end{split}
\end{align}
From Proposition \ref{prop-IRTplus},
\eqref{eq-def-SRT} and \eqref{eq1-pf-lem-approx-derham},
we obtain \eqref{eq-lem-approx-derham}.
This completes the proof of Lemma \ref{lem-approx-derham}.
\end{proof}

We will use the notation in \eqref{eq-OE}.

\begin{thm}
\label{prop-approx-derham}
For $T = R^\kappa \gg 1$,
we have
\begin{equation}
\label{eq-prop-approx-derham}
\mathscr{S}_{R,T}^{-1} \circ \diffRT \circ \mathscr{S}_{R,T}
= \pi^{-1/2}R^{-1+\kappa/2}e^{-T}
\Big(\partial + \mathscr{O}_{\mathrm{End}(C^{\bullet,\bullet}_0)}\big(R^{-1+\kappa/2}\big)\Big) \;.
\end{equation}
\end{thm}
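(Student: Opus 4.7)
The plan is to reduce the computation to a single block and pin it down with the auxiliary functional $\tau_{R,T}$ from Lemma \ref{lem-approx-derham}.

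First, Proposition \ref{prop-SRT-grading} tells us that with respect to the bigrading decomposition $C^{\bullet,\bullet}_0 = C^{0,\bullet}_0 \oplus C^{1,\bullet}_0$, the operator $\mathscr{S}_{R,T}^{-1}\circ\diffRT\circ\mathscr{S}_{R,T}$ has a single non-trivial block $A_{R,T}\colon C^{0,\bullet}_0 \to C^{1,\bullet}_0$, uniquely characterized by $\mathscr{S}_{R,T}(A_{R,T}\sigma_0) = \diffRT\mathscr{S}_{R,T}(\sigma_0)$. It therefore suffices to prove that $A_{R,T} = \pi^{-1/2}R^{-1+\kappa/2}e^{-T}(\partial + \mathscr{O}_{\mathrm{End}}(R^{-1+\kappa/2}))$. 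To invert $\mathscr{S}_{R,T}|_{C^{1,\bullet}_0}$, I will apply Lemma \ref{lem-approx-derham} to $\hat{\mu} = A_{R,T}\sigma_0 \in C^{1,\bullet}_0 = \hh(Y,F)$, giving
\[
\hat{\mu} = \pi^{-1/2}R^{-1+\kappa/2}e^{-T}\,\tau_{R,T}\!\left(\diffRT\mathscr{S}_{R,T}(\sigma_0)\right) + \mathscr{O}(e^{-aT})\|\hat{\mu}\|_Y.
\]

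Next, I will compute $\tau_{R,T}(\diffRT\mathscr{S}_{R,T}(\sigma_0))$ asymptotically. Since $\diffRT$ commutes with $P^{[-1,1]}_{R,T}$, Proposition \ref{prop-FRTplus} lets me replace $\mathscr{S}_{R,T}$ by $G^+_{R,T}$ up to a controlled error. For $G^+_{R,T}(\sigma_0)$ itself, the key input is the identity $e^{Tf_T}\diffRT = d^{Z_R}e^{Tf_T}$. On $IY_R$, the ``resolvent correction'' $e^{2Tf_T}d^{Z_R,*}(\chi_1\stRes+\chi_2\stRes)$ appearing in $e^{Tf_T}G^+_{R,T}(\sigma_0)$ lies entirely in the non-zero $D^Y$-eigenspaces, because $\stRes$ does by \eqref{eq-def-Res} and $d^{Z_R,*}$ preserves the $\mu$-decomposition; on $Z_{j,0}$ the restriction $\omega_j$ is harmonic, so $\diffRT G^+_{R,T}(\sigma_0)$ vanishes there. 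Consequently, the only contribution to the zero-mode of $e^{Tf_T}(\diffRT G^+_{R,T}(\sigma_0))$ is $d^{Z_R}(\chi_1\hat{\omega}_1 + \chi_2\hat{\omega}_2) = R^{-1}(\chi_1'(u/R)\,du\wedge\hat{\omega}_1 + \chi_2'(u/R)\,du\wedge\hat{\omega}_2)$. Substituting $s = u/R$ and invoking $\int_{-1}^1\chi_j'(s)\,ds = \chi_j(1) - \chi_j(-1) = (-1)^j$ yields $\tau_{R,T}(\diffRT G^+_{R,T}(\sigma_0)) = \hat{\omega}_2 - \hat{\omega}_1 = \partial\sigma_0$ on the nose.

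Combining these steps gives $A_{R,T}\sigma_0 = \pi^{-1/2}R^{-1+\kappa/2}e^{-T}\partial\sigma_0 + E_{R,T}\sigma_0$, where $E_{R,T}$ aggregates the $\mathscr{O}(e^{-aT})$ contribution from Lemma \ref{lem-approx-derham} and the $\tau_{R,T}$ of $\diffRT(\mathrm{Id}-P^{[-1,1]}_{R,T})G^+_{R,T}(\sigma_0)$, which I will bound via a Cauchy--Schwarz estimate of the form $\|\tau_{R,T}(\omega)\|_Y \leqslant C\sqrt{R}\,T^{-1/4}e^T\|\omega\|_{Z_R}$ (obtained from $\int_{-R}^R e^{2Tf_T(u/R)}\,du \asymp R\sqrt{\pi/T}\,e^{2T}$) combined with Proposition \ref{prop-FRTplus}. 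The main obstacle will be organizing these errors against the weighted metric $h^{C^{\bullet,\bullet}_0}_{R,T}$---in particular the factor $h^{V^\bullet}_{R,T} = \sqrt{\pi}R^{1-\kappa/2}h^{V^\bullet}$ on the target---so that the surplus error lands in $\mathscr{O}_{\mathrm{End}}(R^{-1+\kappa/2})$ rather than the naive $\mathscr{O}(R^{-1/2+\kappa/2})$. The required cancellation will rely on the Gaussian profile of $e^{Tf_T}$ near $u=0$ matching the target weight, in analogy with the proof of Lemma \ref{lem-approx-derham}.
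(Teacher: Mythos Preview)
Your reduction via Proposition \ref{prop-SRT-grading} and Lemma \ref{lem-approx-derham} is correct, and your exact computation $\tau_{R,T}\big(\diffRT G^+_{R,T}(\sigma_0)\big) = \hat{\omega}_2 - \hat{\omega}_1 = \partial\sigma_0$ is fine (and is in fact the content of \eqref{eq1-pf-prop-approx-derham}--\eqref{eq3-pf-prop-approx-derham} evaluated on $G^+_{R,T}$). The problem is the error term.

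Your Cauchy--Schwarz bound $\lVert\tau_{R,T}(\omega)\rVert_Y \leqslant C\sqrt{R}\,T^{-1/4}e^{T}\lVert\omega\rVert_{Z_R}$ carries the factor $e^{T}$ coming from $\sup_{[-R,R]} e^{Tf_T} = e^{T}$. Applied to $\omega = \diffRT\big(\mathrm{Id}-P^{[-1,1]}_{R,T}\big)G^+_{R,T}(\sigma_0)$, whose $L^2$-norm is only $\mathscr{O}(R^{-1+\kappa/2})\big(\lVert\omega_1\rVert_{Z_{1,0}}+\lVert\omega_2\rVert_{Z_{2,0}}\big)$ by Proposition \ref{prop-FRTplus}, you obtain an error of size $\mathscr{O}(R^{-1/2+\kappa/4}e^{T})$ in $\lVert\cdot\rVert_Y$. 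After dividing by the prefactor $\pi^{-1/2}R^{-1+\kappa/2}e^{-T}$ this leaves $\mathscr{O}(R^{-1/2+\kappa/4}e^{T})$ inside the parentheses of \eqref{eq-prop-approx-derham}, which blows up. No amount of juggling the weights in $h^{C^{\bullet,\bullet}_0}_{R,T}$ recovers the missing $e^{-T}$: the section $\big(\mathrm{Id}-P^{[-1,1]}_{R,T}\big)G^+_{R,T}(\sigma_0)$ is not concentrated near $u=0$, so the Gaussian profile you invoke does not help.

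The paper avoids this entirely by applying the Stokes identity \emph{directly} to $\mathscr{S}_{R,T}(\sigma_0)$ rather than to $G^+_{R,T}(\sigma_0)$ alone. Since $e^{Tf_T}(\diffRT\omega)^{\mathrm{zm}} = d\big(e^{Tf_T}\omega^{\mathrm{zm}}\big)$ on $IY_R$, one has
\[
\tau_{R,T}\big(\diffRT\omega\big)
= i_{\frac{\partial}{\partial u}}du\wedge\Big(\omega^{\mathrm{zm}}\big|_{\partial Z_{2,0}} - \omega^{\mathrm{zm}}\big|_{\partial Z_{1,0}}\Big),
\]
and because $f_T(\pm R)=0$ the exponential never appears. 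The error then becomes the \emph{boundary trace} of $\big(\mathscr{S}_{R,T}-G^+_{R,T}\big)(\sigma_0)^{\mathrm{zm}}$ at $u=\pm R$, which the Trace theorem together with the $H^1$-estimate of Proposition \ref{prop-FRTplus} bounds by $\mathscr{O}(R^{-1+\kappa/2})\big(\lVert\omega_1\rVert_{Z_{1,0}}+\lVert\omega_2\rVert_{Z_{2,0}}\big)$, yielding \eqref{eq4-pf-prop-approx-derham} and hence the theorem. Replace your Cauchy--Schwarz step by this boundary evaluation and the rest of your outline goes through.
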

\begin{proof}
For $\omega\in\mathscr{S}_{R,T} \big(C^{0,\bullet}_0\big)$,
by \eqref{eq-def-fT}, \eqref{eq-def-DRT-conj} and \eqref{eq-def-tauRT},
we have
\begin{equation}
\label{eq1-pf-prop-approx-derham}
\tau_{R,T}\big(\diffRT\omega\big)
= \int_{-R}^R d \big( e^{Tf_T} \omega^\mathrm{zm} \big)
=  i_{\frac{\partial}{\partial u}} du \wedge \Big( \omega^\mathrm{zm}\big|_{\partial Z_{2,0}} - \omega^\mathrm{zm}\big|_{\partial Z_{1,0}} \Big) \;.
\end{equation}

By the Trace theorem for Sobolev spaces, Proposition \ref{prop-FRTplus} and \eqref{eq-def-SRT},
for $j=1,2$
and $(\omega_1,\hat{\omega}_1,\omega_2,\hat{\omega}_2)\in\hha(Z_{1,\infty} \sqcup Z_{2,\infty},F)$,
we have
\begin{align}
\label{eq2-pf-prop-approx-derham}
\begin{split}
\Big\lVert \Big(\mathscr{S}_{R,T}(\omega_1,\hat{\omega}_1,\omega_2,\hat{\omega}_2)
- G^+_{R,T}(\omega_1,\hat{\omega}_1,\omega_2,\hat{\omega}_2)\Big)^\mathrm{zm} \Big\rVert_{\partial Z_{j,0}} \hspace{20mm} & \\
= \mathscr{O}\big(R^{-1+\kappa/2}\big)
\Big( \big\lVert\omega_1\big\rVert_{Z_{1,0}} + \big\lVert\omega_2\big\rVert_{Z_{2,0}} \Big) & \;.
\end{split}
\end{align}
By \eqref{eq1-def-FRTplus},
we have
\begin{equation}
\label{eq3-pf-prop-approx-derham}
\Big(G^+_{R,T}(\omega_1,\hat{\omega}_1,\omega_2,\hat{\omega}_2)\Big)^\mathrm{zm}\Big|_{\partial Z_{j,0}}
= \hat{\omega}_j \in \hh(Y,F) \;.
\end{equation}
By \eqref{eq1-pf-prop-approx-derham}-\eqref{eq3-pf-prop-approx-derham},
we have
\begin{align}
\label{eq4-pf-prop-approx-derham}
\begin{split}
\Big\lVert \hat{\omega}_2 - \hat{\omega}_1
- \tau_{R,T}\Big(\diffRT\mathscr{S}_{R,T}(\omega_1,\hat{\omega}_1,\omega_2,\hat{\omega}_2)\Big) \Big\rVert_Y \hspace{20mm} & \\
= \mathscr{O}\big(R^{-1+\kappa/2}\big)
\Big( \big\lVert\omega_1\big\rVert_{Z_{1,0}} + \big\lVert\omega_2\big\rVert_{Z_{2,0}} \Big) & \;.
\end{split}
\end{align}
From Proposition \ref{prop-SRT-grading},
Lemma \ref{lem-approx-derham},
\eqref{eq1-def-complex}, \eqref{eq-complex-evaluation}
and \eqref{eq4-pf-prop-approx-derham},
we obtain \eqref{eq-prop-approx-derham}.
This completes the proof of Theorem \ref{prop-approx-derham}.
\end{proof}

\subsection{$L^2$-metric on $\mathscr{E}^{[-1,1]}_{0,R,T}$}
\label{subsec-approx-metrics}

Recall that the Hermitian metric $h^{C^{\bullet,\bullet}_0}_{R,T}$ on $C^{\bullet,\bullet}_0$
was constructed in \eqref{eq-def-hC}.
We denote by $\big\lVert\cdot\big\rVert_{R,T}$ the norm on $C^{\bullet,\bullet}_0$
associated with $h^{C^{\bullet,\bullet}_0}_{R,T}$.

\begin{prop}
\label{prop-SRT-metric}
For $T = R^\kappa \gg 1$ and $\sigma\in C^{\bullet,\bullet}_0$,
we have
\begin{equation}
\label{eq-prop-SRT-metric}
\Big\lVert \mathscr{S}_{R,T}(\sigma) \Big\rVert^2_{Z_R} =
\big\lVert \sigma \big\rVert^2_{R,T}
\Big( 1 + \mathscr{O}\big(R^{-1/2+\kappa/4}\big) \Big)\;.
\end{equation}
\end{prop}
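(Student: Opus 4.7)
The plan is to reduce the computation to a sum of orthogonal pieces, compute each piece by explicit Gaussian integrals on the cylinder and exponential-decay estimates on the caps, and identify the dominant error. By Proposition \ref{prop-perp-GI} together with the orthogonal direct sum structure \eqref{eq-def-hC} of $h^{C^{\bullet,\bullet}_0}_{R,T}$, it suffices to verify \eqref{eq-prop-SRT-metric} separately for $\sigma\in C^{0,\bullet}_0=W^\bullet_1\oplus W^\bullet_2$ and for $\sigma\in C^{1,\bullet}_0\simeq\hh(Y,F)$. Moreover the supports of $\chi_1$ and $\chi_2$ are disjoint subsets of $IY_R$ and $Z_{1,0},Z_{2,0}$ are disjoint, so the $W^\bullet_1$ and $W^\bullet_2$ contributions to $\|G^+_{R,T}\|^2_{Z_R}$ decouple; by symmetry we only need to treat $W^\bullet_1$.

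For the $V^\bullet$ factor, Proposition \ref{prop-IRTplus} gives $\|\mathscr{S}_{R,T}(\hat\omega)-I^+_{R,T}(\hat\omega)\|^2_{Z_R}=\mathscr{O}(e^{-aT})\|\hat\omega\|^2_Y$. Since $I^+_{R,T}(\hat\omega)$ vanishes off $IY_R$ and equals $\chi_3 e^{Tf_T-T}du\wedge\hat\omega$ there, the quadratic approximation $f_T(s)=1-s^2/2+\mathscr{O}(e^{-T^2})$ for $|s|\le 1/4$ gives $2Tf_T-2T=-Ts^2$ on $\mathrm{supp}(\chi_3)$; the Gaussian $\int_{-\infty}^{+\infty}e^{-Ts^2}ds=\sqrt{\pi/T}$, together with exponentially small tail and cutoff errors, yields $\|I^+_{R,T}(\hat\omega)\|^2_{Z_R}=\sqrt{\pi}RT^{-1/2}\|\hat\omega\|^2_Y\bigl(1+\mathscr{O}(e^{-cT})\bigr)$, matching $h^{V^\bullet}_{R,T}(\hat\omega,\hat\omega)$. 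For the $W^\bullet_1$ factor we use the orthogonal splitting $W^\bullet_1=K^\bullet_1\oplus K^{\bullet,\perp}_1$ from \eqref{eq-WKK}. If $\omega'_1$ represents a class in $K^\bullet_1$, then $\hat\omega_1=0$, so $G^+_{R,T}(\omega'_1,0)$ equals $\omega'_1$ on $Z_{1,0}$ and, after expanding $d^{Z_R,*}(\chi_1\stRes)=\chi_1(\omega'_1)^\mathrm{nz}-\chi_1'\,i_{\partial_u}\stRes$, equals $e^{Tf_T}\chi_1(\omega'_1)^\mathrm{nz}$ plus a term exponentially small in $R$ on $IY_R$. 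Since on $\mathrm{supp}(\chi_1)$ we have $f_T(u/R)=(u/R+1)^2/2$ and $(\omega'_1)^\mathrm{nz}$ decays like $e^{-\mu_0(u+R)}$ (with $\mu_0=\min_{\mu\neq 0}|\mu|$), the quadratic-vs-linear balance $Tr^2/R^2-2\mu_0 r$ stays negative throughout the integration, the integral is dominated by $r=u+R$ small, and Taylor-expanding $e^{Tr^2/R^2}=1+\mathscr{O}(Tr^2/R^2)$ gives $\|G^+_{R,T}(\omega'_1,0)\|^2_{Z_R}=\|\omega'_1\|^2_{Z_{1,\infty}}\bigl(1+\mathscr{O}(R^{\kappa-2})\bigr)$, matching $h^{K^\bullet_1}$. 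If $\omega''_1$ represents a class in $K^{\bullet,\perp}_1$ with boundary datum $\hat\omega_1$, the term $\|e^{-Tf_T}\chi_1\hat\omega_1\|^2_{IY_R}$ is computed by exactly the same Gaussian integral as in the $V^\bullet$ case but restricted to $t=s+1\in[0,1/8]$, yielding $\frac{\sqrt\pi}{2}RT^{-1/2}\|\hat\omega_1\|^2_Y$, while the remaining contributions $\|\omega''_1\|^2_{Z_{1,0}}+\|e^{Tf_T}\chi_1(\omega''_1)^\mathrm{nz}\|^2_{IY_R}$ are $\mathscr{O}(\|\hat\omega_1\|^2_Y)$, hence smaller than the dominant term by a factor $\mathscr{O}(T^{1/2}/R)=\mathscr{O}(R^{-1+\kappa/2})$.

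The hard part is the cross term between $K^\bullet_1$ and $K^{\bullet,\perp}_1$: the decomposition \eqref{eq-def-hW} is orthogonal in $h^{W^\bullet_1}_{R,T}$, but the corresponding vectors $G^+_{R,T}(\omega'_1,0)$ and $G^+_{R,T}(\omega''_1,\hat\omega_1)$ are \emph{not} $L^2(Z_R)$-orthogonal. The cylinder cross term between the two leading pieces $e^{Tf_T}\chi_1(\omega'_1)^\mathrm{nz}$ and $e^{-Tf_T}\chi_1\hat\omega_1$ vanishes because $(\omega'_1)^\mathrm{nz}\perp\hh(Y,F)$ fiberwise; the only surviving cross terms are on $Z_{1,0}$ (size $\lvert\langle\omega'_1,\omega''_1\rangle_{Z_{1,0}}\rvert=\mathscr{O}(\sqrt{ab})$) and the sub-leading parts of the cylinder term (same order, by exponential decay of non-zero modes), where $a=\|\omega'_1\|^2_{Z_{1,\infty}}$ and $b=\|\hat\omega_1\|^2_Y$. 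Since the corresponding $h^{W^\bullet_1}_{R,T}$-norm squared is $a+\tfrac{\sqrt\pi}{2}RT^{-1/2}b$, Cauchy--Schwarz gives a \emph{relative} error $\sqrt{ab}/\sqrt{a\cdot RT^{-1/2}b}=(RT^{-1/2})^{-1/2}=R^{-1/2+\kappa/4}$, precisely the rate claimed. Finally, the projection errors $(\mathrm{Id}-P^{[-1,1]}_{R,T})G^+_{R,T}=\mathscr{O}(R^{-2+\kappa})$ and $(\mathrm{Id}-P^{[-1,1]}_{R,T})I^+_{R,T}=\mathscr{O}(e^{-aT})$ from Propositions \ref{prop-FRTplus} and \ref{prop-IRTplus} are absorbed into the same estimate, completing the proof.
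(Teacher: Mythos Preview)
Your proof is correct and follows essentially the same approach as the paper: both reduce to the pre-projection maps $G^+_{R,T}$ and $I^+_{R,T}$ using Propositions~\ref{prop-FRTplus}--\ref{prop-IRTplus}, compute the $V^\bullet$, $K^\bullet_j$ and $K^{\bullet,\perp}_j$ blocks by the same Gaussian integrals and exponential-decay estimates for non-zero modes, and identify the $K^\bullet_1$--$K^{\bullet,\perp}_1$ cross term (of size $\mathscr{O}(\sqrt{ab})$ against a diagonal of size $a+RT^{-1/2}b$) as the source of the dominant $R^{-1/2+\kappa/4}$ error. The paper packages the intermediate step as an auxiliary norm $\lVert\cdot\rVert'_{R,T}$, but the content is identical to what you wrote.
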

\begin{proof}
Let $\big\lVert\cdot\big\lVert'_{R,T}$ be the norm on $C^{\bullet,\bullet}_0$ defined as follows:
for $\sigma_0\in C^{0,\bullet}_0$ and $\sigma_1\in C^{1,\bullet}_0$,
\begin{equation}
\label{eq0-pf-prop-SRT-metric}
{\big\lVert \sigma_0 + \sigma_1 \big\lVert'}_{R,T}^2 =
\Big\lVert G^+_{R,T}(\sigma_0) \Big\rVert_{Z_R}^2 +
\Big\lVert I^+_{R,T}(\sigma_1) \Big\rVert_{Z_R}^2 \;,
\end{equation}
where $G^+_{R,T}$ and $I^+_{R,T}$ were constructed in \eqref{eq1-def-FRTplus} and \eqref{eq1-def-IRTplus}.
By \eqref{eq1-def-FRTplus} and \eqref{eq1-def-IRTplus},
we have
\begin{equation}
\label{eq1-pf-prop-SRT-metric}
\Big\langle G^+_{R,T}(\sigma_0),I^+_{R,T}(\sigma_1) \Big\rangle_{Z_R} = 0 \;.
\end{equation}
By Propositions \ref{prop-FRTplus}, \ref{prop-IRTplus},
\eqref{eq-def-SRT}, \eqref{eq0-pf-prop-SRT-metric} and \eqref{eq1-pf-prop-SRT-metric},
we have
\begin{equation}
\label{eq2-pf-prop-SRT-metric}
\Big\lVert \mathscr{S}_{R,T}(\cdot) \Big\rVert^2_{Z_R} =
{\big\lVert \cdot \big\rVert'}^2_{R,T}
\Big( 1 + \mathscr{O}\big(R^{-1+\kappa/2}\big) \Big)\;.
\end{equation}

By \eqref{eq1-def-FRTplus} and \eqref{eq0-pf-prop-SRT-metric},
the decomposition $C^{\bullet,\bullet}_0 = W^\bullet_1 \oplus W^\bullet_2 \oplus V^\bullet$
is orthogonal with respect to $\big\lVert\cdot\big\lVert'_{R,T}$.
Thus it remains to show that
\begin{equation}
\label{eq4-pf-prop-SRT-metric}
{\big\lVert \sigma \big\rVert'}^2_{R,T} =
\big\lVert \sigma \big\rVert^2_{R,T}
\Big( 1 + \mathscr{O}\big(R^{-1/2+\kappa/4}\big) \Big)
\end{equation}
for $\sigma$ belonging to $W^\bullet_1$ or $W^\bullet_2$ or $V^\bullet$.

First we consider $\sigma \in V^\bullet = \hh(Y,F)$.
By \eqref{eq-def-hC},
we have
\begin{equation}
\label{eq11-pf-prop-SRT-metric}
\big\lVert \sigma \big\rVert_{R,T}^2
= \big\lVert \sigma \big\rVert_Y^2
R^{1-\kappa/2} \sqrt{\pi} \;.
\end{equation}
On the other hand,
by \eqref{eq1-def-IRTplus},
we have
\begin{equation}
\label{eq12-pf-prop-SRT-metric}
{\big\lVert\sigma\big\rVert'}^2_{R,T}
= \Big\lVert I^+_{R,T}(\sigma) \Big\rVert_{Z_R}^2
= \big\lVert \sigma \big\rVert_Y^2 R^{1-\kappa/2} \sqrt{\pi}
\Big( 1 + \mathscr{O}\big(e^{-aT}\big) \Big) \;,
\end{equation}
where $a>0$ is a universal constant.
From \eqref{eq11-pf-prop-SRT-metric}
and \eqref{eq12-pf-prop-SRT-metric},
we obtain \eqref{eq4-pf-prop-SRT-metric}
for $\sigma\in V^\bullet$.

Now we consider $\sigma \in W^\bullet_1$.
For $(\omega,\hat{\omega})\in K^{\bullet,\perp}_1\subseteq W^\bullet_1 = \hha(Z_{1,\infty},F)$,
where $K^{\bullet,\perp}_1$ was constructed in \eqref{eq-def-WKK},
by \eqref{eq1-def-FRTplus},
we have
\begin{align}
\label{eq21-pf-prop-SRT-metric}
\begin{split}
\Big\lVert G^+_{R,T}(\omega,\hat{\omega},0,0) \Big\rVert^2_{Z_R}
& = \big\lVert \omega \big\rVert^2_{Z_{1,0}} + \Big\lVert \chi_1 e^{-Tf_T} \hat{\omega} \Big\rVert^2_{IY_R} \\
& \hspace{5mm} + \Big\lVert e^{Tf_T}d^{Z_R,*}
\Big( \chi_1\stRes(\omega,\hat{\omega}) \Big)
\Big\rVert^2_{IY_R} \\
& \hspace{5mm} + 2 \mathrm{Re}
\Big\langle \chi_1 e^{-Tf_T} \hat{\omega},e^{Tf_T}d^{Z_R,*}
\Big( \chi_1\stRes(\omega,\hat{\omega}) \Big)
\Big\rangle_{IY_R} \;.
\end{split}
\end{align}
Similarly to \eqref{eq1-pf-lem-approx-derham},
we have
\begin{equation}
\label{eq22-pf-prop-SRT-metric}
\Big\lVert \chi_1 e^{-Tf_T} \hat{\omega} \Big\rVert^2_{IY_R}
= \big\lVert \hat{\omega} \big\rVert_Y^2 R^{1-\kappa/2} \frac{\sqrt{\pi}}{2}
\Big( 1 + \mathscr{O}\big(e^{-aT}\big) \Big) \;,
\end{equation}
where $a>0$ is a universal constant.
By the Trace theorem for Sobolev spaces and Proposition \ref{prop-sobolev},
we have
\begin{equation}
\label{eq23a-pf-prop-SRT-metric}
\big\lVert \hat{\omega} \big\rVert_Y \leqslant \big\lVert \omega \big\rVert_{\partial Z_{1,0}}
= \mathscr{O}\big(1\big) \big\lVert\omega\big\rVert_{Z_{1,0}} \;.
\end{equation}
By \eqref{eq-resol-RdFRdFstar}, \eqref{eq24-pf-prop-FRT}, \eqref{eq25-pf-prop-FRT} and \eqref {eq23a-pf-prop-SRT-metric},
we have
\begin{align}
\label{eq23-pf-prop-SRT-metric}
\begin{split}
& \Big\lVert e^{Tf_T}d^{Z_R,*}
\Big( \chi_1\stRes(\omega,\hat{\omega}) \Big)
\Big\rVert^2_{IY_R}
= \mathscr{O}\big(1\big)
\big\lVert\omega\big\rVert^2_{Z_{1,0}} \;,\\
& \Big\langle \chi_1 e^{-Tf_T} \hat{\omega},e^{Tf_T}d^{Z_R,*}
\Big( \chi_1\stRes(\omega,\hat{\omega}) \Big)
\Big\rangle_{IY_R}
= \mathscr{O}\big(1\big)
\big\lVert\omega\big\rVert^2_{Z_{1,0}} \;.
\end{split}
\end{align}
By \eqref{eq-def-WKK},
$\omega$ is a generalized eigensection of $\Doneinf$.
Then,
by \cite[(2.38)]{pzz},
we have
\begin{equation}
\label{eq24-pf-prop-SRT-metric}
\big\lVert \omega \big\rVert^2_{Z_{1,0}} =
\mathscr{O}\big(1\big) \big\lVert \hat{\omega} \big\rVert^2_Y \;.
\end{equation}
By \eqref{eq21-pf-prop-SRT-metric}-\eqref{eq24-pf-prop-SRT-metric},
we have
\begin{equation}
\label{eq25-pf-prop-SRT-metric}
\Big\lVert G^+_{R,T}(\omega,\hat{\omega},0,0) \Big\rVert^2_{IY_R} =
\big\lVert\hat{\omega}\big\rVert_Y^2
\Big( R^{1-\kappa/2} \sqrt{\pi}/2
+ \mathscr{O}\big(1\big) \Big)  \;.
\end{equation}

For $(\tau,0)\in K^\bullet_1\subseteq W^\bullet_1 = \hha(Z_{1,\infty},F)$,
where $K^\bullet_1$ was constructed in \eqref{eq-def-WKK},
by \eqref{eq1-def-FRTplus},
we have
\begin{equation}
\label{eq31-pf-prop-SRT-metric}
\Big\lVert G^+_{R,T}(\tau,0,0,0) \Big\rVert^2_{Z_R}
= \big\lVert \tau \big\rVert^2_{Z_{1,0}}
+ \Big\lVert e^{Tf_T}d^{Z_R,*}
\Big( \chi_1\stRes(\tau,0) \Big)
\Big\rVert^2_{IY_R} \;.
\end{equation}
We will use the canonical embedding $Z_{1,R}\subseteq Z_{1,\infty}$.
Since $e^{Tf_T}d^{Z_R,*} \Big( \chi_1\stRes(\tau,0) \Big)$
vanishes near $\partial Z_{1,R}$,
it may be extended to a section on $[0,+\infty)\times Y \subseteq Z_{1,\infty}$.
We use the identification $IY_R = [0,2R] \times Y \subseteq [0,+\infty) \times Y$.
By \eqref{eq-resol-RdFRdFstar}, \eqref{eq-def-chi12}, \eqref{eq-chi-IYR}, \eqref{eq24-pf-prop-FRT} and \eqref{eq25-pf-prop-FRT},
we have
\begin{align}
\label{eq32a-pf-prop-SRT-metric}
\begin{split}
& \big\lVert \tau^\mathrm{nz} \big\rVert_{[0,+\infty)\times Y}
= \big\lVert \tau^\mathrm{nz} \big\rVert_{IY_R} + \mathscr{O}\big(e^{-aR}\big) \big\lVert \tau \big\rVert_{Z_{1,0}}
= \mathscr{O}\big(1\big) \big\lVert \tau \big\rVert_{Z_{1,0}} \;,\\
& \Big\lVert e^{Tf_T}d^{Z_R,*} \Big( \chi_1\stRes(\tau,0) \Big) - \tau^\mathrm{nz} \Big\rVert_{IY_R}
= \mathscr{O}\big(R^{-2+\kappa}\big) \big\lVert \tau \big\rVert_{Z_{1,0}}  \;,
\end{split}
\end{align}
where $a>0$ is a universal constant.
By \eqref{eq32a-pf-prop-SRT-metric},
we have
\begin{equation}
\label{eq32-pf-prop-SRT-metric}
\Big\lVert e^{Tf_T}d^{Z_R,*}
\Big( \chi_1\stRes(\tau,0) \Big)
\Big\rVert^2_{IY_R}
= \big\lVert \tau^\mathrm{nz} \big\rVert^2_{[0,+\infty)\times Y}
+ \mathscr{O}\big(R^{-2+\kappa}\big) \big\lVert \tau \big\rVert^2_{Z_{1,0}}  \;.
\end{equation}
By \eqref{eq-def-WKK},
the zero-mode $\tau^\mathrm{zm}$ vanishes.
As a consequence, we have
\begin{equation}
\label{eq33-pf-prop-SRT-metric}
\big\lVert \tau \big\rVert^2_{Z_{1,\infty}} =
\big\lVert \tau \big\rVert^2_{Z_{1,0}} +
\big\lVert \tau^\mathrm{nz} \big\rVert^2_{[0,+\infty)\times Y} \;.
\end{equation}
By \eqref{eq31-pf-prop-SRT-metric}-\eqref{eq33-pf-prop-SRT-metric},
we have
\begin{equation}
\label{eq34-pf-prop-SRT-metric}
\Big\lVert G^+_{R,T}(\tau,0,0,0) \Big\rVert^2_{Z_R}
= \big\lVert \tau \big\rVert^2_{Z_{1,\infty}}
\Big( 1 + \mathscr{O}\big(R^{-2+\kappa}\big) \Big) \;.
\end{equation}

For $(\omega,\hat{\omega})\in K^{\bullet,\perp}_1$
and $(\tau,0)\in K^\bullet_1$,
we have
\begin{align}
\label{eq41-pf-prop-SRT-metric}
\begin{split}
\Big\langle G^+_{R,T}(\omega,\hat{\omega},0,0),G^+_{R,T}(\tau,0,0,0) \Big\rangle_{Z_R}
= \big\langle\omega,\tau\big\rangle_{Z_{1,0}} \hspace{40mm} & \\
+ \Big\langle
e^{Tf_T}d^{Z_R,*}\Big( \chi_1\stRes(\omega,\hat{\omega}) \Big),
e^{Tf_T}d^{Z_R,*}\Big( \chi_1\stRes(\tau,0) \Big)
\Big\rangle_{IY_R} & \\
+ \Big\langle \chi_1 e^{-Tf_T} \hat{\omega},
e^{Tf_T}d^{Z_R,*} \Big( \chi_1\stRes(\tau,0) \Big)
\Big\rangle_{IY_R} & \;.
\end{split}
\end{align}
Similarly to \eqref{eq23-pf-prop-SRT-metric},
by \eqref{eq-resol-RdFRdFstar}, \eqref{eq24-pf-prop-FRT} and \eqref{eq25-pf-prop-FRT},
we have
\begin{align}
\label{eq42-pf-prop-SRT-metric}
\begin{split}
& \Big\langle
e^{Tf_T}d^{Z_R,*}\Big( \chi_1\stRes(\omega,\hat{\omega}) \Big),
e^{Tf_T}d^{Z_R,*}\Big( \chi_1\stRes(\tau,0) \Big)
\Big\rangle_{IY_R} \\
& \hspace{73mm} = \mathscr{O}\big(1\big)
\big\lVert\omega\big\rVert_{Z_{1,0}} \big\lVert\tau\big\rVert_{Z_{1,0}} \;,\\
& \Big\langle \chi_1 e^{-Tf_T} \hat{\omega},
e^{Tf_T}d^{Z_R,*} \Big( \chi_1\stRes(\tau,0) \Big)
\Big\rangle_{IY_R}
= \mathscr{O}\big(1\big)
\big\lVert\omega\big\rVert_{Z_{1,0}} \big\lVert\tau\big\rVert_{Z_{1,0}} \;.
\end{split}
\end{align}
By \eqref{eq24-pf-prop-SRT-metric},
\eqref{eq41-pf-prop-SRT-metric}
and \eqref{eq42-pf-prop-SRT-metric},
we have
\begin{equation}
\label{eq43-pf-prop-SRT-metric}
\Big\langle G^+_{R,T}(\omega,\hat{\omega},0,0),G^+_{R,T}(\tau,0,0,0) \Big\rangle_{Z_R} =
\mathscr{O}\big(1\big) \big\lVert\hat{\omega}\big\rVert_Y \big\lVert\tau\big\rVert_{Z_{1,0}} \;.
\end{equation}

From \eqref{eq25-pf-prop-SRT-metric},
\eqref{eq34-pf-prop-SRT-metric}
and \eqref{eq43-pf-prop-SRT-metric},
we obtain \eqref{eq4-pf-prop-SRT-metric} with $\sigma\in W^\bullet_1$.
We can prove
\eqref{eq4-pf-prop-SRT-metric} with $\sigma\in W^\bullet_2$
in the same way.
This completes the proof of Proposition \ref{prop-SRT-metric}.
\end{proof}

We will use the following identifications,
\begin{align}
\label{eq-id-HC}
\begin{split}
& H^0(C^{\bullet,\bullet}_0,\partial)
= \Ker\big(\partial: C^{0,\bullet}_0 \rightarrow  C^{1,\bullet}_0 \big)
\subseteq C^{0,\bullet}_0 \;,\\
& H^1(C^{\bullet,\bullet}_0,\partial)
= \Big( \mathrm{Im} \big(\partial: C^{0,\bullet}_0 \rightarrow  C^{1,\bullet}_0 \big) \Big)^\perp
\subseteq C^{1,\bullet}_0 \;,
\end{split}
\end{align}
where the orthogonal is taken with respect to the metric $h^{V^\bullet}_{R,T}$ in \eqref{eq-def-hVRT}.
Since all the $h^{V^\bullet}_{R,T}$ are mutually proportional, this is independent of $R,T$.

\begin{cor}
\label{cor-SRT-metric}
For $T = R^\kappa \gg 1$ and $\sigma\in H^\bullet(C^{\bullet,\bullet}_0,\partial)$,
we have
\begin{equation}
\label{eq-cor-SRT-metric}
\Big\lVert \mathscr{S}^H_{R,T}(\sigma) \Big\rVert^2_{Z_R} =
\big\lVert \sigma \big\rVert^2_{R,T}
\Big( 1 + \mathscr{O}\big(R^{-1/2+\kappa/4}\big) \Big)\;.
\end{equation}
\end{cor}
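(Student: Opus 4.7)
The plan is to reduce to Proposition \ref{prop-SRT-metric} by decomposing $\sigma = \sigma_0 + \sigma_1$ along the orthogonal splitting $H^\bullet(C^{\bullet,\bullet}_0,\partial) = H^0(C^{\bullet,\bullet}_0,\partial) \oplus H^1(C^{\bullet,\bullet}_0,\partial)$ from \eqref{eq-id-HC}, handling each summand via the explicit descriptions of $\mathscr{S}^H_{R,T}$ from \eqref{eq2-def-SHRT} and of $\mathscr{S}_{R,T}$ from \eqref{eq-def-SRT}, and controlling the cross term by a pointwise orthogonality.

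For $\sigma_1 \in H^1(C^{\bullet,\bullet}_0,\partial)$, the formulas \eqref{eq1-def-IRT} and \eqref{eq1-def-IRTplus} coincide on the common domain $H^1 \subseteq V^\bullet = \hh(Y,F)$, so $I_{R,T}(\sigma_1) = I^+_{R,T}(\sigma_1)$; since $\Ker(\DRT) \subseteq \mathscr{E}^{[-1,1]}_{0,R,T}$ we have $P_{R,T} = P_{R,T}P^{[-1,1]}_{R,T}$, hence $\mathscr{S}^H_{R,T}(\sigma_1) = P_{R,T}\mathscr{S}_{R,T}(\sigma_1)$. Proposition \ref{prop-IRT} then yields $\|(\Id-P_{R,T})\mathscr{S}_{R,T}(\sigma_1)\|^2_{Z_R} = \mathscr{O}(e^{-aT})\|\sigma_1\|_Y^2$, which is negligible compared to $\|\sigma_1\|^2_{R,T} = \sqrt{\pi}R^{1-\kappa/2}\|\sigma_1\|_Y^2$. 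Combining with Proposition \ref{prop-SRT-metric} applied to $\sigma_1 \in C^{1,\bullet}_0$ completes this sector.

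For $\sigma_0 \in H^0(C^{\bullet,\bullet}_0,\partial) \simeq \hha(Z_{12,\infty},F)$, write
\begin{equation*}
\|\mathscr{S}^H_{R,T}(\sigma_0)\|^2_{Z_R} = \|F_{R,T}(\sigma_0)\|^2_{Z_R} - \|(\Id-P_{R,T})F_{R,T}(\sigma_0)\|^2_{Z_R}.
\end{equation*}
Proposition \ref{prop-FRT} bounds the second term by $\mathscr{O}(R^{-2+\kappa})\|\sigma_0\|^2_{R,T}$. For the first term, I plan to compare $F_{R,T}(\sigma_0)$ with $G^+_{R,T}(\sigma_0)$, whose norm is related to $\|\sigma_0\|^2_{R,T}$ through Proposition \ref{prop-SRT-metric}. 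Writing $\sigma_0 = (\omega_1,\omega_2,\hat\omega)$, the difference $F_{R,T}(\sigma_0)-G^+_{R,T}(\sigma_0)$ splits on $IY_R$ into a zero-mode piece $e^{-Tf_T}(1-\chi_1-\chi_2)\hat\omega$ (supported where $f_T \approx f_\infty \equiv 1$, hence exponentially small in $T$) and a non-zero-mode piece $e^{-Tf_T}d^{Z_R}(\chi_1\Res + \chi_2\Res) - e^{Tf_T}d^{Z_R,*}(\chi_1\stRes + \chi_2\stRes)$; both are controlled via \eqref{eq-resol-RdFRdFstar}, the exponential decay \eqref{eq24-pf-prop-FRT}, and a Taylor expansion of $e^{\pm Tf_T}$ near $u = \pm R$.

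The cross term $\langle \mathscr{S}^H_{R,T}(\sigma_0),\mathscr{S}^H_{R,T}(\sigma_1)\rangle_{Z_R}$ is handled by the observation that $I_{R,T}(\sigma_1)$ vanishes on $Z_{1,0} \cup Z_{2,0}$ and on $IY_R$ is purely a $du\wedge$-zero-mode form $\chi_3 e^{Tf_T-T}du\wedge\sigma_1$, whereas $F_{R,T}(\sigma_0)|_{IY_R}$ has no such component (its $du$-parts arise only from $d^{Z_R}$ applied to $\Res$, which is non-zero-mode); hence $\langle F_{R,T}(\sigma_0),I_{R,T}(\sigma_1)\rangle_{Z_R} = 0$, and the residual $\langle F_{R,T}(\sigma_0),(\Id-P_{R,T})I_{R,T}(\sigma_1)\rangle_{Z_R}$ is negligible by Cauchy--Schwarz, Proposition \ref{prop-IRT}, and the bound on $\|F_{R,T}(\sigma_0)\|$ just obtained. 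The main obstacle will be the detailed computation in the $H^0$-sector: although structurally parallel to the calculation of $\|G^+_{R,T}(\sigma_0)\|^2_{Z_R}$ in the proof of Proposition \ref{prop-SRT-metric}, the substitution of $d^{Z_R}\Res$ for $d^{Z_R,*}\stRes$ changes the boundary and transition-region contributions, so one must re-run that computation patiently and verify that all new errors stay within the claimed order $\mathscr{O}(R^{-1/2+\kappa/4})$.
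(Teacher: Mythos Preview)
Your approach is correct and uses the same two key observations as the paper: on $H^1$ one has $I_{R,T}=I^+_{R,T}$ exactly, and on $H^0$ one compares $F_{R,T}$ with $G^+_{R,T}$. The paper, however, packages these into a single estimate
\[
\big\lVert \mathscr{S}^H_{R,T}(\sigma)-\mathscr{S}_{R,T}(\sigma)\big\rVert_{Z_R}=\mathscr{O}\big(R^{-1/2+\kappa/4}\big)\,\lVert\sigma\rVert_{R,T}
\]
for all $\sigma\in H^\bullet(C^{\bullet,\bullet}_0,\partial)$, obtained by writing $F_{R,T}-G^+_{R,T}=(F_{R,T}-F^+_{R,T})+(F^+_{R,T}-G^+_{R,T})$ and invoking \eqref{eq1-pf-prop-FRTplus} together with Propositions \ref{prop-FRT}, \ref{prop-IRT}, \ref{prop-FRTplus}, \ref{prop-IRTplus}. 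Then Proposition \ref{prop-SRT-metric} finishes immediately, with no separate cross-term analysis needed, since that proposition already controls $\lVert\mathscr{S}_{R,T}(\sigma_0+\sigma_1)\rVert^2_{Z_R}$ for mixed $\sigma$ (via Proposition \ref{prop-perp-GI}). Your route reaches the same destination but re-derives pieces that the paper gets for free from this packaging.

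One small slip: you write that the zero-mode piece $e^{-Tf_T}(1-\chi_1-\chi_2)\hat\omega$ is ``supported where $f_T\approx f_\infty\equiv 1$''. This is not right; on the support of $1-\chi_1-\chi_2$ (roughly $|s|\leqslant 15/16$) the function $f_\infty$ ranges from $(1/16)^2/2$ near the edges up to $1$ at the center. What you actually need, and what is true, is that $f_\infty$ is bounded below by a positive constant on this support, which still gives the exponential smallness you claim. The paper's formulation via $F_{R,T}-F^+_{R,T}$ records this as \eqref{eq1-pf-cor-SRT-metric}.
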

\begin{proof}
By \eqref{eq1-def-FRT} and \eqref{eq1-def-FRTplus},
for
$\sigma \in H^0(C^{\bullet,\bullet}_0,\partial) = \hha(Z_{12,\infty},F)
\subseteq \hha(Z_{1,\infty} \sqcup Z_{2,\infty},F) = C^{0,\bullet}_0$,
we have
\begin{equation}
\label{eq1-pf-cor-SRT-metric}
\Big\lVert F_{R,T}(\sigma) - F^+_{R,T}(\sigma) \Big\rVert_{Z_R}
= \mathscr{O}\big(e^{-aT}\big) \big\lVert \sigma \big\rVert_{R,T} \;,
\end{equation}
where $a>0$ is a universal constant.
By the first identity in \eqref{eq1-pf-prop-FRTplus}
and \eqref{eq1-pf-cor-SRT-metric},
we have
\begin{equation}
\label{eqa-pf-cor-SRT-metric}
\Big\lVert F_{R,T}(\sigma) - G^+_{R,T}(\sigma) \Big\rVert_{Z_R}
= \mathscr{O}\big(R^{-1+\kappa/2}\big) \big\lVert \sigma \big\rVert_{R,T} \;.
\end{equation}
By \eqref{eq1-def-IRT} and \eqref{eq1-def-IRTplus},
for
$\sigma\in H^1(C^{\bullet,\bullet}_0,\partial)
= \LL^{\bullet,\perp}_{1,\mathrm{abs}}\cap\LL^{\bullet,\perp}_{1,\mathrm{abs}}
\subseteq \hh(Y,F) = C^{1,\bullet}_0$,
we have
\begin{equation}
\label{eqb-pf-cor-SRT-metric}
I_{R,T}(\sigma) = I^+_{R,T}(\sigma) \;.
\end{equation}
By Propositions \ref{prop-FRT}, \ref{prop-IRT}, \ref{prop-FRTplus}, \ref{prop-IRTplus},
\eqref{eq2-def-SHRT}, \eqref{eq-def-SRT}, \eqref{eqa-pf-cor-SRT-metric} and \eqref{eqb-pf-cor-SRT-metric},
we have
\begin{equation}
\label{eq-pf-cor-SRT-metric}
\Big\lVert \mathscr{S}^H_{R,T}(\sigma) - \mathscr{S}_{R,T}(\sigma) \Big\rVert_{Z_R}
= \mathscr{O}\big(R^{-1/2+\kappa/4}\big) \big\lVert \sigma \big\rVert_{R,T} \;.
\end{equation}
From Proposition \ref{prop-SRT-metric}
and \eqref{eq-pf-cor-SRT-metric},
we obtain \eqref{eq-cor-SRT-metric}.
This completes the proof of Corollary \ref{cor-SRT-metric}.
\end{proof}

\begin{proof}[Proof of Theorem \ref{thm-central-SRT}]
First we consider the case $j=0$. We will show that
the map $\mathscr{S}_{R,T}$ constructed in this section satisfies the desired properties.
The first property follows from \eqref{eq1-def-FRTplus}, \eqref{eq1-def-IRTplus} and \eqref{eq-def-SRT}.
The second property follows from Theorem \ref{thm-SRT-bij}.
The third property follows from Proposition \ref{prop-SRT-metric}.
The fourth property follows from Theorem \ref{prop-approx-derham}
and Proposition \ref{prop-SRT-metric}.

For the cases $j=1,2,3$,
we will only give the constructions of $\mathscr{S}^H_{j,R,T}$ and $\mathscr{S}_{j,R,T}$,
the proof of these properties is then essentially the same as in the case $j=0$.

Concerning the cases $j=1,2$,
we construct $F_{j,R,T}: \hha(Z_{j,\infty},F) \rightarrow \Omega^\bullet(Z_{j,R},F)$ as follows:
for $(\omega,\hat{\omega})\in\hha(Z_{j,\infty},F)$,
\begin{align}
\label{eq-def-FRTj}
\begin{split}
F_{j,R,T}(\omega,\hat{\omega}) \big|_{Z_{j,0}}
& = \omega \;,\\
F_{j,R,T}(\omega,\hat{\omega}) \big|_{IY_R}
& = e^{-Tf_T}\hat{\omega}
+ e^{-Tf_T}d^{Z_R} \Big( \chi_j\Res(\omega,\hat{\omega}) \Big) \;.
\end{split}
\end{align}
Let
$P_{j,R,T}: \Omega^\bullet(Z_{j,R},F) \rightarrow \Ker\big(\DjRT\big)$
be the orthogonal projection with respect to $\big\lVert\cdot\big\rVert_{Z_{j,R}}$.
We identify
$H^\bullet(C^{\bullet,\bullet}_j,\partial) = H^0(C^{\bullet,\bullet}_j,\partial)$
with $\hha(Z_{j,\infty},F)$.
We define
\begin{equation}
\mathscr{S}^H_{j,R,T} = P_{j,R,T}F_{j,R,T}:\; H^\bullet(C^{\bullet,\bullet}_j,\partial) \rightarrow \Ker\big(\DjRT\big) \;.
\end{equation}
We construct $G^+_{j,R,T}: \hha(Z_{j,\infty},F)\oplus\hh(Y,F) \rightarrow \Omega^\bullet(Z_{j,R},F)$ as follows:
for $(\omega,\hat{\omega})\in\hha(Z_{j,\infty},F)$ and $\hat{\mu}\in\hh(Y,F)$,
\begin{align}
\label{eq1-def-FRTjplus}
\begin{split}
G^+_{j,R,T}(\omega,\hat{\omega},\hat{\mu}) \big|_{Z_{j,0}}
& = \omega \;,\\
G^+_{j,R,T}(\omega,\hat{\omega},\hat{\mu}) \big|_{IY_R}
& = e^{-Tf_T} \Big( \chi_j\hat{\omega} + \chi_{3-j} \hat{\mu} \Big)
+ e^{Tf_T}d^{Z_R,*} \Big( \chi_j\stRes(\omega,\hat{\omega}) \Big) \;.
\end{split}
\end{align}
We also construct $I^+_{j,R,T}: \hh(Y,F) \rightarrow \Omega^{\bullet+1}(Z_{j,R},F)$ as follows:
for $\hat{\omega}\in \hh(Y,F)$,
\begin{equation}
\label{eq-def-IRTjplus}
I^+_{j,R,T}(\hat{\omega}) \big|_{Z_{j,0}} = 0  \;,\hspace{5mm}
I^+_{j,R,T}(\hat{\omega}) \big|_{IY_R}
= \chi_3 e^{Tf_T-T}du\wedge\hat{\omega} \;.
\end{equation}
Let
$P^{[-1,1]}_{j,R,T}: \Omega^\bullet(Z_{j,R},F) \rightarrow \mathscr{E}^{[-1,1]}_{j,R,T}$
be the orthogonal projection with respect to $\big\lVert\cdot\big\rVert_{Z_{j,R}}$.
We identify $C^{0,\bullet}_j$ with $\hha(Z_{j,\infty},F)\oplus\hh(Y,F)$.
We identify $C^{1,\bullet}_j$ with $\hh(Y,F)$.
We define
\begin{equation}
\label{eq-def-SRTj}
\mathscr{S}_{j,R,T}\Big|_{C^{0,\bullet}_j} = P^{[-1,1]}_{j,R,T}G^+_{j,R,T} \;,\hspace{5mm}
\mathscr{S}_{j,R,T}\Big|_{C^{1,\bullet}_j} = P^{[-1,1]}_{j,R,T}I^+_{j,R,T} \;.
\end{equation}

Now,
concerning the case $j=3$,
we construct $F_{3,R,T}: \hh(Y,F) \rightarrow \Omega^\bullet(IY_R,F)$ as follows:
for $\hat{\omega}\in\hh(Y,F)$,
\begin{equation}
\label{eq2-def-FRT3}
F_{3,R,T}(\hat{\omega}) \big|_{IY_R}
= e^{-Tf_T}\hat{\omega} \;.
\end{equation}
Let
$P_{3,R,T}: \Omega^\bullet(IY_R,F) \rightarrow \Ker\big(\DthreeRT\big)$
be the orthogonal projection with respect to $\big\lVert\cdot\big\rVert_{IY_R}$.
We identify
$H^\bullet(C^{\bullet,\bullet}_3,\partial) = H^0(C^{\bullet,\bullet}_3,\partial)$
with $\hh(Y,F)$.
We define
\begin{equation}
\mathscr{S}^H_{3,R,T} = P_{3,R,T}F_{3,R,T}:
H^\bullet(C^{\bullet,\bullet}_3,\partial) \rightarrow \Ker\big(\DthreeRT\big) \;.
\end{equation}
We construct $G^+_{3,R,T}: \hh(Y,F) \oplus \hh(Y,F) \rightarrow \Omega^\bullet(IY_R,F)$ as follows:
for $(\hat{\mu}_1,\hat{\mu}_2)\in\hh(Y,F)\oplus\hh(Y,F)$,
\begin{equation}
\label{eq2-def-FRT3plus}
G^+_{3,R,T}(\hat{\mu}_1,\hat{\mu}_2)
= e^{-Tf_T} \Big( \chi_1\hat{\mu}_1 + \chi_2\hat{\mu}_2 \Big) \;.
\end{equation}
We also construct $I^+_{3,R,T}: \hh(Y,F) \rightarrow \Omega^{\bullet+1}(IY_R,F)$ as follows:
for $\hat{\omega}\in \hh(Y,F)$,
\begin{equation}
\label{eq2-def-IRT3plus}
I^+_{3,R,T}(\hat{\omega})
= \chi_3 e^{Tf_T-T}du\wedge\hat{\omega} \;.
\end{equation}
Let
$P^{[-1,1]}_{3,R,T}: \Omega^\bullet(IY_R,F) \rightarrow \mathscr{E}^{[-1,1]}_{3,R,T}$
be the orthogonal projection with respect to $\big\lVert\cdot\big\rVert_{IY_R}$.
We identify $C^{0,\bullet}_3$ with $\hh(Y,F)\oplus\hh(Y,F)$,
and identify $C^{1,\bullet}_3$ with $\hh(Y,F)$.
We define
\begin{equation}
\label{eq-def-SRT3}
\mathscr{S}_{3,R,T}\Big|_{C^{0,\bullet}_3} = P^{[-1,1]}_{3,R,T}F^+_{3,R,T} \;,\hspace{5mm}
\mathscr{S}_{3,R,T}\Big|_{C^{1,\bullet}_3} = P^{[-1,1]}_{3,R,T}I^+_{3,R,T} \;.
\end{equation}

This completes the proof of Theorem \ref{thm-central-SRT}.
\end{proof}

\begin{rem}
\label{rem-j}
The proof of Theorem \ref{thm-central-SRT} may be summarized as follows:
all the results hold with
$\mathscr{S}^H_{R,T}$ replaced by $\mathscr{S}^H_{j,R,T}$ and
$\mathscr{S}_{R,T}$ replaced by $\mathscr{S}_{j,R,T}$.
\end{rem}

\section{Analytic torsion forms associated with a fibration}
\label{sect-tf}

The purpose of this section is to prove Theorem \ref{thm-central-tf}.
Many arguments in this section follow \cite{ble}.
This section is organized as follows.
In \textsection \ref{subsect-tf},
we decompose the analytic torsion form in question into two terms:
small time contribution and large time contribution.
In \textsection \ref{subsect-s-time},
we estimate the small time contribution.
In \textsection \ref{subsect-l-time},
we estimate the large time contribution.
Theorem \ref{thm-central-tf} will be proved in this subsection.

In the whole section,
we take $T=R^\kappa$,
where $\kappa\in]0,1/3[$ is a fixed constant.
For ease of notations,
we will systematically omit a parameter ($R$ or $T$)
as long as there is no confusion.

\subsection{Decomposition of analytic torsion forms}
\label{subsect-tf}

For $j=0,1,2,3$ and $R>0$,
we denote $\mathscr{F}_{j,R} = \Omega^\bullet(Z_{j,R},F)$,
which is a complex vector bundle of infinite dimension over $S$.
Let $\nabla^{\mathscr{F}_{j,R}}$ be the connection on $\mathscr{F}_{j,R}$ defined in \eqref{eq-def-n-F}.
Let $h^{\mathscr{F}_{j,R}}$ be the $L^2$-metric on $\mathscr{F}_{j,R}$
with respect to $g^{TZ_{j,R}}$ and $h^F$.
Let $\omega^{\mathscr{F}_{j,R}}\in\Omega^1\big(S,\mathrm{End}(\mathscr{F}_{j,R})\big)$
be as in \eqref{eq-def-omegaF} with $(\n^\mathscr{F},h^\mathscr{F})$ replaced by $(\n^{\mathscr{F}_{j,R}},h^{\mathscr{F}_{j,R}})$.
We may extend the construction above to $R=+\infty$ as follows,
\begin{equation}
\label{eq-def-Fjinf}
\mathscr{F}_{j,\infty} =
\big\{\omega\in\Omega^\bullet(Z_{j,\infty},F)\;:\;\omega \; \text{is }L^2\text{-integrable} \big\} \;.
\end{equation}
By \cite[Prop. 4.15]{bz},
$\omega^{\mathscr{F}_{j,\infty}}\in\Omega^1(S,\mathrm{End}(\mathscr{F}_{j,\infty}))$ is well-defined.
Let $\mathscr{D}_{j,R,t}$ be the operator in \eqref{eq-Dt} with $(\n^\mathscr{F},h^\mathscr{F})$ replaced by $(\n^{\mathscr{F}_{j,R}},h^{\mathscr{F}_{j,R}})$.
We have
\begin{equation}
\label{eq-fDRTt}
\mathscr{D}_{j,R,t} =
\sqrt{t} \big( \stdiffjRT - \diffjRT \big)
+ \omega^{\mathscr{F}_{j,R}}
- \frac{1}{\sqrt{t}} \hat{c}(\mathcal{T})
\in \Omega^\bullet\big(S,\mathrm{End}\big(\mathscr{F}_{j,R}\big)\big) \;.
\end{equation}
We remark that
\begin{equation}
\label{eq-square-tDRT}
\Big( \stdiffjRT - \diffjRT \Big)^2 =
- \Big( \stdiffjRT + \diffjRT \Big)^2 = - \DsjRT \;.
\end{equation}

We denote $\mathscr{T}_{j,R} = \mathscr{T}_{j,R,T}\big|_{T=R^\kappa}$,
where $\mathscr{T}_{j,R,T}$ was defined in \eqref{eq-def-TjRT}.
By \eqref{eq-def-atf},
we have
\begin{align}
\label{eq-def-analytic-tf}
\begin{split}
\mathscr{T}_{j,R} = - \int_0^{+\infty}
\bigg\{ & \varphi \tr\Big[(-1)^{N^{TZ}}\frac{N^{TZ}}{2}f'\big(\mathscr{D}_{j,R,t}\big)\Big]
- \frac{\chi'(Z_j,F)}{2} \\
& \hspace{5mm} - \Big(\frac{\dim Z \mathrm{rk}(F)\chi(Z_j)}{4} - \frac{\chi'(Z_j,F)}{2}\Big)f'\Big(\frac{i\sqrt{t}}{2}\Big) \bigg\}
\frac{dt}{t} \;.
\end{split}
\end{align}
Let $\mathscr{T}_{j,R}^\mathrm{S}$ (resp. $\mathscr{T}_{j,R}^\mathrm{L}$)
be as in \eqref{eq-def-analytic-tf}
with $\int_0^{+\infty}$ replaced by $\int_0^{R^{2-\kappa/2}}$ (resp. $\int_{R^{2-\kappa/2}}^{+\infty}$).
The following identity is obvious,
\begin{equation}
\label{eq-TSL}
\mathscr{T}_{j,R} =
\mathscr{T}_{j,R}^\mathrm{S} + \mathscr{T}_{j,R}^\mathrm{L} \;.
\end{equation}

\subsection{Small time contributions}
\label{subsect-s-time}

For $r\geqslant 1$ and an operator $A$ on a Hilbert space,
the Schauder $r$-norm of $A$ is defined as follows,
\begin{equation}
\big\lVert A \big\rVert_r = \Big( \tr \big[(A^*A)^{r/2}\big] \Big)^{1/r} \;.
\end{equation}
If $A$ is orthogonally diagonalizable,
we have
\begin{equation}
\label{eq-schauder-sp}
\big\lVert A \big\rVert_r = \bigg( \sum_{\lambda\in\Sp(A)} |\lambda|^r \bigg)^{1/r} \;.
\end{equation}
Let $\big\lVert A \big\rVert_\infty$ be the operator norm of $A$.
These norms satisfy the H{\"o}lder's inequality:
for $r_1,r_2,r_3\in[1,+\infty]$ with $1/r_1+1/r_2=1/r_3$,
we have
\begin{equation}
\label{eq-holder}
\big\lVert AB \big\rVert_{r_3} \leqslant
\big\lVert A \big\rVert_{r_1}\big\lVert B \big\rVert_{r_2} \;.
\end{equation}
Moreover,
if $A$ is of finite rank,
we have
\begin{equation}
\label{eq-schauder-frank}
\big\lVert A \big\rVert_r \leqslant
\big(\mathrm{rk}(A)\big)^{1/r} \big\lVert A \big\rVert_\infty \;.
\end{equation}
The proofs in this subsection involve sophisticated estimate of Schauder norms,
which follows \cite[\textsection 9]{ble}.

We remark that $\Sp\big(\mathscr{D}_{j,R,t}\big) \subseteq i\R$.

\begin{lemme}
\label{lem-fps-G}
There exist $\alpha,\beta>0$ such that
for $r\geqslant \dim Z+1$, $R \gg 1$, $0<t\leqslant R^{2-\kappa/2}$
and $\lambda\in\C$ with $\mathrm{Re}(\lambda)=\pm 1$,
we have
\begin{equation}
\label{eq-lem-fps-G}
\Big\lVert \big( \lambda - \mathscr{D}_{j,R,t} \big)^{-1} \Big\rVert_r
= \mathscr{O}\big(R^\beta\big) |\lambda| t^{-\alpha}  \;,\hspace{5mm}\text{for } j=0,1,2,3 \;.
\end{equation}
\end{lemme}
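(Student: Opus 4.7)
The strategy is to split $\mathscr{D}_{j,R,t}$ into a fiberwise skew-adjoint principal part and a perturbation that is nilpotent in the $\Lambda^\bullet T^*S$-grading, and then control everything through the spectral theorem for $\DsjRT$ combined with Weyl's law. I would write
\[
\mathscr{D}_{j,R,t} = D^0_{j,R,t} + B_{j,R,t} \;,\hspace{5mm}
D^0_{j,R,t} = \sqrt{t}\bigl(\stdiffjRT - \diffjRT\bigr) \;,\hspace{5mm}
B_{j,R,t} = \omega^{\mathscr{F}_{j,R}} - t^{-1/2}\hat{c}(\mathcal{T}) \;.
\]
By \eqref{eq-square-tDRT}, $D^0_{j,R,t}$ is fiberwise skew-adjoint with $\bigl(D^0_{j,R,t}\bigr)^2 = -t\DsjRT$, so $\Sp\bigl(D^0_{j,R,t}\bigr)\subseteq i\R$ and $\bigl\lVert(\lambda - D^0_{j,R,t})^{-1}\bigr\rVert_\infty \leqslant 1$ for $\mathrm{Re}(\lambda) = \pm 1$. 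The perturbation $B_{j,R,t}$ takes values in $\Lambda^{\geqslant 1}T^*S\otimes\mathrm{End}(\mathscr{F}_{j,R})$, so it is nilpotent of order at most $\dim S + 1$; a direct pointwise check (the metric, $\mathcal{T}$, and $\omega^{\mathscr{F}_{j,R}}$ are essentially pulled back from $N$ on the cylindrical neck $IN_R$) yields $\bigl\lVert B_{j,R,t}\bigr\rVert_\infty = \mathscr{O}\bigl(R^{\beta_0}\bigr)\,t^{-1/2}$ for some fixed $\beta_0 > 0$.

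The nilpotency of $B_{j,R,t}$ truncates the Neumann expansion into a \emph{finite} sum
\[
\bigl(\lambda - \mathscr{D}_{j,R,t}\bigr)^{-1}
= \sum_{k=0}^{\dim S} \bigl(\lambda - D^0_{j,R,t}\bigr)^{-1} \Bigl(B_{j,R,t}\bigl(\lambda - D^0_{j,R,t}\bigr)^{-1}\Bigr)^k \;,
\]
so no convergence issue arises. Using H{\"o}lder's inequality \eqref{eq-holder} with one copy of $(\lambda - D^0_{j,R,t})^{-1}$ measured in Schauder $r$-norm and the remaining factors in operator norm, each summand is dominated by $\bigl\lVert(\lambda - D^0_{j,R,t})^{-1}\bigr\rVert_r \cdot R^{k\beta_0}\,t^{-k/2}$. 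The free Schauder norm is then estimated by the spectral decomposition of $\DsjRT$: its eigenvalues $\mu_k\geqslant 0$ produce eigenvalues $\pm i\sqrt{t\mu_k}$ of $D^0_{j,R,t}$, and splitting the sum according to whether $\sqrt{t\mu_k}$ is smaller or larger than $2|\mathrm{Im}(\lambda)|$ and invoking Weyl's law for $\DsjRT$ on $Z_{j,R}$ (whose volume is $\mathscr{O}(R)$ and whose Witten potential $T^2|f'_T|^2 + Tf''_T c\hat{c}$ is polynomial in $T = R^\kappa$) gives, for $r > \dim Z$,
\[
\bigl\lVert(\lambda - D^0_{j,R,t})^{-1}\bigr\rVert_r^r \leqslant C_r \, R \, |\lambda|^{\dim Z} \, t^{-\dim Z/2} \;.
\]
Taking the $r$-th root, summing over $k\leqslant\dim S$, and observing that $|\lambda|^{\dim Z/r}\leqslant |\lambda|$ for $r\geqslant \dim Z + 1$ and $|\lambda|\geqslant 1$ then yields \eqref{eq-lem-fps-G} with appropriate $\alpha$ and $\beta$.

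The main obstacle is the uniform bookkeeping of $R$- and $t$-dependence in $\lVert B_{j,R,t}\rVert_\infty$, in the Weyl estimate, and in the $\lambda$-splitting, since the cylindrical region has length $2R$, the $L^2$-metric contributes polynomial factors to $\omega^{\mathscr{F}_{j,R}}$, and the Witten potential is of size $R^{2\kappa}$. Because the statement demands only polynomial exponents without specific values, crude pointwise estimates combined consistently through the three steps will suffice; the slack factor $|\lambda|$ in \eqref{eq-lem-fps-G} is exactly what absorbs the $|\lambda|^{\dim Z/r}$ growth coming from the Weyl-type splitting above.
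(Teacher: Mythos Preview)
Your architecture matches the paper: split off $D^0_{j,R,t}=\sqrt{t}(\stdiffjRT-\diffjRT)$, expand $(\lambda-\mathscr{D}_{j,R,t})^{-1}$ as a finite Neumann series in the nilpotent-in-$\Lambda^\bullet T^*S$ perturbation $B_{j,R,t}$, and bound each term via H\"older with one factor in Schatten $r$-norm and the rest in operator norm. The divergence is in how $\lVert(\lambda-D^0_{j,R,t})^{-1}\rVert_r$ is controlled. Rather than a direct Weyl count on the stretched manifold $Z_{j,R}$ (which would require a uniform-in-$R$ eigenvalue-counting estimate that is not free---you would need, e.g., a uniform on-diagonal heat-kernel bound over the whole family, a genuine extra step your sketch does not supply), the paper \emph{reduces to the fixed manifold} $Z_1$: labelling the eigenvalues $\mu_{k,R,T}$ of $\stdiffjRT-\diffjRT$ continuously in $(R,T)$, it combines the operator-norm perturbation bound $|\mu_{k,R,T}-\mu_{k,R,0}|\leqslant\delta R^{-1}T$ with the stretching inequality $|\mu_{k,R,0}|\geqslant R^{-1}|\mu_{k,1,0}|$ from \cite[(3.95)]{pzz} and the elementary triangle estimate $|\lambda-\sqrt{t}\mu|^{-1}\leqslant|\lambda|\,|\sqrt{t}\mu|^{-1}$ (valid for $\mathrm{Re}(\lambda)=\pm1$, $\mu\in i\R$) to get $|\lambda-\sqrt{t}\mu_{k,R,T}|^{-1}=\mathscr{O}(R^{1+\kappa})|\lambda|t^{-1/2}|\mu_{k,1,0}|^{-1}$ for the eigenvalues outside the window $[-1/R,1/R]$; the Schatten $r$-sum then collapses to $\sum_k|\mu_{k,1,0}|^{-r}<\infty$ by ordinary Weyl on the fixed $Z_1$. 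The bounded number of small eigenvalues inside the window (Theorems~\ref{thm-central-spgap} and \ref{thm-central-SRT}) contributes only $\mathscr{O}(1)$, absorbed at the end via $t\leqslant R^{2-\kappa/2}$. Two minor corrections to your sketch: in fact $\lVert B_{j,R,t}\rVert_\infty=\mathscr{O}(1)(1+t^{-1/2})$ with no $R$-growth (everything is product on the neck), and your displayed Schatten bound must carry an additive constant for $\Ker(\DsjRT)$ and the small eigenvalues---both points are harmless after the same absorption.
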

\begin{proof}
We only consider the case $j=0$.

We denote
\begin{equation}
\label{eq0-pf-lem-fps-G}
a = \Big(\lambda - \sqrt{t} \big( \stdiffRT - \diffRT \big)\Big)^{-1}  \;,\hspace{5mm}
b = \omega^{\mathscr{F}_R} - \frac{1}{\sqrt{t}} \hat{c}(\mathcal{T}) \;.
\end{equation}
Since $b\in\Omega^{>0}\big(S,\mathrm{End}(\mathscr{F}_R)\big)$,
we have
\begin{equation}
\label{eq1-pf-lem-fps-G}
\big( \lambda - \mathscr{D}_{R,t} \big)^{-1}
=  a + aba + \cdots + a(ba)^{\dim S} \;.
\end{equation}
The same technique as above was used in \cite[(2.45)]{bl}.
Note that $\Sp\big(\stdiffRT - \diffRT\big) \subseteq i\R$ and $\mathrm{Re}(\lambda)=\pm 1$,
by \eqref{eq0-pf-lem-fps-G},
we have
\begin{equation}
\label{eq01-pf-lem-fps-G}
\big\lVert a \big\rVert_\infty \leqslant 1 \;,\hspace{15mm}
\big\lVert b \big\rVert_\infty = \mathscr{O}\big(1\big) (1+t^{-1/2}) \;.
\end{equation}
We will temporarily treat $R$ and $T$ as independent parameters.
Note that $\Sp\big(\stdiffRT - \diffRT\big)$ depends continuously on $R,T$,
there exist $\big(\mu_{k,R,T}\in i\R\big)_{k\in\N}$ such that
\begin{itemize}
\item[-] for $R\geqslant 1$ and $T\geqslant 0$,
we have $\big\{\mu_{k,R,T}\;:\;k\in\N\big\} = \Sp\big(\stdiffRT - \diffRT\big)$;
\item[-] the function $(R,T) \mapsto \mu_{R,T}$ is continuous.
\end{itemize}
We will estimate $\mu_{k,R,T}$ with $k\in\N$ fixed.
For ease of notations,
we will omit the index $k$.
By \cite[(3.95)]{pzz}
and \eqref{eq-square-tDRT},
we have
\begin{equation}
\label{eq2-pf-lem-fps-G}
\big|\mu_{R,0}\big| \geqslant R^{-1}\big|\mu_{1,0}\big| \;.
\end{equation}
Similarly to the first identity in \eqref{eq-def-DRT-cylinder},
we have
\begin{equation}
\label{eq3a-pf-lem-fps-G}
\big( \stdiffRT - \diffRT \big)\big|_{IY_R}
= \hat{c}c \big( d^{Y,*} - d^Y \big)  - \hat{c} \; \frac{\partial}{\partial u} - R^{-1}Tf'_T c \;.
\end{equation}
By \eqref{eq-compare-f-fT}, \eqref{eq-square-tDRT}, \eqref{eq3a-pf-lem-fps-G} and the identity $f_T\big|_{Z_{1,0} \cup Z_{2,0}} = 0$,
there exists $\delta>0$ independent of $R,T,\mu_{R,T}$ such that
\begin{equation}
\label{eq3-pf-lem-fps-G}
\big|\mu_{R,T}-\mu_{R,0}\big| \leqslant \delta R^{-1}T \;.
\end{equation}
We consider the triangle spanned by
$\lambda,\sqrt{t}\mu_{R,T} \in \C$.
Let $A$ be its area.
As $\mathrm{Re}(\lambda)=\pm 1$,
we have
\begin{equation}
|\lambda| \big|\lambda-\sqrt{t}\mu_{R,T}\big| \geqslant
2A = \big|\sqrt{t}\mu_{R,T}\big| \;.
\end{equation}
Equivalently, we have
\begin{equation}
\label{eq4a-pf-lem-fps-G}
\big|\lambda-\sqrt{t}\mu_{R,T}\big|^{-1}
\leqslant |\lambda| \big|\sqrt{t}\mu_{R,T}\big|^{-1} \;.
\end{equation}
If $|\mu_{R,T}|\geqslant 1/R$,
by \eqref{eq2-pf-lem-fps-G}-\eqref{eq4a-pf-lem-fps-G},
we have
\begin{align}
\label{eq4-pf-lem-fps-G}
\begin{split}
\big|\lambda-\sqrt{t}\mu_{R,T}\big|^{-1}
& \leqslant |\lambda| t^{-1/2}\big|\mu_{R,0}\big|^{-1} \left| 1+\frac{\mu_{R,0}-\mu_{R,T}}{\mu_{R,T}} \right| \\
& \leqslant  |\lambda| t^{-1/2} \big|\mu_{1,0}\big|^{-1} R(1 + \delta T )
= \mathscr{O}\big(R^{1+\kappa}\big) |\lambda| t^{-1/2}  \big|\mu_{1,0}\big|^{-1} \;,
\end{split}
\end{align}
where $\mathscr{O}\big(R^{1+\kappa}\big)$ is uniform,
i.e.,  it is bounded by $CR^{1+\kappa}$ with $C>0$ independent of $R,T,\mu_{R,T}$.
On the other hand,
as $\mathrm{Re}(\lambda)=\pm 1$ and $\mu_{R,T} \in i\R$,
we have the obvious estimate
\begin{equation}
\label{eq5-pf-lem-fps-G}
\big|\lambda-\sqrt{t}\mu_{R,T}\big|^{-1} \leqslant 1 \;.
\end{equation}
By Theorem \ref{thm-central-spgap} and \eqref{eq-square-tDRT},
there exists $\alpha>0$ such that
\begin{equation}
\label{eq6a-pf-lem-fps-G}
\Sp \Big(i\big( \stdiffRT - \diffRT \big)\Big) \subseteq
\big]-\infty,-\alpha\sqrt{T}/R\big]
\cup \big[-1/R,1/R\big] \cup
\big[\alpha\sqrt{T}/R,+\infty\big[ \;.
\end{equation}
Moreover,
by Theorem \ref{thm-central-SRT},
the number of eigenvalues lying in $[-1/R,1/R]$ is constant for $R^\kappa = T \gg 1$.
Let $P_{R,T}^{\R\backslash\{0\}}: \mathscr{F}_{R}
\rightarrow \Big(\Ker\big(\DRT\big)\Big)^\perp$
be the orthogonal projection.
By \eqref{eq-schauder-sp},
the first identity in \eqref{eq0-pf-lem-fps-G}
and \eqref{eq4-pf-lem-fps-G}-\eqref{eq6a-pf-lem-fps-G},
we have
\begin{equation}
\label{eq6-pf-lem-fps-G}
\big\lVert a \big\rVert_r =
\mathscr{O}\big(1\big) +
\mathscr{O}\big(R^{1+\kappa}\big)
|\lambda| t^{-1/2} \Big\lVert \big(\DRT\big)^{-1} P_{R,T}^{\R\backslash\{0\}} \big|_{R=1,T=0}\Big\rVert_r \;.
\end{equation}
Since $r\geqslant \dim Z+1$,
by Weyl's law, we have
$\Big\lVert \big(\DRT\big)^{-1} P_{R,T}^{\R\backslash\{0\}} \big|_{R=1,T=0}\Big\rVert_r < +\infty$.
Then \eqref{eq6-pf-lem-fps-G} becomes
\begin{equation}
\label{eq7-pf-lem-fps-G}
\big\lVert a \big\rVert_r =
\mathscr{O}\big(1\big) +
\mathscr{O}\big(R^{1+\kappa}\big) |\lambda| t^{-1/2} \;.
\end{equation}

By \eqref{eq-holder} and
\eqref{eq1-pf-lem-fps-G},
we have
\begin{equation}
\label{eq8-pf-lem-fps-G}
\Big\lVert \big( \lambda - \mathscr{D}_{R,t} \big)^{-1} \Big\rVert_r
\leqslant \big\lVert a \big\rVert_r \sum_{k=0}^{\dim S} \big\lVert b \big\rVert_\infty^k \big\lVert a \big\rVert_\infty^k \;.
\end{equation}
From \eqref{eq01-pf-lem-fps-G}, \eqref{eq7-pf-lem-fps-G}, \eqref{eq8-pf-lem-fps-G}
and the assumption $0<t\leqslant R^{2-\kappa/2}$,
we obtain \eqref{eq-lem-fps-G}.
This completes the proof of Lemma \ref{lem-fps-G}.
\end{proof}

Let $\rho : \R \rightarrow [0,1]$ be a smooth even function such that
\begin{equation}
\rho(x) = 1 \hspace{5mm}\text{for}\hspace{2mm} |x|\leqslant 1/2 \;,\hspace{5mm}
\rho(x) = 0 \hspace{5mm}\text{for}\hspace{2mm} |x|\geqslant 1 \;.
\end{equation}
For $\varsigma>0$ and $z\in\mathbb{C}$,
set
\begin{align}
\begin{split}
F_\varsigma(z) \; & = (1+2z^2) \int_{-\infty}^{+\infty} \exp\left(\sqrt{2}x z\right)
\exp\left(-\frac{x^2}{2}\right)\rho(\sqrt{2\varsigma} x)\frac{d x}{\sqrt{2\pi}} \;,\\
G_\varsigma(z) \; & = (1+2z^2) \int_{-\infty}^{+\infty} \exp\left(\sqrt{2}x z\right)
\exp\left(-\frac{x^2}{2}\right)\big(1-\rho(\sqrt{2\varsigma} x)\big)\frac{d x}{\sqrt{2\pi}} \;.
\end{split}
\end{align}
The construction above follows \cite[Def. 13.3]{ble}.
We have
\begin{equation}
\label{eq-FplusG}
F_\varsigma(z)+ G_\varsigma(z) = f'(z) \;.
\end{equation}
Moreover,
$F_\varsigma\big|_{i\R}$ and $G_\varsigma\big|_{i\R}$ take real values,
and lie in the Schwartz space $\mathcal{S}(i\R)$.

\begin{prop}
\label{prop-fps-G}
There exists $\alpha>0$ such that
for $R \gg 1$ and $0<t\leqslant R^{2-\kappa/2}$,
we have
\begin{equation}
\label{eq-prop-fps-G}
\Big\lVert G_{tR^{-2+\kappa/4}} \big(\mathscr{D}_{j,R,t}\big) \Big\rVert_1
\leqslant \exp\big(-\alpha R^{2-\kappa/4}/t\big) \;,\hspace{5mm} j=0,1,2,3 \;.
\end{equation}
\end{prop}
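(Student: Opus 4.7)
The plan is to represent $G_\varsigma(\mathscr{D}_{j,R,t})$ via the Dunford--Cauchy functional calculus on a contour enclosing its spectrum, and then to control the resulting contour integral using the resolvent bound of Lemma~\ref{lem-fps-G} together with the rapid decay of $G_\varsigma$ on the contour. First, I would observe that $\mathrm{Sp}(\mathscr{D}_{j,R,t}) \subseteq i\R$: by \eqref{eq-fDRTt}, $\mathscr{D}_{j,R,t}$ is the sum of the skew-adjoint degree-zero part $\sqrt{t}(\stdiffjRT - \diffjRT)$ and a perturbation $\omega^{\mathscr{F}_{j,R}} - t^{-1/2}\hat{c}(\mathcal{T})$ of positive form degree in $\Omega^\bullet(S,\mathrm{End}(\mathscr{F}_{j,R}))$, hence nilpotent of order at most $\dim S + 1$, so the spectrum coincides with that of the degree-zero part and lies in $i\R$. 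Taking $U = \{\lambda \in \C : |\mathrm{Re}(\lambda)| < 1\}$ and fixing an integer $r \geq \dim Z + 1$, I would rewrite
\[
G_\varsigma(\mathscr{D}_{j,R,t}) = \frac{1}{2\pi i}\int_{\partial U} H_{\varsigma,r}(\lambda)\bigl(\lambda - \mathscr{D}_{j,R,t}\bigr)^{-r}\, d\lambda,
\]
where $H_{\varsigma,r}$ is the holomorphic $(r-1)$-fold primitive of $(r-1)!\, G_\varsigma$ on a neighbourhood of $\partial U$, normalized so as to decay as $\mathrm{Im}(\lambda) \to \pm\infty$. This follows from $(\lambda-\mathscr{D})^{-r} = \frac{(-1)^{r-1}}{(r-1)!}\partial_\lambda^{r-1}(\lambda-\mathscr{D})^{-1}$ combined with $r-1$ integrations by parts along $\partial U$, with no boundary contributions thanks to the Schwartz decay established next.

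The crucial decay estimate on $\partial U$ comes from the cutoff $1-\rho(\sqrt{2\varsigma}\, x)$, which is supported in $|x| \geq 1/(2\sqrt{2\varsigma})$. Parametrizing $\lambda = \pm 1 + i\xi$ and completing the square in $e^{\sqrt{2}\lambda x}e^{-x^2/2}$, the Gaussian weight restricted to $|x| \geq 1/(2\sqrt{2\varsigma})$ yields an exponential factor $\exp(-\alpha_0/\varsigma)$ for some universal $\alpha_0 > 0$; the remaining $\xi$-dependence is Schwartz, and the same is true for the primitive $H_{\varsigma,r}$ (built by iterated integration along the vertical lines of $\partial U$ starting from $\mathrm{Im}(\lambda) = \pm\infty$), giving
\[
|H_{\varsigma,r}(\pm 1 + i\xi)| \leq C_r(1+\xi^2)^r \exp(-\alpha_0/\varsigma)\, h(\xi), \quad h \in \mathscr{S}(\R).
\]

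To pass from the $r$-Schauder-norm estimate of Lemma~\ref{lem-fps-G} to a trace-norm bound, I would invoke the iterated Hölder inequality $\|A^r\|_1 \leq \|A\|_r^r$ (from \eqref{eq-holder}), yielding
\[
\bigl\lVert (\lambda - \mathscr{D}_{j,R,t})^{-r}\bigr\rVert_1 \leq \bigl\lVert (\lambda - \mathscr{D}_{j,R,t})^{-1}\bigr\rVert_r^r = \mathscr{O}\bigl(R^{\beta r}\bigr)|\lambda|^r t^{-\alpha r}.
\]
Combining with the decay of $H_{\varsigma,r}$ in the contour integral produces
\[
\|G_\varsigma(\mathscr{D}_{j,R,t})\|_1 \leq C R^{\beta r} t^{-\alpha r}\exp(-\alpha_0/\varsigma)\int_{\R}(1+\xi^2)^{2r+1}h(\xi)\, d\xi,
\]
and since $\varsigma = tR^{-2+\kappa/4}$ with $0 < t \leq R^{2-\kappa/2}$, the exponential factor $\exp(-\alpha_0 R^{2-\kappa/4}/t)$ dominates all polynomial prefactors in $R$ and $t^{-1}$ for $R$ sufficiently large, yielding \eqref{eq-prop-fps-G} with any $\alpha < \alpha_0$. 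The main obstacle is precisely this conversion from the $r$-norm resolvent bound to the trace norm: raising the resolvent to the $r$-th power improves its Schauder integrability but introduces an antiderivative of $G_\varsigma$, and the hard part will be verifying carefully that $H_{\varsigma,r}$ retains the full $\exp(-\alpha_0/\varsigma)$ decay uniformly along the entire contour $\partial U$ in a manner compatible with Schwartz integrability in $\xi$.
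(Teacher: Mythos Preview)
Your proposal is correct and follows the same Dunford--Cauchy strategy as the paper, but the paper makes one simplification that bypasses precisely the ``hard part'' you flag. Instead of working with $\mathscr{D}_{j,R,t}$ on the strip $\{|\mathrm{Re}(\lambda)|<1\}$, the paper exploits the evenness of $G_\varsigma$ to write $G_\varsigma(z)=\widetilde G_\varsigma(z^2)$ and represents $\widetilde G_\varsigma(\mathscr{D}_{j,R,t}^2)$ by a contour integral over the boundary of the parabolic region $U=\{4\,\mathrm{Re}(z)+|\mathrm{Im}(z)|^2<4\}$, whose square root is exactly your strip. The point is that $\partial U$ is a single curve whose two ends both tend to $\mathrm{Re}(z)\to-\infty$, so the iterated primitives $\widetilde G_{r,\varsigma}$ (defined by $\frac{1}{r!}\partial_z^r\widetilde G_{r,\varsigma}=\widetilde G_\varsigma$ with $\lim_{z\to-\infty}\widetilde G_{r,\varsigma}(z)=0$) automatically decay along the whole contour, with the uniform bound $|\widetilde G_{r,\varsigma}(z)|\leq C_{m,r}|z|^{r-m/2}\exp(-1/(32\varsigma))$ following directly from the basic estimate $|z|^m|G_\varsigma(z)|\leq C_m\exp(-1/(32\varsigma))$ on the strip. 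Your direct approach on the two vertical lines requires the $(r-1)$-fold primitives of $G_\varsigma$ to decay at \emph{both} $\pm i\infty$ on each line, which is equivalent to the vanishing of the first $r-1$ vertical moments of $G_\varsigma$; this does hold (the Fourier preimage $e^{-x^2/2}(1-\rho(\sqrt{2\varsigma}\,x))$ is flat at $x=0$, so all moments vanish), but it is an extra verification that the squaring trick renders unnecessary. On the trace-norm side the two proofs match: the paper factors $(\lambda-\mathscr{D}^2)^{-r-1}$ through $(\pm\sqrt\lambda-\mathscr{D})^{-1}$ and applies H\"older together with Lemma~\ref{lem-fps-G} at Schatten index $2r+2$, which is the squared-variable counterpart of your $\lVert(\lambda-\mathscr{D})^{-r}\rVert_1\leq\lVert(\lambda-\mathscr{D})^{-1}\rVert_r^{\,r}$ at index $r$.
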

\begin{proof}
We only consider the case $j=0$.

Due to the relation
$\frac{\partial^m}{\partial x^m} \exp\left(\sqrt{2}x z\right)
= 2^{m/2} z^m \exp\left(\sqrt{2}x z\right)$,
we can integrate by parts in the expression of $z^m G_\varsigma(z)$
and obtain that for $m\in\mathbb{N}$,
there exists $C_m>0$ such that
for $z\in\mathbb{C}$ with $|\mathrm{Re}(z)|\leqslant 1$, we have
\begin{equation}
\label{eq2-pf-prop-fps-G}
\big|z\big|^m\big|G_\varsigma(z)\big| \leqslant C_m \exp\left(-\frac{1}{32\varsigma}\right) \;.
\end{equation}

The function $G_\varsigma (z)$ is an even holomorphic function.
Therefore there exists a holomorphic function $\widetilde G_\varsigma(z)$
such that
\begin{equation}
\label{eq1-pf-prop-fps-G}
G_\varsigma (z) = \widetilde G_\varsigma (z^2)\;.
\end{equation}
Set
\begin{equation}
\label{eq3-pf-prop-fps-G}
U = \Big\{z\in\mathbb{C} \;:\;
4\mathrm{Re}(z)+ |\mathrm{Im}(z)|^2<4 \Big\}\;.
\end{equation}
We have
\begin{equation}
\label{eq4-pf-prop-fps-G}
\sqrt{U} := \Big\{z\in\mathbb{C}\;:\: z^2\in U \Big\} = \Big\{z\in\mathbb{C}\;:\;|\mathrm{Re}(z)|<1\Big\} \;.
\end{equation}
By \eqref{eq2-pf-prop-fps-G},
\eqref{eq1-pf-prop-fps-G} and
\eqref{eq4-pf-prop-fps-G},
for $z\in U$, we have
\begin{equation}
\label{eq5-pf-prop-fps-G}
\big|z\big|^{m/2}\big|\widetilde G_\varsigma(z)\big|
\leqslant C_m \exp\left(-\frac{1}{32\varsigma}\right) \;.
\end{equation}
The technique above follows \cite[\textsection 13 c)]{ble}.

For $r\in\mathbb{N}$,
let $\widetilde G_{r,\varsigma}(z)$ be the unique holomorphic function satisfying
\begin{equation}
\label{eq6-pf-prop-fps-G}
\frac{1}{r!}\frac{d^r}{d z^r} \widetilde G_{r,\varsigma}(z)
= \widetilde G_\varsigma(z) \;,\hspace{5mm}
\lim_{z\rightarrow-\infty} \widetilde G_{r,\varsigma}(z)
= 0 \;.
\end{equation}
By \eqref{eq5-pf-prop-fps-G}
and \eqref{eq6-pf-prop-fps-G},
for $m>2r$, there exists $C_{m,r}>0$ such that
for $z\in U$,
\begin{equation}
\label{eq7-pf-prop-fps-G}
\big|\widetilde G_{r,\varsigma}(z)\big|
\leqslant C_{m,r} \big|z\big|^{r-m/2} \exp\left(-\frac{1}{32\varsigma}\right) \;.
\end{equation}

In the rest of the proof,
we fix $\varsigma = tR^{-2+\kappa/4}$ and $\N \ni r \geqslant 1+ \dim Z /2$.
We have
\begin{equation}
\label{eq8-pf-prop-fps-G}
G_\varsigma \big(\mathscr{D}_{R,t}\big)
= \widetilde{G}_\varsigma \big(\mathscr{D}_{R,t}^2\big)
= \frac{1}{2\pi i} \int_{\partial U}
\widetilde G_{r,\varsigma}(\lambda)
\big( \lambda-\mathscr{D}_{R,t}^2 \big)^{-r-1} d \lambda \;.
\end{equation}
By \eqref{eq-holder}, we have
\begin{equation}
\label{eq9-pf-prop-fps-G}
\left\lVert \big( \lambda-\mathscr{D}_{R,t}^2 \big)^{-r-1}  \right\rVert_1
\leqslant \left\lVert \big( \sqrt{\lambda} - \mathscr{D}_{R,t} \big)^{-1} \right\rVert_{2r+2}^{r+1}
\left\lVert \big( - \sqrt{\lambda} - \mathscr{D}_{R,t} \big)^{-1} \right\rVert_{2r+2}^{r+1} \;.
\end{equation}
From Lemma \ref{lem-fps-G}
and \eqref{eq7-pf-prop-fps-G}-\eqref{eq9-pf-prop-fps-G},
we obtain \eqref{eq-prop-fps-G}.
This completes the proof of Proposition \ref{prop-fps-G}.
\end{proof}

Let $\chi'(Z_j,F)$ be as in \eqref{eq-def-chiprim}
with $Z$ replaced by $Z_j$.
Set
\begin{equation}
\label{eq-def-chiprim-sum}
\chi' = \sum_{j=0}^3 (-1)^{j(j-3)/2} \chi'(Z_j,F) \;.
\end{equation}

\begin{prop}
\label{prop-fps}
There exists $\alpha>0$ such that
for $R \gg 1$,
we have
\begin{align}
\label{eq-prop-fps}
\begin{split}
& \sum_{j=0}^3 (-1)^{j(j-3)/2} \mathscr{T}_{j,R}^\mathrm{S} \\
& = - \frac{\chi'}{2} \int_0^{R^{2-\kappa/2}}
\bigg\{ f'\Big(\frac{i\sqrt{t}}{2}\Big) - 1 \bigg\}
\frac{dt}{t}
+ \mathscr{O}\Big(\exp\big(-\alpha R^{\kappa/4}\big)\Big) \;.
\end{split}
\end{align}
\end{prop}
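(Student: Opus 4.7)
The plan is to decompose $f'(\mathscr{D}_{j,R,t}) = F_\varsigma(\mathscr{D}_{j,R,t}) + G_\varsigma(\mathscr{D}_{j,R,t})$ via \eqref{eq-FplusG} with $\varsigma = tR^{-2+\kappa/4}$, and treat the two resulting halves by different methods. First, the two $j$-dependent constants in the definition of $\mathscr{T}_{j,R}^\mathrm{S}$ are handled topologically. Mayer--Vietoris applied to the decomposition $Z = Z_1 \cup_{IY} Z_2$ yields $\chi(Z_0) - \chi(Z_1) - \chi(Z_2) + \chi(Z_3) = 0$, so in the alternating sum the $\chi(Z_j)$-piece of the $f'(i\sqrt{t}/2)$-coefficient disappears and we are left with $\chi'/2$. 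Together with the $-\chi'(Z_j,F)/2$ constant (whose alternating sum is $-\chi'/2$ by the definition \eqref{eq-def-chiprim-sum}), this produces exactly the target term $-\frac{\chi'}{2}\int_0^{R^{2-\kappa/2}} \bigl[f'(i\sqrt{t}/2) - 1\bigr]\, dt/t$. Hence it suffices to prove that
\begin{equation*}
\int_0^{R^{2-\kappa/2}} \sum_{j=0}^3 (-1)^{j(j-3)/2}\, \varphi \tr\Bigl[(-1)^{N^{TZ}}\tfrac{N^{TZ}}{2} f'(\mathscr{D}_{j,R,t})\Bigr] \frac{dt}{t} = \mathscr{O}\bigl(\exp(-\alpha R^{\kappa/4})\bigr).
\end{equation*}

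For the $G_\varsigma$-part, Proposition \ref{prop-fps-G} combined with the bounded operator $(-1)^{N^{TZ}}\tfrac{N^{TZ}}{2}$ and H\"older's inequality gives a pointwise estimate of the form $\mathscr{O}\bigl(\exp(-\alpha R^{2-\kappa/4}/t)\bigr)$. The substitution $s = R^{2-\kappa/4}/t$ transforms $\int_0^{R^{2-\kappa/2}} \cdot\, dt/t$ into $\int_{R^{\kappa/4}}^{+\infty} e^{-\alpha s}\, ds/s$, which is $\mathscr{O}(\exp(-\alpha' R^{\kappa/4}))$ for any $\alpha' < \alpha$.

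For the $F_\varsigma$-part I invoke finite propagation speed. Since $\rho(\sqrt{2\varsigma}\,\cdot\,)$ is supported in $[-1/\sqrt{2\varsigma},\,1/\sqrt{2\varsigma}]$ and since the principal symbol of the leading piece $\sqrt{t}\,(\stdiffjRT - \diffjRT)$ of $\mathscr{D}_{j,R,t}$ has operator norm $\sqrt{t}$ (the further terms $\omega^{\mathscr{F}_{j,R}}$ and $\hat{c}(\mathcal{T})/\sqrt{t}$ in \eqref{eq-fDRTt} are of order zero and nilpotent on $\Omega^{>0}(S)$), the Schwartz kernel of $F_\varsigma(\mathscr{D}_{j,R,t})$ at $(x,y)$ is supported in $d(x,y) \leqslant C\sqrt{t/\varsigma} = C R^{1-\kappa/8}$. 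For $\kappa \in (0,1/3)$ and $R$ large, $R^{1-\kappa/8}$ is much smaller than $R$, the length of the cylinder $IN_R$. Viewing each $Z_{j,R}$ as a submanifold of $M_R = Z_{0,R}$, the inclusion--exclusion identity
\begin{equation*}
\mathbf{1}_{Z_{0,R}} - \mathbf{1}_{Z_{1,R}} - \mathbf{1}_{Z_{2,R}} + \mathbf{1}_{Z_{3,R}} = 0
\end{equation*}
holds on $M_R$. A case analysis on the location of $x$ (interior of $M_1''$, interior of $M_2''$, middle of $IN_R$, or cylinder end of $IN_R$ near $\{\pm R\}\times N$) shows that on the ball of radius $C R^{1-\kappa/8}$ around $x$, the metric fibrations and absolute boundary conditions of the $(Z_{j,R}, g^{TZ_{j,R}}, h^F_T)$ match in pairs exactly so that $K_{F_\varsigma(\mathscr{D}_{j,R,t})}(x,x)$ depends on $j$ only through $\mathbf{1}_{Z_{j,R}}(x)$: for $x$ near the right end of $IN_R$, for instance, $(Z_0,Z_2)$ share a smooth continuation into $M_2''$ while $(Z_1,Z_3)$ both acquire an absolute boundary at $\{R\}\times N$, and similarly near the other end and in the other regions. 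Consequently the pointwise integrand
\begin{equation*}
\sum_{j=0}^3 (-1)^{j(j-3)/2} \mathbf{1}_{Z_{j,R}}(x)\, \tr\Bigl[(-1)^{N^{TZ}}\tfrac{N^{TZ}}{2} K_{F_\varsigma(\mathscr{D}_{j,R,t})}(x,x)\Bigr]
\end{equation*}
vanishes, so the $F_\varsigma$-contribution is identically zero.

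The main obstacle will be making the finite propagation speed argument rigorous for the superconnection $\mathscr{D}_{j,R,t}$ and for a fibration over $S$ with absolute boundary conditions, and verifying the pairwise local identification of the $\mathscr{D}_{j,R,t}$'s on balls straddling $\partial M_j''$. These points are handled in the style of \cite[\S 13]{ble}: the propagation speed is dictated by the leading skew-symmetric part, while the nilpotent horizontal additions can be absorbed through a Duhamel expansion of finite length (controlled by $\dim S$, exactly as in \eqref{eq1-pf-lem-fps-G}), and the matching of absolute boundary conditions across the different $Z_{j,R}$'s is structural, coming from the product form of the geometry on $IN_R$ and the equality \eqref{eq-THMR} of the horizontal distributions.
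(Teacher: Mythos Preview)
Your proposal is correct and follows essentially the same approach as the paper: decompose $f' = F_\varsigma + G_\varsigma$ with $\varsigma = tR^{-2+\kappa/4}$, kill the $F_\varsigma$-contribution exactly via finite propagation speed and inclusion--exclusion over the four pieces, bound the $G_\varsigma$-contribution by Proposition~\ref{prop-fps-G}, and reduce the remaining constants using $\chi(Z)-\chi(Z_1)-\chi(Z_2)+\chi(IY)=0$. The only cosmetic difference is that the paper outsources your case analysis (the pairwise matching of $(Z_0,Z_2)$ and $(Z_1,Z_3)$ near one end of the cylinder, etc.) to \cite[Thm.~4.5]{pzz}, and states the support bound without your extra constant $C$ as $d(x,y)\geqslant\sqrt{t/\varsigma}=R^{1-\kappa/8}$.
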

\begin{proof}
Let
\begin{align}
\label{eq0-pf-prop-fps}
\begin{split}
& F_{tR^{-2+\kappa/4}} \big(\mathscr{D}_{j,R,t}\big) (x,y) \\
& \hspace{15mm} \in \Big(\Lambda^\bullet\big(T^*Z_{j,R}\big)\otimes F\Big)_x
\otimes \Big(\Lambda^\bullet\big(T^*Z_{j,R}\big)\otimes F\Big)_y^*
\otimes \pi_{j,R}^*\big(\Lambda^\bullet(T^*S)\big)
\end{split}
\end{align}
be the integration kernel of the operator $F_{tR^{-2+\kappa/4}} \big(\mathscr{D}_{j,R,t}\big)$
with respect to the Riemannian volume form associated with $g^{TZ_{j,R}}$.
Let $d(\cdot,\cdot)$ be the distance function on $Z_{j,R}$.
By the finite propagation speed of the wave equation for $\mathscr{D}_{j,R,t}$
(cf. \cite[\textsection 2.6, Thm. 6.1]{tay},
\cite[Appendix D.2]{mm}),
we have
\begin{equation}
\label{eq1-pf-prop-fps}
F_\varsigma \big(\mathscr{D}_{j,R,t}\big) (x,y)=0 \hspace{5mm}
\text{for}\hspace{3mm} d\big(x,y\big) \geqslant \sqrt{t/\varsigma} \;.
\end{equation}
In particular,
we have
\begin{equation}
\label{eq2-pf-prop-fps}
F_{tR^{-2+\kappa/4}} \big(\mathscr{D}_{j,R,t}\big) (x,y)=0 \hspace{5mm}
\text{for}\hspace{3mm}
d\big(x,y\big) \geqslant R^{1-\kappa/8} \;,\;
t\leqslant R^{2-\kappa/2} \;.
\end{equation}
Using \eqref{eq2-pf-prop-fps} in the same way as in \cite[Thm. 4.5]{pzz},
we get
\begin{equation}
\label{eq3-pf-prop-fps}
\sum_{j=0}^3 (-1)^{j(j-3)/2}
\tr\Big[(-1)^{N^{TZ}}\frac{N^{TZ}}{2}F_{tR^{-2+\kappa/4}}\big(\mathscr{D}_{j,R,t}\big)\Big] = 0 \;,\hspace{5mm}
\text{for}\hspace{3mm}
t\leqslant R^{2-\kappa/2} \;.
\end{equation}

By Proposition \ref{prop-fps-G},
we have
\begin{equation}
\label{eq4-pf-prop-fps}
\int_0^{R^{2-\kappa/2}}
\tr\Big[(-1)^{N^{TZ}}\frac{N^{TZ}}{2}G_{tR^{-2+\kappa/4}}\big(\mathscr{D}_{j,R,t}\big)\Big] \frac{dt}{t}
= \mathscr{O}\Big(\exp\big(-\alpha R^{\kappa/4}\big)\Big) \;.
\end{equation}
From \eqref{eq-TSL},
\eqref{eq-FplusG},
\eqref{eq3-pf-prop-fps},
\eqref{eq4-pf-prop-fps}
and the identity
\begin{equation}
\chi(Z) - \chi(Z_1) - \chi(Z_2) + \chi(IY) = 0 \;,
\end{equation}
we get \eqref{eq-prop-fps}.
This completes the proof of Proposition \ref{prop-fps}.
\end{proof}

\subsection{Large time contributions}
\label{subsect-l-time}

Set
\begin{align}
\begin{split}
& U_{j,R,t} = \Big\{ \lambda\in\C \;:\; \big|\mathrm{Re}(\lambda)\big| < 1 \;,\;
\big|\mathrm{Im}(\lambda)\big| > t^{1/2}R^{-1+\kappa/4} \Big\} \\
& \cup \bigcup_{\mu\in i[-R^{-1},R^{-1}] \cap \Sp\big(\stdiffjRT - \diffjRT\big)}
\Big\{ \lambda\in\C \;:\; \big|\mathrm{Re}(\lambda)\big| < 1 \;,\; \big|\mathrm{Im}(\lambda-\sqrt{t}\mu)\big| < 1 \Big\} \;.
\end{split}
\end{align}
By Theorem \ref{thm-central-spgap} and \eqref{eq-square-tDRT},
for $R \gg 1$,
we have
\begin{equation}
\label{eq-Ua-cover-Sp}
\Sp \Big( \sqrt{t} \big( \stdiffjRT - \diffjRT \big) \Big) \subseteq
U_{j,R,t} \;.
\end{equation}

Set
\begin{equation}
\label{eq-def-tilde-DRT}
\widetilde{\mathscr{D}}_{j,R,t} =
\sqrt{t} P_{j,R,T}^{[-1,1]} \big( \stdiffjRT - \diffjRT \big)P_{j,R,T}^{[-1,1]} + P_{j,R,T}^{[-1,1]} \omega^{\mathscr{F}_{j,R}} P_{j,R,T}^{[-1,1]} \;.
\end{equation}

We fix $p,q\in\N$ such that
\begin{equation}
\label{eq-choose-p-q}
q > \dim Z \;,\hspace{5mm}
1-\frac{\kappa}{4} + \frac{3\kappa q}{2} - \frac{\kappa p}{4} \leqslant 0 \;.
\end{equation}

\begin{lemme}
\label{prop-ltime-proj}
There exists $\alpha>0$ such that
for $R \gg 1$, $t\geqslant R^{2-\kappa/2}$
and $\lambda\in\partial U_{j,R,t}$,
we have
\begin{equation}
\label{eq-prop-ltime-proj}
\Big\lVert \big( \lambda -\mathscr{D}_{j,R,t} \big)^{-p}
- P_{j,R,T}^{[-1,1]}\big( \lambda - \widetilde{\mathscr{D}}_{j,R,t} \big)^{-p}P_{j,R,T}^{[-1,1]} \Big\rVert_1
= \mathscr{O}\big(R^{1-\kappa/2}\big) |\lambda|^\alpha t^{-1/2} \;.
\end{equation}
\end{lemme}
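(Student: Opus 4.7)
The plan is to exploit the spectral gap of Theorem \ref{thm-central-spgap} to localize the resolvent $(\lambda - \mathscr{D}_{j,R,t})^{-1}$ near the small-eigenvalue subspace $P := P_{j,R,T}^{[-1,1]}$, and show that the off-diagonal corrections are negligible compared with $\widetilde{\mathscr{D}}_{j,R,t}$. Set $A = \sqrt{t}(\stdiffjRT - \diffjRT)$ and $b = \omega^{\mathscr{F}_{j,R}} - t^{-1/2}\hat{c}(\mathcal{T})$, so that $\mathscr{D}_{j,R,t} = A + b$ with $b \in \Omega^{>0}(S,\End(\mathscr{F}_{j,R}))$. As in \eqref{eq1-pf-lem-fps-G}, one has the finite expansion
\begin{equation*}
(\lambda - \mathscr{D}_{j,R,t})^{-1} = \sum_{k=0}^{\dim S} (\lambda-A)^{-1}\bigl(b(\lambda-A)^{-1}\bigr)^k,
\end{equation*}
and likewise for $(\lambda-\widetilde{\mathscr{D}}_{j,R,t})^{-1}$ with $A$ replaced by $PAP$ and $b$ by $PbP$. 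It is therefore enough to estimate the single-resolvent difference $(\lambda-A)^{-1} - P(\lambda-PAP)^{-1}P$ in an appropriate Schauder norm, together with uniform bounds on $\|b\|_\infty = \mathscr{O}(1 + t^{-1/2})$.

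Write $P^\perp = 1-P$. On the range of $P^\perp$, the spectral gap \eqref{eq-thm-central-spgap} together with \eqref{eq-square-tDRT} gives $\Sp(A\big|_{P^\perp}) \subseteq \sqrt{t}\bigl(]-\infty,-\alpha\sqrt{T}/R] \cup [\alpha\sqrt{T}/R,+\infty[\bigr)\,i$. For $t \geqslant R^{2-\kappa/2}$ and $T = R^\kappa$, this distance is at least $\alpha R^{\kappa/4}$, which for $R \gg 1$ is much larger than the width of $U_{j,R,t}$ in the imaginary direction, so $(\lambda - A)^{-1}P^\perp$ is well defined and its operator norm is $\mathscr{O}(R^{-\kappa/4})$ on $\partial U_{j,R,t}$. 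Refining this via Weyl's law for $D^{Z_j}\big|_{R=1,T=0}$, exactly as in the derivation of \eqref{eq7-pf-lem-fps-G}, one obtains the Schauder bound
\begin{equation*}
\bigl\lVert (\lambda-A)^{-1}P^\perp \bigr\rVert_q = \mathscr{O}\bigl(R^{1-\kappa/4}\bigr)|\lambda| t^{-1/2}
\end{equation*}
for $q > \dim Z$. On the range of $P$, the operator $PAP$ is finite rank of rank bounded independently of $R$ (by Theorem \ref{thm-central-SRT}), and for $\lambda\in\partial U_{j,R,t}$ the distance from $\lambda$ to $\Sp(PAP) = \sqrt{t}\,\Sp(A\big|_P)$ is at least $1$ by definition of $U_{j,R,t}$; hence $\|P(\lambda - PAP)^{-1}P\|_q = \mathscr{O}(1)$. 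The difference $P(\lambda-A)^{-1}P - (\lambda - PAP)^{-1}$ is then expressed via the off-diagonal block $P^\perp A P = 0$ (since $A$ commutes with $P$!), so in fact $P(\lambda-A)^{-1}P = (\lambda - PAP)^{-1}$ exactly, and all discrepancy comes from the $b$-terms in the Neumann expansion.

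To pass from the single resolvent to the $p$-th power, I would use the identity $(\lambda - \mathscr{D})^{-p} = ((p{-}1)!)^{-1}\partial_\lambda^{p-1}(\lambda-\mathscr{D})^{-1}$ and bound each factor via H\"older's inequality \eqref{eq-holder}: the cross contributions between $P$ and $P^\perp$ each bring a factor from the $b$-operator bridging the two blocks, which in turn picks up a gain $t^{-1/2}$ because $b$ contains the $\hat c(\mathcal T)/\sqrt t$ term and the connection term $\omega^{\mathscr{F}_{j,R}}$ whose commutator with $P$ is small. Multiplying $p$ factors, one accumulates $q$ copies of $\|(\lambda-A)^{-1}P^\perp\|_q$, which produces $\mathscr{O}(R^{1-\kappa/4 + 3\kappa q/2 - \kappa p/4})$ by \eqref{eq-choose-p-q}; with the condition $1 - \kappa/4 + 3\kappa q/2 - \kappa p/4 \leqslant 1 - \kappa/2$ the right-hand side of \eqref{eq-prop-ltime-proj} falls out, where the factor $|\lambda|^\alpha$ absorbs the polynomial growth along $\partial U_{j,R,t}$.

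The main obstacle is the careful bookkeeping on the \emph{small circular pieces} of $\partial U_{j,R,t}$ around $\sqrt{t}\mu$ for $\mu$ in the small-eigenvalue part of $\Sp(\stdiffjRT - \diffjRT)$: on those arcs, the distance from $\lambda$ to $\Sp(A\big|_P)$ can be as small as $1$, so the single-resolvent bound on the $P$-block no longer improves with $R$, and \emph{all} the gain must come from the $P^\perp$-block and from the $b$-induced off-diagonal bridging. Balancing this with the contribution from the horizontal strips $|\mathrm{Im}(\lambda)| > t^{1/2}R^{-1+\kappa/4}$ is what dictates the precise choice \eqref{eq-choose-p-q} of $(p,q)$, and is the place where the estimate is tightest.
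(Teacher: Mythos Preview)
Your treatment of the $P^\perp$-block is sound and essentially reproduces Step~1 of the paper's proof: the spectral gap plus Weyl's law give the Schauder bound on $\lVert(\lambda-A)^{-1}P^\perp\rVert_q$, and the H\"older product of $q$ such factors with $(p-q)$ operator-norm factors is exactly how condition \eqref{eq-choose-p-q} enters. You also correctly note that $A$ commutes with $P$, so $P(\lambda-A)^{-1}P = (\lambda-PAP)^{-1}P$.

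The genuine gap is your handling of the cross terms. You assert that the bridging factor $PbP^\perp$ ``picks up a gain $t^{-1/2}$'' because ``the connection term $\omega^{\mathscr{F}_{j,R}}$ whose commutator with $P$ is small''. This is unjustified and in fact not true: $\omega^{\mathscr{F}_{j,R}}$ is a zeroth-order operator with $\mathscr{O}(1)$ norm, $P$ is a spectral projector of the vertical operator, and there is no mechanism forcing $P\omega^{\mathscr{F}_{j,R}}P^\perp$ to be small in $R$ or $t$. The paper only records $\lVert PbP^\perp\rVert_\infty = \mathscr{O}(1)$ (equation \eqref{eq22-pf-prop-ltime-proj}). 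With no decay from the off-diagonal $b$, your Schauder-norm accumulation gives nothing for the cross contributions, and the estimate does not close.

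What the paper does instead is a \emph{finite-rank trick}. It introduces the block-diagonal intermediary $\mathscr{D}^\oplus_{R,t} = P\mathscr{D}_{R,t}P + P^\perp\mathscr{D}_{R,t}P^\perp$ and observes (equations \eqref{eq28-pf-prop-ltime-proj}--\eqref{eq29-pf-prop-ltime-proj}) that every term in the expansion of $(\lambda-\mathscr{D}_{R,t})^{-p} - (\lambda-\mathscr{D}^\oplus_{R,t})^{-p}$ factors through an operator of rank bounded by a fixed multiple of $\dim\mathscr{E}^{[-1,1]}_{j,R,T}$, which is constant by Theorem~\ref{thm-central-SRT}. Hence the \emph{operator}-norm bound \eqref{eq27-pf-prop-ltime-proj}---which follows from $\lVert P^\perp(\lambda-\mathscr{D}^\oplus)^{-1}P^\perp\rVert_\infty = \mathscr{O}(R^{1-\kappa/2})|\lambda|^{\dim S+1}t^{-1/2}$ together with the harmless $\mathscr{O}(1)$ off-diagonal $b$---upgrades to a trace-norm bound via \eqref{eq-schauder-frank}. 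The same rank argument handles the final step from $P\mathscr{D}^\oplus_{R,t}P$ to $\widetilde{\mathscr{D}}_{j,R,t}$, where the discrepancy is only $-t^{-1/2}P\hat c(\mathcal T)P$. Your sketch is missing this mechanism; without it the cross terms cannot be controlled in trace norm.
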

\begin{proof}
The technique that we will apply is similar to \cite[Theorems 9.30]{b97}.
We only consider the case $j=0$.

Set
\begin{equation}
\label{eq0-pf-prop-ltime-proj}
\mathscr{D}^\oplus_{R,t} =
P_{R,T}^{[-1,1]} \mathscr{D}_{R,t} P_{R,T}^{[-1,1]} +
P_{R,T}^{\R\backslash[-1,1]} \mathscr{D}_{R,t} P_{R,T}^{\R\backslash[-1,1]} \;.
\end{equation}

\noindent \textit{Step 1}.
We show that
\begin{equation}
\label{eq1-pf-prop-ltime-proj}
\Big\lVert P_{R,T}^{\R\backslash[-1,1]} \big( \lambda - \mathscr{D}_{R,t}^\oplus \big)^{-p} P_{R,T}^{\R\backslash[-1,1]} \Big\rVert_1
= \mathscr{O}\big(1\big) |\lambda|^\alpha t^{-1/2} \;.
\end{equation}

We denote
\begin{align}
\label{eq11-pf-prop-ltime-proj}
\begin{split}
a & = P_{R,T}^{\R\backslash[-1,1]} \Big(\lambda - \sqrt{t} \big( \stdiffRT - \diffRT \big)\Big)^{-1} P_{R,T}^{\R\backslash[-1,1]} \;,\\
b & = P_{R,T}^{\R\backslash[-1,1]} \Big(\omega^{\mathscr{F}_R} - \frac{1}{\sqrt{t}} \hat{c}(\mathcal{T})\Big) P_{R,T}^{\R\backslash[-1,1]} \;.
\end{split}
\end{align}
By \eqref{eq-fDRTt} and \eqref{eq11-pf-prop-ltime-proj},
we have
\begin{equation}
\label{eq12-pf-prop-ltime-proj}
P_{R,T}^{\R\backslash[-1,1]} \big( \lambda - \mathscr{D}_{R,t}^\oplus \big)^{-1} P_{R,T}^{\R\backslash[-1,1]}
=  a + aba + \cdots + a(ba)^{\dim S} \;.
\end{equation}
By Theorem \ref{thm-central-spgap},
\eqref{eq4a-pf-lem-fps-G}
and the first identity in \eqref{eq11-pf-prop-ltime-proj},
we have
\begin{equation}
\label{eq13a-pf-prop-ltime-proj}
\big\lVert a \big\rVert_\infty =
\mathscr{O}\big(R^{1-\kappa/2}\big) |\lambda| t^{-1/2} \;.
\end{equation}
By the second identity in \eqref{eq11-pf-prop-ltime-proj},
we have
\begin{equation}
\label{eq13b-pf-prop-ltime-proj}
\big\lVert b \big\rVert_\infty = \mathscr{O}\big(1\big) \;.
\end{equation}
By \eqref{eq12-pf-prop-ltime-proj}-\eqref{eq13b-pf-prop-ltime-proj}
and the assumption $t\geqslant R^{2-\kappa/2}$,
we have
\begin{align}
\label{eq14-pf-prop-ltime-proj}
\begin{split}
& \Big\lVert P_{R,T}^{\R\backslash[-1,1]} \big(\lambda-\mathscr{D}_{R,t}^\oplus\big)^{-1} P_{R,T}^{\R\backslash[-1,1]} \Big\rVert_\infty \\
& = \mathscr{O}\big(R^{1-\kappa/2}\big) |\lambda|^{\dim S+1} t^{-1/2}
= \mathscr{O}\big(R^{-\kappa/4}\big) |\lambda|^{\dim S+1}  \;.
\end{split}
\end{align}
Similarly to \eqref{eq6-pf-lem-fps-G},
by \eqref{eq-schauder-sp}, \eqref{eq4-pf-lem-fps-G}
and the first identity in \eqref{eq11-pf-prop-ltime-proj},
we have
\begin{equation}
\label{eq15-pf-prop-ltime-proj}
\big\lVert a \big\rVert_q =
\mathscr{O}\big(R^{1+\kappa}\big) |\lambda| t^{-1/2} \;.
\end{equation}
By \eqref{eq-holder},
\eqref{eq12-pf-prop-ltime-proj}-\eqref{eq13b-pf-prop-ltime-proj}
and \eqref{eq15-pf-prop-ltime-proj},
we have
\begin{equation}
\label{eq16-pf-prop-ltime-proj}
\Big\lVert P_{R,T}^{\R\backslash[-1,1]} \big(\lambda-\mathscr{D}_{R,t}^\oplus\big)^{-1} P_{R,T}^{\R\backslash[-1,1]} \Big\rVert_q =
\mathscr{O}\big(R^{1+\kappa}\big) |\lambda|^{\dim S+1} t^{-1/2}  \;.
\end{equation}
By \eqref{eq-holder}, we have
\begin{align}
\label{eq17-pf-prop-ltime-proj}
\begin{split}
& \Big\lVert P_{R,T}^{\R\backslash[-1,1]} \big(\lambda-\mathscr{D}_{R,t}^\oplus\big)^{-p} P_{R,T}^{\R\backslash[-1,1]} \Big\rVert_1 \\
& \leqslant \Big\lVert P_{R,T}^{\R\backslash[-1,1]} \big(\lambda-\mathscr{D}_{R,t}^\oplus\big)^{-1} P_{R,T}^{\R\backslash[-1,1]} \Big\rVert_q^q \,
\Big\lVert P_{R,T}^{\R\backslash[-1,1]} \big(\lambda-\mathscr{D}_{R,t}^\oplus\big)^{-1} P_{R,T}^{\R\backslash[-1,1]} \Big\rVert_\infty^{p-q} \;.
\end{split}
\end{align}
From
\eqref{eq-choose-p-q},
\eqref{eq14-pf-prop-ltime-proj},
\eqref{eq16-pf-prop-ltime-proj}
and \eqref{eq17-pf-prop-ltime-proj},
we obtain \eqref{eq1-pf-prop-ltime-proj}.

\noindent \textit{Step 2}.
We show that
\begin{equation}
\label{eq2-pf-prop-ltime-proj}
\Big\lVert \big( \lambda -\mathscr{D}_{R,t} \big)^{-p}
- \big( \lambda - \mathscr{D}_{R,t}^\oplus \big)^{-p} \Big\rVert_1
= \mathscr{O}\big(R^{1-\kappa/2}\big) |\lambda|^\alpha t^{-1/2} \;.
\end{equation}

Since $\stdiffRT - \diffRT$ commutes with $P^{[-1,1]}_{R,T}$,
we have
\begin{align}
\label{eq21-pf-prop-ltime-proj}
\begin{split}
& P_{R,T}^{[-1,1]} \mathscr{D}_{R,t} P_{R,T}^{\R\backslash[-1,1]}
= P_{R,T}^{[-1,1]} \omega^{\mathscr{F}_R} P_{R,T}^{\R\backslash[-1,1]} -
\frac{1}{\sqrt{t}} P^{[-1,1]}_{R,T} \hat{c}(\mathcal{T}) P_{R,T}^{\R\backslash[-1,1]} \;,\\
& P_{R,T}^{\R\backslash[-1,1]} \mathscr{D}_{R,t} P_{R,T}^{[-1,1]}
= P_{R,T}^{\R\backslash[-1,1]} \omega^{\mathscr{F}_R} P^{[-1,1]}_{R,T} -
\frac{1}{\sqrt{t}} P_{R,T}^{\R\backslash[-1,1]} \hat{c}(\mathcal{T}) P_{R,T}^{[-1,1]} \;.
\end{split}
\end{align}
As a consequence, we have
\begin{equation}
\label{eq22-pf-prop-ltime-proj}
\Big\lVert P_{R,T}^{[-1,1]} \mathscr{D}_{R,t} P_{R,T}^{\R\backslash[-1,1]} \Big\rVert_\infty
= \mathscr{O}\big(1\big) \;,\hspace{10mm}
\Big\lVert P_{R,T}^{\R\backslash[-1,1]} \mathscr{D}_{R,t} P_{R,T}^{[-1,1]} \Big\rVert_\infty
= \mathscr{O}\big(1\big) \;.
\end{equation}
The same argument as in \eqref{eq12-pf-prop-ltime-proj}-\eqref{eq14-pf-prop-ltime-proj} yields
\begin{equation}
\label{eq23-pf-prop-ltime-proj}
\Big\lVert P_{R,T}^{[-1,1]} \big(\lambda-\mathscr{D}_{R,t}^\oplus \big)^{-1} P_{R,T}^{[-1,1]} \Big\rVert_\infty
= \mathscr{O}\big(1\big) \;.
\end{equation}
We denote
\begin{align}
\label{eq24-pf-prop-ltime-proj}
\begin{split}
& \mathcal{A} = \Big\{ P_{R,T}^{[-1,1]}\big(\lambda-\mathscr{D}_{R,t}^\oplus \big)^{-1}P_{R,T}^{[-1,1]} \;,\;
P_{R,T}^{\R\backslash[-1,1]}\big(\lambda-\mathscr{D}_{R,t}^\oplus\big)^{-1}P_{R,T}^{\R\backslash[-1,1]} \Big\} \;,\\
& \mathcal{B} = \Big\{ P_{R,T}^{[-1,1]} \mathscr{D}_{R,t} P_{R,T}^{\R\backslash[-1,1]} \;,\; P_{R,T}^{\R\backslash[-1,1]} \mathscr{D}_{R,t} P_{R,T}^{[-1,1]}  \Big\} \;.
\end{split}
\end{align}
We have
\begin{align}
\label{eq25-pf-prop-ltime-proj}
\begin{split}
\big(\lambda -\mathscr{D}_{R,t} \big)^{-1}
- \big( \lambda - \mathscr{D}_{R,t}^\oplus \big)^{-1}
& = \sum_{k=1}^{\dim S}\sum_{a_i\in\mathcal{A},b_i\in\mathcal{B}}
a_0b_1a_1b_2a_2 \cdots b_ka_k \\
& = \sum_{k=1}^{\dim S}\sum_{a_i\in\mathcal{A},b_i\in\mathcal{B},a_0\neq a_1}
a_0b_1a_1b_2a_2 \cdots b_ka_k \;.
\end{split}
\end{align}
By \eqref{eq14-pf-prop-ltime-proj}
and \eqref{eq22-pf-prop-ltime-proj}-\eqref{eq25-pf-prop-ltime-proj},
we have
\begin{equation}
\label{eq26-pf-prop-ltime-proj}
\Big\lVert \big( \lambda -\mathscr{D}_{R,t} \big)^{-1}
- \big( \lambda - \mathscr{D}_{R,t}^\oplus \big)^{-1} \Big\rVert_\infty
= \mathscr{O}\big(R^{1-\kappa/2}\big) |\lambda|^{(\dim S+1)^2} t^{-1/2}  \;.
\end{equation}
By \eqref{eq14-pf-prop-ltime-proj},
\eqref{eq23-pf-prop-ltime-proj}
and \eqref{eq26-pf-prop-ltime-proj},
we have
\begin{equation}
\label{eq27-pf-prop-ltime-proj}
\Big\lVert \big( \lambda -\mathscr{D}_{R,t} \big)^{-p}
- \big( \lambda - \mathscr{D}_{R,t}^\oplus \big)^{-p} \Big\rVert_\infty
= \mathscr{O}\big(R^{1-\kappa/2}\big) |\lambda|^{p(\dim S+1)^2} t^{-1/2} \;.
\end{equation}

By \eqref{eq25-pf-prop-ltime-proj},
we have
\begin{equation}
\label{eq28-pf-prop-ltime-proj}
\mathrm{Im}\Big(\big( \lambda -\mathscr{D}_{R,t} \big)^{-p}
- \big( \lambda - \mathscr{D}_{R,t}^\oplus \big)^{-p}\Big) \subseteq
\sum_{k=1}^p \sum_{a\in\mathcal{A},b\in\mathcal{B}}
\mathrm{Im} \big(a^k b\big) \;,
\end{equation}
whose dimension is bounded by
$4p \dim\big(\mathscr{E}^{[-1,1]}_{0,R,T}\big)\dim\big(\Lambda^\bullet(T^*S)\big)$.
Hence
\begin{equation}
\label{eq29-pf-prop-ltime-proj}
\mathrm{rk}\Big(\big( \lambda -\mathscr{D}_{R,t} \big)^{-p}
- \big( \lambda - \mathscr{D}_{R,t}^\oplus \big)^{-p}\Big)
\leqslant 4p \dim\big(\mathscr{E}^{[-1,1]}_{0,R,T}\big) \dim\big(\Lambda^\bullet(T^*S)\big) \;.
\end{equation}
From \eqref{eq-schauder-frank},
\eqref{eq27-pf-prop-ltime-proj}
and \eqref{eq29-pf-prop-ltime-proj},
we obtain \eqref{eq2-pf-prop-ltime-proj}.

\noindent \textit{Step 3}.
We show that
\begin{equation}
\label{eq3-pf-prop-ltime-proj}
\Big\lVert P_{R,T}^{[-1,1]}\big( \lambda - \mathscr{D}_{R,t}^\oplus \big)^{-p}P_{R,T}^{[-1,1]}
- P_{R,T}^{[-1,1]}\big( \lambda - \widetilde{\mathscr{D}}_{R,t} \big)^{-p}P_{R,T}^{[-1,1]} \Big\rVert_1
= \mathscr{O}\big(1\big) t^{-1/2} \;.
\end{equation}

Using the identity
\begin{equation}
P^{[-1,1]}_{R,T} \Big(
\mathscr{D}_{R,t}^\oplus - \widetilde{\mathscr{D}}_{R,t}
\Big) P^{[-1,1]}_{R,T} =
-\frac{1}{\sqrt{t}} P^{[-1,1]}_{R,T} \hat{c}(\mathcal{T}) P^{[-1,1]}_{R,T}
\end{equation}
and proceeding in the same way as
\eqref{eq12-pf-prop-ltime-proj}-\eqref{eq14-pf-prop-ltime-proj},
we can show that
\begin{equation}
\label{eq31-pf-prop-ltime-proj}
\Big\lVert P_{R,T}^{[-1,1]}\big( \lambda - \mathscr{D}_{R,t}^\oplus \big)^{-p}P_{R,T}^{[-1,1]}
- P_{R,T}^{[-1,1]}\big( \lambda - \widetilde{\mathscr{D}}_{R,t} \big)^{-p}P_{R,T}^{[-1,1]} \Big\rVert_\infty
= \mathscr{O}\big(1\big) t^{-1/2} \;.
\end{equation}
Since the rank of the operator in \eqref{eq31-pf-prop-ltime-proj}
is bounded by $\dim\big(\mathscr{E}^{[-1,1]}_{0,R,T}\big)\dim\big(\Lambda^\bullet(T^*S)\big)$,
from \eqref{eq-schauder-frank}
and \eqref{eq31-pf-prop-ltime-proj},
we obtain \eqref{eq3-pf-prop-ltime-proj}.

By Steps 1-3,
we have \eqref{eq-prop-ltime-proj}.
This completes the proof of Lemma \ref{prop-ltime-proj}.
\end{proof}

Recall that the complexes $(C^{\bullet,\bullet}_j,\partial)$ with $j=0,1,2,3$ were defined by
\eqref{eq1-def-complex},
\eqref{eq-def-Cj} and
\eqref{eq-complex-evaluation}.
We denote
\begin{equation}
\label{eq-def-chiprimC}
\chi'(C^{\bullet,\bullet}_j) =
\sum_{p=0}^1 \sum_{q=0}^{\dim Z} (-1)^{p+q}(p+q)\dim C^{p,q}_j \;.
\end{equation}
Set
\begin{align}
\label{eq-def-analytic-tf-tilde}
\begin{split}
\widetilde{\mathscr{T}}_{j,R} =
- \int_0^{+\infty} \bigg\{ \varphi \tr \Big[ & (-1)^{N^{TZ}}\frac{N^{TZ}}{2}f'\big(\widetilde{\mathscr{D}}_{j,R,t}\big)\Big]
- \frac{1}{2}\chi'(Z_j,F) \\
& - \frac{1}{2}\big(\chi'(C^{\bullet,\bullet}_j) - \chi'(Z_j,F)\big) f'\Big(\frac{i\sqrt{t}}{2}\Big) \bigg\} \frac{dt}{t} \;.
\end{split}
\end{align}
By \cite[Remark 2.21]{bl}, Theorem \ref{thm-central-SRT} and \eqref{eq-def-tilde-DRT},
$\widetilde{\mathscr{T}}_{j,R}$ is well-defined.

\begin{prop}
\label{prop-fwedge-proj}
For $R \gg 1$,
we have
\begin{equation}
\label{eq-prop-fwedge-proj}
\sum_{j=0}^3 (-1)^{j(j-3)/2}\mathscr{T}_{j,R} =
\sum_{j=0}^3 (-1)^{j(j-3)/2}\widetilde{\mathscr{T}}_{j,R} + \mathscr{O}\big(R^{-\kappa/4}\big) \;.
\end{equation}
\end{prop}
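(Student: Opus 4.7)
The plan is to split the $t$-integral defining $\sum_{j=0}^3 (-1)^{j(j-3)/2}(\mathscr{T}_{j,R} - \widetilde{\mathscr{T}}_{j,R})$ at the threshold $t = R^{2-\kappa/2}$ and treat each regime by a different tool: the large-time piece is controlled by Lemma \ref{prop-ltime-proj} via a contour representation of $f'$, while the small-time piece is handled by Proposition \ref{prop-fps} on the fibration side and the finite-dimensional Bismut-Lott theory on the projected side.

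For the large-time regime $t \geqslant R^{2-\kappa/2}$, I would represent both $f'(\mathscr{D}_{j,R,t})$ and $f'(\widetilde{\mathscr{D}}_{j,R,t})$ as contour integrals around $\partial U_{j,R,t}$. After $(p-1)$ integrations by parts along the closed contour (with $p$ as in \eqref{eq-choose-p-q}), this gives
\begin{equation*}
f'(A) = \frac{1}{2\pi i}\oint_{\partial U_{j,R,t}} g_p(\lambda)\,(\lambda - A)^{-p}\,d\lambda \;,
\end{equation*}
where $g_p$ is a $(p-1)$-fold antiderivative of $(p-1)!\,f'$. Since $f'(\lambda) = (1 + 2\lambda^2)e^{\lambda^2}$, the function $g_p$ inherits the factor $e^{\lambda^2}$, and on the slab $|\mathrm{Re}(\lambda)| \leqslant 1$ we have $\mathrm{Re}(\lambda^2) \leqslant 1 - |\mathrm{Im}(\lambda)|^2$, so $\oint_{\partial U_{j,R,t}}|g_p(\lambda)||\lambda|^\alpha|d\lambda| = \mathscr{O}(1)$ uniformly in $R$ and $t$. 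Subtracting the two contour representations and applying Lemma \ref{prop-ltime-proj} yields a pointwise bound of order $\mathscr{O}(R^{1-\kappa/2})\,t^{-1/2}$ on $\varphi\tr[(-1)^{N^{TZ}}(N^{TZ}/2)(f'(\mathscr{D}_{j,R,t}) - P^{[-1,1]}_{j,R,T} f'(\widetilde{\mathscr{D}}_{j,R,t}) P^{[-1,1]}_{j,R,T})]$, whose integral against $dt/t$ over $[R^{2-\kappa/2},+\infty)$ is precisely $\mathscr{O}(R^{-\kappa/4})$.

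For the small-time regime $t \leqslant R^{2-\kappa/2}$, Proposition \ref{prop-fps} already evaluates $\sum_j (-1)^{j(j-3)/2}\mathscr{T}^{\mathrm{S}}_{j,R}$ modulo $\mathscr{O}(\exp(-\alpha R^{\kappa/4}))$. For the corresponding sum over $\widetilde{\mathscr{T}}^{\mathrm{S}}_{j,R}$, I would use that $\widetilde{\mathscr{D}}_{j,R,t}$ acts on the finite-dimensional space $\mathscr{E}^{[-1,1]}_{j,R,T}$ (whose dimension is fixed by Theorem \ref{thm-central-SRT}), together with the small-$t$ expansion of the Bismut-Lott torsion form for finite-dimensional complexes (Prop.~2.18 of \cite{bl}), to obtain a pointwise bound on the integrand. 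The combinatorial identities $\sum_j (-1)^{j(j-3)/2}\chi(Z_j) = 0$ (Mayer-Vietoris) and $\sum_j (-1)^{j(j-3)/2}\chi'(C^{\bullet,\bullet}_j) = 0$ (a direct computation from the bigraded structure of $C^{\bullet,\bullet}_j$) ensure that the $f'(i\sqrt{t}/2)$ counterterms align between $\sum\mathscr{T}^{\mathrm{S}}$ and $\sum\widetilde{\mathscr{T}}^{\mathrm{S}}$; the remaining combinatorial coefficients of $\int_0^{R^{2-\kappa/2}}(f'(i\sqrt{t}/2)-1)dt/t$ match in both expressions, so only an $\mathscr{O}(R^{-\kappa/4})$ remainder survives.

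The most delicate point is this small-time analysis: Proposition \ref{prop-fps} exploits the finite propagation speed of the wave equation for $\mathscr{D}_{j,R,t}$ to cancel local heat-kernel contributions away from the hypersurface $N$, but the projected operators $\widetilde{\mathscr{D}}_{j,R,t}$ are inherently non-local, so finite propagation is unavailable. I expect the resolution to proceed through the isomorphism $\mathscr{S}_{j,R,T}$ of Theorem \ref{thm-central-SRT}, reducing $\widetilde{\mathscr{T}}^{\mathrm{S}}_{j,R}$ to an explicit finite-dimensional torsion integral on $(C^{\bullet,\bullet}_j,\partial)$ whose alternating sum is controlled by the uniform spectral gap of Theorem \ref{thm-central-spgap} together with the combinatorial vanishing $\widetilde{\chi}' = 0$.
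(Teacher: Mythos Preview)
Your large-time argument ($t \geqslant R^{2-\kappa/2}$) is essentially the paper's: represent $f'(\mathscr{D}_{j,R,t})$ and $f'(\widetilde{\mathscr{D}}_{j,R,t})$ as contour integrals over $\partial U_{j,R,t}$ against an $r$-fold antiderivative of $f'$, and feed in Lemma~\ref{prop-ltime-proj}. One technical point you gloss over: $\partial U_{j,R,t}$ has both unbounded components (where the antiderivative must vanish at $\pm i\infty$) and bounded components around the small eigenvalues; on the latter the antiderivative must be chosen to vanish somewhere inside each component so that its sup-norm on $\overline{U_{j,R,t}}$ stays $\mathscr{O}(1)$ uniformly in $R,t$ (the total area of the bounded components is bounded by Theorem~\ref{thm-central-SRT}). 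The paper handles both explicitly in \eqref{eq2-pf-prop-fwedge-proj}--\eqref{eq30-pf-prop-fwedge-proj}.

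Your small-time treatment of $\sum_j(-1)^{j(j-3)/2}\widetilde{\mathscr{T}}^{\mathrm{S}}_{j,R}$ has a genuine gap. Citing \cite[Prop.~2.18]{bl} gives the $t\to 0$ limit of a finite-dimensional torsion integrand, but not uniform control over the whole window $0<t\leqslant R^{2-\kappa/2}$, and Theorem~\ref{thm-central-spgap} is the wrong reference: the spectral gap separates small from large eigenvalues but says nothing about how small the small ones are. The missing mechanism is that, by the $e^{-T}$ in \eqref{eq4-thm-central-SRT} of Theorem~\ref{thm-central-SRT}, the eigenvalues of $\stdiffjRT-\diffjRT$ on $\mathscr{E}^{[-1,1]}_{j,R,T}$ are $\mathscr{O}(e^{-R^\kappa})$, so
\[
\Big\lVert \sqrt{t}\,P^{[-1,1]}_{j,R,T}\big(\stdiffjRT-\diffjRT\big)P^{[-1,1]}_{j,R,T}\Big\rVert_\infty \leqslant e^{-R^\kappa}t^{1/2}
\]
is exponentially small \emph{uniformly} over $t\in[0,R^{2-\kappa/2}]$. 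This lets the paper replace $f'(\widetilde{\mathscr{D}}_{j,R,t})$ by $f'\big(P^{[-1,1]}_{j,R,T}\omega^{\mathscr{F}_{j,R}}P^{[-1,1]}_{j,R,T}\big)$ throughout the small-time window, after which a one-line scaling argument (the map $s\mapsto \tr\big[(-1)^{N^{TZ}}\tfrac{N^{TZ}}{2}f'(s\cdot P\omega P)\big]$ is constant, as in the proof of \cite[Prop.~1.3]{bl}) yields the exact value $\tfrac12\chi'(C^{\bullet,\bullet}_j)$. Your combinatorial identity $\sum_j(-1)^{j(j-3)/2}\chi'(C^{\bullet,\bullet}_j)=0$ is correct and is indeed what makes the resulting expression match Proposition~\ref{prop-fps}.
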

\begin{proof}
Let $\widetilde{\mathscr{T}}_{j,R}^\mathrm{S}$ (resp. $\widetilde{\mathscr{T}}_{j,R}^\mathrm{L}$)
be as in \eqref{eq-def-analytic-tf-tilde}
with $\int_0^{+\infty}$ replaced by $\int_0^{R^{2-\kappa/2}}$ (resp. $\int_{R^{2-\kappa/2}}^{+\infty}$).
The following identity is obvious,
\begin{equation}
\label{eq1-pf-prop-fwedge-proj}
\widetilde{\mathscr{T}}_{j,R} =
\widetilde{\mathscr{T}}_{j,R}^\mathrm{S} + \widetilde{\mathscr{T}}_{j,R}^\mathrm{L} \;.
\end{equation}

We fix $\N \ni r > \dim Z$.
Let
$f_r: \overline{U_{j,R,t}} \rightarrow \C$
be a holomorphic function satisfying
\begin{equation}
\label{eq2-pf-prop-fwedge-proj}
\frac{1}{r!}\frac{d^r}{d z^r} f_r(z) = f'(z) \;,\hspace{5mm}
\lim_{z\rightarrow \pm i\infty} f_r(z) = 0 \;.
\end{equation}
We further assume that
for each bounded connected component $V \subseteq U_{j,R,t}$,
there exists $z\in V$ such that $f_r(z)= 0$.
Note that the bounded (resp. unbounded) connected components of $U_{j,R,t}$ cover the small (resp. large) eigenvalues of $\sqrt{t} \big( \stdiffjRT - \diffjRT \big)$,
by Theorem \ref{thm-central-SRT} and \eqref{eq-Ua-cover-Sp},
the total area of the bounded connected components of $U_{j,R,t}$ is bounded by a universal constant.
Then,
there exists $C>0$ such that for any $z\in \overline{U_{j,R,t}}$,
we have
\begin{equation}
\label{eq30-pf-prop-fwedge-proj}
\big|f_r(z)\big| \leqslant C e^{-|z|} \;.
\end{equation}
By \eqref{eq-Ua-cover-Sp} and \eqref{eq2-pf-prop-fwedge-proj},
for $R \gg 1$,
we have
\begin{align}
\label{eq3-pf-prop-fwedge-proj}
\begin{split}
f'\big(\mathscr{D}_{j,R,t}\big) = & \frac{1}{2\pi i} \int_{\partial U_{j,R,t}}
f_r(\lambda) \big( \lambda-\mathscr{D}_{j,R,t} \big)^{-r-1} d \lambda \;,\\
f'\big(\widetilde{\mathscr{D}}_{j,R,t}\big) = & \frac{1}{2\pi i} \int_{\partial U_{j,R,t}}
f_r(\lambda) \big( \lambda-\widetilde{\mathscr{D}}_{j,R,t} \big)^{-r-1} d \lambda \;.
\end{split}
\end{align}
By Lemma \ref{prop-ltime-proj}, \eqref{eq30-pf-prop-fwedge-proj} and \eqref{eq3-pf-prop-fwedge-proj},
for $R \gg 1$ and $t\geqslant R^{2-\kappa/2}$,
we have
\begin{equation}
\label{eq4a-pf-prop-fwedge-proj}
\Big\lVert f'\big(\mathscr{D}_{j,R,t}\big) - f'\big(\widetilde{\mathscr{D}}_{j,R,t}\big) \Big\rVert_1
= \mathscr{O}\big(R^{1-\kappa/2}\big) t^{-1/2} \;.
\end{equation}
On the other hand,
by \eqref{eq-mv-top}, \eqref{eq-complex-evaluation}, \eqref{eq-def-analytic-tf}, \eqref{eq-TSL}, \eqref{eq-def-analytic-tf-tilde} and \eqref{eq1-pf-prop-fwedge-proj},
we have
\begin{align}
\label{eq4b-pf-prop-fwedge-proj}
\begin{split}
& \sum_{j=0}^3 (-1)^{j(j-3)/2} \Big( \mathscr{T}_{j,R}^\mathrm{L} - \widetilde{\mathscr{T}}_{j,R}^\mathrm{L} \Big) \\
& = - \sum_{j=0}^3 (-1)^{j(j-3)/2} \int_{R^{2-\kappa/2}}^{+\infty} \varphi \tr\Big[(-1)^{N^{TZ}}\frac{N^{TZ}}{2}
\Big( f'\big(\mathscr{D}_{j,R,t}\big)-f'\big(\widetilde{\mathscr{D}}_{j,R,t}\big) \Big) \Big] \frac{dt}{t} \;.
\end{split}
\end{align}
By \eqref{eq4a-pf-prop-fwedge-proj} and \eqref{eq4b-pf-prop-fwedge-proj},
we have
\begin{equation}
\label{eq4-pf-prop-fwedge-proj}
\sum_{j=0}^3 (-1)^{j(j-3)/2} \mathscr{T}_{j,R}^\mathrm{L} =
\sum_{j=0}^3 (-1)^{j(j-3)/2} \widetilde{\mathscr{T}}_{j,R}^\mathrm{L} + \mathscr{O}\big(R^{-\kappa/4}\big) \;.
\end{equation}

By Theorem \ref{thm-central-SRT} and \eqref{eq-square-tDRT},
for $R \gg 1$,
we have
\begin{equation}
\Sp \Big( \big( \stdiffjRT - \diffjRT \big)\big|_{\mathscr{E}^{[-1,1]}_{j,R,T}} \Big)
\subseteq i[-\exp(-R^\kappa),\exp(-R^\kappa)] \;,
\end{equation}
where the exponential term comes from $e^{-T} = e^{-R^\kappa}$ in \eqref{eq4-thm-central-SRT}.
As a consequence,
we have
\begin{equation}
\label{eq5-pf-prop-fwedge-proj}
\Big\lVert \sqrt{t} P_{j,R,T}^{[-1,1]} \big( \stdiffjRT - \diffjRT \big) P_{j,R,T}^{[-1,1]} \Big\rVert_\infty
\leqslant  \exp(-R^\kappa) t^{1/2} \;.
\end{equation}
By \eqref{eq-def-tilde-DRT}
and \eqref{eq5-pf-prop-fwedge-proj},
we have
\begin{align}
\label{eq6-pf-prop-fwedge-proj}
\begin{split}
\tr\Big[(-1)^{N^{TZ}}\frac{N^{TZ}}{2}f'\big(\widetilde{\mathscr{D}}_{j,R,t}\big)\Big]
- \tr\Big[(-1)^{N^{TZ}}\frac{N^{TZ}}{2}f'\big( P_{j,R,T}^{[-1,1]}\omega^{\mathscr{F}_{j,R}}P_{j,R,T}^{[-1,1]}\big)\Big] & \\
= \mathscr{O}\big(\exp(-R^\kappa)\big) t^{1/2}  & \;.
\end{split}
\end{align}
Note that $f'$ is an even function,
by the proof of \cite[Prop. 1.3]{bl},
the function
\begin{equation}
\label{eq7a-pf-prop-fwedge-proj}
\R \ni s \mapsto
\tr\Big[(-1)^{N^{TZ}}\frac{N^{TZ}}{2}f'\big(sP_{j,R,T}^{[-1,1]}\omega^{\mathscr{F}_{j,R}}P_{j,R,T}^{[-1,1]}\big)\Big] \in Q^S
\end{equation}
is constant.
Taking $s=0,1$ in \eqref{eq7a-pf-prop-fwedge-proj} and using the identity $f'(0) = 1$,
we get
\begin{align}
\label{eq7b-pf-prop-fwedge-proj}
\begin{split}
\tr\Big[(-1)^{N^{TZ}}\frac{N^{TZ}}{2}f'\big(P_{j,R,T}^{[-1,1]}\omega^{\mathscr{F}_{j,R}}P_{j,R,T}^{[-1,1]}\big)\Big]
= \tr\Big[(-1)^{N^{TZ}}\frac{N^{TZ}}{2}P_{j,R,T}^{[-1,1]}\Big] \;.
\end{split}
\end{align}
By Theorem \ref{thm-central-SRT}, \eqref{eq-def-chiprimC} and \eqref{eq7b-pf-prop-fwedge-proj},
we have
\begin{equation}
\label{eq7-pf-prop-fwedge-proj}
\tr\Big[(-1)^{N^{TZ}}\frac{N^{TZ}}{2}f'\big(P_{j,R,T}^{[-1,1]}\omega^{\mathscr{F}_{j,R}}P_{j,R,T}^{[-1,1]}\big)\Big]
= \frac{1}{2}\chi'(C^{\bullet,\bullet}_j) \;.
\end{equation}
By \eqref{eq-def-chiprim-sum}, \eqref{eq-def-analytic-tf-tilde}, \eqref{eq1-pf-prop-fwedge-proj}, \eqref{eq6-pf-prop-fwedge-proj} and \eqref{eq7-pf-prop-fwedge-proj},
we have
\begin{equation}
\label{eq8-pf-prop-fwedge-proj}
\sum_{j=0}^3 (-1)^{j(j-3)/2} \widetilde{\mathscr{T}}_{j,R}^\mathrm{S}
= - \frac{\chi'}{2} \int_0^{R^{2-\kappa/2}}
\bigg\{ f'\Big(\frac{i\sqrt{t}}{2}\Big) - 1 \bigg\}
\frac{dt}{t}
+ \mathscr{O}\big(R^{-\kappa/4}\big) \;.
\end{equation}
From Proposition \ref{prop-fps},
\eqref{eq-TSL},
\eqref{eq1-pf-prop-fwedge-proj},
\eqref{eq4-pf-prop-fwedge-proj}
and \eqref{eq8-pf-prop-fwedge-proj},
we obtain \eqref{eq-prop-fwedge-proj}.
This completes the proof of Proposition \ref{prop-fwedge-proj}.
\end{proof}

We denote $\mathscr{G} = \Omega^\bullet(Y,F)$,
which is a complex vector bundle of infinite dimension over $S$.
We define the connection $\nabla^{\mathscr{G}}$
on $\mathscr{G}$
in the same way as in \eqref{eq-def-n-F}.
Let $h^\mathscr{G}$
be the $L^2$-metric on $\mathscr{G}$
with respect to $h^F$ and $g^{TY}$.
Let $\omega^\mathscr{G} \in \Omega^1\big(S,\mathrm{End}(\mathscr{G})\big)$
be as in \eqref{eq-def-omegaF}
with $(\n^\mathscr{F},h^\mathscr{F})$ replaced by $(\n^\mathscr{G},h^\mathscr{G})$.

Let $\nabla^{V^\bullet}$ be the canonical flat connection on $V^\bullet = H^\bullet(Y,F)$ (see \cite[Def. 2.4]{bl}).
We identify $V^\bullet$ with $\hh(Y,F)\subseteq\mathscr{G}$ via the Hodge theorem.
Recall that $h^{V^\bullet}$ is the $L^2$-metric on $V^\bullet$ defined after \eqref{eq-def-hW}.
Let $\omega^{V^\bullet} \in \Omega^1\big(S,\mathrm{End}(V^\bullet)\big)$
be as in \eqref{intro-def-omegaF} with $(\n^F,h^F)$ replaced by $(\n^V,h^V)$.
Let $P^{V^\bullet}: \mathscr{G}\rightarrow V^\bullet$
be the orthogonal projection with respect to $h^\mathscr{G}$.
By \cite[Prop. 3.14]{bl},
we have
\begin{equation}
\label{eq-omega-V}
\omega^{V^\bullet} = P^{V^\bullet} \omega^\mathscr{G} P^{V^\bullet} \;.
\end{equation}
Recall that the flat sub vector bundles
$V^\bullet_j = \mathrm{Im}\big(\tau_j : W^\bullet_j \rightarrow V^\bullet\big) \subseteq V^\bullet$ with $j=1,2$
were defined by \eqref{eq-def-V1-V2} and \eqref{eq-complex-evaluation}.
The identity \eqref{eq-omega-V} also holds
with $V^\bullet$ replaced by $V^\bullet_j$.
Let
\begin{equation}
\tau^\perp_j : K^{\bullet,\perp}_j \rightarrow V^\bullet_j
\end{equation}
be the restriction of $\tau_j: W^\bullet_j \rightarrow V^\bullet_j \subseteq V^\bullet$ defined in \eqref{eq-complex-evaluation}
to $K^{\bullet,\perp}_j\subseteq W^\bullet_j$ defined in \eqref{eq-def-WKK},
which is bijective.
Set
\begin{equation}
\omega^{K^{\bullet,\perp}_j} =
\big(\tau^\perp_j\big)^{-1} \circ \omega^{V^\bullet_j} \circ \tau^\perp_j
\in\Omega^1\big(S,\mathrm{End}(K^{\bullet,\perp}_j)\big) \;.
\end{equation}

For $j=1,2$,
let $\nabla^{W^\bullet_j}$ be the canonical flat connection on $W^\bullet_j = H^\bullet(Z_j,F)$ (see \cite[Def. 2.4]{bl}).
The sub vector bundle $K^\bullet_j \subseteq W^\bullet_j$ is preserved by $\nabla^{W^\bullet_j}$.
Then $\nabla^{K^\bullet_j} := \nabla^{W^\bullet_j}\big|_{K^\bullet_j}$ is a flat connection on $K^\bullet_j$.
The Hermitian metric $h^{K^\bullet_j}$ was constructed in \eqref{eq-def-hK}.
Let $\omega^{K^\bullet_j} \in \Omega^1\big(S,\mathrm{End}(K^\bullet_j)\big)$
be as in \eqref{intro-def-omegaF} with $(\n^F,h^F)$ replaced by $(\n^{K^\bullet_j},h^{K^\bullet_j})$.
We identify $K^\bullet_j$ with $\mathscr{F}_{j,\infty}\cap\Ker\big(\Djinf\big)$ via the map \eqref{eq-hha2coh}.
Let $P^{K^\bullet_j}: \mathscr{F}_{j,\infty}\rightarrow K^\bullet_j$
be the orthogonal projection.
By \cite[Prop. 3.14]{bl},
we have
\begin{equation}
\label{eq-omega-K}
\omega^{K^\bullet_j} = P^{K^\bullet_j} \omega^{\mathscr{F}_{j,\infty}} P^{K^\bullet_j} \;.
\end{equation}

Recall that
$C^{\bullet,\bullet}_0 =
W^\bullet_1 \oplus W^\bullet_2 \oplus V^\bullet =
K^\bullet_1 \oplus K^{\bullet,\perp}_1 \oplus
K^\bullet_2 \oplus K^{\bullet,\perp}_2 \oplus
V^\bullet$.
Set
\begin{equation}
\label{eq-def-omegaC}
\omega^{C^{\bullet,\bullet}_0} =
\omega^{K^\bullet_1} \oplus \omega^{K^{\bullet,\perp}_1} \oplus
\omega^{K^\bullet_2} \oplus \omega^{K^{\bullet,\perp}_2} \oplus
\omega^{V^\bullet}
\in \Omega^1\big(S,\mathrm{End}(C^{\bullet,\bullet}_0)\big) \;.
\end{equation}
For $j=1,2,3$,
we may construct $\omega^{C^{\bullet,\bullet}_j}$ in the same way.

Recall that the bijection
$\mathscr{S}_{j,R,T}: C^{\bullet,\bullet}_j
\rightarrow \mathscr{E}^{[-1,1]}_{j,R,T} \subseteq \Omega^\bullet(Z_{j,R},F)$
was defined in \eqref{eq-def-SRT}, \eqref{eq-def-SRTj} and \eqref{eq-def-SRT3}.

\begin{lemme}
\label{lem-SRT-omega}
For $j=0,1,2,3$
and $R \gg 1$,
we have
\begin{equation}
\label{eq-lem-SRT-omega}
\mathscr{S}_{j,R,T}^{-1} \circ
\Big(P^{[-1,1]}_{j,R,T} \omega^{\mathscr{F}_{j,R}} P^{[-1,1]}_{j,R,T}\Big)
\circ \mathscr{S}_{j,R,T} =
\omega^{C^{\bullet,\bullet}_j}
+ \mathscr{O}_{R,T} \big(R^{-1/2+\kappa/4}\big) \;,
\end{equation}
where $\mathscr{O}_{R,T}\big(\cdot\big)$ was defined in the paragraph above Theorem \ref{thm-central-SRT}.
\end{lemme}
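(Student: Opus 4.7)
The plan is to exploit the explicit approximation of $\mathscr{S}_{j,R,T}$ by the maps $G^+_{j,R,T}$ and $I^+_{j,R,T}$ from \textsection \ref{subsec-approx-small-eigen}, together with the projection formulas \eqref{eq-omega-V} and \eqref{eq-omega-K} that characterize $\omega^{V^\bullet}$, $\omega^{V^\bullet_j}$ and $\omega^{K^\bullet_j}$ as $L^2$-orthogonal projections of ambient $\omega$-forms. First, using Propositions \ref{prop-FRTplus} and \ref{prop-IRTplus} together with Corollary \ref{cor-SRT-metric}, I would replace $\mathscr{S}_{j,R,T}$ by $P^{[-1,1]}_{j,R,T}\circ\big(G^+_{j,R,T}\oplus I^+_{j,R,T}\big)$ on the respective factors of $C^{\bullet,\bullet}_j$, up to an error of order $R^{-1/2+\kappa/4}$ in the $h^{C^{\bullet,\bullet}_j}_{R,T}$-operator norm. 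By Proposition \ref{prop-perp-GI}, the decomposition $\mathscr{E}^{[-1,1]}_{j,R,T} = \mathscr{S}_{j,R,T}(C^{0,\bullet}_j)\oplus\mathscr{S}_{j,R,T}(C^{1,\bullet}_j)$ is orthogonal, so it suffices to analyze $\omega^{\mathscr{F}_{j,R}}$ on each block and to bound the off-diagonal blocks.

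Second, on the $C^{1,\bullet}_j=V^\bullet$ block, the section $I^+_{j,R,T}(\sigma) = \chi_3 e^{Tf_T-T}du\wedge\sigma$ factors as a fixed profile in $u\in[-R,R]$ times a harmonic representative of $\sigma$ on $Y$. Since the profile is independent of the base $S$, the only contribution to the derivative of the $L^2$-inner product in $S$ comes from differentiating the harmonic factor, which produces $\omega^\mathscr{G}$ applied to $\sigma$ integrated against $|\chi_3 e^{Tf_T-T}|^2 du$. The Gaussian mass $\int_{-R}^R |\chi_3 e^{Tf_T-T}|^2du \sim \sqrt{\pi}R T^{-1/2}e^{-2T}$ is exactly absorbed by the normalization $h^{V^\bullet}_{R,T}=\sqrt{\pi}RT^{-1/2}h^{V^\bullet}$ in \eqref{eq-def-hVRT}, and the post-composition with $P^{[-1,1]}_{j,R,T}$ amounts (modulo errors of the stated order) to applying $P^{V^\bullet}$ at the level of the cohomology. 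Formula \eqref{eq-omega-V} then identifies the outcome as $\omega^{V^\bullet}$.

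Third, on the $C^{0,\bullet}_j$ block the analysis splits along $W^\bullet_j=K^\bullet_j\oplus K^{\bullet,\perp}_j$ (for the $W^\bullet_j$ factors) together with the trivial $V^\bullet$ factor present when $j=1,2,3$. On $K^\bullet_j$, sections $G^+_{j,R,T}(\omega,0)$ are $L^2$-close to the extension by zero of the $L^2$-harmonic representative $\omega\in\mathscr{F}_{j,\infty}$ on the cylinder (by \cite[(2.52)]{pzz} and \eqref{eq1-def-FRTplus}), so computing $\omega^{\mathscr{F}_{j,R}}$ on $G^+_{j,R,T}(\omega,0)$ approaches $\omega^{\mathscr{F}_{j,\infty}}$ on the harmonic section; \eqref{eq-omega-K} then identifies the projection onto $K^\bullet_j$ as $\omega^{K^\bullet_j}$. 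On $K^{\bullet,\perp}_j$, the cylinder piece $\chi_j e^{-Tf_T}\hat{\omega}$ dominates in the $h^{K^{\bullet,\perp}_j}$-normalization, and a Gaussian-averaging argument identical to the $V^\bullet$ case yields $\omega^{V^\bullet_j}$ transported via $\tau^\perp_j$, i.e.\ $\omega^{K^{\bullet,\perp}_j}$.

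The main technical obstacle will be the bookkeeping of error terms in these Gaussian-averaging arguments: specifically, (i) the substitution error from Propositions \ref{prop-FRTplus}, \ref{prop-IRTplus}; (ii) the asymptotic matching of cylinder integrals $\int_{-R}^{R}$ with the infinite Gaussian integrals (controlled by $e^{-aT}$ thanks to \eqref{eq-def-finf} and \eqref{eq-compare-f-fT}); (iii) the cross contributions between $K^\bullet_j$, $K^{\bullet,\perp}_j$ and $V^\bullet$, which must be shown to be subdominant in the $\mathscr{O}_{R,T}$-sense; and (iv) the effect of conjugating by $e^{\pm Tf_T}$ on the relation between the $L^2$-metric used to define $\omega^{\mathscr{F}_{j,R}}$ and the one entering $\DjRT$. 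The exponential decay $e^{-T} = e^{-R^\kappa}$ handles the non-zero-mode contributions, while the polynomial rate $R^{-1/2+\kappa/4}$ traces back entirely to Corollary \ref{cor-SRT-metric}; combining these gives the claimed estimate.
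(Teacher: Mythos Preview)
Your plan is correct and follows the same underlying strategy as the paper: compute the $L^2$-pairing $\big\langle \mathscr{S}_{j,R,T}(\sigma),\omega^{\mathscr{F}_{j,R}}\mathscr{S}_{j,R,T}(\sigma')\big\rangle_{Z_{j,R}}$ block by block along the decomposition $K^\bullet_1\oplus K^{\bullet,\perp}_1\oplus K^\bullet_2\oplus K^{\bullet,\perp}_2\oplus V^\bullet$, using the explicit profiles $G^+_{j,R,T}$ and $I^+_{j,R,T}$. The paper's proof, however, is much shorter: it observes that every estimate entering the proof of Proposition \ref{prop-SRT-metric} (the computation of $\big\lVert\mathscr{S}_{R,T}(\sigma)\big\rVert^2_{Z_R}$) continues to hold verbatim once $\big\lVert\cdot\big\rVert^2_{Z_R}$ is replaced by $\big\langle\cdot,\omega^{\mathscr{F}_R}\cdot\big\rangle_{Z_R}$, giving directly
\[
\big\langle \mathscr{S}_{R,T}(\sigma),\omega^{\mathscr{F}_R}\mathscr{S}_{R,T}(\sigma)\big\rangle_{Z_R}
= \big\langle\sigma,\omega^{C^{\bullet,\bullet}_0}\sigma\big\rangle_{R,T}
+ \mathscr{O}\big(R^{-1/2+\kappa/4}\big)\big\lVert\sigma\big\rVert^2_{R,T},
\]
and then recovers the full operator identity by polarization combined with Proposition \ref{prop-SRT-metric}. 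What this buys over your approach is that the cross-block terms in your item (iii) come for free from polarization rather than being estimated separately. Two minor remarks on your outline: the Gaussian mass in your second paragraph is $\sqrt{\pi}RT^{-1/2}$, not $\sqrt{\pi}RT^{-1/2}e^{-2T}$ (since $e^{2Tf_T-2T}\approx e^{-Tu^2/R^2}$ near $u=0$); and your point (iv) is a non-issue, because both $\omega^{\mathscr{F}_{j,R}}$ and the $L^2$-metric for which $\DjRT$ is self-adjoint are defined with respect to the same $h^F$ (the conjugation by $e^{Tf_T}$ has already been absorbed into the definition of $\DjRT$).
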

\begin{proof}
We only prove the case $j=0$.

We consider $\sigma \in C^{\bullet,\bullet}$.
All the estimates in the proof of Proposition \ref{prop-SRT-metric} hold
with $\big\lVert\sigma\big\rVert^2_{Z_R}$ replaced by $\big\langle\sigma,\omega^{\mathscr{F}_R}\sigma\big\rangle_{Z_R}$.
Hence \eqref{eq-prop-SRT-metric} holds
with $\big\lVert\sigma\big\rVert^2_{Z_R}$ replaced by $\big\langle\sigma,\omega^{\mathscr{F}_R}\sigma\big\rangle_{Z_R}$, i.e.,
\begin{equation}
\label{eq11-pf-lem-SRT-omega}
\Big\langle \mathscr{S}_{R,T}(\sigma),
\omega^{\mathscr{F}_R}\mathscr{S}_{R,T}(\sigma) \Big\rangle_{Z_R}
= \big\langle \sigma,\omega^{C^{\bullet,\bullet}}\sigma \big\rangle_{R,T}
+ \mathscr{O}\big(R^{-1/2+\kappa/4}\big) \big\lVert\sigma\big\rVert^2_{R,T} \;.
\end{equation}
From the polarization identity, Proposition \ref{prop-SRT-metric} and \eqref{eq11-pf-lem-SRT-omega},
we obtain \eqref{eq-lem-SRT-omega} with $j=0$.
This completes the proof of Lemma \ref{lem-SRT-omega}.
\end{proof}

For $j=0,1,2,3$,
let $\n^{C^{\bullet,\bullet}_j}$ be the flat connection on  $C^{\bullet,\bullet}_j$
induced by $\n^{W^\bullet_1}$, $\n^{W^\bullet_2}$ and $\n^{V^\bullet}$.
Let $\omega^{C^{\bullet,\bullet}_j}_{R,T} \in \Omega^\bullet\big(S,\mathrm{End}(C^{\bullet,\bullet}_j)\big)$
be as in \eqref{intro-def-omegaF} with $(\n^F,h^F)$ replaced by $(\n^{C^{\bullet,\bullet}_j},h^{C^{\bullet,\bullet}_j}_{R,T})$,
where the metric $h^{C^{\bullet,\bullet}_j}_{R,T}$ was defined in \eqref{eq-def-hC}.

\begin{lemme}
\label{lem-SRT-omegaRT}
For $j=0,1,2,3$ and $R \gg 1$,
we have
\begin{equation}
\label{eq-lem-SRT-omegaRT}
\omega^{C^{\bullet,\bullet}_j}_{R,T} =
\omega^{C^{\bullet,\bullet}_j}
+ \mathscr{O}_{R,T} \big(R^{-1/2+\kappa/4}\big) \;.
\end{equation}
\end{lemme}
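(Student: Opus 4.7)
The plan is to compute $\omega^{C^{\bullet,\bullet}_j}_{R,T}$ block-by-block with respect to the natural decomposition of $C^{\bullet,\bullet}_j$ into factors $W^\bullet_i$ ($i=1,2$) and $V^\bullet$, and to observe that the anisotropy of the rescaled metric $h^{C^{\bullet,\bullet}_j}_{R,T}$ forces the off-diagonal mixing terms to be small. By construction, both $\n^{C^{\bullet,\bullet}_j}$ and $h^{C^{\bullet,\bullet}_j}_{R,T}$ are direct sums with respect to this coarse decomposition, so $\omega^{C^{\bullet,\bullet}_j}_{R,T}$ splits as a direct sum and I may analyze each factor separately.

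For any $V^\bullet$ summand, the metric $h^{V^\bullet}_{R,T}=\sqrt{\pi}\,R T^{-1/2}h^{V^\bullet}$ differs from $h^{V^\bullet}$ only by a positive scalar that is constant on $S$, so $\omega^{V^\bullet}_{R,T}=\omega^{V^\bullet}$ holds exactly, with no error term. The substantial work is in a $W^\bullet_i$ summand, where I use the orthogonal splitting \eqref{eq-WKK}, $W^\bullet_i=K^\bullet_i\oplus K^{\bullet,\perp}_i$. Since $K^\bullet_i=\mathrm{Ker}(\tau_i)$ is a flat sub-bundle of $W^\bullet_i$, the connection $\n^{W^\bullet_i}$ is block upper-triangular in this splitting, its only off-diagonal entry being a tensor $A\in\Omega^1(S,\mathrm{Hom}(K^{\bullet,\perp}_i,K^\bullet_i))$ whose norm is bounded uniformly on the compact base $S$. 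A direct computation of $\omega=h^{-1}\nabla h$ with $h^{W^\bullet_i}_{R,T}=h^{K^\bullet_i}\oplus c_{R,T}\,h^{K^{\bullet,\perp}_i}$ and $c_{R,T}=\tfrac{\sqrt{\pi}}{2}RT^{-1/2}$ then yields the block form
\begin{equation*}
\omega^{W^\bullet_i}_{R,T}=\begin{pmatrix}\omega^{K^\bullet_i} & -A \\ -c_{R,T}^{-1}A^* & \omega^{K^{\bullet,\perp}_i}_{\mathrm{ind}}\end{pmatrix},
\end{equation*}
where $A^*$ is the adjoint of $A$ taken with respect to $h^{K^\bullet_i}$ and $h^{K^{\bullet,\perp}_i}$, and $\omega^{K^{\bullet,\perp}_i}_{\mathrm{ind}}$ is the form associated with the induced connection on the orthogonal complement. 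The crucial scaling point is that when operator norms are computed with respect to $h^{W^\bullet_i}_{R,T}$, both off-diagonal blocks pick up a factor of $c_{R,T}^{-1/2}$, hence are $\mathscr{O}_{R,T}(R^{-1/2+\kappa/4})$ for $T=R^\kappa$; the $(1,1)$ block is $\omega^{K^\bullet_i}$ tautologically.

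The step I expect to be the main obstacle is the identification of the diagonal $(2,2)$ block $\omega^{K^{\bullet,\perp}_i}_{\mathrm{ind}}$ with the paper's $\omega^{K^{\bullet,\perp}_i}=(\tau^\perp_i)^{-1}\omega^{V^\bullet_i}\tau^\perp_i$, which is defined by pullback from the flat quotient $V^\bullet_i$ and not by an intrinsic projection inside $W^\bullet_i$. I plan to establish this by applying a ``quotient'' version of \cite[Prop. 3.14]{bl} to the short exact sequence of flat bundles $0\to K^\bullet_i\to W^\bullet_i\to V^\bullet_i\to 0$: the $\omega$-form of the flat quotient $V^\bullet_i$ equals $P^{K^{\bullet,\perp}_i}\omega^{W^\bullet_i}P^{K^{\bullet,\perp}_i}$ under the identification $K^{\bullet,\perp}_i\simeq V^\bullet_i$ furnished by $\tau^\perp_i$, which is an isometry by the very definition of $h^{K^{\bullet,\perp}_i}$ given in the paragraph containing \eqref{eq-def-hK}. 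Once this identification is in place, assembling the three contributions -- $(1,1)$ block $=\omega^{K^\bullet_i}$, $(2,2)$ block $=\omega^{K^{\bullet,\perp}_i}$, off-diagonal blocks $=\mathscr{O}_{R,T}(R^{-1/2+\kappa/4})$ -- and summing over all factors of $C^{\bullet,\bullet}_j$ delivers \eqref{eq-lem-SRT-omegaRT}.
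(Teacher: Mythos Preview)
Your proposal is correct and follows essentially the same route as the paper: reduce to the $W^\bullet_i$ factors, use flatness of $K^\bullet_i\subseteq W^\bullet_i$ to make $\nabla^{W^\bullet_i}$ block upper-triangular, and observe that the anisotropic scaling $h^{K^\bullet_i}\oplus c_{R,T}h^{K^{\bullet,\perp}_i}$ makes the off-diagonal contribution $\mathscr{O}_{R,T}(c_{R,T}^{-1/2})=\mathscr{O}_{R,T}(R^{-1/2+\kappa/4})$. The paper packages this slightly differently, writing $\nabla^{W^\bullet_i}=\nabla^{W^\bullet_i,\oplus}+(\nabla^{W^\bullet_i}-\nabla^{W^\bullet_i,\oplus})$ and bounding the second term directly, rather than computing the full block matrix of $\omega^{W^\bullet_i}_{R,T}$; but this is only a cosmetic difference. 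Your treatment of the $(2,2)$ block---identifying $\omega^{K^{\bullet,\perp}_i}_{\mathrm{ind}}$ with $(\tau^\perp_i)^{-1}\omega^{V^\bullet_i}\tau^\perp_i$ via the short exact sequence $0\to K^\bullet_i\to W^\bullet_i\to V^\bullet_i\to 0$ and the isometry property of $\tau^\perp_i$---is in fact more explicit than the paper, which asserts $(h^{W^\bullet_j}_{R,T})^{-1}\nabla^{W^\bullet_j,\oplus}h^{W^\bullet_j}_{R,T}=\omega^{K^\bullet_j}\oplus\omega^{K^{\bullet,\perp}_j}$ without spelling out this identification.
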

\begin{proof}
We only prove the case $j=0$.

Let $\omega^{W^\bullet_j}_{R,T} \in \Omega^1\big(S,\mathrm{End}(W^\bullet_j)\big)$
be as in \eqref{intro-def-omegaF} with $(\n^F,h^F)$ replaced by $(\nabla^{W^\bullet_j},h^{W^\bullet_j}_{R,T})$,
where the metric $h^{W^\bullet_j}_{R,T}$ was defined in \eqref{eq-def-hW}.
More precisely,
we have
\begin{equation}
\label{eq1-pf-lem-SRT-omegaRT}
\omega^{W^\bullet_j}_{R,T} = \big(h^{W^\bullet_j}_{R,T}\big)^{-1} \nabla^{W^\bullet_j} h^{W^\bullet_j}_{R,T} \;.
\end{equation}
By \eqref{eq1-pf-lem-SRT-omegaRT} and the paragraph above Lemma \ref{lem-SRT-omegaRT},
we have
\begin{equation}
\label{eq2-pf-lem-SRT-omegaRT}
\omega^{C^{\bullet,\bullet}}_{R,T} =
\omega^{W^\bullet_1}_{R,T} \oplus
\omega^{W^\bullet_2}_{R,T} \oplus
\omega^{V^\bullet} \;.
\end{equation}
Comparing \eqref{eq-def-omegaC}
with \eqref{eq2-pf-lem-SRT-omegaRT},
it remains to show that
\begin{equation}
\label{eq3-pf-lem-SRT-omegaRT}
\omega^{W^\bullet_j}_{R,T} =
\omega^{K^\bullet_j} \oplus \omega^{K^{\bullet,\perp}_j}
+ \mathscr{O}_{R,T} \big(R^{-1/2+\kappa/4}\big) \;,\hspace{5mm} \text{for } j=1,2.
\end{equation}

Let $P_j : \; W^\bullet_j \rightarrow K^\bullet_j$ and $P^\perp_j : \; W^\bullet_j \rightarrow K^{\bullet,\perp}_j$
be the projections with respect to the decomposition $W^\bullet_j = K^\bullet_j \oplus K^{\bullet,\perp}_j$.
Set
\begin{equation}
\label{eq5-pf-lem-SRT-omegaRT}
\nabla^{W^\bullet_j,\oplus} =
P_j\nabla^{W^\bullet_j}P_j + P^\perp_j\nabla^{W^\bullet_j}P^\perp_j \;.
\end{equation}
Since $K^\bullet_j \subseteq W^\bullet_j$ is a flat sub vector bundle,
we have
\begin{equation}
\label{eq6-pf-lem-SRT-omegaRT}
\nabla^{W^\bullet_j} - \nabla^{W^\bullet_j,\oplus} =
P_j \nabla^{W^\bullet_j} P^\perp_j
\in \Omega^1\Big(S,\mathrm{Hom}\big(K^{\bullet,\perp}_j,K^\bullet_j\big)\Big) \;.
\end{equation}
We denote by $\big\lVert\cdot\big\rVert_{R,T}$
the operator norm on $\mathrm{Hom}\big(K^{\bullet,\perp}_j,K^\bullet_j\big)$ with respect to $h^{W^\bullet_j}_{R,T}$.
By \eqref{eq-def-hW},
we have
\begin{equation}
\label{eq7a-pf-lem-SRT-omegaRT}
\big\lVert\cdot\big\rVert_{R,T}
= R^{-1/2}T^{1/4} \big\lVert\cdot\big\rVert_{1,1} = R^{-1/2+\kappa/4} \big\lVert\cdot\big\rVert_{1,1} \;.
\end{equation}
By \eqref{eq6-pf-lem-SRT-omegaRT} and \eqref{eq7a-pf-lem-SRT-omegaRT},
we have
\begin{equation}
\label{eq7-pf-lem-SRT-omegaRT}
\nabla^{W^\bullet_j} - \nabla^{W^\bullet_j,\oplus} =
\mathscr{O}_{R,T}\big(R^{-1/2+\kappa/4}\big) \;.
\end{equation}
By \eqref{eq-def-hW} and \eqref{eq5-pf-lem-SRT-omegaRT},
we have
\begin{equation}
\label{eq8-pf-lem-SRT-omegaRT}
\big(h^{W^\bullet_j}_{R,T}\big)^{-1}
\nabla^{W^\bullet_j,\oplus}
h^{W^\bullet_j}_{R,T} =
\omega^{K^\bullet_j} \oplus \omega^{K^{\bullet,\perp}_j} \;.
\end{equation}
From \eqref{eq1-pf-lem-SRT-omegaRT}, \eqref{eq7-pf-lem-SRT-omegaRT}
and \eqref{eq8-pf-lem-SRT-omegaRT},
we obtain \eqref{eq3-pf-lem-SRT-omegaRT}.
This completes the proof of Lemma \ref{lem-SRT-omegaRT}.
\end{proof}

\begin{proof}[Proof of Theorem \ref{thm-central-tf}]
Applying Remark \ref{lem-notlfat} to the map
$\mathscr{S}_{j,R,T}: C^{\bullet,\bullet}_j \rightarrow \mathscr{E}^{[-1,1]}_{j,R,T}$
and using Theorem \ref{thm-central-SRT},
Lemmas \ref{lem-SRT-omega}, \ref{lem-SRT-omegaRT},
\eqref{eq-def-TR} and \eqref{eq-def-analytic-tf-tilde},
we get
\begin{equation}
\label{eq3-pf-thm-central-tf}
\widetilde{\mathscr{T}}_{j,R} - \widehat{\mathscr{T}}_{j,R} =
\mathscr{O}\big(R^{-1/4+\kappa/8}\big) \;.
\end{equation}
From Proposition \ref{prop-fwedge-proj}
and \eqref{eq3-pf-thm-central-tf},
we obtain \eqref{eq-thm-central-tf}.
This completes the proof of Theorem \ref{thm-central-tf}.
\end{proof}

\section{Torsion forms associated with the Mayer-Vietoris exact sequence}
\label{sect-mv}

The purpose of this section is to prove Theorem \ref{thm-central-mv}.
This section is organized as follows.
In \textsection \ref{subsect-fil-mv},
we introduce a filtration of the Mayer-Vietoris exact sequence in question.
In \textsection \ref{subsect-mv-estimate},
we estimate the torsion form associated with the Mayer-Vietoris exact sequence.
Theorem \ref{thm-central-mv} will be proved in this subsection.
In the whole section,
we take $T=R^\kappa$,
where $\kappa\in]0,1/3[$ is a fixed constant.
For ease of notations,
we will systematically omit a parameter ($R$ or $T$)
as long as there is no confusion.

\subsection{A filtration of the Mayer-Vietoris exact sequence}
\label{subsect-fil-mv}

Recall that $W^\bullet_1$, $W^\bullet_2$, $V^\bullet$, $V^\bullet_1$ and $V^\bullet_2$
were defined by \eqref{eq-def-V1-V2} and \eqref{eq-complex-evaluation}.
Recall that $W^\bullet_{12}\subseteq W^\bullet_1 \oplus W^\bullet_2$
was defined by \eqref{eq-def-W12}.
For convenience,
we denote $V^\bullet_\mathrm{quot} = V^\bullet/(V^\bullet_1+V^\bullet_2)$.

For $k\in\N$, we construct a truncation of the exact sequence \eqref{eq-mv-top} as follows,
\begin{equation}
\label{eq-truncation-mv}
\xymatrix{
\cdots \ar[r] &
H^k(Z,F) \ar[r] &
W^k_1 \oplus W^k_2 \ar[r] &
V^k \ar[r] &
V^k_\mathrm{quot} \ar[r] &
0 \;. }
\end{equation}
The truncations of \eqref{eq-mv-top} at degree $k-1$ and $k$ fit into
the following commutative diagram with exact rows and columns,
\begin{align}
\label{eq-truncation-mv-quot}
\begin{split}
\xymatrix{
\cdots \ar[r] &
V^{k-1}_\mathrm{quot} \ar[r] \ar[d]  &
0 &
&
&
\\
\cdots \ar[r] &
H^k(Z,F) \ar[r] \ar[d] &
W^k_1 \oplus W^k_2 \ar[r] \ar[d] &
V^k \ar[r] \ar[d] &
V^k_\mathrm{quot} \ar[r] \ar[d] &
0 \\
0 \ar[r] &
W^k_{12} \ar[r] &
W^k_1 \oplus W^k_2 \ar[r] &
V^k \ar[r] &
V^k_\mathrm{quot} \ar[r] &
0 \;.}
\end{split}
\end{align}

We equip
$H^\bullet(Z,F)$, $W^\bullet_1$, $W^\bullet_2$ and $V^\bullet$ in \eqref{eq-truncation-mv-quot}
with Hermitian metrics induced by $\big\lVert\cdot\big\rVert_{Z_{j,R}}$ ($j=0,1,2,3$)
via the identification \eqref{eq-hodge-DRT}.
We equip $W^\bullet_{12}$ in \eqref{eq-truncation-mv-quot}
with the Hermitian metric induced by $h^{W^\bullet_1}_{R,T} \oplus h^{W^\bullet_2}_{R,T}$,
which were defined in \eqref{eq-def-hW},
via the embedding $W^\bullet_{12}\hookrightarrow W^\bullet_1 \oplus W^\bullet_2$.
Set
\begin{equation}
\label{eq-def-aRT}
a_{R,T} = R \int_{-1}^1 \chi_3(s) e^{2Tf_T(s) - T} ds \;,
\end{equation}
where $\chi_3: \R \rightarrow \R$ was defined in \eqref{eq-def-chi3}.
We equip
$V^\bullet_\mathrm{quot}$
in \eqref{eq-truncation-mv-quot}
with the quotient metric of $ a_{R,T}^{-2} h^{V^\bullet}_{R,T}$,
where $h^{V^\bullet}_{R,T}$ was defined in \eqref{eq-def-hVRT}.
Let $\mathscr{T}_{\mathrm{hor},R,T}^k$ be
the torsion form associated with the third row in \eqref{eq-truncation-mv-quot}.
Let $\mathscr{T}_{\mathrm{vert},R,T}^k$ be
the torsion form associated with the (unique) non trivial column in \eqref{eq-truncation-mv-quot}.

\begin{prop}
\label{prop-decomp-mv}
The following identity holds in $Q^S/Q^{S,0}$,
\begin{equation}
\label{eq-prop-decomp-mv}
\mathscr{T}_{\mathscr{H},R,T} =
\sum_{k=0}^n (-1)^k \mathscr{T}_{\mathrm{hor},R,T}^k
- \sum_{k=1}^n (-1)^k \mathscr{T}_{\mathrm{vert},R,T}^k \;.
\end{equation}
\end{prop}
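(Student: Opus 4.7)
The strategy is to interpret the commutative diagram \eqref{eq-truncation-mv-quot} as providing, for each $k$, a short exact sequence of acyclic complexes of flat Hermitian vector bundles over $S$. Reading the diagram column by column, the three rows $\mathcal{R}^{k-1}$ (truncation at degree $k-1$), $\mathcal{R}^k$ (truncation at degree $k$), and $\mathcal{B}^k$ (the third row) fit into
$$0 \to \mathcal{R}^{k-1} \to \mathcal{R}^k \to \mathcal{B}^k \to 0,$$
and the individual column-wise short exact sequences are trivial identity sequences, except for the column $0 \to V^{k-1}_\mathrm{quot} \to H^k(Z,F) \to W^k_{12} \to 0$, whose torsion form is exactly $\mathscr{T}_{\mathrm{vert},R,T}^k$. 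Moreover $\mathscr{T}(\mathcal{B}^k) = \mathscr{T}_{\mathrm{hor},R,T}^k$.

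First I would establish the finite-dimensional additivity, modulo $Q^{S,0}$, for the torsion form of an acyclic complex sitting in a short exact sequence of such complexes, with a correction term indexed by the column-wise short exact sequences. This is a consequence of Theorem \ref{thm-model-tf-gluing}, itself built on \cite[Thm.~7.37]{g-a2} (the finite-dimensional avatar of Ma's formula \cite[Thm.~0.1]{ma}). The precise form I need is: if the column over position $i$ of the short exact sequence of complexes is itself a short exact sequence with torsion $\mathscr{T}_{\mathrm{col}}^i$, then
$$\mathscr{T}(\mathcal{R}^k) \;\equiv\; \mathscr{T}(\mathcal{R}^{k-1}) + \mathscr{T}_{\mathrm{hor},R,T}^k - (-1)^{k}\,\mathscr{T}_{\mathrm{vert},R,T}^k \pmod{Q^{S,0}},$$
the sign $(-1)^k$ arising from the cohomological position at which the non-trivial column $0 \to V^{k-1}_\mathrm{quot} \to H^k(Z,F) \to W^k_{12} \to 0$ sits inside the ambient complex.

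Next I would iterate this identity for $k=1,2,\dots$, starting from the trivial truncation $\mathcal{R}^0$ (a very short acyclic complex built from $H^0(Z,F)$, $W^0_1\oplus W^0_2$, $V^0$, $V^0_\mathrm{quot}$) and telescoping the contributions. Since $H^k(Z_j,F) = 0$ for $k > \dim Z$, the truncation $\mathcal{R}^k$ stabilizes for $k$ large, and its torsion form in the limit differs from $\mathscr{T}_{\mathscr{H},R,T}$ only by a term lying in $Q^{S,0}$ (indeed the ``stabilized'' truncation is the Mayer-Vietoris sequence itself, equipped with the same metrics; one must check that the horizontal row 3 at the top degree $k=0$ and the quotient metric on $V^k_\mathrm{quot}$ are compatible with the $L^2$-metrics appearing in the definition of $\mathscr{T}_{\mathscr{H},R,T}$).

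The main technical obstacle is the careful verification that the specific Hermitian metrics prescribed on $W^\bullet_{12}$ (via the embedding $W^\bullet_{12}\hookrightarrow W^\bullet_1\oplus W^\bullet_2$ with metrics $h^{W^\bullet_j}_{R,T}$ from \eqref{eq-def-hW}) and on $V^\bullet_\mathrm{quot}$ (with the rescaled quotient metric involving $a_{R,T}^{-2}$) are exactly what is required so that the telescoping identity above holds without additional correction terms. The factor $a_{R,T}^{-2}$ is tuned so that the connecting map $V^k_\mathrm{quot}\hookrightarrow H^{k+1}(Z,F)$ in the MV sequence becomes, asymptotically, an isometric embedding when $V^k_\mathrm{quot}$ is equipped with the quotient metric and $H^{k+1}(Z,F)$ with the $L^2$-metric $h^{H^\bullet(Z,F)}_{R,T}$; this in turn mirrors the rescalings $h^{V^\bullet}_{R,T}$ and $h^{W^\bullet_j}_{R,T}$ introduced in \textsection\ref{subsec-intermediate}, whose presence was explained by Lemma \ref{lem-approx-derham}. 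Once this compatibility is in place, the bookkeeping of signs yields \eqref{eq-prop-decomp-mv}.
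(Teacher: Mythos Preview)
Your approach is the same as the paper's: view \eqref{eq-truncation-mv-quot} as a short exact sequence of acyclic complexes $0\to\mathcal{R}^{k-1}\to\mathcal{R}^k\to\mathcal{B}^k\to 0$, apply an additivity result for torsion forms, and telescope. The paper does exactly this in three lines, invoking \cite[Thm.~A1.4]{bl} (the general ``alternating sum of row torsions equals alternating sum of column torsions'' identity for a double complex), which is the right reference; Theorem~\ref{thm-model-tf-gluing} is only a very special instance and is not what you need here.

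Two concrete corrections. First, your recurrence is missing a sign: the identity the paper extracts from \cite[Thm.~A1.4]{bl} is
\[
\mathscr{T}_{\mathscr{H},R,T}(k-1)-\mathscr{T}_{\mathscr{H},R,T}(k)
+(-1)^k\mathscr{T}_{\mathrm{hor},R,T}^k-(-1)^k\mathscr{T}_{\mathrm{vert},R,T}^k\in Q^{S,0},
\]
so $\mathscr{T}_{\mathrm{hor},R,T}^k$ also carries the factor $(-1)^k$, coming from the position (congruent to $k$ mod~$2$) at which Row~3 sits inside the ambient graded complex. Without it the telescoped sum does not reproduce \eqref{eq-prop-decomp-mv}. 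Second, your ``main technical obstacle'' is a non-issue for this proposition. The identity from \cite[Thm.~A1.4]{bl} holds for \emph{any} choice of Hermitian metrics on the objects of the diagram, provided each object carries one fixed metric; no compatibility between the metric on $W^\bullet_{12}$ and the $L^2$-metric on $W^\bullet_1\oplus W^\bullet_2$, nor any tuning of $a_{R,T}$, is required. The specific metrics on $W^\bullet_{12}$ and $V^\bullet_{\mathrm{quot}}$ are chosen so that Propositions~\ref{prop-vert-mv} and~\ref{prop-hor-mv} go through, not so that Proposition~\ref{prop-decomp-mv} does. Once you drop that discussion and fix the sign, your argument is the paper's proof.
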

\begin{proof}
Let $\mathscr{T}_{\mathscr{H},R,T}(k)$ be the torsion form associated with \eqref{eq-truncation-mv}.
In particular,
$\mathscr{T}_{\mathscr{H},R,T}(-1) = 0$ and
$\mathscr{T}_{\mathscr{H},R,T}(\dim Z) = \mathscr{T}_{\mathscr{H},R,T}$.
Applying \cite[Thm. A1.4]{bl} to \eqref{eq-truncation-mv-quot},
we get
\begin{equation}
\label{eq1-pf-prop-decomp-mv}
\mathscr{T}_{\mathscr{H},R,T}(k-1)
- \mathscr{T}_{\mathscr{H},R,T}(k)
+ (-1)^k \mathscr{T}_{\mathrm{hor},R,T}^k
- (-1)^k \mathscr{T}_{\mathrm{vert},R,T}^k \in Q^{S,0} \;.
\end{equation}
Taking the sum of \eqref{eq1-pf-prop-decomp-mv} for $k=0,1,\cdots,\dim Z$,
we obtain \eqref{eq-prop-decomp-mv}.
This completes the proof of Proposition \ref{prop-decomp-mv}.
\end{proof}

\subsection{Estimating $\mathscr{T}_{\mathrm{vert},R,T}^k$ and $\mathscr{T}_{\mathrm{hor},R,T}^k$}
\label{subsect-mv-estimate}

For $j=0,1,2,3$,
under the identification \eqref{eq-id-HC},
we view $H^\bullet(C^{\bullet,\bullet}_j,\partial)$ as a vector subspace of $C^{\bullet,\bullet}_j$.
Let
\begin{equation}
P^H_j:
C^{\bullet,\bullet}_j \rightarrow H^\bullet(C^{\bullet,\bullet}_j,\partial)
\end{equation}
be the orthogonal projection with respect to $h^{C^{\bullet,\bullet}_j}_{R,T}$ (see \eqref{eq-def-hC}).
Note that $K_j^\bullet \subseteq H^0(C^{\bullet,\bullet}_j,\partial)$,
by \eqref{eq-def-hW} and \eqref{eq-def-hC},
$P^H_j$ is independent of $R,T$.
Let $\omega^{H^\bullet(C^{\bullet,\bullet}_j,\partial)}$
(resp. $\omega^{H^\bullet(C^{\bullet,\bullet}_j,\partial)}_{R,T}$)
be the $1$-form on $S$ with values in $\mathrm{End}(C^{\bullet,\bullet}_j)$
induced by  $\omega^{C^{\bullet,\bullet}_j}$ in \eqref{eq-def-omegaC}
(resp. $\omega^{C^{\bullet,\bullet}_j}_{R,T}$ in \eqref{eq-lem-SRT-omegaRT})
via the projection $P^H_j$.
More precisely,
we have
\begin{align}
\label{eq-PH-omega}
\begin{split}
\omega^{H^\bullet(C^{\bullet,\bullet}_j,\partial)} & =
P^H_j \omega^{C^{\bullet,\bullet}_j} P^H_j
\in \Omega^1\big(S,\mathrm{End}\big(H^\bullet(C^{\bullet,\bullet}_j,\partial)\big)\big) \;,\\
\omega^{H^\bullet(C^{\bullet,\bullet}_j,\partial)}_{R,T} & =
P^H_j \omega^{C^{\bullet,\bullet}_j}_{R,T} P^H_j
\in \Omega^1\big(S,\mathrm{End}\big(H^\bullet(C^{\bullet,\bullet}_j,\partial)\big)\big) \;.
\end{split}
\end{align}
By \eqref{eq-HC},
we have $H^\bullet(C^{\bullet,\bullet}_j,\partial) = H^0(C^{\bullet,\bullet}_j,\partial) = W^\bullet_j$ for $j=1,2$.
Then we have
\begin{equation}
\omega^{H^\bullet(C^{\bullet,\bullet}_j,\partial)}_{R,T} = \omega^{W^\bullet_j}_{R,T}
\end{equation}
for $j=1,2$,
where $\omega^{W^\bullet_j}_{R,T}$ was defined in \eqref{eq1-pf-lem-SRT-omegaRT}.

For $j=0,1,2,3$,
let $h^{H^\bullet(Z_j,F)}_{R,T}$ be
the Hermitian metric on $H^\bullet(Z_j,F) = H^\bullet(Z_{j,R},F)$
induced by $\big\lVert\cdot\big\rVert_{Z_{j,R}}$
via the identification \eqref{eq-hodge-DRT}.
Let $\nabla^{H^\bullet(Z_j,F)}$ be the canonical flat connection on $H^\bullet(Z_j,F)$
(see \cite[Def. 2.4]{bl}).
Let $\omega^{H^\bullet(Z_j,F)}_{R,T} \in \Omega^1\big(S,\mathrm{End}\big(H^\bullet(Z_j,F)\big)\big)$
be as in \eqref{intro-def-omegaF} with $(\n^F,h^F)$ replaced by $(\nabla^{H^\bullet(Z_j,F)},h^{H^\bullet(Z_j,F)}_{R,T})$.

Let $\big[\mathscr{S}^H_{j,R,T}\big]_T: H^\bullet(C^{\bullet,\bullet},\partial) \rightarrow H^\bullet(Z_j,F)$
be the map defined by \eqref{eq2-def-SHRT}, \eqref{eq-def-T-class}, \eqref{eq-def-SRTj} and \eqref{eq-def-SRT3}.
Recall that the notation $\mathscr{O}_{R,T}\big(\cdot\big)$
was defined in the paragraph above Theorem \ref{thm-central-SRT}.

\begin{lemme}
\label{lem-omegaH}
For $R \gg 1$,
we have
\begin{align}
\label{eq-lem-omegaH}
\begin{split}
\Big(\big[\mathscr{S}^H_{j,R,T}\big]_T\Big)^{-1} \circ
\omega^{H^\bullet(Z_j,F)}_{R,T} \circ
\big[\mathscr{S}^H_{j,R,T}\big]_T & =
\omega^{H^\bullet(C^{\bullet,\bullet}_j,\partial)}
+ \mathscr{O}_{R,T}\big(R^{-1/2+\kappa/4}\big) \;,\\
\omega^{H^\bullet(C^{\bullet,\bullet}_j,\partial)}_{R,T} & =
\omega^{H^\bullet(C^{\bullet,\bullet}_j,\partial)}
+ \mathscr{O}_{R,T}\big(R^{-1/2+\kappa/4}\big) \;.
\end{split}
\end{align}
\end{lemme}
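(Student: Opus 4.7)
The second identity in \eqref{eq-lem-omegaH} is immediate. The projection $P^H_j$ has operator norm one with respect to $h^{C^{\bullet,\bullet}_j}_{R,T}$, so sandwiching the identity of Lemma \ref{lem-SRT-omegaRT} between $P^H_j$'s and invoking \eqref{eq-PH-omega} yields $\omega^{H^\bullet(C^{\bullet,\bullet}_j,\partial)}_{R,T} = \omega^{H^\bullet(C^{\bullet,\bullet}_j,\partial)} + \mathscr{O}_{R,T}(R^{-1/2+\kappa/4})$ at once.

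For the first identity, the plan is to apply Bismut-Lott's formula \cite[Prop. 3.14]{bl}, exactly as used in \eqref{eq-omega-V}, to the Hodge identification $H^\bullet(Z_j,F) \simeq \Ker(\DjRT) \subseteq \mathscr{F}_{j,R}$ from \eqref{eq-hodge-DRT}. This rewrites
\[
\omega^{H^\bullet(Z_j,F)}_{R,T} = Q_{j,R,T}\, \omega^{\mathscr{F}_{j,R}}\, Q_{j,R,T},
\]
where $Q_{j,R,T}:\mathscr{F}_{j,R}\to\Ker(\DjRT)$ is the orthogonal projection with respect to $\lVert\cdot\rVert_{Z_{j,R}}$. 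Since $\Ker(\DjRT)\subseteq\mathscr{E}^{[-1,1]}_{j,R,T}$, this factors as $Q_{j,R,T} = \widetilde Q_{j,R,T}\, P^{[-1,1]}_{j,R,T}$, where $\widetilde Q_{j,R,T}$ is the orthogonal projection inside $\mathscr{E}^{[-1,1]}_{j,R,T}$. Under the Hodge identification, $[\mathscr{S}^H_{j,R,T}]_T$ coincides with $\mathscr{S}^H_{j,R,T}$; inserting $\mathscr{S}_{j,R,T}\mathscr{S}_{j,R,T}^{-1}$ on each side of the middle factor and applying Lemma \ref{lem-SRT-omega} reduces the identity to proving that $(\mathscr{S}^H_{j,R,T})^{-1}\widetilde Q_{j,R,T}\mathscr{S}_{j,R,T} = P^H_j + \mathscr{O}_{R,T}(R^{-1/2+\kappa/4})$ and $\mathscr{S}_{j,R,T}^{-1}\mathscr{S}^H_{j,R,T} = \Id + \mathscr{O}_{R,T}(R^{-1/2+\kappa/4})$ on $H^\bullet(C^{\bullet,\bullet}_j,\partial)$.

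Both of these comparisons are statements about approximate orthogonal projections, driven by three inputs: Proposition \ref{prop-SRT-metric} and Corollary \ref{cor-SRT-metric}, which together show that $\mathscr{S}_{j,R,T}$ intertwines $\lVert\cdot\rVert_{Z_{j,R}}$ on $\mathscr{E}^{[-1,1]}_{j,R,T}$ with $h^{C^{\bullet,\bullet}_j}_{R,T}$ up to a multiplicative $1+\mathscr{O}(R^{-1/2+\kappa/4})$; the estimate \eqref{eq-pf-cor-SRT-metric}, which says that $\mathscr{S}^H_{j,R,T}$ and $\mathscr{S}_{j,R,T}|_{H^\bullet(C^{\bullet,\bullet}_j,\partial)}$ differ by $\mathscr{O}(R^{-1/2+\kappa/4})$ in $\lVert\cdot\rVert_{Z_{j,R}}$; and the identification $\Ker(\DjRT) = \mathscr{S}^H_{j,R,T}(H^\bullet(C^{\bullet,\bullet}_j,\partial))$, which places $\mathscr{S}_{j,R,T}^{-1}(\Ker(\DjRT))$ within $\mathscr{O}_{R,T}(R^{-1/2+\kappa/4})$ of $H^\bullet(C^{\bullet,\bullet}_j,\partial)$. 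The main obstacle will be combining these close-metric and close-subspace perturbations without compounding the errors: this will require explicit Gram-matrix computations for $\widetilde Q_{j,R,T}$ and $P^H_j$ in the common trivialization provided by $\mathscr{S}_{j,R,T}$, tracking operator norms simultaneously with respect to $\lVert\cdot\rVert_{Z_{j,R}}$ and $h^{C^{\bullet,\bullet}_j}_{R,T}$. Once this is established for $j=0$, Remark \ref{rem-j} propagates the argument uniformly to $j=1,2,3$.
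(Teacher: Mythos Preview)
Your proposal is correct and follows the same approach as the paper. The paper's proof is far terser: for the second identity it cites Lemma~\ref{lem-SRT-omegaRT} and \eqref{eq-PH-omega} exactly as you do; for the first identity it invokes \cite[Prop.~3.14]{bl} to write $\omega^{H^\bullet(Z_j,F)}_{R,T} = P_{j,R,T}\,\omega^{\mathscr{F}_{j,R}}\,P_{j,R,T}$ (your $Q_{j,R,T}$), then asserts \eqref{eq2-pf-lem-omegaH} directly ``by Lemma~\ref{lem-SRT-omega} and \eqref{eq-pf-cor-SRT-metric}.'' Your factorization through $P^{[-1,1]}_{j,R,T}$ and the two projection comparisons $(\mathscr{S}^H_{j,R,T})^{-1}\widetilde Q_{j,R,T}\mathscr{S}_{j,R,T}\approx P^H_j$ and $\mathscr{S}_{j,R,T}^{-1}\mathscr{S}^H_{j,R,T}\approx\Id$ are precisely what is hidden inside that one-line citation; the Gram-matrix bookkeeping you flag is real but routine, since both perturbations are of the same order and the spaces are finite-dimensional with uniformly bounded rank.
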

\begin{proof}
Recall that $P_{j,R,T}: \mathscr{F}_{j,R} \rightarrow \Ker\big(\DjRT\big)$ was defined
above Proposition \ref{prop-FRT}, above \eqref{eq-def-SRTj} and above \eqref{eq-def-SRT3}.
By \cite[Prop. 3.14]{bl},
the following identity holds under the identification \eqref{eq-hodge-DRT},
\begin{equation}
\label{eq1-pf-lem-omegaH}
\omega^{H^\bullet(Z_j,F)}_{R,T} =
P_{j,R,T} \omega^{\mathscr{F}_{j,R}} P_{j,R,T} \;,
\end{equation}
where $\omega^{\mathscr{F}_{j,R}}$ was defined in \textsection \ref{subsect-tf}.
By Lemma \ref{lem-SRT-omega} and \eqref{eq-pf-cor-SRT-metric},
we have
\begin{equation}
\label{eq2-pf-lem-omegaH}
\Big(\mathscr{S}^H_{j,R,T}\Big)^{-1} \circ
\Big(P_{j,R,T} \omega^{\mathscr{F}_{j,R}} P_{j,R,T}\Big)
\circ \mathscr{S}^H_{j,R,T} =
P^H_j \omega^{C^{\bullet,\bullet}_j} P^H_j
+ \mathscr{O}_{R,T} \big(R^{-1/2+\kappa/4}\big) \;.
\end{equation}
From \eqref{eq-PH-omega}, \eqref{eq1-pf-lem-omegaH} and \eqref{eq2-pf-lem-omegaH},
we obtain the first identity in \eqref{eq-lem-omegaH}.
The second identity in \eqref{eq-lem-omegaH}
is a direct consequence of Lemma \ref{lem-SRT-omegaRT} and \eqref{eq-PH-omega}.
This completes the proof of Lemma \ref{lem-omegaH}.
\end{proof}

\begin{prop}
\label{prop-vert-mv}
For $R \gg 1$,
we have
\begin{equation}
\label{eq-prop-vert-mv}
\mathscr{T}_{\mathrm{vert},R,T}^k
= \mathscr{O}\big(R^{-1/4+\kappa/8}\big) \;.
\end{equation}
\end{prop}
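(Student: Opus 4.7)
The non-trivial column in \eqref{eq-truncation-mv-quot} is the short exact sequence of flat complex vector bundles
\begin{equation}
(E_k):\quad 0 \to V^{k-1}_\mathrm{quot} \xrightarrow{\iota_k} H^k(Z,F) \xrightarrow{\pi_k} W^k_{12} \to 0,
\end{equation}
where $\iota_k$ is the Mayer--Vietoris connecting homomorphism restricted to $V^{k-1}_\mathrm{quot}$ and $\pi_k$ is induced by the restriction map $\alpha \mapsto (\alpha|_{Z_1},\alpha|_{Z_2})$. My plan is to compare $(E_k)$ with the split exact sequence
\begin{equation}
(\tilde E_k):\quad 0 \to V^{k-1}_\mathrm{quot} \to V^{k-1}_\mathrm{quot} \oplus W^k_{12} \to W^k_{12} \to 0,
\end{equation}
equipped with the direct-sum flat structure and the direct sum of the metrics prescribed in the statement. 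By additivity of torsion forms over direct sums, $(\tilde E_k)$ decomposes as the sum of two trivial two-term complexes of the form $A \xrightarrow{\mathrm{Id}} A$ with matching metric and connection; each has vanishing torsion form, hence $\mathscr{T}(\tilde E_k) = 0$ in $Q^S/Q^{S,0}$.

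The isomorphism of complexes is built from the bijection $[\mathscr{S}^H_{0,R,T}]_T$ of Corollary \ref{cor-SRTH-bij}. The key observation, which I would verify first, is the following: the restriction of $[\mathscr{S}^H_{0,R,T}]_T$ to $V^{k-1}_\mathrm{quot} = H^1(C^{\bullet,\bullet}_0,\partial)^{k-1}$ takes values in the flat subbundle $\iota_k(V^{k-1}_\mathrm{quot}) \subseteq H^k(Z,F)$ (as remarked after Theorem \ref{thm-central-SRT}, this subbundle is exactly the flat kernel of the induced flat quotient map to $H^0(C^{\bullet,\bullet}_0,\partial)^k = W^k_{12}$), and coincides with $a_{R,T}\iota_k$ up to a flat perturbation. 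This follows from a direct computation: the explicit cocycle $e^{Tf_T}I_{R,T}(\hat\omega)$ has $u$-integral equal to $a_{R,T}\hat\omega$ along the cylinder by the very definition \eqref{eq-def-aRT} of $a_{R,T}$, which pins down its Mayer--Vietoris class. Similarly, $c_k := \pi_k \circ [\mathscr{S}^H_{0,R,T}]_T|_{W^k_{12}}$ is a flat isomorphism close to $\mathrm{Id}_{W^k_{12}}$. I would then define a chain isomorphism $\alpha:(\tilde E_k) \to (E_k)$ by $\alpha(v,w) = \iota_k(v) + [\mathscr{S}^H_{0,R,T}]_T(c_k^{-1}(w))$; the scaling by $a_{R,T}$ is absorbed precisely so that the pullback metric on the $V$-factor matches $a_{R,T}^{-2} h^{V^\bullet}_{R,T}|_\mathrm{quot}$.

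To apply Proposition \ref{prop-comparison-tf} with $\delta = \mathscr{O}(R^{-1/2+\kappa/4})$, I would verify its three hypotheses. The differentials agree exactly because $\alpha$ is a chain map between exact sequences. The metric comparison reduces, via Corollary \ref{cor-SRT-metric}, to $\|[\mathscr{S}^H_{0,R,T}]_T(\sigma)\|^2_{Z_R} = \|\sigma\|^2_{R,T}(1 + \mathscr{O}(R^{-1/2+\kappa/4}))$, applied separately on the $V$- and $W$-factors together with polarization on the cross terms (which vanish at leading order by the $h^{C^{\bullet,\bullet}_0}_{R,T}$-orthogonality of these factors). The connection comparison follows from Lemma \ref{lem-omegaH}: the pullback $([\mathscr{S}^H_{0,R,T}]_T)^{-1}\omega^{H^\bullet(Z,F)}_{R,T}[\mathscr{S}^H_{0,R,T}]_T$ agrees with the block-diagonal $\omega^{H^\bullet(C^{\bullet,\bullet}_0,\partial)}_{R,T} = \omega^{W^\bullet_{12}}_{R,T} \oplus \omega^{V^{\bullet-1}_\mathrm{quot}}_{R,T}$ up to $\mathscr{O}_{R,T}(R^{-1/2+\kappa/4})$, and rescaling the $V$-factor by the constant $a_{R,T}$ leaves $\omega$ unchanged. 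Proposition \ref{prop-comparison-tf} then yields $|\mathscr{T}^k_{\mathrm{vert},R,T}| = |\mathscr{T}^k_{\mathrm{vert},R,T} - \mathscr{T}(\tilde E_k)| \leqslant C\delta^{1/2} = \mathscr{O}(R^{-1/4+\kappa/8})$.

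The main obstacle will be the precise identification of $[\mathscr{S}^H_{0,R,T}]_T|_{V^{k-1}_\mathrm{quot}}$ with $a_{R,T}\iota_k$ and of $c_k$ with $\mathrm{Id}_{W^k_{12}}$: while the heuristic cohomology-class computation is transparent, one needs to confirm that the small flat perturbations involved are bounded in a way compatible with Corollary \ref{cor-SRT-metric} and do not spoil either the metric or the $\omega$ comparison, particularly in light of the large scaling factor $a_{R,T}^{-2}$ appearing in the metric on $V^{k-1}_\mathrm{quot}$.
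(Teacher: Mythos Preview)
Your approach is essentially the paper's own, and the overall architecture is correct: compare the non-trivial column $(E_k)$ to the split sequence $(\tilde E_k)$ via the bijection $[\mathscr{S}^H_{R,T}]_T$, then invoke Corollary~\ref{cor-SRT-metric} and Lemma~\ref{lem-omegaH} to feed Proposition~\ref{prop-comparison-tf}.

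The one place where you make life harder than necessary is your hedging on the identification of $[\mathscr{S}^H_{R,T}]_T$ with the boundary and restriction maps. In fact both identities are \emph{exact}, not approximate: a direct cocycle computation shows that for $\overline{\sigma}\in V^{k-1}_\mathrm{quot}$ one has $[\mathscr{S}^H_{R,T}]_T(\overline{\sigma}) = a_{R,T}\,\iota_k(\overline{\sigma})$ on the nose (integrate $e^{Tf_T}I_{R,T}(\sigma')$ along the cylinder and compare to an explicit $\iota_k$-cocycle built from a cut-off primitive), and for $(\omega_1,\omega_2,\hat\omega)\in W^k_{12}$ the restriction of $e^{Tf_T}F_{R,T}(\omega_1,\omega_2,\hat\omega)$ to $Z_{j,0}$ is literally $\omega_j$, so $\pi_k\circ[\mathscr{S}^H_{R,T}]_T = \mathrm{Id}$ exactly. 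Thus your $c_k$ equals $\mathrm{Id}$, your $\alpha$ simplifies to $[\mathscr{S}^H_{R,T}]_T$ itself (with the $a_{R,T}$ scaling on the $V$-factor), and the commutative diagram of complexes holds strictly. This eliminates your ``main obstacle'': there are no flat perturbations to control, and hence the large factor $a_{R,T}^{-2}$ never threatens the metric comparison---it is absorbed exactly by the scaling, and only the $\mathscr{O}(R^{-1/2+\kappa/4})$ errors from Corollary~\ref{cor-SRT-metric} and Lemma~\ref{lem-omegaH} survive.
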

\begin{proof}
Recall that
$H^0(C^{\bullet,k}_0,\partial) = W^k_{12}$ and
$H^1(C^{\bullet,k-1}_0,\partial) = V^{k-1}_\mathrm{quot}$.
First we show that the following diagram commutes,
\begin{equation}
\label{eq1-pf-prop-vert-mv}
\xymatrix{
V^{k-1}_\mathrm{quot}
\ar[d]^{a_{R,T} \mathrm{Id}} \ar[r]  &
V^{k-1}_\mathrm{quot} \oplus W^k_{12}
\ar[d]^{\big[\mathscr{S}^H_{R,T}\big]_T} \ar[r] &
W^k_{12}
\ar[d]^{\mathrm{Id}} \\
V^{k-1}_\mathrm{quot}
\ar[r]^{\hspace{-5mm}\delta} &
H^k(Z,F)
\ar[r]^{\hspace{2.5mm}\alpha} &
W^k_{12} \;,
}
\end{equation}
where $a_{R,T}$ was defined by \eqref{eq-def-aRT},
the first row consists of canonical injection and projection,
the second row is the (unique) non trivial column in \eqref{eq-truncation-mv-quot}.
We remark that
\eqref{eq1-pf-prop-vert-mv}
is not a commutative diagram of flat complex vector bundles over $S$.

Let $\eta: [-R,R] \rightarrow \R$ be a smooth function such that
\begin{equation}
\label{eq1e-pf-prop-vert-mv}
\eta\big|_{[-R,-R/2]}=0 \;,\hspace{5mm}
\eta\big|_{[R/2,R]}=1 \;.
\end{equation}
We will view $\eta$ as a function on $IY_R$.
Let $\sigma \in \mathscr{H}^{k-1}(Y,F) = V^{k-1}$.
Let $\overline{\sigma}\in V^{k-1}_\mathrm{quot}$ be the image of $\sigma$.
Let $\omega\in\Omega^\bullet(Z_R,F)$ such that
\begin{equation}
\label{eq11-pf-prop-vert-mv}
\omega\big|_{Z_{1,0}} = 0 \;,\hspace{5mm}
\omega\big|_{Z_{2,0}} = 0 \;,\hspace{5mm}
\omega\big|_{IY_R} = d\eta \wedge \sigma \;.
\end{equation}
Then we have
\begin{equation}
\label{eq10-pf-prop-vert-mv}
\delta(\overline{\sigma}) = [\omega] \in H^k(Z_R,F) = H^k(Z,F) \;.
\end{equation}
Let $\sigma'\in \big(V^{k-1}_1+V^{k-1}_2\big)^\perp \subseteq V^{k-1}$.
By \eqref{eq0-def-IRT}-\eqref{eq3-def-IRT},
we have
\begin{equation}
\label{eq13-pf-prop-vert-mv}
I_{R,T}(\sigma')\big|_{Z_{1,0}} = 0 \;,\hspace{5mm}
I_{R,T}(\sigma')\big|_{Z_{2,0}} = 0 \;,\hspace{5mm}
I_{R,T}(\sigma')\big|_{IY_R} = \chi_3 e^{Tf_T-T} du \wedge \sigma' \;.
\end{equation}
Let $\overline{\sigma}'\in V^{k-1}_\mathrm{quot}$ be the image of $\sigma'$.
By \eqref{eq2-def-SHRT}, \eqref{eq-def-T-class}, \eqref{eq-def-SRTj} and \eqref{eq-def-SRT3},
we have
\begin{equation}
\label{eq12-pf-prop-vert-mv}
\big[\mathscr{S}^H_{R,T}\big]_T(\overline{\sigma}') =
\big[e^{Tf_T}I_{R,T}(\sigma')\big] \in H^k(Z_R,F) = H^k(Z,F) \;.
\end{equation}
By \eqref{eq-def-aRT}
and \eqref{eq1e-pf-prop-vert-mv}-\eqref{eq12-pf-prop-vert-mv},
we have
\begin{equation}
\label{eq14-pf-prop-vert-mv}
\big[\mathscr{S}^H_{R,T}\big]_T(\overline{\sigma}) =
\Big( \int_{-R}^R \chi_3(u)e^{2Tf_T(u)-T}du \Big) \delta(\overline{\sigma}) =
a_{R,T} \delta(\overline{\sigma}) \;.
\end{equation}
Hence the left square in \eqref{eq1-pf-prop-vert-mv} commutes.

Let $(\omega_1,\omega_2,\hat{\omega})\in\mathscr{H}^k_\mathrm{abs}(Z_{12,\infty},F)$.
Its image in $W^k_{12}$ via the identification \eqref{eq-iso-hha12}
is given by $\Big( \big[\omega_1\big|_{Z_{1,0}}\big],\big[\omega_2\big|_{Z_{2,0}}\big] \Big)$.
By \eqref{eq2-def-SHRT}, \eqref{eq-def-T-class}, \eqref{eq-def-SRTj} and \eqref{eq-def-SRT3},
we have
\begin{align}
\label{eq16-pf-prop-vert-mv}
\begin{split}
& \big[\mathscr{S}^H_{R,T}\big]_T \Big( \big[\omega_1\big|_{Z_{1,0}}\big],\big[\omega_2\big|_{Z_{2,0}}\big] \Big) \\
& = \big[e^{Tf_T} F_{R,T}(\omega_1,\omega_2,\hat{\omega})\big]
\in H^k(Z_R,F) = H^k(Z,F) \;.
\end{split}
\end{align}
By \eqref{eq0-def-FRT}-\eqref{eq3-def-FRT},
we have
\begin{equation}
\label{eq17-pf-prop-vert-mv}
F_{R,T}(\omega_1,\omega_2,\hat{\omega})\big|_{Z_{1,0}}
= \omega_1\big|_{Z_{1,0}} \;,\hspace{5mm}
F_{R,T}(\omega_1,\omega_2,\hat{\omega})\big|_{Z_{2,0}}
= \omega_2\big|_{Z_{2,0}} \;.
\end{equation}
On the other hand,
for $[\omega]\in H^k(Z_R,F) = H^k(Z,F)$,
we have
\begin{equation}
\label{eq15-pf-prop-vert-mv}
\alpha([\omega]) = \Big( \big[\omega\big|_{Z_{1,0}}\big],\big[\omega\big|_{Z_{2,0}}\big] \Big)
\in W^k_{12} \subseteq W^k_1 \oplus W^k_2 \;.
\end{equation}
By \eqref{eq16-pf-prop-vert-mv}-\eqref{eq15-pf-prop-vert-mv},
we have
\begin{equation}
\label{eq18-pf-prop-vert-mv}
\alpha \circ \big[\mathscr{S}^H_{R,T}\big]_T
\Big( \big[\omega_1\big|_{Z_{1,0}}\big],\big[\omega_2\big|_{Z_{2,0}}\big] \Big) =
\Big( \big[\omega_1\big|_{Z_{1,0}}\big],\big[\omega_2\big|_{Z_{2,0}}\big] \Big) \;.
\end{equation}
Hence the right square in \eqref{eq1-pf-prop-vert-mv} commutes.

We equip
$H^k(Z,F) = H^k(Z_R,F)$ in \eqref{eq1-pf-prop-vert-mv}
with the Hermitian metric induced by $\big\lVert\cdot\big\rVert_{Z_R}$
via the identification \eqref{eq-hodge-DRT}.
We equip $W^k_{12}$ in \eqref{eq1-pf-prop-vert-mv}
with the Hermitian metric induced by $h^{W^\bullet_1}_{R,T} \oplus h^{W^\bullet_2}_{R,T}$
via the embedding $W^k_{12}\hookrightarrow W^k_1 \oplus W^k_2$.
We equip
$V^\bullet_\mathrm{quot}$
in the first row of \eqref{eq1-pf-prop-vert-mv}
with the quotient metric of $h^{V^\bullet}_{R,T}$.
We equip
$V^\bullet_\mathrm{quot}$
in the second row of \eqref{eq1-pf-prop-vert-mv}
with the quotient metric of $ a_{R,T}^{-2} h^{V^\bullet}_{R,T}$.
Then the torsion form of the first row in \eqref{eq1-pf-prop-vert-mv}
vanishes,
and the torsion form of the second row in \eqref{eq1-pf-prop-vert-mv}
equals $\mathscr{T}_{\mathrm{vert},R,T}^k$.
Applying Proposition \ref{prop-comparison-tf} to \eqref{eq1-pf-prop-vert-mv}
and using Corollary \ref{cor-SRT-metric} and Lemma \ref{lem-omegaH},
we obtain \eqref{eq-prop-vert-mv}.
This completes the proof of Proposition \ref{prop-vert-mv}.
\end{proof}

\begin{prop}
\label{prop-hor-mv}
For $R \gg 1$,
the following identity holds in $Q^S/Q^{S,0}$,
\begin{equation}
\label{eq-prop-hor-mv}
\mathscr{T}_{\mathrm{hor},R,T}^k
= \widehat{\mathscr{T}}_{\mathscr{H},R,T}^k + \mathscr{O}\big(R^{-1/4+\kappa/8}\big)
\end{equation}
with $\mathscr{T}_{\mathrm{hor},R,T}^k$ being as in \eqref{eq-prop-decomp-mv}
and $\widehat{\mathscr{T}}_{\mathscr{H},R,T}^k$ being as in \eqref{eq-def-THR}.
\end{prop}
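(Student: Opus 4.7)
The plan is to identify the two exact sequences defining $\mathscr{T}^k_{\mathrm{hor},R,T}$ and $\widehat{\mathscr{T}}^k_{\mathscr{H},R,T}$ as the same abstract Mayer--Vietoris sequence of flat complex vector bundles, and then use the comparison estimates already established for metrics and $\omega$-forms to bound the discrepancy of the torsion forms.

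\textbf{Step 1: Identifying the sequences.} I would trace through the LES induced by the short exact sequence \eqref{eq-ses-CCC} at fixed horizontal degree $k$, with differential $R^{-1}\partial_T$. Using the identifications \eqref{eq-HC} together with \eqref{eq-iso-hha12}, \eqref{eq-LLVj} and the Hodge theorem, I would verify that the first three maps of \eqref{eq-mv-model-k} agree with those of the third row of \eqref{eq-truncation-mv-quot}, while the connecting homomorphism $\delta\colon V^k=H^0\bigl(C^{\bullet,k}_3\bigr)\to H^1\bigl(C^{\bullet,k}_0\bigr)=V^k_\mathrm{quot}$ acquires a scalar factor $\lambda_{R,T}:=-\pi^{-1/2}R^{-1}T^{1/2}e^{-T}$ coming from \eqref{eq-def-partial-T}. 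A direct Gaussian evaluation of \eqref{eq-def-aRT}, using the explicit form of $f_\infty$ near the origin in \eqref{eq-def-finf} and the uniform bound \eqref{eq-compare-f-fT}, yields $a_{R,T}\,\lambda_{R,T}=-1+\mathscr{O}\bigl(e^{-R^\kappa/32}\bigr)$; that is, the metric rescaling $a_{R,T}^{-2}$ built into $\mathscr{T}^k_{\mathrm{hor},R,T}$ precisely cancels the squared factor $\lambda_{R,T}^2$ that the hat version inserts into the Laplacian on~$V^k_\mathrm{quot}$.

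\textbf{Step 2: Comparison.} I would then construct an isomorphism $\alpha$ between the hor and hat complexes equal to the identity on $W^k_{12}$, $W^k_1\oplus W^k_2$, $V^k$, and equal to $-a_{R,T}^{-1}\,\mathrm{Id}$ on $V^k_\mathrm{quot}$. By construction $\alpha^{*}h^{\mathrm{hat}}=h^{\mathrm{hor}}$ holds exactly on $V^k_\mathrm{quot}$, and Step~1 gives $\alpha^{-1}\partial^{\mathrm{hat}}-\partial^{\mathrm{hor}}=\mathscr{O}(e^{-R^\kappa/32})\cdot\pi_\mathrm{quot}$. On the other three vertices, Corollary~\ref{cor-SRT-metric} controls the metric discrepancy between the $L^2$-metric on $\Ker\bigl(\DjRT\bigr)$ and the Hodge metric induced by $h^{C^{\bullet,k}_j}_{R,T}$ by $\mathscr{O}(R^{-1/2+\kappa/4})$, while Lemma~\ref{lem-omegaH} gives the same bound on $\alpha^{*}\omega^{\mathrm{hat}}-\omega^{\mathrm{hor}}$. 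Applying Proposition~\ref{prop-comparison-tf} (and its variant from Remark~\ref{lem-notlfat} to absorb the small boundary discrepancy at $V^k_\mathrm{quot}$) with $\delta=\mathscr{O}(R^{-1/2+\kappa/4})$ would then produce the desired bound $\mathscr{O}(R^{-1/4+\kappa/8})$.

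\textbf{Main obstacle.} The subtle point is that the 4-term sequence simultaneously carries the structurally small eigenvalue $\|\partial_3\|^2\sim a_{R,T}^{-2}$ of the final boundary and the order-one eigenvalue of the inclusion $W^k_{12}\hookrightarrow W^k_1\oplus W^k_2$, so the ratio $\lambda_\mathrm{max}/\lambda_\mathrm{min}$ is exponentially large in $T=R^\kappa$, potentially spoiling the constant in Proposition~\ref{prop-comparison-tf}. To circumvent this I would split the 4-term sequence into the two short exact sequences
\begin{equation*}
0\to W^k_{12}\to W^k_1\oplus W^k_2\to V^k_1+V^k_2\to 0,\qquad
0\to V^k_1+V^k_2\to V^k\to V^k_\mathrm{quot}\to 0,
\end{equation*}
and use \cite[Thm.~A1.4]{bl} to express both $\mathscr{T}^k_{\mathrm{hor},R,T}$ and $\widehat{\mathscr{T}}^k_{\mathscr{H},R,T}$ modulo $Q^{S,0}$ as sums of the torsion forms of these two shorter sequences. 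The first short sequence has uniformly bounded spectral ratio and Proposition~\ref{prop-comparison-tf} applies directly to give an $\mathscr{O}(R^{-1/4+\kappa/8})$ bound. The second isolates the problematic $V^k_\mathrm{quot}$-rescaling at a single metric change; this can be handled either by Corollary~\ref{cor-comparison-tf} (after identifying the contribution as a one-parameter metric comparison on a two-step complex) or by a direct evaluation exploiting the near-explicit form of the torsion form of a three-term exact sequence of flat bundles. Summing the two contributions yields \eqref{eq-prop-hor-mv}.
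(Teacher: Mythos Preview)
Your Step~1 is correct and matches the paper: the hat and hor sequences are the same four-term exact sequence of flat bundles, the last map differing only by the scalar $b_{R,T}^{-1}:=\pi^{-1/2}R^{-1}T^{1/2}e^{-T}$, and $a_{R,T}=b_{R,T}\bigl(1+\mathscr{O}(e^{-aT})\bigr)$. Your diagnosis of the obstacle to a direct use of Proposition~\ref{prop-comparison-tf}, namely the exponentially large ratio $\lambda_{\max}/\lambda_{\min}\sim a_{R,T}^{2}$ coming from the last boundary map, is also right.

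Where you and the paper part ways is in the cure. You split each four-term sequence \emph{horizontally} into two short exact sequences through $V^k_1+V^k_2$ and then compare the pieces. The paper instead applies \cite[Thm.~A1.4]{bl} \emph{vertically} to a single commutative square
\begin{equation*}
\xymatrix{
W^k_{12} \ar[r] \ar[d]^{\mathrm{Id}} &
W^k_1 \oplus W^k_2 \ar[r] \ar[d]^{\mathrm{Id}} &
V^k \ar[r]^{\hspace{-3mm}b_{R,T}^{-1}p} \ar[d]^{\mathrm{Id}} &
V^k_{\mathrm{quot}} \ar[d]^{b_{R,T}\mathrm{Id}} \\
W^k_{12} \ar[r] &
W^k_1 \oplus W^k_2 \ar[r] &
V^k \ar[r]^{\hspace{-1mm}p} &
V^k_{\mathrm{quot}}
}
\end{equation*}
whose first row carries the $h^{C^{\bullet,\bullet}_j}_{R,T}$-Hodge metrics (torsion $\widehat{\mathscr{T}}^k_{\mathscr{H},R,T}$) and whose second row carries the $L^2$-metrics and the $a_{R,T}^{-2}$-rescaled quotient metric (torsion $\mathscr{T}^k_{\mathrm{hor},R,T}$). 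The four column torsions are then \emph{two-term} metric comparisons: the first is zero, the second and third are $\mathscr{O}(R^{-1/4+\kappa/8})$ by Corollary~\ref{cor-comparison-tf} together with Corollary~\ref{cor-SRT-metric} and Lemma~\ref{lem-omegaH}, and the fourth is $\mathscr{O}(e^{-aT/2})$ since $b_{R,T}^{2}a_{R,T}^{-2}=1+\mathscr{O}(e^{-aT})$. Proposition~\ref{prop-comparison-tf} is never invoked.

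Your route also works, but with more bookkeeping. The horizontal split via \cite[Thm.~A1.4]{bl} does not literally produce the two three-term torsions you wrote; it produces the first SES torsion plus the torsion of the two-term quotient complex $C/E\xrightarrow{\mathrm{Id}}D$, so a metric choice on $V^k_1+V^k_2$ and a small additional term have to be tracked. More to the point, your second piece still carries the bad spectral ratio, and your proposed fix (``identifying the contribution as a one-parameter metric comparison on a two-step complex'') is exactly the vertical \cite[Thm.~A1.4]{bl} argument above, now applied to the second SES only. So you end up using \cite[Thm.~A1.4]{bl} twice where the paper uses it once. The gain from the paper's route is that by comparing the full four-term sequences directly, every column is already two-term and Corollary~\ref{cor-comparison-tf} suffices throughout.
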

\begin{proof}
We denote $b_{R,T} = \pi^{1/2}RT^{-1/2}e^T$.
By \eqref{eq-def-aRT},
there exists $a>0$ such that
\begin{equation}
\label{eq-abRT}
a_{R,T} = b_{R,T} \Big( 1 + \mathscr{O}\big(e^{-aT}\big) \Big)  \;.
\end{equation}

Let $p: V^k \rightarrow V^k_\mathrm{quot}$ be the canonical projection.
The following commutative diagram is obvious,
\begin{equation}
\label{eq2-pf-prop-hor-mv}
\xymatrix{
W^k_{12} \ar[r] \ar[d]^{\mathrm{Id}} &
W^k_1 \oplus W^k_2   \ar[r] \ar[d]^{\mathrm{Id}} &
V^k \ar[r]^{b_{R,T}^{-1}p} \ar[d]^{\mathrm{Id}} &
V^k_\mathrm{quot}  \ar[d]^{b_{R,T}\mathrm{Id}} \\
W^k_{12} \ar[r] &
W^k_1 \oplus W^k_2 \ar[r] &
V^k \ar[r]^{\hspace{-2.5mm}p} &
V^k_\mathrm{quot}  \;.}
\end{equation}

We equip $W^k_{12}$ in \eqref{eq2-pf-prop-hor-mv}
with the Hermitian metric induced by $h^{W^k_1}_{R,T} \oplus W^{W^k_2}_{R,T}$
via the inclusion $W^k_{12} \subseteq W^k_1 \oplus W^k_2$.
We equip $W^k_1 \oplus W^k_2$ in the first row of \eqref{eq2-pf-prop-hor-mv}
with the Hermitian metric $h^{W^k_1}_{2R,T} \oplus h^{W^k_2}_{2R,T}$.
We equip $W^k_1 \oplus W^k_2 = H^k(Z_{1,R},F) \oplus H^k(Z_{2,R},F)$
in the second row of \eqref{eq2-pf-prop-hor-mv}
with the Hermitian metric induced by $\big\lVert\cdot\big\rVert_{Z_{j,R}}$ ($j=1,2$)
via the identification \eqref{eq-hodge-DRT}.
We equip $V^k$ in the first row of \eqref{eq2-pf-prop-hor-mv}
with the Hermitian metric $h^{V^k}_{R,T}$.
We equip $V^k$ in the second row of \eqref{eq2-pf-prop-hor-mv}
with the Hermitian metric induced by $\big\lVert\cdot\big\rVert_{IY_R}$
via the identification \eqref{eq-hodge-DRT}.
We equip
$V^\bullet_\mathrm{quot}$
in the first (resp. second) row of \eqref{eq2-pf-prop-hor-mv}
with the quotient metric of $h^{V^\bullet}_{R,T}$
(resp. $a_{R,T}^{-2} h^{V^\bullet}_{R,T}$).
The torsion form of the first row of \eqref{eq2-pf-prop-hor-mv}
is given by $\widehat{\mathscr{T}}_{\mathscr{H},R,T}^k$,
and the torsion form of the second row of \eqref{eq2-pf-prop-hor-mv}
is given by $\mathscr{T}_{\mathrm{hor},R,T}^k$.
For $j=1,2,3,4$,
let $\mathscr{T}_j\in Q^S$ be the torsion form of the $j$-th column in \eqref{eq2-pf-prop-hor-mv}.
Applying \cite[Thm. A1.4]{bl} to \eqref{eq2-pf-prop-hor-mv},
we get
\begin{equation}
\label{eq3-pf-prop-hor-mv}
\widehat{\mathscr{T}}_{\mathscr{H},R,T}^k - \mathscr{T}_{\mathrm{hor},R,T}^k
- \mathscr{T}_1 + \mathscr{T}_2 - \mathscr{T}_3 + \mathscr{T}_4
\in Q^{S,0} \;.
\end{equation}
Since the first vertical map is isometric,
we have
\begin{equation}
\label{eq4-pf-prop-hor-mv}
\mathscr{T}_1 = 0 \;.
\end{equation}
By Corollary \ref{cor-comparison-tf},
Corollary \ref{cor-SRT-metric}, Remark \ref{rem-j} and Lemma \ref{lem-omegaH},
we have
\begin{equation}
\label{eq5-pf-prop-hor-mv}
\mathscr{T}_2 = \mathscr{O}\big(R^{-1/4+\kappa/8}\big) \;,\hspace{5mm}
\mathscr{T}_3 = \mathscr{O}\big(R^{-1/4+\kappa/8}\big) \;.
\end{equation}
By Corollary \ref{cor-comparison-tf} and \eqref{eq-abRT},
we have
\begin{equation}
\label{eq6-pf-prop-hor-mv}
\mathscr{T}_4 =
\mathscr{O}\big(e^{-aT/2}\big) \;.
\end{equation}
From \eqref{eq3-pf-prop-hor-mv}-\eqref{eq6-pf-prop-hor-mv},
we obtain \eqref{eq-prop-hor-mv}.
This completes the proof of Proposition \ref{prop-hor-mv}.
\end{proof}

\begin{proof}[Proof of Theorem \ref{thm-central-mv}]
We combine Propositions \ref{prop-decomp-mv}, \ref{prop-vert-mv}, \ref{prop-hor-mv} and \eqref{eq-def-THR}.
\end{proof}

%%%%%%%%%%%%%%%%%%%%%%%%%%%%%%%%%%%%%%%%%%%%%%%%%%%%%%%%%%%%%%%%%%%%%%%%%%%%%%%%%%%%%%%%%%%%%%%%%%%%%%%%%%%%%%%%%%%%%%%%%%%

\def\cprime{$'$}
\providecommand{\bysame}{\leavevmode\hbox to3em{\hrulefill}\thinspace}
\providecommand{\MR}{\relax\ifhmode\unskip\space\fi MR }
% \MRhref is called by the amsart/book/proc definition of \MR.
\providecommand{\MRhref}[2]{%
  \href{http://www.ams.org/mathscinet-getitem?mr=#1}{#2}
}
\providecommand{\href}[2]{#2}

%\bibliographystyle{amsplain}
%\bibliography{familygluing}

\end{document}